\newtheorem{theorem}{Theorem}[section]
\newtheorem{prop}[theorem]{Proposition}
\newtheorem{cor}[theorem]{Corollary}
\newtheorem{lemma}[theorem]{Lemma}
\newtheorem{remark}[theorem]{Remark}
\theoremstyle{definition}
\newtheorem{definition}[theorem]{Definition}
\newcommand{\rr}{\mathbb{R}}
\newcommand{\nn}{\mathbb{N}}
\newcommand{\cc}{\mathbb{C}}
\newcommand{\fa}{\mathfrak{A}}
\newcommand{\C}{\mathbb{C}}
\newcommand{\hatG}{\widehat{\Gamma}}
\renewcommand{\epsilon}{\varepsilon}
\DeclareMathOperator{\range}{Range}
\DeclareMathOperator{\Span}{span}
\newcommand{\<}{\left\langle}
\renewcommand{\>}{\right\rangle}
\newcommand{\Rmnum}[1]{\expandafter\@\romancap\romannumeral #1@}
\newcommand{\rl}{\mathfrak{R}_L}
\newcommand{\fh}{\mathfrak{H}}
\newcommand{\fk}{\mathfrak{K}}
\newcommand{\fb}{\mathfrak{B}}
\newcommand{\bh}{\fb(\fh)}
\newcommand{\bk}{\fb(\fk)}
\newcommand{\ah}{\fa(\fh)}
\newcommand{\bromega}{\breve{\omega}}
\newcommand{\breta}{\breve{\eta}}
\newcommand{\brTheta}{\breve{\Theta}}
\newcommand{\bub}[1]{\mathring{#1}}
\newcommand{\iq}{internal\ }
\newcommand{\cem}{\circledast}
\newcommand{\ultralim}{\mathop{\sigma\text{-weak-lim}}}
\begin{document}

\title{E$_0$-semigroups and $q$-purity}
\author{Christopher Jankowski}
\address{Christopher Jankowski, Department of Mathematics,
Ben-Gurion University of the Negev,
P.O.B. 653, Beersheva 84105,
Israel.}
\email{cjankows@math.bgu.ac.il}
\thanks{C.J. was partially supported by the
Skirball Foundation via
the Center for Advanced Studies in Mathematics at Ben-Gurion
University of the Negev. 
}

\author{Daniel Markiewicz}
\address{Daniel Markiewicz, Department of Mathematics,
Ben-Gurion University of the Negev,
P.O.B. 653, Beersheva 84105,
Israel.}
\email{danielm@math.bgu.ac.il}

\thanks{D.M. and R.T.P. were partially supported by 
grant 2008295 from the U.S.-Israel Binational Science Foundation.
}

\author{Robert T. Powers}
\address{Robert T. Powers, Department of Mathematics, 
David Rittenhouse Lab.,
209 South 33rd St., 
Philadelphia, PA 19104-6395, U.S.A.}
\email{rpowers@math.upenn.edu}

\date{June 12, 2011}
\begin{abstract}
An E$_0$-semigroup is called $q$-pure if it is a CP-flow and its set of flow subordinates is totally ordered by subordination. The range rank of a positive boundary weight map is the
 dimension of the range of its dual map. Let $\fk$ be a separable Hilbert space. We describe all  
 $q$-pure E$_0$-semigroups of type II$_0$ which arise from boundary weight maps with range rank
 one over $\fk$. We also prove that no $q$-pure E$_0$-semigroups of type II$_0$ arise from boundary weight maps with range rank two over $\fk$.   In the case when $\fk$ is finite-dimensional, we provide a criterion to determine if two boundary weight maps of range rank one over $\fk$ give rise to cocycle conjugate $q$-pure E$_0$-semigroups.  
\end{abstract}

\subjclass[2000]{Primary: 46L55, 46L57}
\keywords{E$_0$-semigroups, CP-semigroups, CP-flows, $q$-pure, $q$-corners}
\maketitle

\section{Introduction}

Let $\bh$ be algebra of all bounded operators on a separable Hilbert space $\fh$. A 
CP-semigroup $\alpha=\{ \alpha_t\}$ acting on $\fb(\fh)$ is a continuous 
one-parameter semigroup of contractive completely positive maps which is continuous
 in the
 point-strong operator topology. When $\alpha_t$ is in addition an endomorphism 
for every $t>0$, then $\alpha$ is called an E-semigroup, and if furthermore
 $\alpha_t$ unital, then $\alpha$ is an E$_0$-semigroup. We recommend the monograph
 by Arveson~\cite{arv-monograph} as an excellent introduction to the theory of
 E$_0$-semigroups, and we will make use of its terminology in the remainder.

Given $\alpha$ and $\beta$ two CP-semigroups acting on $\bh$, we say that 
$\beta$ is a subordinate of $\alpha$ if $\alpha_t - \beta_t$ is a completely
 positive map for all $t>0$. Given an E$_0$-semigroup $\alpha$, let  
$\mathfrak{S}(\alpha)$ be the
 set of all its CP-semigroup subordinates, endowed with the
 partial order given by subordination. Let also $\mathfrak{E}(\alpha)$ be the 
subset of all E-semigroup subordinates of $\alpha$. Both partially ordered 
sets are easily seen to be cocycle conjugacy invariants of $\alpha$. The set
 $\mathfrak{S}(\alpha)$  was first studied by Bhat~\cite{bhat-cocycles}, 
who characterized it completely in the type I case, with the help of the quantum
 stochastic calculus in the sense of Hudson-Parthasaraty~\cite{parthasarathy1}. 
 Liebscher~\cite{liebscher} has further described an alternative presentation for
 $\mathfrak{E}(\alpha)$ in terms of subproduct systems arising from certain measure
 types with partial order given by absolute continuity.

We would like to improve our understanding of the class of E$_0$-semigroups $\alpha$
 such that $\mathfrak{S}(\alpha)$ is as small as possible.  
 When $\alpha$ is an E$_0$-semigroup and
 additionally it is a CP-flow, then we will call it  $q$-pure if and only if
 its set of CP-flow subordinates is totally
 ordered by subordination.  See Section~\ref{sec-q-purity} for a detailed
 discussion. It seems to us that $q$-pure E$_0$-semigroups will become
 important objects in the classification theory of E$_0$-semigroups. 

The class of  $q$-pure E$_0$-semigroups was first studied by Powers~\cite{powers-holyoke}.  Recall that Powers~\cite{powers-CPflows} has  proven that any spatial E$_0$-semigroup  can be constructed (up to cocycle conjugacy) by choosing an 
appropriate $q$-weight map over a separable Hilbert space $\fk$. A $q$-weight map over $\fk$ is a boundary 
weight map from the predual of $\bk$ to certain weights on a (non-closed) *-subalgebra 
$\ah\subseteq \fb(\fk \otimes L^2(0,\infty))$ satisfying the $q$-positivity conditions.
 Powers~\cite{powers-holyoke} completely described the $q$-weight maps over $\fk=\cc$ which give rise to $q$-pure E$_0$-semigroups. Later Jankowski~\cite{jankowski1} analyzed a class of 
E$_0$-semigroups of type II$_0$ arising 
from boundary weight doubles $(\phi,\nu)$ where 
where $\phi:M_n(\cc) \to M_n(\cc)$ is a $q$-positive linear map and $\nu$ is an unbounded boundary weight over 
$L^2(0,\infty)$. Jankowski characterized those boundary weight doubles giving rise to  $q$-pure E$_0$-semigroups 
when the map $\phi$ either is invertible or has rank one and in addition $\nu$ has the particular form 
$\nu(T) = (f,(I-\Lambda)^{-1/2}T(I-\Lambda)^{-1/2} f)$ where $\Lambda \in B(L^2(0,\infty))$ is the multiplication 
operator by $e^{-x}$,  $T \in \ah$ and $f\in L^2(0,\infty)$.

In this paper we generalize the results of Powers~\cite{powers-holyoke} and Jankowski~\cite{jankowski1} 
to the class $\mathfrak{C}$ of all E$_0$-semigroups of type II$_0$ arising from $q$-weight maps 
with range rank one over a separable Hilbert space $\fk$. The range rank of a  $q$-weight map 
$\omega:\bk_*\to\ah_*$ is defined to be the dimension of the range of the dual map 
(see Definition~\ref{bromega-finite-rank} and the discussion preceding it for the details). 
In particular, the class $\mathfrak{C}$ contains all $q$-weights considered earlier by Powers~\cite{powers-holyoke}  
as well as those considered by Jankowski~\cite{jankowski1} with $\phi$ rank one. 
In fact, we obtain an effective description of all $q$-weight maps in the class $\mathfrak{C}$ 
which are $q$-pure (see Theorem~\ref{rank-one-q-pure}).   
We also prove that if $\fk$ is finite-dimensional, 
then a $q$-corner between unbounded range rank one $q$-weight maps must have range rank one 
(see Theorem~\ref{corners-are-range-rank-one}). And we describe one criterion to determine if 
a $q$-corner exists (see Theorem~\ref{rank-one-corner-form}) between $q$-pure boundary weight 
maps of range rank one over $\fk$ finite-dimensional. The latter results form an important 
first step for the classification theory of $q$-pure E$_0$-semigroups. See Section~\ref{sec-rank-one}. 

We also show that if a $q$-weight map over a separable Hilbert space $\fk$ has range rank 
two, then it cannot give rise to an E$_0$-semigroup of type II$_0$ that is $q$-pure 
(Corollary~\ref{sec6-main}). In fact, when $\dim \fk=2$, we prove further that a $q$-weight 
map can only give rise to a $q$-pure E$_0$-semigroup of type II$_0$ if it has range rank 1 or 4. 
This generalizes a result of Jankowski~\cite{jankowski3}. This comprises 
Section~\ref{sec-rank-two}, which is the last section of the article.

We describe the remaining sections of the paper. 
In Section 2 we review the basic terminology and results 
necessary for the paper. In Section~\ref{sec-expectations} we introduce the concept of \emph{boundary expectations}, 
which has proved useful and convenient for the analysis of certain properties of $q$-weight maps with range rank one or two.

\section{Preliminaries}\label{sec-preliminaries}

In this article, we will consider only Hilbert spaces that are separable unless stated otherwise. We will also denote the inner product of the Hilbert space by $(\cdot, \cdot)$ and it will be taken to be conjugate linear in the \emph{first} entry.

\subsection{E$_0$-semigroups and CP-flows}

\begin{definition}  Let $\mathfrak{H}$ be a separable Hilbert space.
We say a family $\alpha = \{\alpha_t\}_{t \geq 0}$ of
normal completely positive contractions of $\mathfrak{B}(\mathfrak{H})$ into itself is a \emph{CP-semigroup} acting on
$\mathfrak{B}(\mathfrak{H})$ if:

(i)  $\alpha_s \circ \alpha_t = \alpha_{s+t}$ for all $s,t \geq 0$
and $\alpha_0 (A)=A$ for all $A \in \mathfrak{B}(\mathfrak{H})$;

(ii) For each $f,g \in \fh$ and $A \in \mathfrak{B}(\mathfrak{H})$, the inner product
$(f, \alpha_t(A)g)$ is continuous in $t$;

If $\alpha_t(I) =I$ for all $t \geq 0$, then $\alpha$ is called a \emph{unital} CP-semigroup. When
$\alpha$ is a unital CP-semigroup and in addition the map $\alpha_t$ is an endomorphism for every
$t\geq 0$, then $\alpha$ is called an \emph{E$_0$-semigroup}.
\end{definition}

We have two notions of equivalence for $E_0$-semigroups:

\begin{definition}
An $E_0$-semigroup $\alpha$ acting on $\mathfrak{B}(\mathfrak{H}_1)$ is \emph{conjugate} to
an E$_0$-semigroup $\beta$ acting on $\mathfrak{B}(\mathfrak{H}_2)$  if there exists
a $*$-isomorphism $\theta$ from $\mathfrak{B}(\mathfrak{H}_1)$ onto $\mathfrak{B}(\mathfrak{H}_2)$ such that
$\theta \circ \alpha_t = \beta_t \circ \theta$ for all $t \geq 0$.

A strongly continuous family of contractions $\mathcal{W}=\{W_t\}_{t \geq 0}$ acting on $\mathfrak{H}_2$ is
called a contractive $\beta$-cocycle if $W_t \beta_{t}(W_s)=W_{t+s}$ for all $t,s \geq 0$. A
contractive $\beta$-cocycle $W_t$ is said to be a \emph{local cocycle} if for all $A\in \mathfrak{B}(\mathfrak{H}_2)$ and
$t\geq 0$, $W_t\beta_t(A) = \beta_t(A)W_t$.

We say $\alpha$ and $\beta$ are \emph{cocycle conjugate} if there exists
a unitary $\beta$-cocycle $\{W_t\}_{t \geq 0}$ such that the E$_0$-semigroup
acting on $\mathfrak{B}(\mathfrak{H}_2)$ given by $\beta'_t(A) = W_t \beta_t(A)W_t^*$ for all $A \in \mathfrak{B}(\mathfrak{H}_2)$ and $t\geq 0$
is conjugate to $\alpha$.
\end{definition}

Let $\mathfrak{K}$ be a separable Hilbert space. We will always denote by
$\{S_t\}_{t \geq 0}$ the right shift semigroup on  $\mathfrak{K} \otimes L^2(0,
\infty)$ (which we identify with the space of
$\mathfrak{K}$-valued measurable functions on $(0, \infty)$ which are square
integrable):
\begin{equation*}
(S_t f)(x) = \left\{ \begin{matrix}
f(x-t), & x> t; \\
0, & x\leq t.
\end{matrix}
\right.
\end{equation*}
We will also denote by $E(t, \infty) = S_tS_t^*$ for all $t\geq 0$, and $E(t,s) = E(t,\infty) - E(s,\infty)$ for all $0\leq t<s<\infty$.

\begin{definition}
A CP-semigroup $\alpha$ acting on $\fb(\fk \otimes L^2(0,\infty))$
is called a \textit{CP-flow} over $\fk$ if $\alpha_t(A)S_t = S_t A$
for all $A \in \fb(\fk \otimes L^2(0,\infty))$ and $t \geq 0$.
\end{definition}

A dilation of a unital CP-semigroup $\alpha$ acting on $\mathfrak{B}(\mathfrak{K})$ is a pair $(\alpha^d, W)$, where
$\alpha^d$ is an E$_0$-semigroup acting on $\mathfrak{B}(\mathfrak{H})$ and $W:\fk \to \fh$ is an isometry such that
$\alpha^d_t(WW^*) \geq WW^*$ for $t > 0$ and furthermore
$$
\alpha_t(A) = W^*\alpha_t^d(WAW^*)W
$$
for all $A \in \mathfrak{B}(\mathfrak{K})$ and $t\geq 0$. The dilation is said to be \emph{minimal} if the
span of the vectors
$$
\alpha_{t_1}^d(WA_1W^*)\alpha_{t_2}^d(WA_2W^*)\cdots\alpha_{t_n}^d(WA_nW^*)Wf
$$
for  $f\in K, A_i \in \mathfrak{B}(\mathfrak{K}), i=1,\dots n, n\in \mathbb{N}$ is dense in $\mathfrak{H}$.
This definition of minimality is due to Arveson (see \cite{arv-monograph} for a detailed discussion
regarding dilations of CP-semigroups). We will often suppress the isometry $W$, and refer to a
minimal dilation $\alpha^d$ instead of $(\alpha^d, W)$.

\begin{theorem}[Bhat's dilation theorem]\label{theorem:Bhat's-dilation-theorem}
Every unital CP-semigroup has a minimal dilation which is unique up to conjugacy.
\end{theorem}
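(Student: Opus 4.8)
I would establish existence by an inductive-limit construction over Stinespring dilations, with $\alpha^d$ realized as a ``time shift'', and uniqueness by producing an intertwining unitary between any two minimal dilations.

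\emph{Existence.} Since $\bk$ is a type I factor, Stinespring's theorem (in Kraus form) lets us write, for each $t>0$,
\[
\alpha_t(A)=V_t^{*}\,(A\otimes 1_{\mathfrak{m}_t})\,V_t ,
\]
where $\mathfrak{m}_t$ is a Hilbert space and $V_t\colon\fk\to\fk\otimes\mathfrak{m}_t$ is an isometry (isometry because $\alpha_t(I)=I$), the dilation being minimal in the sense that $\cspan\,(\bk\otimes 1_{\mathfrak{m}_t})V_t\fk=\fk\otimes\mathfrak{m}_t$; set $\mathfrak{m}_0=\cc$. The semigroup law $\alpha_{u+v}=\alpha_u\circ\alpha_v$ together with uniqueness of the minimal Stinespring dilation yields, for all $u,v>0$, an isometry $T_{u,v}\colon\mathfrak{m}_{u+v}\to\mathfrak{m}_u\otimes\mathfrak{m}_v$ (so that $1_\fk\otimes T_{u,v}$ intertwines the relevant $A\otimes 1$ representations), and hence, for each finite partition $\mathfrak{p}\colon 0=s_0<s_1<\cdots<s_n$, an embedding of $\mathcal{H}_{\mathfrak{p}}:=\fk\otimes\mathfrak{m}_{s_1-s_0}\otimes\cdots\otimes\mathfrak{m}_{s_n-s_{n-1}}$ into $\mathcal{H}_{\mathfrak{p}'}$ whenever $\mathfrak{p}'$ refines $\mathfrak{p}$; one checks these embeddings are compatible under further refinement (this cocycle identity is the point that must be verified with care). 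Let $\fh$ be the inductive limit of the $\mathcal{H}_{\mathfrak{p}}$, first over partitions of each $[0,t]$ directed by refinement and then over $t\to\infty$, and let $W\colon\fk\to\fh$ be the canonical inclusion of the leading tensor leg. Using the product-system structure induced on the ``future'' tensor legs, one obtains an identification of $\fh$ under which the right shift --- which adjoins a fresh copy of $\mathfrak{m}_t$ at the front of the future --- defines a normal unital $*$-endomorphism $\alpha^d_t$ of $\bh$; then $\{\alpha^d_t\}_{t\ge0}$ is a semigroup, $t\mapsto\alpha^d_t$ is continuous in the point-strong topology (inherited from $\alpha$), the identity $W^{*}\alpha^d_t(WAW^{*})W=\alpha_t(A)$ follows directly from $\alpha_t(A)=V_t^{*}(A\otimes 1_{\mathfrak{m}_t})V_t$, and $\alpha^d_t(WW^{*})\ge WW^{*}$ since the shift only adjoins extra legs to the future. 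Minimality is automatic: $\fh$ is by construction the closed span of the images of the $\mathcal{H}_{\mathfrak{p}}$, which are spanned by vectors $\alpha^d_{t_1}(WA_1W^{*})\cdots\alpha^d_{t_n}(WA_nW^{*})Wf$.

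\emph{Uniqueness.} Let $\alpha^d$ (with isometry $W$) and $\beta^d$ (with isometry $V$) be two minimal dilations of $\alpha$. The computational core is: if $\gamma$ is an E$_0$-semigroup and $P$ a projection with $\gamma_t(P)\ge P$ for all $t>0$, then $\gamma_t(P)P=P$, so, since $\gamma_t$ is an endomorphism, $P\,\gamma_t(PXP)\,P=P\,\gamma_t(X)\,P$ for every operator $X$. Iterating this with $P=WW^{*}$ (together with the semigroup law), for $0\le t_1\le t_2\le\cdots\le t_n$ and $A_i\in\bk$ one obtains
\[
W^{*}\,\alpha^d_{t_1}(WA_1W^{*})\,\alpha^d_{t_2}(WA_2W^{*})\cdots\alpha^d_{t_n}(WA_nW^{*})\,W
=\alpha_{t_1}\!\big(A_1\,\alpha_{t_2-t_1}(A_2\cdots\alpha_{t_n-t_{n-1}}(A_n)\cdots)\big),
\]
and the identical formula with $(\beta^d,V)$ in place of $(\alpha^d,W)$. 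An arbitrary product of the generators (with unordered times) is reduced to this time-ordered form by repeatedly applying the endomorphism property and $\alpha^d_s(WW^{*})\ge WW^{*}$; consequently every inner product of two of the generating vectors $\alpha^d_{t_1}(WA_1W^{*})\cdots\alpha^d_{t_n}(WA_nW^{*})Wf$ is expressed purely in terms of $\alpha$, the $t_i$, the $A_i$ and $f$, independently of the dilation. Therefore the assignment
\[
\alpha^d_{t_1}(WA_1W^{*})\cdots\alpha^d_{t_n}(WA_nW^{*})Wf\;\longmapsto\;\beta^d_{t_1}(VA_1V^{*})\cdots\beta^d_{t_n}(VA_nV^{*})Vf
\]
extends to a well-defined isometry $U$ of the dilation space of $\alpha^d$ onto that of $\beta^d$ (surjectivity being exactly minimality of $\beta^d$). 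Checking on the generating vectors that $UW=V$ and $U\alpha^d_t(X)U^{*}=\beta^d_t(UXU^{*})$, the map $\theta(\,\cdot\,)=U(\,\cdot\,)U^{*}$ realizes the asserted conjugacy.

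\emph{Main obstacle.} The real work lies in the existence half: proving the cocycle compatibility of the Stinespring embeddings under refinement of partitions (so that the inductive limit $\fh$ and the right-shift semigroup $\alpha^d$ are well defined), and verifying that the shift genuinely yields \emph{normal} unital $*$-endomorphisms of all of $\bh$ together with the required continuity in $t$. The uniqueness half is comparatively routine once the time-ordered moment identity is in hand; its only mild subtlety is the reduction of an arbitrary product of generators to the time-ordered case.
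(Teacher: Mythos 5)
The paper does not prove this statement: it is imported as a known result (Bhat's theorem, see \cite{bhat-cocycles} and the treatment in \cite{arv-monograph}, which the paper cites for all matters concerning dilations of CP-semigroups). Your sketch follows exactly the standard line of the original proof: existence via the inductive limit, over refinements of partitions, of iterated minimal Stinespring spaces $\fk\otimes\mathfrak{m}_{s_1-s_0}\otimes\cdots\otimes\mathfrak{m}_{s_n-s_{n-1}}$, with the dilated endomorphisms given by shifting in fresh tensor legs; and uniqueness via the time-ordered moment identity, which shows that all inner products of the generating vectors are determined by $\alpha$ alone, so that the obvious assignment between generating sets extends to the intertwining unitary. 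Both halves are correct in outline, and your identification of where the work lies is accurate. I would only add that two of the points you defer are more substantial than your phrasing suggests: (a) the continuity of $t\mapsto\alpha^d_t$ in the point-weak/point-strong topology is \emph{not} simply ``inherited from $\alpha$'' --- establishing measurability/continuity of the minimal dilation of a continuous CP-semigroup is a genuine technical hurdle in Bhat's and Arveson's treatments; and (b) the normality and unitality of the shift endomorphisms on all of $\bh$ (not just on the weakly dense algebra generated by the finite-partition subspaces) requires an argument. Neither point invalidates the approach, but a complete proof must address them explicitly rather than in passing.
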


The following addendum by Powers (Lemma 4.50 of \cite{powers-CPflows}) further clarifies the situation for CP-flows.

\begin{theorem}
Every unital CP-flow $\alpha$ has a minimal dilation $\alpha^d$ which is also a CP-flow. We call $\alpha^d$ the minimal flow dilation of the unital CP-flow.
\end{theorem}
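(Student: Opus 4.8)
The plan is to apply Bhat's dilation theorem (Theorem~\ref{theorem:Bhat's-dilation-theorem}) to the unital CP-semigroup $\alpha$ to obtain a minimal dilation $(\alpha^d,W)$, and then to realize the dilation Hilbert space, equipped with a right shift semigroup, so that $\alpha^d$ becomes a CP-flow. Since the minimal dilation is unique up to conjugacy, it suffices to exhibit one minimal dilation that is a CP-flow. Write $\fh_0=\fk\otimes L^2(0,\infty)$, let $\fh$ be the Hilbert space on which $\alpha^d$ acts, and let $W\colon\fh_0\to\fh$ be the dilating isometry. By minimality the vectors
\[
\eta=\alpha^d_{t_1}(WA_1W^*)\alpha^d_{t_2}(WA_2W^*)\cdots\alpha^d_{t_n}(WA_nW^*)\,Wf,\qquad n\ge0,\ t_i\ge0,\ A_i\in\fb(\fh_0),\ f\in\fh_0,
\]
span a dense subspace of $\fh$. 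The central step is to define a strongly continuous semigroup of isometries $\{S^d_t\}_{t\ge0}$ of $\fh$ by advancing every time parameter by $t$ and applying the shift $S_t$ to the initial vector, i.e.\ $S^d_t\eta=\alpha^d_{t_1+t}(WA_1W^*)\cdots\alpha^d_{t_n+t}(WA_nW^*)\,WS_tf$; in particular $S^d_tW=WS_t$.

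The heart of the matter is that $S^d_t$ is well defined and isometric. The inner products between vectors of the form $\eta$ are determined by $\alpha$ alone, through the recursion underlying Bhat's construction, which uses the intertwining identity $\alpha_s(C)=W^*\alpha^d_s(WCW^*)W$, the $*$-homomorphism property of the maps $\alpha^d_s$, the inequalities $WW^*\le\alpha^d_s(WW^*)$ coming from the dilation, and the Schwarz inequality at each reduction step. One checks by induction on the number of factors that this computation is unchanged when every time parameter is advanced by $t$ and every initial vector $f_j$ is replaced by $S_tf_j$: when a parameter is incremented from $r$ to $r+t$, one writes $\alpha_{r+t}=\alpha_t\circ\alpha_r$ and uses the flow relation $\alpha_t(A)S_t=S_tA$ to move the resulting $S_t$ outward, where it is absorbed into the adjacent factor $S_tf_j$, leaving a strictly shorter word to which the inductive hypothesis applies; the base case is the identity $(S_tf,S_tg)=(f,g)$. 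It follows that the Gram matrix of a finite family of $\eta$-vectors coincides with that of their $t$-advances, so $S^d_t$ extends to an isometry of $\fh$. The semigroup law and strong continuity of $\{S^d_t\}$ are then immediate, and the definition forces $\alpha^d_t(A)S^d_t=S^d_tA$ for every $A$ in the $*$-algebra $\mathcal A$ generated by $\{\alpha^d_s(WCW^*):s\ge0,\ C\in\fb(\fh_0)\}$. By minimality $\mathcal A$ has trivial commutant, hence is $\sigma$-weakly dense in $\bh$, and since $\alpha^d_t$ is normal the relation $\alpha^d_t(A)S^d_t=S^d_tA$ holds for all $A\in\bh$.

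Next I would show $\{S^d_t\}$ is a pure shift, i.e.\ $\bigcap_{t>0}\Ran S^d_t=\{0\}$. The flow relation for $\alpha$ yields the identity $(S^d_t)^*W=WS^*_t$: indeed $WE(0,t)f$ is orthogonal to every $t$-advanced $\eta$-vector, because the corresponding inner product reduces, again via $\alpha_t(B)S_t=S_tB$, to the pairing of a vector in $\Ran S_t$ with a vector in $\Ran E(0,t)=(\Ran S_t)^\perp$. Combining this with $(S^d_r)^*\alpha^d_r(B)=B(S^d_r)^*$ (which follows by taking adjoints in the relation $\alpha^d_r(B)S^d_r=S^d_rB$, valid for all $B$), a short induction shows that once $t$ exceeds every time parameter of $\eta$ one has $(S^d_t)^*\eta=WS^*_{t-r}w$ for a vector $w\in\fh_0$ independent of $t$, where $r$ is the leftmost time parameter of $\eta$. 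Since $S^*_u\to0$ strongly on $\fh_0$ and the $\eta$-vectors are total, $(S^d_t)^*\to0$ strongly, so $\bigcap_{t>0}\Ran S^d_t=\{0\}$. By the continuous analogue of the Wold decomposition (Cooper's theorem) for strongly continuous semigroups of isometries, there is a separable Hilbert space $\fk'$ and a unitary $U\colon\fh\to\fk'\otimes L^2(0,\infty)$ with $US^d_t=S_tU$ for all $t$. Transporting $\alpha^d$ and $W$ by $U$ yields a minimal dilation of $\alpha$ realized as an E$_0$-semigroup on $\fb(\fk'\otimes L^2(0,\infty))$ satisfying $\alpha^d_t(A)S_t=S_tA$ for all $A$ and $t\ge0$ — that is, a CP-flow over $\fk'$ — which is what we wanted.

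I expect the principal obstacle to be the well-definedness and isometry of $S^d_t$: one must organize Bhat's inner-product recursion so that each increment of a time parameter is paired with exactly one use of the flow relation $\alpha_r(A)S_r=S_rA$, and one must accommodate the fact that $WW^*\neq I$ — so that $\alpha^d_r(WCW^*)$ does not literally factor through $\alpha$ — by inserting the inequalities $WW^*\le\alpha^d_s(WW^*)$ at the appropriate points of the reduction. Verifying the identity $(S^d_t)^*W=WS^*_t$ and the ensuing recursion in the purity step is of the same flavour but more routine.
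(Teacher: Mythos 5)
The paper does not actually prove this statement --- it quotes Powers' Lemma 4.50 of \cite{powers-CPflows} --- and your argument reconstructs essentially the proof given there: define translation isometries $S^d_t$ on the dilation space through their action on the spanning vectors, verify well-definedness and isometry via the inner-product recursion of Bhat's construction combined with the flow relation $\alpha_t(C)S_t=S_tC$, show the resulting semigroup of isometries is a pure shift, and invoke the Cooper--Wold decomposition to identify the dilation space with $\fk'\otimes L^2(0,\infty)$. The approach is correct; the one imprecision is in your description of the induction, since after the reduction of a Gram-matrix entry to the form $(f,\alpha_{t_n}(D)g)$ only the innermost (smallest) time survives as an absolute parameter --- all others appear only as differences, which are unchanged by the $t$-advance --- so a single application of the flow relation at the outermost level, $(S_tf,\alpha_{t_n+t}(D)S_tg)=(S_tf,S_t\alpha_{t_n}(D)g)=(f,\alpha_{t_n}(D)g)$, does the whole job rather than one application per incremented factor.
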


Given two CP-flows $\alpha$ and $\beta$ over $\mathfrak{K}$, we will say that $\alpha$ \emph{dominates} $\beta$
or that $\beta$ is a \emph{subordinate} of $\alpha$ if for all $t\geq 0$, the map $\alpha_t
-\beta_t$ is completely positive. We will often denote this relationship by $\alpha \geq \beta$.
Powers~\cite{powers-CPflows} has described a useful criterion for determining whether two CP-flows
have minimal dilations that are cocycle conjugate in terms of the next definition.

\begin{definition}\label{def-corners}
Let $\alpha$ and $\beta$ be CP-flows over $\mathfrak{K}_1$ and $\mathfrak{K}_2$, respectively. For $j=1,2$, let
$\mathfrak{H}_j=\mathfrak{K}_j \otimes L^2(0,\infty)$ and let $S_t^{(j)}$ denote the right shift on $\mathfrak{H}_j$. Let $\gamma=\{\gamma_t: t\geq 0\}$ be a family of maps from $\mathfrak{B}(\mathfrak{H}_2, \mathfrak{H}_1)$ into itself and define for each $t>0$, $\gamma_t^*: \mathfrak{B}(\mathfrak{H}_1, \mathfrak{H}_2) \to \mathfrak{B}(\mathfrak{H}_1, \mathfrak{H}_2)$ by $\gamma_t^*(C)=[\gamma_t(C^*)]^*$ for all $C\in \mathfrak{B}(\mathfrak{H}_1, \mathfrak{H}_2)$. We say that $\gamma$ is a \emph{flow corner} from
$\alpha$ to $\beta$ if the maps
$$
\Theta_t \begin{bmatrix}
A & B \\
C & D
\end{bmatrix} =
\begin{bmatrix}
\alpha_t(A) & \gamma_t(B) \\
\gamma_t^*(C) & \beta_t(D)
\end{bmatrix}
$$
define a CP-flow $\Theta=\{ \Theta_t : t\geq 0\}$ over $\mathfrak{K}_1 \oplus \mathfrak{K}_2$ with respect to the shift
$S_t^{(1)}\oplus S_t^{(2)}$. Note that $\gamma$ is a flow corner from $\alpha$ to $\beta$ if and only if $\gamma^*$ is a flow corner from $\beta$ to $\alpha$.

A flow corner $\gamma$ is called a \emph{hyper-maximal
flow corner} if every subordinate CP-flow $\Theta'$ of $\Theta$ of the form
$$
\Theta_t' \begin{bmatrix}
A & B \\
C & D
\end{bmatrix} =
\begin{bmatrix}
\alpha_t'(A) & \gamma_t(B) \\
\gamma_t^*(C) & \beta_t'(D)
\end{bmatrix}
$$
for $t\geq 0$ must satisfy $\alpha_t'=\alpha_t$ and $\beta_t'=\beta_t$ for all $t\geq 0$.
\end{definition}

\begin{theorem}\label{hyperflowcorn}  Suppose $\alpha$ and $\beta$ are unital
$CP$-flows over
$\mathfrak{K}_1$ and $\mathfrak{K}_2$ and $\alpha^d$ and $\beta^d$ are their minimal dilations
to $E_0$-semigroups.  Suppose $\gamma$ is a hyper maximal flow corner from $\alpha$
to $\beta$.  Then $\alpha^d$ and $\beta^d$ are cocycle conjugate.  Conversely,
if $\alpha^d$ is a type II$_0$ and $\alpha^d$ and $\beta^d$ are cocycle conjugate,
then there is a hyper maximal flow corner from $\alpha$ to $\beta$.
\end{theorem}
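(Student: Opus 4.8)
\emph{Strategy and setup.} The approach, following Powers, is to move the problem into the subordinate structure of the ambient minimal flow dilation. Suppose $\gamma$ is a hyper-maximal flow corner from $\alpha$ to $\beta$, and let $\Theta$ be the associated CP-flow over $\mathfrak{K}_1\oplus\mathfrak{K}_2$; since $\alpha$ and $\beta$ are unital, so is $\Theta$, and by the minimal flow dilation theorem stated above it has a minimal flow dilation $\Theta^d$, an E$_0$-semigroup on $\fb(\mathfrak{H}^d)$ with $\mathfrak{H}_1\oplus\mathfrak{H}_2$ isometrically embedded in $\mathfrak{H}^d$. One first checks that $\Theta^d$ is a flow dilation of $\alpha$ and of $\beta$ simultaneously: each corner is recovered by compression, because $\Theta^d_t$ carries $A\oplus 0$ to $\alpha_t(A)\oplus 0$, and unitality of $\alpha,\beta$ provides the required increase of the orthogonal projections of $\mathfrak{H}^d$ onto the embedded copies of $\mathfrak{H}_1$ and $\mathfrak{H}_2$. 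Writing $\mathfrak{M}_1,\mathfrak{M}_2\subseteq\mathfrak{H}^d$ for the subspaces generated under $\Theta^d$ by $\mathfrak{H}_1$ and $\mathfrak{H}_2$ respectively, Bhat's uniqueness theorem identifies the restriction of $\Theta^d$ to $\mathfrak{M}_1$ with $\alpha^d$ and to $\mathfrak{M}_2$ with $\beta^d$, while minimality of $\Theta^d$ over $\Theta$ forces $\overline{\mathfrak{M}_1+\mathfrak{M}_2}=\mathfrak{H}^d$.

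\emph{From a hyper-maximal corner to cocycle conjugacy.} The decisive step is to use hyper-maximality of $\gamma$ to show $\mathfrak{M}_1\perp\mathfrak{M}_2$, so that $\mathfrak{H}^d=\mathfrak{M}_1\oplus\mathfrak{M}_2$: if these subspaces overlapped, the common part would be a sub-dilation shared by $\alpha$ and $\beta$, out of which one could manufacture a nontrivial subordinate CP-flow $\Theta'\leq\Theta$ of exactly the form excluded by hyper-maximality. Granting the direct-sum decomposition, $\Theta^d_t$ is a unital $*$-endomorphism relative to $\mathfrak{H}^d=\mathfrak{M}_1\oplus\mathfrak{M}_2$, and multiplicativity forces its off-diagonal part to be built from a single partial-isometry cocycle whose initial and final projections lie in the range of $\Theta^d_t$ — that is, an essentially unitary cocycle. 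Unwinding this yields a unitary $\beta^d$-cocycle together with a $*$-isomorphism conjugating $\alpha^d$ to the corresponding cocycle perturbation of $\beta^d$; thus $\alpha^d$ and $\beta^d$ are cocycle conjugate.

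\emph{The converse.} Suppose now $\alpha^d$ is of type II$_0$ and $\alpha^d,\beta^d$ are cocycle conjugate. Using the rigidity of the type II$_0$ structure one may take the conjugacy to be implemented by a flow cocycle, and then amalgamate $\alpha^d$ and $\beta^d$ along it into an E$_0$-semigroup $\Psi$ on $\fb(\mathfrak{H}_1^d\oplus\mathfrak{H}_2^d)$ with diagonal corners $\alpha^d,\beta^d$ and off-diagonal block equal to that cocycle. One verifies that $\Psi$ is a CP-flow, and — since $\alpha^d,\beta^d$ are the minimal flow dilations of $\alpha,\beta$ — that the subspace generated under $\Psi$ by $\mathfrak{H}_1\oplus\mathfrak{H}_2$ is all of $\mathfrak{H}_1^d\oplus\mathfrak{H}_2^d$, so that $\Psi$ is a minimal flow dilation of the CP-flow $\Theta$ obtained by compressing $\Psi$ to $\mathfrak{H}_1\oplus\mathfrak{H}_2$. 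By construction $\Theta$ has diagonal corners $\alpha$ and $\beta$, hence its off-diagonal block is a flow corner $\gamma$ from $\alpha$ to $\beta$. To see $\gamma$ is hyper-maximal, let $\Theta'\leq\Theta$ be a subordinate CP-flow of the prescribed form; via the correspondence between subordinates of a CP-flow and subordinates of its minimal flow dilation it lifts to $\Psi'\leq\Psi$, which (as $\Psi$ is an E$_0$-semigroup) corresponds to a positive contractive local $\Psi$-cocycle $\{C_t\}$. Locality together with the diagonal-corner structure of $\Psi$ forces $C_t$ to be block diagonal, $C_t=C_t^{(1)}\oplus C_t^{(2)}$, with $C_t^{(i)}$ a local cocycle for the $i$-th diagonal corner; the requirement that $\Theta'$ retain all of $\gamma$ then forces $C_t^{(2)}$ to act as the identity on the generating subspace $\mathfrak{H}_2$, and the type II$_0$ hypothesis propagates this to $C_t^{(2)}=I$ on all of $\mathfrak{H}_2^d$, whence $C_t=I$. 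Therefore $\alpha'=\alpha$ and $\beta'=\beta$, so $\gamma$ is a hyper-maximal flow corner.

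\emph{Main obstacle.} The two load-bearing steps are: proving, in the first direction, that hyper-maximality forces $\mathfrak{M}_1\perp\mathfrak{M}_2$ (after which everything is endomorphism bookkeeping), and, in the converse, that the identity-on-$\mathfrak{H}_2$ constraint propagates to the whole dilation space — this is exactly where type II$_0$ is indispensable, since for a type I dilation the creation-operator directions would support a nontrivial local cocycle fixing $\mathfrak{H}_2$, hence a subordinate with the prescribed off-diagonal corner but a strictly smaller diagonal corner. Underlying both is the dictionary, due to Powers, between subordinates of a unital CP-flow and subordinates (equivalently, positive contractive local cocycles) of its minimal flow dilation; in \cite{powers-CPflows} this dictionary and the propagation argument are carried out concretely through boundary weight maps and their resolvent identities.
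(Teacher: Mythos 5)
First, a point of order: the paper does not actually prove Theorem~\ref{hyperflowcorn} --- it is quoted as a known result of Powers (essentially Theorem~4.56 together with Lemma~4.50 of \cite{powers-CPflows}) --- so there is no in-paper proof to compare against. Judged on its own terms, your sketch has the right architecture: dilate the $2\times 2$ CP-flow $\Theta$, decompose the dilation space, translate the off-diagonal block into a cocycle between $\alpha^d$ and $\beta^d$, and in the converse control subordinates of the dilation by local cocycles. But the two steps you yourself flag as load-bearing are exactly the ones not carried out, and one of them is misattributed.

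In the forward direction, the orthogonality $\mathfrak{M}_1\perp\mathfrak{M}_2$ is not where hyper-maximality acts: it is automatic. Since $\alpha$ and $\beta$ are unital, $\Theta_t(E_i)=E_i$, so the $\Theta^d_t(E_i)$ form increasing families of projections with $\Theta^d_t(E_1)+\Theta^d_t(E_2)=I$, and every generator of $\mathfrak{M}_i$ lies in the range of $\Theta^d_{t}(E_i)$ for its leading time; comparing at the larger of the two leading times gives orthogonality with no hypothesis on $\gamma$. What hyper-maximality must actually deliver --- and what your sketch does not establish --- is that the induced corner $C\mapsto N_1\Theta^d_t(N_1CN_2)N_2$ between the E$_0$-semigroups $\alpha^d$ and $\beta^d$ is itself hyper-maximal. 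Without that, ``multiplicativity forces \dots\ an essentially unitary cocycle'' is false: a corner between two E$_0$-semigroups in general yields only a \emph{contractive} local intertwining cocycle, and a contractive cocycle does not give cocycle conjugacy. The transfer of hyper-maximality from the flow corner $\gamma$ downstairs to the corner of the dilations upstairs (via the order isomorphism between subordinates of a unital CP-flow and subordinates of its minimal flow dilation) is the technical heart of Powers' argument and is missing here. In the converse, both hard points are asserted rather than proved: (i) that an abstract cocycle conjugacy of $\alpha^d$ and $\beta^d$ can be implemented by a cocycle compatible with the shift, so that the compressed $2\times 2$ semigroup is a CP-\emph{flow} and its off-diagonal block a \emph{flow} corner; and (ii) that a positive contractive local cocycle $C^{(1)}\oplus C^{(2)}$ whose compression preserves only the off-diagonal corner $\gamma$ (not the full diagonal) must be the identity. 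Your ``acts as the identity on $\mathfrak{H}_2$, and type II$_0$ propagates this'' elides that the corner condition constrains $C^{(i)}$ only through the compressed off-diagonal block, and the propagation is precisely the assertion that fails for higher index; it is the step that needs a genuine argument rather than a one-line appeal to rigidity.
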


\subsection{Boundary weight maps} For the remainder of this section, let $\mathfrak{K}$ be a fixed separable Hilbert space
(not necessarily infinite-dimensional) and let $\mathfrak{H}=\mathfrak{K} \otimes L^2(0,\infty)$.

Define $\Lambda: \mathfrak{B}(\mathfrak{K}) \rightarrow \mathfrak{B}(\mathfrak{H})$ by
\begin{equation*}
(\Lambda(A)f)(x)=e^{-x} Af(x)
\end{equation*}
and let $\mathfrak{A}(\mathfrak{H})$ be the algebra
\begin{equation*}
\mathfrak{A}(\mathfrak{H}) = [I - \Lambda(I_\fk)]^\frac{1}{2} \mathfrak{B}(\mathfrak{H})  [I -
\Lambda(I_\fk)]^\frac{1}{2}.
\end{equation*}

\begin{definition}
We say that a linear functional $\mu:\mathfrak{A}(\mathfrak{H}) \to \C$
is a \textit{boundary weight}, denoted $\mu \in \mathfrak{A}(\mathfrak{H})_*$, if the functional
$\ell$ defined on $\mathfrak{B}(\mathfrak{H})$ by
\begin{equation*} \ell(A)= \mu\Big(
[I - \Lambda(I_\fk)]^\frac{1}{2} A [I - \Lambda(I_\fk)]^\frac{1}{2} \Big)
\end{equation*}
is a normal bounded linear functional.  The boundary weight $\mu$ is called \emph{bounded} if there exists $C>0$ such that $|\mu(T)| \leq C \|T\| $ for all $T \in \mathfrak{A}(\mathfrak{H})$. Otherwise, $\mu$ is called \emph{unbounded}.

A linear map from $\mathfrak{B}(\mathfrak{K})_*$ to $\mathfrak{A}(\mathfrak{H})_*$ will be called a \emph{boundary weight map}.
\end{definition}
Boundary weights were first defined in \cite{powers-CPflows} (Definition 4.16), where
their relationship to CP-flows was explored in depth.  For an additional discussion of boundary
weights and their properties, we refer
the reader to Definition 1.10 of \cite{markiewicz-powers} and its subsequent remarks.

Given a normal map $\phi: \mathfrak{B}(\mathfrak{H}) \to \mathfrak{B}(\mathfrak{K})$, we will denote by $\hat{\phi}:\mathfrak{B}(\mathfrak{K})_* \to \mathfrak{B}(\mathfrak{H})_*$ the
predual map satisfying $\rho(\phi(A)) = (\hat{\phi}(\rho))(A)$ for all $A \in \mathfrak{B}(\mathfrak{H})$ and $\rho \in \mathfrak{B}(\mathfrak{K})_*$.

Define $\Gamma : \mathfrak{B}(\mathfrak{H}) \to \mathfrak{B}(\mathfrak{H})$ by the weak* integral
\begin{equation}\label{gamma}
\Gamma(A) = \int_0^\infty e^{-t} S_t A S_t^* dt.
\end{equation}
The following records facts that are implicit in the proof of 
Theorem~4.17 in Powers~\cite{powers-CPflows} (for a proof, see Proposition~2.11 of \cite{jankowski-markiewicz}):

\begin{prop}\label{get-omega}
Let $\mu \in\fa(\fh)_*$ be a boundary weight. We have that for all $T\in\fa(\fh)$,
$$
\mu(T) = \lim_{x\to 0+} \mu\big(E(x,\infty) T E(x,\infty)\big).
$$
In particular $\mu=\mu'$ if and only if for all $x>0$ and $T \in E(x,\infty)\mathfrak{B}(\mathfrak{H})E(x,\infty)$, we have that $\mu(T) = \mu'(T)$. Furthermore, given $x>0$ and $T \in E(x,\infty)\mathfrak{B}(\mathfrak{H})E(x,\infty)$,
\begin{equation}\label{mut}
\mu(T) = \lim_{y\to x+} \frac{1}{y-x} \hatG(\mu) \Big(T - e^{x-y} S_{y-x} T S_{y-x}^*\Big).
\end{equation}
\end{prop}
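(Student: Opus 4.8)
The plan is to prove Proposition~\ref{get-omega} by carefully unpacking the definition of a boundary weight and exploiting the interplay between the operators $E(x,\infty) = S_xS_x^*$, the weak* integral $\Gamma$, and the completely positive maps implementing the shift.

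\textbf{Step 1: The limit formula $\mu(T) = \lim_{x\to 0+} \mu(E(x,\infty)TE(x,\infty))$.}
By definition of a boundary weight, the functional $\ell(A) = \mu([I-\Lambda(I_\fk)]^{1/2} A [I-\Lambda(I_\fk)]^{1/2})$ is normal and bounded on $\fb(\fh)$. Write $\Delta = [I-\Lambda(I_\fk)]^{1/2}$, which is a positive injective bounded operator with bounded inverse on each $E(x,\infty)\fh$ but not globally. Given $T \in \ah$, write $T = \Delta A \Delta$ for the (not necessarily unique, but fixed) $A \in \fb(\fh)$ furnished by the definition, so $\mu(T) = \ell(A)$. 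One checks $E(x,\infty)$ commutes with $\Lambda(I_\fk)$ and hence with $\Delta$, so $E(x,\infty) T E(x,\infty) = \Delta (E(x,\infty) A E(x,\infty)) \Delta$, giving $\mu(E(x,\infty)TE(x,\infty)) = \ell(E(x,\infty) A E(x,\infty))$. Since $E(x,\infty) \to I$ strongly as $x\to 0+$ (the shift $S_x$ converges strongly to the identity) and $\ell$ is normal, hence $\sigma$-weakly continuous and in particular continuous under such bounded strong approximations sandwiching $A$, we get $\ell(E(x,\infty)AE(x,\infty)) \to \ell(A) = \mu(T)$. This step requires only care that $\Delta$ and $E(x,\infty)$ commute and that normality of $\ell$ delivers the needed convergence.

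\textbf{Step 2: The uniqueness criterion.}
The ``if and only if'' is now essentially immediate: if $\mu$ and $\mu'$ agree on $E(x,\infty)\fb(\fh)E(x,\infty)$ for every $x>0$, then for any $T \in \ah$ we have $\mu(E(x,\infty)TE(x,\infty)) = \mu'(E(x,\infty)TE(x,\infty))$ for all $x>0$, and letting $x\to 0+$ and invoking Step~1 for both $\mu$ and $\mu'$ yields $\mu(T) = \mu'(T)$. The converse direction is trivial since $E(x,\infty)TE(x,\infty) \in \ah$ whenever $T \in E(x,\infty)\fb(\fh)E(x,\infty)$, using again that $E(x,\infty)$ commutes with $\Delta$ and that $E(x,\infty)\fh$ is exactly where $\Delta^{-1}$ is bounded.

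\textbf{Step 3: The differential formula \eqref{mut}.}
Fix $x>0$ and $T \in E(x,\infty)\fb(\fh)E(x,\infty)$. The idea is to recognize $\hatG(\mu)$ as the normal functional on $\fb(\fh)$ associated to $\mu$ via $\Gamma$: for $T$ supported on $E(x,\infty)\fh$, $\Gamma$ can be used to reconstruct $\mu$ because $\Gamma(T) = \int_0^\infty e^{-t} S_t T S_t^*\,dt$ and on this subspace $\Delta^2 = I - \Lambda(I_\fk)$ is invertible; in fact a direct computation of Powers (Theorem~4.17 of \cite{powers-CPflows}) shows $\hatG(\mu)(A) = \mu$ applied to the appropriate resolvent-type expression, so that $\mu(T)$ is recovered from $\hatG(\mu)$ by inverting the operation $A \mapsto \int_0^\infty e^{-t} S_t A S_t^*\,dt$. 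Concretely, one shows that $\frac{1}{y-x}(T - e^{x-y} S_{y-x} T S_{y-x}^*)$ converges, as $y\to x+$, in a suitable sense to the operator whose image under $\Gamma$ is $T$ (at least when tested against $\hatG(\mu)$), using the semigroup property $S_{y-x} = $ right shift by $y-x$, the strong differentiability of $t \mapsto S_tTS_t^*$ against the normal functional $\hatG(\mu)$ for $T$ bounded away from the boundary $x=0$, and the fact that $E(x,\infty)$ restricts everything to a region where the generator of the shift acts boundedly enough for the difference quotient to behave. The main obstacle will be Step~3: making the passage from $\Gamma$ back to $\mu$ rigorous, i.e., justifying that the difference quotient of $e^{-t}S_tTS_t^*$ at $t=0$ (suitably interpreted on $E(x,\infty)\fh$) pairs with $\hatG(\mu)$ to give exactly $\mu(T)$, rather than merely $\ell$ of some operator; this is where the specific algebraic identity relating $\Gamma$, $\Lambda$, and the defining operator $[I-\Lambda(I_\fk)]^{1/2}$ of $\ah$ must be invoked in full, and one must be careful that although $T - e^{x-y}S_{y-x}TS_{y-x}^*$ is a bona fide element of $E(x,\infty)\fb(\fh)E(x,\infty)$ for $y$ close to $x$, the difference quotient need not converge in norm, only in the weak sense needed to apply the normal functional $\hatG(\mu)$.
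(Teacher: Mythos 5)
First, a remark on the comparison itself: the paper does not prove this proposition internally — it records these facts as implicit in Theorem~4.17 of Powers and cites Proposition~2.11 of \cite{jankowski-markiewicz} for the proof — so your argument has to be judged on its own. Your Steps 1 and 2 are correct and essentially complete: the key observations (that $E(x,\infty)$ and $\Delta=[I-\Lambda(I_\fk)]^{1/2}$ are both multiplication operators in the $L^2(0,\infty)$ variable and hence commute; that $E(x,\infty)AE(x,\infty)\to A$ boundedly in the $\sigma$-strong topology; and that normality of $\ell$ then yields the limit) are all there, and the uniqueness criterion follows formally as you say.

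Step 3, however, is not a proof as written: you lay out a strategy and then explicitly defer its core as ``the main obstacle'' rather than executing it. The missing piece is a short algebraic identity that removes the need to ``invert'' $\Gamma$ or to worry about convergence of the difference quotient itself. Writing $h=y-x$ and using $S_tS_h=S_{t+h}$ together with the substitution $s=t+h$ in the weak* integral defining $\Gamma$, one finds
$$
\Gamma(T)-e^{-h}\,\Gamma(S_hTS_h^*)\;=\;\int_0^\infty e^{-t}S_tTS_t^*\,dt-\int_h^\infty e^{-s}S_sTS_s^*\,ds\;=\;\int_0^h e^{-s}S_sTS_s^*\,ds,
$$
so that $\tfrac1h\,\hatG(\mu)\big(T-e^{-h}S_hTS_h^*\big)=\tfrac1h\int_0^h e^{-s}\mu(S_sTS_s^*)\,ds$. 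Equation \eqref{mut} is then exactly the statement that $s\mapsto\mu(S_sTS_s^*)$ is continuous at $s=0$. This is where the hypothesis $T\in E(x,\infty)\mathfrak{B}(\mathfrak{H})E(x,\infty)$ enters: all the operators $S_sTS_s^*$ remain in that corner, on which $\mu$ coincides with the bounded normal functional $B\mapsto\ell\big(\Delta^{-1}E(x,\infty)\,B\,E(x,\infty)\Delta^{-1}\big)$ (the operator $\Delta^{-1}E(x,\infty)$ is bounded because $\Delta\geq(1-e^{-x})^{1/2}$ there), while $S_sTS_s^*\to T$ boundedly in the $\sigma$-strong topology; the same normality justifies interchanging $\mu$ with the integral. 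So your plan does work, but as submitted the proof of \eqref{mut} is incomplete; with the displayed identity and this continuity observation it closes in a few lines.
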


If $\alpha$ is a CP-flow over $\mathfrak{K}$, we define its resolvent by the weak* integral
\begin{equation} \label{resolvent-pure}
R_\alpha(A) = \int_0^\infty e^{-t} \alpha_t(A) dt
\end{equation}
defined for $A\in \mathfrak{B}(\mathfrak{H})$.  Powers~\cite{powers-CPflows} proved that there exists a completely positive boundary weight map $\omega:  \mathfrak{B}(\mathfrak{K})_* \to \mathfrak{A}(\mathfrak{H})_*$ such that
\begin{equation} \label{resolvent}
\hat{R}_{\alpha}(\eta) = \hat{\Gamma} (\omega(\hat{\Lambda}\eta) + \eta)
\end{equation}
and $\omega(\rho)(I-\Lambda(I_\fk)) \leq \rho(I_\fk)$ for all $\rho \in \mathfrak{B}(\mathfrak{K})_*$ positive. Such a boundary weight map is uniquely determined by \eqref{resolvent} in combination with Proposition~\ref{get-omega}, and in fact for all $\rho \in \mathfrak{B}(\mathfrak{K})_*$, $x>0$ and $T \in E(x,\infty)\mathfrak{B}(\mathfrak{H})E(x,\infty)$,
\begin{equation}\label{got-omega}
\omega(\rho)(T) = \lim_{y \to x+} \frac{1}{y-x}
(\widehat{R}_\alpha - \widehat{\Gamma})(\eta) ( T - e^{x-y} S_{y-x}TS_{y-x}^*),
\end{equation}
where $\eta \in \mathfrak{B}(\mathfrak{H})_*$ is any normal functional such that $\rho=\widehat{\Lambda}(\eta)$. Such a functional exists since $\Lambda$ is isometric hence $\widehat{\Lambda}$ is onto.

The map $\omega$ is called \emph{the boundary weight map associated to $\alpha$}.  

The following result, which is a compilation of Theorems 4.17, 4.23, and 4.27 of
\cite{powers-CPflows}, describes the converse relationship between boundary weight maps and
CP-flows.

\begin{theorem}\label{powerstheorem}
Let $\omega: \mathfrak{B}(\mathfrak{K})_* \to \mathfrak{A}(\mathfrak{H})_*$ be a completely positive map satisfying
$\omega(\rho)(I-\Lambda(I_\fk)) \leq \rho(I_\fk)$ for all positive
$\rho$.  Let $\{S_t\}_{t \geq 0}$ be the right shift semigroup acting on $\mathfrak{H}$.
For each $t>0$, define the truncated boundary weight map
$\omega \vert_t: \mathfrak{B}(\mathfrak{K})_* \to \mathfrak{B}(\mathfrak{H})_*$ by
\begin{equation*}
\omega \vert_t(\rho)(A)= \omega(\rho)\big(E(t,\infty) A E(t,\infty)\big)
\end{equation*}
If for every $t>0$, the map $(I +\hat{\Lambda}\omega\vert_t)$ is invertible  and furthermore the map
\begin{equation}
\label{dmb1} \hat{\pi}_t : = \omega\vert_t(I + \hat{\Lambda}\omega\vert_t)^{-1}
\end{equation}
is a completely positive contraction from $\mathfrak{B}(\mathfrak{K})_*$ into $\mathfrak{B}(\mathfrak{H})_*$, then $\omega$ is the boundary weight map associated to a CP-flow over $\mathfrak{K}$.
The CP-flow is unital if and only if $\omega(\rho)(I-\Lambda(I_\fk))
= \rho(I_\fk)$ for all $\rho \in \mathfrak{B}(\mathfrak{K})_*$.
\end{theorem}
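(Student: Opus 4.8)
The plan is to follow Powers' construction from \cite{powers-CPflows}: to exhibit $\omega$ as the boundary weight map of an explicitly built CP-flow $\alpha$ over $\mathfrak{K}$, obtained as a limit of CP-flows attached to the bounded truncations $\omega\vert_t$. Concretely, one must produce, for each $s>0$, a normal completely positive contraction $\alpha_s$ of $\mathfrak{B}(\mathfrak{H})$ so that the family satisfies the semigroup and continuity axioms of a CP-semigroup, the flow identity $\alpha_s(A)S_s=S_sA$, and the resolvent relation \eqref{resolvent} with the prescribed $\omega$; uniqueness of the associated boundary weight map is then automatic from Proposition~\ref{get-omega}.

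First I would treat the truncated, bounded case. Fix $t>0$. Since $E(t,\infty)\mathfrak{B}(\mathfrak{H})E(t,\infty)\subseteq\mathfrak{A}(\mathfrak{H})$ and $[I-\Lambda(I_\fk)]^{-1/2}$ is bounded on $\Ran E(t,\infty)$, each functional $\omega\vert_t(\rho)$ is normal and bounded, so $\omega\vert_t:\mathfrak{B}(\mathfrak{K})_*\to\mathfrak{B}(\mathfrak{H})_*$ is a bounded completely positive map. The hypotheses say precisely that $I+\hat\Lambda\omega\vert_t$ is invertible and that the map $\hat\pi_t$ of \eqref{dmb1} is a completely positive contraction. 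From this bounded datum one builds a CP-flow $\alpha^{[t]}$ with resolvent $\widehat{R}_{\alpha^{[t]}}(\eta)=\hat\Gamma\bigl(\omega\vert_t(\hat\Lambda\eta)+\eta\bigr)$: the truncation makes the relevant perturbation (Dyson-type) series converge in norm, invertibility of $I+\hat\Lambda\omega\vert_t$ is what closes that series into a genuine resolvent, and complete positivity and contractivity of the $\alpha^{[t]}_s$ come from the fact that $\hat\pi_t$ is a completely positive contraction; the flow identity is inherited from the covariance of $\Gamma$ and $\Lambda$ under the right shift.

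Next I would let $t\to0^+$. By Proposition~\ref{get-omega}, $\omega\vert_t(\rho)(T)=\omega(\rho)(T)$ once $t<x$ for $T\in E(x,\infty)\mathfrak{B}(\mathfrak{H})E(x,\infty)$, and $\omega\vert_t(\rho)(B)\to\omega(\rho)(B)$ for every $B\in\mathfrak{A}(\mathfrak{H})$; consequently the resolvents $\widehat{R}_{\alpha^{[t]}}(\eta)$ converge in the weak* topology to $\hat\Gamma\bigl(\omega(\hat\Lambda\eta)+\eta\bigr)$, and they are uniformly bounded thanks to the normalization $\omega(\rho)(I-\Lambda(I_\fk))\le\rho(I_\fk)$. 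Since the completely positive contractions of $\mathfrak{B}(\mathfrak{H})$ form a point-weak*-closed convex set, one extracts a CP-flow $\alpha$ with $\alpha_s=\lim_{t\to0^+}\alpha^{[t]}_s$, whose semigroup law, continuity and flow identity persist in the limit and whose resolvent is $\hat\Gamma\bigl(\omega(\hat\Lambda\,\cdot\,)+\,\cdot\,\bigr)$. Formula \eqref{got-omega} then recovers the boundary weight map of $\alpha$ on each $E(x,\infty)\mathfrak{B}(\mathfrak{H})E(x,\infty)$ from this resolvent, where it agrees with $\omega$; so $\omega$ is the boundary weight map associated to $\alpha$. Finally $\alpha^{[t]}_s(I)=I$, hence $\alpha_s(I)=I$, precisely when the weight is conservative, that is $\omega(\rho)(I-\Lambda(I_\fk))=\rho(I_\fk)$ for all $\rho$; this gives the unitality clause.

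I expect the limit step to be the main obstacle: one must check that complete positivity, the semigroup law, continuity, and the flow identity genuinely survive in the (necessarily only weak*) limit, and --- more delicately --- that the limiting CP-flow does not lose boundary mass, i.e. that its associated boundary weight map is all of $\omega$ rather than a proper subordinate. This is exactly where the hypothesis that each $\hat\pi_t$ is a completely positive \emph{contraction}, and not merely completely positive, is used: it supplies the uniform control needed to identify $\lim_{t\to0^+}\widehat{R}_{\alpha^{[t]}}$ with $\hat\Gamma\bigl(\omega(\hat\Lambda\,\cdot\,)+\,\cdot\,\bigr)$ and, via \eqref{got-omega}, with a bona fide CP-flow resolvent. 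The remaining verifications are routine manipulations with the definitions of $\Gamma$, $\Lambda$, and the truncated maps.
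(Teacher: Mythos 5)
The paper does not actually prove this statement: it is quoted as a compilation of Theorems 4.17, 4.23 and 4.27 of Powers~\cite{powers-CPflows}, so there is no in-paper argument to compare against. Judged on its own terms, your proposal is a reasonable roadmap of Powers' strategy (truncate, build a CP-flow from the bounded datum, pass to the limit in $t$, identify the boundary weight map via \eqref{got-omega}), but it is an outline rather than a proof, and the two steps you defer are exactly where the theorem lives.

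First, the assertion that ``complete positivity and contractivity of the $\alpha^{[t]}_s$ come from the fact that $\hat\pi_t$ is a completely positive contraction'' is the entire content of the bounded case and cannot be waved through. Knowing that the candidate resolvent $\eta\mapsto\hat\Gamma\bigl(\omega\vert_t(\hat\Lambda\eta)+\eta\bigr)$ is completely positive does not by itself imply that the semigroup it resolves consists of completely positive contractions: one must construct the semigroup explicitly --- in Powers' treatment each $\alpha_s$ arises as a limit of sums of manifestly completely positive terms built from the generalized boundary representation $\pi^\#$ interlaced with the maps $A\mapsto S_rAS_r^*$ --- and then verify the semigroup law, the continuity, and the flow identity for that explicit formula. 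None of this is supplied, and it is precisely here that the hypothesis on $\hat\pi_t$ enters. Second, in the limit $t\to 0^+$ you correctly flag, but do not close, the danger that the limiting CP-flow ``loses boundary mass'': identifying $\lim_{t\to 0^+}\widehat R_{\alpha^{[t]}}$ with the resolvent of the limit semigroup requires interchanging the limit in $t$ with the Laplace transform defining $R_\alpha$, which rests on monotonicity and the uniform bound coming from $\omega(\rho)(I-\Lambda(I_\fk))\le\rho(I_\fk)$; this is not spelled out. As written, the proposal locates the difficulties accurately but resolves neither of them, so it should be read as a plan for reproducing Powers' proof rather than as a proof.
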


\begin{definition}\label{def-breve}
Let $\omega: \mathfrak{B}(\mathfrak{K})_* \to \mathfrak{A}(\mathfrak{H})_*$  be a completely positive boundary weight map
satisfying
$\omega(\rho)(I-\Lambda(I_\fk)) \leq \rho(I_\fk)$ for all positive
$\rho$. If for every $t>0$ the map $\hat{\pi}_t$ as defined in the statement of Theorem~\ref{powerstheorem} exists and it is a completely positive contraction, then $\omega$ is called a \emph{$q$-weight map
over $\fk$}. In that case, the
family $\pi_t^{\#}$ (for $t>0$) of completely positive normal contractions from $\mathfrak{B}(\mathfrak{H})$ to $\mathfrak{B}(\mathfrak{K})$ is called the \emph{generalized boundary representation} associated to $\omega$, or alternatively to the CP-flow
associated to $\omega$.  For every $t>0$, we have
\begin{equation} \label{dmb2} \omega\vert_t = \hat{\pi}_t(I - \hat{\Lambda} \hat{\pi}_t)^{-1}. \end{equation}
We say a $q$-weight map $\omega$ is \emph{unital} if it induces a unital CP-flow.  By
Theorem~\ref{powerstheorem}, $\omega$ is unital if and only
if $\omega(\rho)(I - \Lambda(I_\fk))= \rho(I_\fk)$ for all $\rho \in \mathfrak{B}(\mathfrak{K})_*$. 

 We will say that a $q$-weight $\omega:\fb(\fk)_* \to \fa(\fh)_*$ is \emph{bounded} if for every $\rho \in \fb(\fk)_*$ the linear functional $\omega(\rho)$ extends to a $\sigma$-weakly continuous linear functional on $\fb(\fh)$.
\end{definition}

We note that, in order to check that $\pi^\#_t$ is a completely positive contraction for all $t>0$, it suffices to check for small $t$ in the sense that if $\pi^\#_s$ is a completely positive contraction for some $s>0$, then for all $t>s$,  $\pi^\#_t$ is automatically a completely positive contraction.

 If $\omega: \fb(\fk)_* \rightarrow \fa(\fh)_*$ is a completely positive boundary weight map, 
then we have a well-defined completely positive map $\bub{\omega}:\fb(\fk)_*\to \fb(\fh)_*$ given by
$$
\bub{\omega}(\rho)(A) = \omega(\rho)\big((I-\Lambda)^\frac{1}{2} A (I-\Lambda)^\frac{1}{2})\big) 
\qquad \forall \rho \in\fb(\fk)_*, \forall A \in \fb(\fh).
$$ 
By an argument analogous to the proof of continuity of positive linear functionals C$^*$algebras, the positivity of $\bub{\omega}$ implies that it is bounded.

Since $\bub{\omega}:\fb(\fk)_*\to \fb(\fh)_*$ is a bounded linear map, it induces
a normal dual map $\bub{\omega}':\fb(\fh) \to \fb(\fk)$ satisfying
$$
\rho(\bub{\omega}'(A))= \bub{\omega}(\rho)(A) \qquad \forall A \in \fb(\fh).
$$  Observe
that $\bub{\omega}'$ is completely positive since $\bub{\omega}$ has that property.

Since the map $\fb(\fh) \to \fa(\fh)$ given by $A\mapsto (I - \Lambda)^\frac{1}{2} A (I - \Lambda)^\frac{1}{2}$ 
is one-to-one and onto, there is a unique 
linear map $\bromega: \fa(\fh) \rightarrow \fb(\fk)$ satisfying
\begin{equation}\label{dualized}
\bromega\big((I - \Lambda)^\frac{1}{2} A (I - \Lambda)^\frac{1}{2}\big) = \bub{\omega}'(A) \qquad \forall A \in \fb(\fh).
\end{equation}

\begin{definition} \label{bromega-finite-rank}
Let $\omega: \fb(\fk)_* \to \fa(\fh)_*$ be a completely positive boundary weight map.
We define the \emph{dualized boundary weight map} $\bromega:\fa(\fh) \to \fb(\fk)$ to be the unique map satisfying equation~\eqref{dualized} or, alternatively, 
$$\rho(\bromega(B))= \omega(\rho)(B) \qquad \forall \rho \in \fb(\fk)_*, \forall B \in \fa(\fh).$$
Similarly, for every $t>0$, there exists a
unique normal map $\breve{\omega}\vert_t: \mathfrak{B}(\mathfrak{H}) \rightarrow \fb(\fk)$ such that 
$$
\rho(\breve{\omega}\vert_t(A))=\omega\vert_t(\rho)(A)
$$ 
for every $\rho \in \fb(\fk)_*$ and $A \in \fb(\fh)$ or, alternatively,
$\breve{\omega}\vert_t(A)= \breve{\omega}\big(E(t,\infty) A E(t,\infty)\big)$ for all $t>0$ and $A \in \fb(\fh)$. 

We will say that a $q$-weight map $\omega:\fb(\fk)_* \to \fa(\fh)_*$ has \emph{finite  range rank}  if $\range(\bromega) \subseteq \fb(\fk)$ is finite-dimensional. In this case, we will say that the \emph{range rank} of $\omega$ is the dimension of $\range(\bromega)$. Of course, if $\dim \fk < \infty$, then $\omega$ automatically has finite range rank.
\end{definition}

In the next result proven by
Powers~\cite{powers-CPflows} we recall the criterion for subordination in terms of the generalized
boundary representation.

\begin{theorem}\label{boundary-representation-subordinates}
Let $\alpha$ and $\beta$ be CP-flows acting on $\mathfrak{B}(\mathfrak{H})$ with generalized 
boundary representations $\pi_t^{\#}$ and $\xi_t^{\#}$, respectively. Then $\alpha \geq \beta$
if and only if $\pi_t^{\#}-\xi_t^{\#}$ is completely positive for all $t>0$.  Also if $\pi_s^\# \geq \xi_s^\#$ for some $s>0$, then $\pi_t^\# \geq \xi_t^\#$ for all $t\geq s$, so one only has to check for a sequence $(t_n)_{n\in\nn}$ tending to zero.
\end{theorem}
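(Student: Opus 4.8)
The plan is to route the argument through the chain of mutually determining objects $\alpha\leftrightarrow\omega^\alpha\leftrightarrow\{\omega^\alpha\vert_t\}_{t>0}\leftrightarrow\{\pi_t^\#\}_{t>0}$, checking that each link preserves the order ``the difference is completely positive.'' Write $\omega,\omega'$ for the boundary weight maps of $\alpha,\beta$, so that $\hat\pi_t=\omega\vert_t(I+\hat\Lambda\omega\vert_t)^{-1}$ and $\hat\xi_t=\omega'\vert_t(I+\hat\Lambda\omega'\vert_t)^{-1}$ by \eqref{dmb1}. Note that $\Lambda$, hence $\hat\Lambda$, $\hatG$ and each compression $A\mapsto E(t,\infty)AE(t,\infty)$, is completely positive, and that $\hat\pi_t$ is carried by $E(t,\infty)$, so $\|\hat\Lambda\hat\pi_t\|\le e^{-t}<1$. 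To prove ``$\pi_t^\#\ge\xi_t^\#$ for all $t>0$'' $\Rightarrow$ $\alpha\ge\beta$, I would first expand the norm-convergent Neumann series in \eqref{dmb2}, $\omega\vert_t=\hat\pi_t(I-\hat\Lambda\hat\pi_t)^{-1}=\sum_{n\ge0}\hat\pi_t(\hat\Lambda\hat\pi_t)^n$; since $\hat\pi_t-\hat\xi_t$ is completely positive, a term-by-term telescoping (replacing the factors $\hat\pi_t$ by $\hat\xi_t$ one at a time) shows $\omega\vert_t-\omega'\vert_t$ is completely positive for every $t$, and letting $t\to0+$ via Proposition~\ref{get-omega} gives that $\omega-\omega'$ is a completely positive boundary weight map. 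Then \eqref{resolvent} exhibits $\hat R_\alpha-\hat R_\beta$ as the composition $\hatG\circ(\omega-\omega')\circ\hat\Lambda$ of completely positive maps, and the same holds at any Laplace parameter $\lambda>0$; so $R_\alpha^{(\lambda)}-R_\beta^{(\lambda)}$ is completely positive for all $\lambda$, and, using $(-1)^n\tfrac{d^n}{d\lambda^n}R^{(\lambda)}=n!\,(R^{(\lambda)})^{n+1}$ together with $X^{n+1}-Y^{n+1}=\sum_{k=0}^{n}X^k(X-Y)Y^{n-k}$, the Post--Widder inversion formula recovers $\alpha_t-\beta_t$ as a pointwise limit of completely positive maps, so $\alpha\ge\beta$.

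For the converse, $\alpha\ge\beta$ $\Rightarrow$ ``$\pi_t^\#\ge\xi_t^\#$ for all $t$'', I would integrate $\alpha_t-\beta_t$ against $e^{-\lambda t}\,dt$ to get $R_\alpha^{(\lambda)}-R_\beta^{(\lambda)}$ completely positive for all $\lambda>0$, transfer this to the boundary weight maps via the local formula \eqref{got-omega} --- as in Powers's correspondence between CP-flows and $q$-weight maps --- to conclude that $\omega-\omega'$, hence each $D_t:=\omega\vert_t-\omega'\vert_t$, is completely positive, and then try to pass from $D_t\ge0$ to $\hat\pi_t-\hat\xi_t\ge0$. Using \eqref{dmb1} and the push-through identity $(I+\omega'\vert_t\hat\Lambda)^{-1}=I-\hat\xi_t\hat\Lambda$, one computes
\[\hat\pi_t-\hat\xi_t=(I-\hat\xi_t\hat\Lambda)\,D_t\,(I-\hat\Lambda\hat\pi_t).\]
This is the step I expect to be the main obstacle: the two outer factors have the shape $I-(\text{completely positive contraction})$ and are in general not completely positive, so this identity by itself is insufficient --- indeed the transformation $\psi\mapsto\psi(I+\hat\Lambda\psi)^{-1}$ is not order preserving on the cone of all completely positive maps. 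The missing ingredient must be that $\omega$ and $\omega'$ are honest $q$-weight maps of CP-flows (so the truncations $\omega\vert_t$ are coherent in $t$ and produce contractions $\hat\pi_t$); one plausible way to use this is to establish the equivalence first for the minimal flow dilations $\alpha^d,\beta^d$, for which $\pi_t^\#$ has a concrete description in terms of the dilating isometry that makes the ordering manifest, and then transfer the conclusion back along the defining compression.

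For the monotonicity statement, observe that for $0<s\le t$ one has $\omega\vert_t=\hat C_t\circ\omega\vert_s$, where $\hat C_t$ is the predual of the completely positive compression $A\mapsto E(t,\infty)AE(t,\infty)$, because $E(t,\infty)\le E(s,\infty)$. Hence if $\pi_s^\#\ge\xi_s^\#$ for some $s>0$, the Neumann-series telescoping above (which uses only $\hat\pi_s-\hat\xi_s\ge0$) gives $\omega\vert_s\ge\omega'\vert_s$, whence $\omega\vert_t=\hat C_t\omega\vert_s\ge\hat C_t\omega'\vert_s=\omega'\vert_t$ for every $t\ge s$, and the obstacle step then yields $\pi_t^\#\ge\xi_t^\#$ for all $t\ge s$. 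Thus the condition ``$\pi_t^\#\ge\xi_t^\#$'' is upward closed in $t$, so it suffices to check it along any sequence $t_n\to0$, after which it holds for every $t>0$ and, by the first part, is equivalent to $\alpha\ge\beta$.
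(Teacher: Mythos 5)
The paper contains no proof of this statement to compare against: it is quoted from Powers~\cite{powers-CPflows} (the text introduces it as a result ``proven by Powers''). Judged on its own, your proposal is incomplete, and you have correctly located where it breaks.

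The genuine gap is the converse implication, $\alpha\ge\beta\Rightarrow\pi_t^\#-\xi_t^\#$ completely positive. Your factorization $\hat\pi_t-\hat\xi_t=(I-\hat\xi_t\hat\Lambda)\,D_t\,(I-\hat\Lambda\hat\pi_t)$ is algebraically correct but, as you yourself observe, the outer factors are not completely positive and the transformation $\psi\mapsto\psi(I+\hat\Lambda\psi)^{-1}$ is not order preserving on the cone of completely positive maps, so nothing is actually established in this direction; the suggestion to ``establish the equivalence first for the minimal flow dilations'' is a pointer, not an argument. The same unproved step is then invoked again at the end of your monotonicity argument (``the obstacle step then yields $\pi_t^\#\ge\xi_t^\#$ for all $t\ge s$''), so the second assertion of the theorem is also left open. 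This is precisely the content that has to be imported from Powers's analysis of CP-flows, where the ordering of generalized boundary representations is related to subordination through the correspondence between subordinates of $\alpha$ and positive local cocycles of the minimal flow dilation.

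Two further soft spots in the forward direction deserve mention, even though that direction is essentially salvageable. First, \eqref{resolvent} ties $\omega$ to the resolvent only at Laplace parameter $1$; your claim that $\hat R^{(\lambda)}_\alpha-\hat R^{(\lambda)}_\beta$ is completely positive ``at any Laplace parameter $\lambda>0$'' requires the $\lambda$-recalibrated boundary-weight formalism, which you do not set up, and complete positivity of $R_\alpha-R_\beta$ at the single value $\lambda=1$ is not enough for the Post--Widder step. Second, in the converse you propose to extract $\omega-\omega'\ge0$ from \eqref{got-omega}, but the test operator $T-e^{x-y}S_{y-x}TS_{y-x}^*$ appearing there is not positive for positive $T$, so complete positivity of $\hat R_\alpha-\hat R_\beta$ does not transfer termwise. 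By contrast, the Neumann-series telescoping (the same device the paper uses inside the proof of Proposition~\ref{q-subordinate-at-1}) and the telescoping of powers of resolvents are fine.
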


We can deduce from Theorem~\ref{boundary-representation-subordinates} that there is a bijective correspondence
between CP-flows and $q$-weight maps:  Let $\alpha$ and $\beta$ be CP-flows,
with associated $q$-weight maps $\omega$ and $\eta$ and generalized boundary representations 
$\{\pi_t ^{\#}\}_{t>0}$ and $\{\xi_t ^{\#}\}_{t>0}$, respectively.
By Theorem~\ref{boundary-representation-subordinates}, $\alpha = \beta$ if and only if
$\pi_t^{\#} = \xi_t^{\#}$ for every $t>0$.  By equations \eqref{dmb1} and \eqref{dmb2},
this holds if and only if $\omega\vert_t = \eta\vert_t$ for all $t>0$.  By Proposition~\ref{get-omega},
we have $\omega\vert_t = \eta\vert_t$ for all $t>0$ if and only if $\omega = \eta$.  Therefore, 
$\alpha = \beta$ if and only if $\omega = \eta$.

The index of the E$_0$-semigroup induced by a CP-flow turns out to be the rank of an associated map which is called its normal spine:

\begin{theorem}\label{normal-spine}
Let $\alpha$ be a CP-flow over $\fk$ with generalized boundary representation $\pi_t^\#$. Then for every $A \in \cup_{t >0} S_t \fb(\fh) S_t^*$, we have that $\pi_t^\#(A)$ converges $\sigma$-strongly to an element denoted by $\pi_0^\#(A)$. Furthermore, this map extends uniquely to a map $\pi_0^\#: \fb(\fh) \to \fb(\fk)$, called the normal spine of $\alpha$, which is a $\sigma$-weakly continuous completely positive contraction. The index of $\alpha^d$ is equal to the rank of $\pi_0^\#$ as a completely positive map.
\end{theorem}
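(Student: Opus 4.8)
The plan is to construct the map $\pi_0^\#$ first on the $\sigma$-weakly dense subalgebra
\[
\mathcal D:=\bigcup_{t>0}S_t\fb(\fh)S_t^{*}=\bigcup_{t>0}E(t,\infty)\fb(\fh)E(t,\infty)\subseteq\fa(\fh),
\]
then to extend it by continuity to all of $\fb(\fh)$, and finally to identify its rank with $\ind\alpha^d$.

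\emph{Convergence on $\mathcal D$.} I would first show that for fixed $A\ge 0$ in $\mathcal D$ the net $\{\pi_t^\#(A)\}$ is monotone decreasing as $t\downarrow 0$. The key structural input is that $\Lambda$ is fiberwise multiplication, so for $B\ge 0$ in $\fb(\fk)$ the operator $E(s,\infty)\Lambda(B)E(s,\infty)=\Lambda(B)E(s,\infty)$ lies in $\fa(\fh)$ and increases to $\Lambda(B)$ as $s\downarrow 0$; consequently the completely positive maps $Y_t:=\hat{\Lambda}\,\omega\vert_t$ on $\fb(\fk)_*$ have the property that $Y_s-Y_t$ is completely positive for $0<s\le t$. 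Since $\hat{\Lambda}\hat{\pi}_t=I-(I+Y_t)^{-1}$ by \eqref{dmb1}, and $(I+Y_t)^{-1}$ is completely positive (a standard fact in the theory of $q$-weight maps, cf.\ \cite{powers-CPflows}), the resolvent identity
\[
\hat{\Lambda}\hat{\pi}_s-\hat{\Lambda}\hat{\pi}_t=(I+Y_t)^{-1}(Y_s-Y_t)(I+Y_s)^{-1}
\]
shows that $t\mapsto\hat{\Lambda}\hat{\pi}_t$ is increasing (in the completely positive order) as $t\downarrow 0$. Now if $A\ge 0$ lies in $\mathcal D$, then $\bromega(A)\ge 0$ by positivity of $\bromega$, and using \eqref{dmb2} in the form $\hat{\pi}_t=\omega\vert_t(I-\hat{\Lambda}\hat{\pi}_t)$ together with $E(t,\infty)AE(t,\infty)=A$ for small $t$, one computes, for $\rho\in\fb(\fk)_*$ positive,
\[
\rho(\pi_t^\#(A))=\hat{\pi}_t(\rho)(A)=\rho\big(\bromega(A)\big)-(\hat{\Lambda}\hat{\pi}_t)(\rho)\big(\bromega(A)\big),
\]
which is decreasing in $t$. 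Hence $\{\pi_t^\#(A)\}$ is a decreasing net of positive operators bounded below by $0$, so it converges $\sigma$-strongly; by linearity (each element of $\mathcal D$ being a combination of four positive ones) the limit $\pi_0^\#(A):=\sigma\text{-strong-}\lim_{t\downarrow 0}\pi_t^\#(A)$ exists for all $A\in\mathcal D$ and defines there a positive, linear, contractive map.

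\emph{Extension and properties.} Approximating $A\ge 0$ in $\fb(\fh)$ by $E(r,\infty)AE(r,\infty)\in\mathcal D$ and using that the positive cone is $\sigma$-weakly closed, the monotonicity extends: $\pi_s^\#(A)\ge\pi_t^\#(A)$ for $0<s\le t$ and all $A\ge 0$ in $\fb(\fh)$. Therefore, for $\rho\ge 0$, the net $\{\hat{\pi}_t(\rho)\}$ of positive normal functionals is monotone and bounded by $\|\rho\|$, hence norm-Cauchy, hence norm-convergent to a positive normal functional $\hat{\pi}_0(\rho)$; linear extension yields a completely positive contraction $\hat{\pi}_0:\fb(\fk)_*\to\fb(\fh)_*$. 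Its predual adjoint $\pi_0^\#:\fb(\fh)\to\fb(\fk)$ is then $\sigma$-weakly continuous and completely positive, is contractive because $\rho(\pi_0^\#(I))=\lim_t\hat{\pi}_t(\rho)(I)\le\|\rho\|$ for $\rho\ge 0$, and restricts on $\mathcal D$ to the map built above since $\rho(\pi_0^\#(A))=\lim_t\rho(\pi_t^\#(A))$. It is the unique $\sigma$-weakly continuous extension because $\mathcal D$ is $\sigma$-weakly dense.

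\emph{Index.} For the last assertion I would invoke Arveson's index theory together with Powers' description of minimal flow dilations. Concretely, relating the resolvent of the minimal flow dilation $\alpha^d$ to that of $\alpha$ through \eqref{resolvent}, one identifies $\pi_0^\#$ with the boundary representation of $\alpha^d$ in Arveson's sense; the rank of this completely positive map is then the dimension of the space of units of the associated product system, i.e.\ $\ind\alpha^d$ (see \cite{arv-monograph}, adapted to the CP-flow setting in \cite{powers-CPflows}). I expect two main obstacles: first, making the order-theoretic step fully rigorous — in particular the complete positivity of $(I+Y_t)^{-1}$ and the upgrade from $\sigma$-weak to $\sigma$-strong convergence on $\mathcal D$ — and second, and more substantially, the identification of the normal spine $\pi_0^\#$ with Arveson's boundary representation, which is where the deep input from $E_0$-semigroup index theory enters.
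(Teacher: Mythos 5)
First, a point of comparison: the paper does not prove Theorem~\ref{normal-spine}. It is recalled from Powers~\cite{powers-CPflows}, where the normal spine is constructed and the index formula is established, so there is no in-paper argument to measure your proposal against; I can only assess it on its own terms.

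The central step of your argument rests on the assertion that $(I+Y_t)^{-1}=(I+\hat{\Lambda}\omega\vert_t)^{-1}$ is completely positive, and this is false, not merely delicate. Already for a range rank one $q$-weight $\omega(\rho)(A)=\rho(I)\mu(A)$ over $\fk=\cc^2$, the dual of $(I+Y_t)^{-1}$ is the map $B\mapsto B-\bigl(1+\mu\vert_t(\Lambda(I))\bigr)^{-1}\mu\vert_t(\Lambda(B))\,I$ (this inverse is computed explicitly in the proof of Theorem~\ref{rank-one-q-weight}); applied to the rank one projection $B=e_{11}$ it yields $e_{11}-cI$ with $c>0$, which is not positive. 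Worse, the conclusion you want to extract from the resolvent identity --- that $\pi_t^\#\circ\Lambda$ increases in the completely positive order as $t\downarrow 0$ --- is itself false: in the same example $\pi_t^\#(\Lambda(B))=\mu\vert_t(\Lambda(B))\bigl(1+\mu\vert_t(\Lambda(I))\bigr)^{-1}I$, and choosing $\mu$ so that $\mu\vert_t(\Lambda(e_{22}))$ stabilizes for small $t$ while $\mu\vert_t(\Lambda(e_{11}))\to\infty$ makes this net decrease to $0$. This is consistent with Theorem~\ref{twoz}, where the authors must pass to BW-cluster points of $\Pi_t^\#\circ\Lambda$ precisely because the net need not converge. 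The monotone decrease of $\pi_t^\#(A)$ for positive $A\in\mathcal D$ that you are after is, I believe, correct --- via \eqref{dmb2} it only requires monotonicity of $\pi_t^\#(\Lambda(B))$ for $B$ in the positive cone of the range of $\bromega\vert_r$ --- but your route to it collapses and a genuinely different argument is needed.

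The extension step also fails as written. Since $\hat{\pi}_t=\omega\vert_t(I+\hat{\Lambda}\omega\vert_t)^{-1}$ and $\omega\vert_t(\sigma)$ only sees $E(t,\infty)AE(t,\infty)$, one has $\pi_t^\#(A)=\pi_t^\#\bigl(E(t,\infty)AE(t,\infty)\bigr)$ for every $A$; hence for a positive $A=E(a,b)CE(a,b)$ the net is identically $0$ for $t\geq b$ and can be strictly positive for $t<a$, so monotonicity does not extend to all of $\fb(\fh)_+$ and the functionals $\hat{\pi}_t(\rho)$ need not converge in norm on $\fb(\fh)$. The theorem asserts convergence only on $\mathcal D$; $\pi_0^\#$ is then the unique \emph{normal} extension from the $\sigma$-strongly dense $*$-subalgebra $\mathcal D$, and producing that extension requires assembling the compatible normal limits on the increasing algebras $E(r,\infty)\fb(\fh)E(r,\infty)$ and verifying normality of the result --- not a two-line density remark. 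Finally, the index identity $\ind\alpha^d=\rank\pi_0^\#$ is the deep half of the statement; it is a substantial theorem of Powers obtained by analyzing the units of the minimal flow dilation, and your proposal only cites it rather than proving it.
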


We note that in the particular case when $\alpha$ is a CP-flow which induces an E$_0$-semigroup type II$_0$, then it follows that its normal spine $\pi_0^\#=0$.

\subsection{Generalized Schur maps} \label{generalized-Schur}

Recall that a map $\phi: M_n(\cc) \to M_n(\cc)$ is said to be a \emph{Schur map} if there exists a matrix $Q=(q_{ij}) \in M_n(\cc)$ such that
$$
\phi \big( (x_{ij}) \big) = ( q_{ij} x_{ij} )
$$
In this section we review the concept and notation associated with generalized Schur maps introduced in \cite{jankowski-markiewicz},  and which will be used in the remainder of the paper.

For each $i=1, 2, \dots, n$, let $\mathfrak{K}_i$ and $\mathfrak{H}_i$ be Hilbert spaces, and let $\mathfrak{K}=\bigoplus_{i=1}^n \mathfrak{K}_i$ and $\mathfrak{H}=\bigoplus_{i=1}^n \mathfrak{H}_i$. Let for $i=1,\dots, n$, $V_i:\mathfrak{K}_i \to \fk$ and $W_i:\mathfrak{H}_i \to \fh$ be the canonical isometries. Given operators $A \in \mathfrak{B}(\mathfrak{K})$ and $B \in \mathfrak{B}(\mathfrak{H})$, and  for $i,j=1,2, \dots, n$ given operators  $X \in \mathfrak{B}(\mathfrak{K}_j, \mathfrak{K}_i), Z \in \mathfrak{B}(\mathfrak{H}_j, \mathfrak{H}_i)$, we define
\begin{align*}
A_{ij} & = V_i^*AV_j \in \mathfrak{B}(\mathfrak{K}_j, \mathfrak{K}_i) &  X^{ij} & = V_iXV_j^* \in \mathfrak{B}(\mathfrak{K}) \\
B_{ij} & = W_i^*BW_j \in \mathfrak{B}(\mathfrak{H}_j, \mathfrak{H}_i) & Z^{ij} & = W_iZW_j^* \in \mathfrak{B}(\mathfrak{H})
\end{align*}
In particular,
$$
(X^{ij})_{rs} = \delta_{ir} \delta_{js} X.
$$

Given a subalgebra $\fa$ of $\mathfrak{B}(\mathfrak{H})$, and for each $i,j=1,2, \dots, n$, let $\fa_{ij} = W_i^*\fa W_j$. Suppose that for all $i,j=1,2, \dots, n$,
\begin{equation}\label{contains-corners}
W_i\fa_{ij}W_j^* \subseteq \fa.
\end{equation}
Given a linear map $\phi: \fa \to \mathfrak{B}(\mathfrak{K})$, for each $i,j=1,2, \dots, n$ we define the linear map
$\phi_{ij} : \fa _{ij} \to \mathfrak{B}(\mathfrak{K}_j, \mathfrak{K}_i)$ given by
$$
\phi_{ij}(X) = [\phi(X^{ij})]_{ij}
$$
We say that $\phi$ is a \emph{generalized Schur map} with respect to the decompositions $\bigoplus_{i=1}^n \mathfrak{K}_i$ and $\bigoplus_{i=1}^n \mathfrak{H}_i$ if for all $A \in \fa$,
$$
[\phi (A)]_{ij} = \phi_{ij}( A_{ij}) .
$$
In particular, if $\phi$ is a generalized Schur map and if $X \in \mathfrak{B}(\mathfrak{K}_j, \mathfrak{K}_i)$, then
$$
\phi(X^{ij}) = [\phi_{ij}(X)]^{ij}.
$$

A similar definition applies to maps from $\mathfrak{B}(\mathfrak{K})_*$ to the algebraic dual $\fa'$. If $\rho \in \mathfrak{B}(\mathfrak{K})_*$ and $\eta \in \fa'$, we define for each $i,j=1,2, \dots, n$ the linear functionals $\rho_{ij} \in \mathfrak{B}(\mathfrak{K}_j, \mathfrak{K}_i)'$ and $\eta_{ij} \in \fa_{ij}'$ given by
$$
\rho_{ij} (X) = \rho(X^{ij}), \qquad \eta_{ij}(Z) = \eta(Z^{ij}),
$$
for all $X \in \mathfrak{B}(\mathfrak{K}_j, \mathfrak{K}_i)$ and $Z \in \fa_{ij}$. For each $\mu \in \mathfrak{B}(\mathfrak{K}_j, \mathfrak{K}_i)'$, we define $\mu^{ij} \in \mathfrak{B}(\mathfrak{K})'$ given by
$$
\mu^{ij}(A) = \mu(A_{ij}).
$$
Given a map $\Psi: \mathfrak{B}(\mathfrak{K})_* \to \fa'$ and $i,j=1,2, \dots, n$, we define $\Psi_{ij} : \mathfrak{B}(\mathfrak{K}_j, \mathfrak{K}_i)' \to \fa_{ij}'$ by
$$
\Psi_{ij}(\mu) = [\Psi(\mu^{ij})]_{ij}.
$$
We say that $\Psi : \mathfrak{B}(\mathfrak{K})_* \to \fa'$ is a \emph{generalized Schur map with respect to the decompositions} $\bigoplus_{i=1}^n \mathfrak{K}_i$ and  $\bigoplus_{i=1}^n \mathfrak{H}_i$ if
$$
[\Psi(\rho)]_{ij} = \Psi_{ij}(\rho_{ij}).
$$
We observe that if $\Psi$ is a generalized Schur map and $\rho \in \mathfrak{B}(\mathfrak{K})_*$, then
$\Psi([\rho_{ij}]^{ij})=[\Psi_{ij}(\rho_{ij})]^{ij}$.

\subsection{Powers weights and boundary weight doubles}
For a moment, let us examine the 
case when $\omega$ is a $q$-weight map over $\C$.
Then $\omega$
is determined by its value $\omega_1:=\omega(1)$, and it induces a CP-flow $\alpha$ over $\cc$ if and only  
$\omega_1$ is a positive boundary weight and $\omega_1(I-\Lambda)\leq1$. In that case,
the CP-flow $\alpha$ is unital if and only if $\omega_1(I-\Lambda)=1$, and
therefore dilates to an $E_0$-semigroup $\alpha^d$.

\emph{Since all the key properties of $\omega$ are determined by the single boundary weight $\omega_1$ 
in the special case $\mathfrak{K}=\cc$, we will write $\omega$ instead of $\omega_1$.}

Results from
\cite{powers-CPflows} show that $\alpha^d$ is of type I if $\omega_1$
is bounded and of type II$_0$ if $\omega_1$ is unbounded.  This leads to the following definition.

\begin{definition} \label{powers-weight} A boundary weight $\nu \in \mathfrak{A}(L^2(0, \infty))_*$ is called
a \emph{Powers
weight} if $\nu$ is positive and $\nu(I-\Lambda)=1$. We say that a Powers weight $\nu$ is
\emph{type I}
if it is bounded and \emph{type II} if it is unbounded.
\end{definition}

If $\nu$ is a Powers weight, then it has the form:
\begin{equation*}
\nu
\Big((I - \Lambda)^\frac{1}{2}A
(I - \Lambda)^\frac{1}{2}\Big) = \sum_{i=1}^k (f_i, Af_i) \end{equation*} for some mutually
orthogonal nonzero $L^2$-functions $\{f_i\}_{i=1}^k$ ($k \in \mathbb{N} \cup
\{\infty\}$) with $\sum_{i=1}^k \|f_i\|^2=1$.  We note that if $\nu$ is a
type II Powers weight, then for the weights $\nu_t$ defined by
$\nu_t(A)=\nu(E(t,\infty)AE(t,\infty))$ for $A \in B(L^2(0, \infty))$
and $t>0$, both $\nu_t(I)$ and $\nu_t(\Lambda)$
approach infinity as $t \rightarrow 0+$.

In \cite{powers-holyoke}, Powers defined $q$-corners and hyper-maximal $q$-corners, and  determined necessary and sufficient conditions for cocycle conjugacy between E$_0$-semigroups arising 
from type II Powers weights. In the following, we will generalize several of the definitions and results obtained in \cite{powers-holyoke}, such as Definition 3.11 and Theorems 3.9, 3.10 and 3.14.

\subsection{Comparison theory for $q$-weight maps}
Suppose that $\omega:\bk_* \to \ah_*$ is a $q$-weight map which induces a CP-flow $\alpha$ with generalized boundary representation $\pi_t^\#$. It will important to describe the subordinates of $\alpha$ in terms of $\omega$. 

\begin{definition}
A $q$-weight map $\eta$ with associated generalized boundary representation $\xi^\#_t$ is called a \emph{$q$-subordinate} of $\omega$ if for all $t>0$ we have $\xi_t^\# \leq \pi_t^\#$. We will denote this relation by $\eta \leq_q \omega$.
\end{definition}
In view of Theorem~\ref{boundary-representation-subordinates}, it is clear that $q$-subordination of $q$-weight maps is equivalent to subordination  for the associated CP-flows. We will also make use of the following fact.

\begin{prop}\label{q-subordinate-at-1}
Let $\omega:\bk_* \to \ah_*$ be a $q$-weight map and let $\eta$ be a $q$-subordinate of $\omega$. Then for all positive $\rho \in \bk_*$ and positive $T \in \ah$,
\begin{equation}\label{positive-diff}
\omega(\rho)(T) \geq \eta(\rho)(T).
\end{equation}
Furthermore, if $\omega(\rho)(I-\Lambda) = \eta(\rho)(I-\Lambda)$ for all $\rho \in \bk_*$, then $\omega = \eta$.
\end{prop}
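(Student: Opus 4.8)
The plan is to reduce both assertions to the subordination criterion at the level of truncated boundary weight maps, and then use the limiting formula of Proposition~\ref{get-omega} to pass from the truncations back to $\omega$ and $\eta$ themselves. First I would establish \eqref{positive-diff}. Fix $t>0$ and let $\pi_t^\#$ and $\xi_t^\#$ be the generalized boundary representations of $\omega$ and $\eta$. By hypothesis $\pi_t^\# - \xi_t^\#$ is completely positive, hence so is the predual map $\hat\pi_t - \hat\xi_t: \bk_* \to \bh_*$. Now by \eqref{dmb2} we have $\omega\vert_t = \hat\pi_t(I-\hat\Lambda\hat\pi_t)^{-1} = \sum_{k\geq 0} \hat\pi_t(\hat\Lambda\hat\pi_t)^k$ and likewise for $\eta\vert_t$ with $\hat\xi_t$; these Neumann series converge because $\hat\pi_t, \hat\xi_t$ are completely positive contractions and $\hat\Lambda$ is a strict contraction on the relevant subspace (here one uses that $E(t,\infty)$ times $(I-\Lambda(I_\fk))^{-1}$ is bounded, so that $\hat\Lambda\hat\pi_t$ has norm $<1$, as is implicit in Theorem~\ref{powerstheorem}). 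Taking the difference term by term and using that $\hat\Lambda$ is completely positive (multiplication by the positive operator $e^{-x}$), a standard telescoping argument shows $\hat\pi_t(\hat\Lambda\hat\pi_t)^k - \hat\xi_t(\hat\Lambda\hat\xi_t)^k$ is completely positive for each $k$: write it as a sum of terms each of which has a factor $(\hat\pi_t - \hat\xi_t)$ sandwiched between completely positive maps. Hence $\omega\vert_t - \eta\vert_t$ is completely positive, in particular positive, so for positive $\rho$ and positive $A \in \bh$ we get $\omega\vert_t(\rho)(A) \geq \eta\vert_t(\rho)(A)$, i.e. $\omega(\rho)(E(t,\infty)AE(t,\infty)) \geq \eta(\rho)(E(t,\infty)AE(t,\infty))$.

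To finish \eqref{positive-diff}, take a positive $T \in \ah$ and write $T = (I-\Lambda(I_\fk))^{1/2} A (I-\Lambda(I_\fk))^{1/2}$ with $A \in \bh$ positive. For $x>0$, $E(x,\infty)TE(x,\infty)$ lies in $E(x,\infty)\bh E(x,\infty)$ and is positive, so by the previous paragraph $\omega(\rho)(E(x,\infty)TE(x,\infty)) \geq \eta(\rho)(E(x,\infty)TE(x,\infty))$ for all $x>0$. Letting $x \to 0+$ and applying the limit formula of Proposition~\ref{get-omega} to both $\omega(\rho)$ and $\eta(\rho)$ (both are boundary weights since $\omega,\eta$ are $q$-weight maps), we obtain $\omega(\rho)(T) \geq \eta(\rho)(T)$, which is \eqref{positive-diff}.

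For the second assertion, suppose in addition $\omega(\rho)(I-\Lambda(I_\fk)) = \eta(\rho)(I-\Lambda(I_\fk))$ for all $\rho$. It suffices to show $\omega(\rho) = \eta(\rho)$ for every positive $\rho$, since such $\rho$ span $\bk_*$. Fix positive $\rho$ and consider the difference $\delta := \omega(\rho) - \eta(\rho)$, which by \eqref{positive-diff} is a positive boundary weight, i.e. $\bub{(\omega-\eta)}(\rho) = \delta\big((I-\Lambda(I_\fk))^{1/2}\,\cdot\,(I-\Lambda(I_\fk))^{1/2}\big)$ is a positive normal functional on $\bh$, and this functional annihilates the identity because $\delta(I - \Lambda(I_\fk)) = 0$ by hypothesis. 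A positive normal functional that vanishes at $I$ is identically zero; hence $\bub{(\omega-\eta)}(\rho) = 0$, which forces $\delta(T) = 0$ for all $T \in \ah$, i.e. $\omega(\rho) = \eta(\rho)$. Therefore $\omega = \eta$.

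The main obstacle is the first step: justifying that $\omega\vert_t - \eta\vert_t$ is completely positive given only that $\hat\pi_t - \hat\xi_t$ is. This requires care with the convergence of the two Neumann series and with the telescoping identity $a_1\cdots a_m - b_1\cdots b_m = \sum_{j=1}^m a_1\cdots a_{j-1}(a_j - b_j)b_{j+1}\cdots b_m$ applied with the $a$'s and $b$'s alternating between $\hat\pi_t$ (resp.\ $\hat\xi_t$) and $\hat\Lambda$; one must check each resulting summand is completely positive, which holds because complete positivity is preserved under composition and $\hat\Lambda$, $\hat\pi_t$, $\hat\xi_t$ and their difference are all completely positive. (Alternatively, one could invoke Theorem~\ref{boundary-representation-subordinates} together with equations \eqref{dmb1}--\eqref{dmb2} directly to identify $\omega\vert_t - \eta\vert_t$ with the boundary weight map data of the subordinate CP-flow, bypassing the explicit series.) Everything else is routine manipulation with the limit formula of Proposition~\ref{get-omega} and the elementary fact about positive normal functionals vanishing at the identity.
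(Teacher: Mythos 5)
Your proposal follows the paper's proof essentially verbatim: both establish complete positivity of $\omega\vert_t-\eta\vert_t$ via the Neumann series $\omega\vert_t=\hat\pi_t\sum_n(\hat\Lambda\hat\pi_t)^n$ and termwise domination, pass to the limit with Proposition~\ref{get-omega}, and conclude the rigidity statement from the fact that the positive normal functional $\bub{\omega}(\rho)-\bub{\eta}(\rho)$ vanishes at $I$. The only point worth adjusting is your justification of convergence of the series: the paper does not claim $\|\hat\Lambda\hat\pi_t\|<1$ (which is not obviously true in general) but instead notes that at each positive $\rho$ the partial sums increase and converge in the sense of weights, which suffices for the termwise comparison you carry out.
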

\begin{proof} Let $\pi_t^\#$ and $\xi_t^\#$ be the generalized boundary representations for $\omega$ and $\eta$ respectively. By Theorem~\ref{boundary-representation-subordinates}, we have that for all $t>0$, $\pi_t^\# - \xi_t^\#$ is completely positive. Therefore,
$$
\omega\vert_t = \hat{\pi}_t \sum_{n=0}^\infty (\hat{\Lambda} \hat{\pi}_t)^n \geq \hat{\xi}_t  \sum_{n=0}^\infty (\hat{\Lambda} \hat{\xi}_t)^n = \eta\vert_t
$$
for all $t>0$ (the series converge, at every positive functional $\rho$ in the sense of weights). The inequality is in the completely positive sense. Therefore, we have that for all $T\in\ah$ positive, and for all $\rho \in \bk_*$, by Proposition~\ref{get-omega},
$$
\omega(\rho)(T) - \eta(\rho)(T) = \lim_{t\to 0+} \omega\vert_t(\rho)(T) - \eta\vert_t(\rho)(T)  \geq 0.
$$
Thus we have that $\omega \geq \eta$ (in the positive sense). Therefore, we have that $\bub{\omega} - \bub{\eta}$ is positive as a map from $\bk_*$ to $\bh_*$, or alternatively,
$\bub{\omega}(\rho) - \bub{\eta}(\rho)$ is a positive normal functional for all positive $\rho \in \bk_*$. Now notice that for all $\rho\in \bk_*$, 
$$
\bub{\omega}(\rho)(I) = \omega(\rho)(I-\Lambda) = \eta(\rho)(I-\Lambda) = \bub{\eta}(\rho)(I).
$$
Therefore, it follows that for every positive $\rho \in \bk_*$, $\bub{\omega}(\rho) = \bub{\eta}(\rho)$. Now by considering linear combinations, $\bub{\omega}=\bub{\eta}$, hence $\omega=\eta$.
\end{proof}

Now suppose that $\omega_i$ is a unital $q$-weight map for $i=1,2$, so $\omega_i$ induces a unital CP-flow $\alpha_i$.
By Theorem~\ref{theorem:Bhat's-dilation-theorem}, $\alpha_i$ has a minimal dilation to an $E_0$-semigroup
$\alpha_i ^d$ which is unique up to conjugacy.  A fundamental question to ask is whether $\alpha_1 ^d$
and $\alpha_2 ^d$ are cocycle conjugate.  Theorem~\ref{hyperflowcorn} gives us a partial answer in terms of
flow corners from $\alpha_1$ to $\alpha_2$.  In this section, we translate this description
in terms of boundary weight maps.
We will use the following notation.  If $\mathfrak{K}_i$ is a separable Hilbert space and $\mathfrak{H}_i = \mathfrak{K}_i \otimes L^2(0, \infty)$
for $i=1,2$, then we denote by $\fa(\mathfrak{H}_i, \mathfrak{H}_j)$ the vector space
$$\fa(\mathfrak{H}_i, \mathfrak{H}_j) = [I - \Lambda(I_{\mathfrak{K}_j})]^{1/2} \mathfrak{B}(\mathfrak{H}_i, \mathfrak{H}_j) [I - \Lambda(I_{\mathfrak{K}_i})]^{1/2}.$$
Let $\mathfrak{K}=\mathfrak{K}_1 \oplus \mathfrak{K}_2$ and $\mathfrak{H}= \mathfrak{H}_1 \oplus \mathfrak{H}_2$.  We define $\mathfrak{B}(\mathfrak{K}_j, \mathfrak{K}_i)_*$ to be the closed
subspace of $\mathfrak{B}(\mathfrak{K}_j, \mathfrak{K}_i)'$ given by
$$
\mathfrak{B}(\mathfrak{K}_j, \mathfrak{K}_i)_* = \{ \rho \in \mathfrak{B}(\mathfrak{K}_j, \mathfrak{K}_i)' : \rho^{ij} \in \mathfrak{B}(\mathfrak{K})_*\},$$
and similarly we define the vector space
$$
\fa(\mathfrak{H}_j, \mathfrak{H}_i)_* = \{ \nu \in \fa(\mathfrak{H}_j, \mathfrak{H}_i)' : \nu^{ij} \in \fa(\fh)_*\}.
$$

For the sake of clarity, we will frequently write generalized Schur maps in matrix form.  For example,
if $\rho \in \mathfrak{B}(\mathfrak{K}_1 \oplus \mathfrak{K}_2)'$, so $\rho = \sum_{i,j=1}^2 (\rho_{ij})^{ij}$, we write it as
$$\rho=
\begin{pmatrix}
\rho_{11} & \rho_{12} \\ \rho_{21} & \rho_{22}
\end{pmatrix},
$$
and if $\omega$ is a boundary weight map over $\mathfrak{K}_1 \oplus \mathfrak{K}_2$ which is also a generalized Schur map,
we denote it by
$$\omega(\rho)=
\begin{pmatrix}
\omega_{11}(\rho_{11}) & \omega_{12}(\rho_{12}) \\ \omega_{21}(\rho_{21}) & \omega_{22}(\rho_{22})
\end{pmatrix},
$$
or write $\omega$ in the more abbreviated form
$$\omega=
\begin{pmatrix}
\omega_{11} & \omega_{12} \\ \omega_{21} & \omega_{22}
\end{pmatrix}.
$$

\begin{definition} \label{definition-q-corner}
For $i=1,2$, let $\mathfrak{K}_i$ be a separable Hilbert space, and let $\mathfrak{H}_i = \mathfrak{K}_i \otimes L^2(0, \infty)$.
Suppose $\mu: \mathfrak{B}(\mathfrak{K}_1)_* \to \fa(\mathfrak{H}_1)_*$ and $\eta: \mathfrak{B}(\mathfrak{K}_2)_* \to \fa(\mathfrak{H}_2)_*$ are $q$-weight maps.
We say that a map $\ell$ from $\mathfrak{B}(\mathfrak{K}_2, \mathfrak{K}_1)_*: \rightarrow \fa(\mathfrak{H}_2, \mathfrak{H}_1)_*$ is a \emph{$q$-corner} from $\mu$ to $\eta$ if
$\omega: \mathfrak{B}(\mathfrak{K}_1 \oplus \mathfrak{K}_2)_* \rightarrow \mathfrak{A}(\mathfrak{H}_1 \oplus \mathfrak{H}_2)_*$ defined by 
$$
\omega(\rho)= \begin{pmatrix} \mu(\rho_{11}) & \ell(\rho_{12}) \\ \ell^*(\rho_{21})
& \eta(\rho_{22}) \end{pmatrix}  \qquad \forall \rho \in \mathfrak{B}(\mathfrak{K}_1 \oplus \mathfrak{K}_2)_*
$$
is a $q$-weight map over $\fk_1 \oplus \fk_2$.  We say $\ell$ is a \emph{hyper-maximal $q$-corner} from $\mu$ to $\eta$ if, whenever 
$$ \omega \geq_q \begin{pmatrix} \mu' & \ell\\ \ell^*
& \eta' \end{pmatrix} \geq_q 0,
$$
we have $\mu=\mu'$ and $\eta=\eta'$.
\end{definition}

The following result has a straightforward proof using the techniques of generalized Schur maps introduced in \cite{jankowski-markiewicz}, which we omit.

\begin{theorem}
Suppose $\alpha$ and $\beta$ are unital CP-flows over $\mathfrak{K}_1$ and $\mathfrak{K}_2$ with boundary weight maps
$\mu$ and $\eta$, respectively.  If there is a hyper-maximal $q$-corner from 
$\mu$ to $\eta$, then $\alpha^d$ and $\beta^d$ are cocycle conjugate.  Conversely,
if $\alpha^d$ is a type II$_0$ E$_0$-semigroup and $\alpha^d$ and $\beta^d$ are cocycle conjugate,
then there is a hypermaximal $q$-corner from $\mu$ to $\eta$.
\end{theorem}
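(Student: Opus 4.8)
The plan is to reduce the statement to the flow-corner picture already established in Theorem~\ref{hyperflowcorn} by translating between $q$-corners of $q$-weight maps and flow corners of the associated CP-flows. First I would show that if $\ell$ is a $q$-corner from $\mu$ to $\eta$, then the matrix $q$-weight map $\omega$ over $\fk_1\oplus\fk_2$ is the boundary weight map of a CP-flow $\Theta$ over $\fk_1\oplus\fk_2$ (this is immediate from Theorem~\ref{powerstheorem} and the definition of $q$-weight map), and that $\Theta$ has the off-diagonal-preserving block form in Definition~\ref{def-corners}; the key point here is that the generalized-Schur-map structure of $\omega$ propagates through the formulas \eqref{dmb1}, \eqref{dmb2} and \eqref{resolvent} to the CP-flow level, so $\Theta_t\left[\begin{smallmatrix}A&B\\C&D\end{smallmatrix}\right]=\left[\begin{smallmatrix}\alpha_t(A)&\gamma_t(B)\\\gamma_t^*(C)&\beta_t(D)\end{smallmatrix}\right]$ for a family $\gamma$ determined by $\ell$. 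Conversely, any flow corner $\gamma$ from $\alpha$ to $\beta$ yields, via the resolvent-to-boundary-weight correspondence \eqref{got-omega}, a $q$-corner $\ell$ from $\mu$ to $\eta$. This gives a bijective, order-preserving dictionary between $q$-corners from $\mu$ to $\eta$ and flow corners from $\alpha$ to $\beta$.

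Next I would verify that this dictionary carries the hyper-maximality condition of Definition~\ref{definition-q-corner} to that of Definition~\ref{def-corners}. A subordinate CP-flow $\Theta'$ of $\Theta$ with the prescribed off-diagonal entries $\gamma_t,\gamma_t^*$ corresponds, again through \eqref{dmb1}--\eqref{dmb2} and Theorem~\ref{boundary-representation-subordinates}, to a $q$-subordinate of $\omega$ of the form $\left[\begin{smallmatrix}\mu'&\ell\\\ell^*&\eta'\end{smallmatrix}\right]$, and vice versa; here one uses that subordination of CP-flows is equivalent to $q$-subordination of $q$-weight maps, and that the requirement ``$\gamma_t$ unchanged'' translates precisely to ``$\ell$ unchanged,'' which holds because the off-diagonal block of the boundary weight map is recovered from the $(1,2)$-entry of $\Theta'$. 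Hence $\ell$ is a hyper-maximal $q$-corner from $\mu$ to $\eta$ if and only if $\gamma$ is a hyper-maximal flow corner from $\alpha$ to $\beta$.

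With the dictionary in place the theorem is now a direct consequence of Theorem~\ref{hyperflowcorn}. If there is a hyper-maximal $q$-corner $\ell$ from $\mu$ to $\eta$, then the associated $\gamma$ is a hyper-maximal flow corner from $\alpha$ to $\beta$, so by Theorem~\ref{hyperflowcorn} the minimal dilations $\alpha^d$ and $\beta^d$ are cocycle conjugate (note that $\alpha$ and $\beta$ are unital CP-flows, as required, since $\mu$ and $\eta$ are unital $q$-weight maps and their minimal dilations are flow dilations). Conversely, if $\alpha^d$ is of type II$_0$ and $\alpha^d$ and $\beta^d$ are cocycle conjugate, then Theorem~\ref{hyperflowcorn} produces a hyper-maximal flow corner $\gamma$ from $\alpha$ to $\beta$, and feeding $\gamma$ back through the dictionary yields the desired hyper-maximal $q$-corner from $\mu$ to $\eta$.

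The routine but slightly delicate part is the first step: one must check carefully that the generalized Schur structure is preserved under $(I+\hat\Lambda\omega|_t)^{-1}$, i.e. that inverting a block boundary weight map with zero corners left undisturbed keeps the corners undisturbed, so that $\hat\pi_t$ and therefore $\Theta_t$ genuinely has the block form in Definition~\ref{def-corners} with off-diagonal blocks depending only on $\ell$. This is exactly the kind of computation that the excerpt flags as ``a straightforward proof using the techniques of generalized Schur maps introduced in \cite{jankowski-markiewicz}, which we omit,'' so I expect it to be the main (and only real) obstacle; everything after it is a transcription of Theorem~\ref{hyperflowcorn}.
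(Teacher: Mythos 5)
Your proposal is correct and follows exactly the route the paper intends: the paper itself omits the proof, stating only that it is ``straightforward using the techniques of generalized Schur maps,'' and your reduction to Theorem~\ref{hyperflowcorn} via an order-preserving dictionary between $q$-corners of the boundary weight maps and flow corners of the associated CP-flows (with hyper-maximality matched on both sides through the equivalence of subordination and $q$-subordination) is that argument. You have also correctly isolated the one genuinely delicate point, namely that the block generalized Schur structure is preserved under $(I+\hat\Lambda\omega\vert_t)^{-1}$ so that the off-diagonal entries of $\Theta_t$ depend only on $\ell$.
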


\section{Boundary expectations}\label{sec-expectations}

\begin{definition} \label{boundary-expectation-definition}
Let $\omega$ be a $q$-weight map over a separable Hilbert space $\fk$. We will say that a map 
$L:\bk \to \bk$ is a \emph{boundary expectation} corresponding to $\omega$ 
(or alternatively the CP-flow it induces) if it satisfies the following properties:
\begin{enumerate}[(i)]
\item $L$ is completely positive;
\item $L \circ \breve{\omega} = \breve{\omega}$;
\item $\range(L) = \range(\breve{\omega})$;
\item $L^2=L$ and $\|L\|=1$.
\end{enumerate}
\end{definition}

We note that we do not know if a boundary expectation always exists, 
but in general even when one exists it need not be unique. 
However, as we establish in the following, a boundary expectation always exists when 
$\omega$ is a $q$-weight map with finite range rank whose normal spine is zero.

Let $\fk$ be a separable Hilbert space. Recall that the relative BW-topology on the set $\mathfrak{C}(\bk) = \{ \phi: \bk \to \bk: \| \phi \| \leq 1 \}$ is determined by requiring that a net $\phi_\lambda$ converges to $\phi$ if and only if for all $x\in \bk$ and $\rho \in \bk_*$, we have $\rho(\phi_\lambda(x)) \to \rho(\phi(x))$. Furthermore, $\mathfrak{C}(\bk)$ is compact in the relative BW-topology (see \cite{arv-subalgs1} for details). 

\begin{theorem} \label{twoz} 
Let $\fk$ be a separable Hilbert space. Let $\omega$ be a non-zero $q$-weight map over $\fk$ with finite range rank, and let $\{\Pi^\# _t\}_{t>0}$ be the corresponding generalized boundary representation. Let us consider $(\Pi_t^\# \circ \Lambda)_{t>0}$ as a net with respect to the directed $J=(0,\infty)$ directed by $t \precsim s$ if and only if $t \geq s$. If 
$\Pi_0 ^\# \equiv 0$, then the net $(\Pi_t^\# \circ \Lambda)_{t \in J}$ in $\mathfrak{C}(\bk)$ has at least one cluster point $L$ in the relative BW-topology, and every such cluster point is a boundary expectation for $\omega$.
\end{theorem}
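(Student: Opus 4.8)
\emph{Overview.} My plan is to dispose of the topological part at once and then reduce all four axioms of a boundary expectation to the single assertion that the net $P_t:=\Pi_t^\#\circ\Lambda$ converges, as $t\to 0+$, to the identity map on the finite-dimensional space $V:=\range(\breve{\omega})$. First, each $P_t$ is a completely positive contraction of $\bk$ into itself: $\Lambda$ is completely positive because $\Lambda(A)=(I_\fk\otimes M^{1/2})(A\otimes I)(I_\fk\otimes M^{1/2})$, where $M$ is multiplication by $e^{-x}$ on $L^2(0,\infty)$; $\Pi_t^\#$ is a completely positive contraction by definition; and $0\le P_t(I_\fk)=\Pi_t^\#(\Lambda(I_\fk))\le \Pi_t^\#(I_\fh)\le I_\fk$, so $\|P_t\|\le 1$. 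Thus $(P_t)_{t\in J}$ is a net in the BW-compact set $\mathfrak{C}(\bk)$ and has a cluster point; since complete positivity is BW-closed (it is the pointwise-in-$\rho$ passage to the limit of the inequalities $\sum_{i,j}(\xi_i,P_t(x_{ij})\xi_j)\ge 0$ taken over $(x_{ij})\ge 0$ and $\xi_i\in\fk$), every cluster point $L$ is a completely positive contraction --- this is property~(i).

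\emph{Range control.} Taking preadjoints in \eqref{dmb1} gives $\Pi_t^\# = (I+Q_t)^{-1}\breve{\omega}\vert_t$ with $Q_t:=\breve{\omega}\vert_t\circ\Lambda$, so $P_t=(I+Q_t)^{-1}Q_t=Q_t(I+Q_t)^{-1}=I-(I+Q_t)^{-1}$. Since $\range(Q_t)\subseteq\range(\breve{\omega}\vert_t)\subseteq\range(\breve{\omega})=V$, the identity $P_t=Q_t(I+Q_t)^{-1}$ shows $\range(P_t)\subseteq V$ for every $t>0$; as $V$ is finite-dimensional (finite range rank), hence $\sigma$-weakly closed, $\range(L)\subseteq V$ for every cluster point $L$.

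\emph{Convergence on $V$.} Fix $v\in V$. Because $\breve{\omega}\vert_\epsilon(A)\to\breve{\omega}(A)$ for every $A\in\ah$ as $\epsilon\to 0+$ (Proposition~\ref{get-omega}) and $V$ is finite-dimensional, $\range(\breve{\omega}\vert_\epsilon)=V$ for all small enough $\epsilon>0$; fix such an $\epsilon$ and choose $T_v\in E(\epsilon,\infty)\bh E(\epsilon,\infty)$ with $\breve{\omega}(T_v)=v$. For $t<\epsilon$ one has $E(t,\infty)T_vE(t,\infty)=T_v$, hence $\breve{\omega}\vert_t(T_v)=v$, so $\Pi_t^\#(T_v)=(I+Q_t)^{-1}\breve{\omega}\vert_t(T_v)=(I+Q_t)^{-1}v$. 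On the other hand $T_v\in S_\epsilon\bh S_\epsilon^*$, so by Theorem~\ref{normal-spine} and the hypothesis $\Pi_0^\#\equiv 0$, $\Pi_t^\#(T_v)\to\Pi_0^\#(T_v)=0$ $\sigma$-strongly as $t\to 0+$. Hence $(I+Q_t)^{-1}v\to 0$, and consequently $P_t(v)=v-(I+Q_t)^{-1}v\to v$ $\sigma$-weakly, for the full net; therefore $L(v)=v$ for every $v\in V$ and every cluster point $L$.

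\emph{Conclusion and expected difficulty.} From $L|_V=\mathrm{id}_V$ and $V=\range(\breve{\omega})$ we get $L\circ\breve{\omega}=\breve{\omega}$ (property~(ii)), and together with $\range(L)\subseteq V$ this forces $\range(L)=V=\range(\breve{\omega})$ (property~(iii)). For $x\in\bk$ we have $L(x)\in\range(L)\subseteq V$, hence $L(L(x))=L(x)$, so $L^2=L$; and $L\ne 0$ because $\omega\ne 0$ makes $V\ne\{0\}$ and $L$ restricts to the identity there, so idempotency with $\|L\|\le 1$ forces $\|L\|=1$ --- this is property~(iv). I expect the main obstacle to be the convergence-on-$V$ step, in particular justifying the identity $\Pi_t^\#(T_v)=(I+Q_t)^{-1}v$ and coupling it with the vanishing of the normal spine: this is precisely what makes $(I+Q_t)^{-1}$ annihilate $V$ in the limit (heuristically, $Q_t|_V\to\infty$), and it is where the type~II$_0$ hypothesis is consumed. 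Finite-dimensionality of $\range(\breve{\omega})$ is used both to keep $\range(P_t)$ inside a $\sigma$-weakly closed space and to realize every $v\in V$ as $\breve{\omega}(T_v)$ with $T_v$ localized in some $E(\epsilon,\infty)\bh E(\epsilon,\infty)$.
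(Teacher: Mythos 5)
Your proof is correct and follows essentially the same route as the paper's: BW-compactness of $\mathfrak{C}(\bk)$ for existence and complete positivity of cluster points, the identity $\Pi_t^\#\circ\Lambda = I - (I+\breve{\omega}\vert_t\circ\Lambda)^{-1}$ combined with the vanishing of the normal spine to show that $L$ fixes $\range(\breve{\omega})$, and finite range rank both to upgrade $\sigma$-weak to norm convergence and to keep $\range(L)$ inside the $\sigma$-weakly closed subspace $\range(\breve{\omega})$. The only organizational difference is in the middle step: the paper first proves $\|(I+\breve{\omega}\vert_t\circ\Lambda)^{-1}\circ\breve{\omega}\vert_s\|\to 0$ for each fixed $s$ and then takes a second limit $s\to 0+$, whereas your range-stabilization observation (that $\range(\breve{\omega}\vert_\epsilon)$ equals $\range(\breve{\omega})$ for all small $\epsilon$, so each $v$ is realized as $\breve{\omega}\vert_t(T_v)$ for a single $T_v$ and all small $t$) collapses this into one application of Theorem~\ref{normal-spine}; both arguments are valid.
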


\begin{proof}  Since $\Pi^\#_t \circ \Lambda$ is a completely positive contraction of $\bk$ for each $t>0$, we have that the net is inside the set $\frak{C}(\bk)$ which is compact in the relative BW-topology. Therefore, there exists a subnet $\Pi^\#_{t_\mu} \circ \Lambda$ which converges to a map $L$ in the BW-topology.  We now show
that $L$ has properties $(i)$ through $(iv)$ of Definition~\ref{boundary-expectation-definition}.
Property $(i)$
follows trivially from the fact that the space of completely positive maps in $\frak{C}(\bk)$
is closed in the BW-topology.

Let $s>0$.  We claim that
\begin{equation}\label{zero}
\lim_{t \rightarrow 0^+} \|(I + \breve{\omega}\vert_t \circ \Lambda)^{-1}\circ \breve{\omega}\vert_s\| =0.
\end{equation}  For this, we first note that if $t<s$ and $A \in \mathfrak{B}(\mathfrak{H})$, then
\begin{equation} \label{omegas} 
\breve{\omega}\vert_s(A) = \breve{\omega}\big(E(s,\infty) A E(s,\infty)\big) = \breve{\omega}\vert_t \big(E(s,\infty)AE(s,\infty)\big).
\end{equation}  Since $\Pi_0^\#$ is the zero map and 
$E(s,\infty) \in U_s \mathfrak{B}(\mathfrak{H}) U_s^*$, it follows that $\Pi^\# _t(E(s,\infty)) \rightarrow 0$ $\sigma$-strongly as $t \rightarrow 0^+$.  Now note that $\Pi_t^\# = (I + \bromega\vert_t\Lambda)^{-1}\bromega\vert_t$. Since the range of $\bromega\vert|t$ is invariant under $I + \bromega\vert_t\Lambda$, the same holds for the inverse. Hence, the range of $\Pi_t^\#$ is contained in the range of $\bromega\vert|t$ and the latter is contained in the range of $\bromega$ which is finite dimensional. Therefore, $\| \Pi^\# _t(E(s,\infty)) \| \rightarrow 0$  as $t \rightarrow 0^+$. 

The maps $\phi_t(A):= \Pi^\# _t(E(s,\infty)A E(s,\infty))$ are completely positive for all $t>0$
and thus satisfy
$$
\|\phi_t\|=\|\phi_t(I)\|=\|\Pi^\# _t(E(s,\infty) I E(s,\infty)) \|=\|\Pi^\# _t(E(s,\infty))\|,
$$ 
so for every contraction $A \in \mathfrak{B}(\mathfrak{H})$, we have
\begin{equation} \label{pies1} 
\left\|\Pi^\#_t \big(E(s,\infty) A E(s,\infty) \big) \right\| \leq \|\Pi^\#_t(E(s,\infty))\|.
\end{equation}
Putting together equations \eqref{omegas} and \eqref{pies1},
we observe that if $t<s$, then
\begin{align*} \|(I + \breve{\omega}\vert_t \circ \Lambda)^{-1} \circ \breve{\omega}\vert_s \| & =
\sup_{\|A\| \leq 1, A \in \mathfrak{B}(\mathfrak{H})} \|(I + \breve{\omega}\vert_t \circ \Lambda)^{-1} \breve{\omega}\vert_s(A)\|
\\ & = \sup_{\|A\| \leq 1, A \in \mathfrak{B}(\mathfrak{H})} \|(I + \breve{\omega}\vert_t \circ \Lambda)^{-1} \breve{\omega}\vert_t(E(s,\infty)AE(s,\infty))\|
\\ & = \sup_{\|A\| \leq 1, A \in \mathfrak{B}(\mathfrak{H})} \|\Pi ^\#_t(E(s,\infty)AE(s,\infty))\| \\ &= \|\Pi^\# _t(E(s,\infty))\| \rightarrow 0 \textrm{ as } t \rightarrow 0,
\end{align*}
establishing equation \eqref{zero}.  Thus, for all $x\in \bk$ and $\rho \in \bk_*$,
\begin{align}
\rho(L \circ \breve{\omega}\vert_s(x) )\nonumber 
& = \lim_{\mu} 
\rho \Big( \big[ (I + \breve{\omega}\vert_{t_\mu} \Lambda)^{-1} \breve{\omega}\vert_{t_\mu}  \Lambda\big] (\breve{\omega}\vert_s(x)) \Big)
= \lim_{\mu} \rho \Big( \big[I - (I + \breve{\omega}\vert_{t_\mu} \Lambda)^{-1}\big] (\bromega\vert_s(x))\Big)
\\ \label{dd} & = \rho(\bromega\vert_s(x)) - \lim_{\mu} \rho\Big((I + \bromega\vert_{t_\mu}  \Lambda)^{-1} (\bromega\vert_s(x)) \Big) = \rho(\bromega\vert_s(x)).
\end{align}
Therefore, $L$ fixes the range of $\breve{\omega}\vert_s$ for every $s>0$.  Let $R \in \mathfrak{A}(\mathfrak{H})$,
so $R= (I - \Lambda(I))^{\frac{1}{2}} B (I - \Lambda(I))^{\frac{1}{2}}$ for some $B \in \mathfrak{B}(\mathfrak{H})$.  
Let $\rho \in \bk_*$.  By Proposition \ref{get-omega}, we have 
$\lim_{s \rightarrow 0+} \omega(\rho)\big(E(s,\infty) R E(s,\infty)\big)=  \omega(\rho)(R)$,
hence $\lim_{s \rightarrow 0+} \rho(\breve{\omega}\vert_s(R)) = \rho(\breve{\omega}(R)).$
Since the range of $\bromega\vert_t$ is contained in the range of $\bromega$ for every $t>0$ and the latter is finite dimensional we actually have that 
\begin{equation} \label{ee} \lim_{s \rightarrow 0+} \| \bromega\vert_s(R) - \breve{\omega}(R)\| =0. 
\end{equation}
Since $L \in \mathfrak{C}(\bk)$,  it is norm-continuous, hence equations \eqref{dd} and \eqref{ee} imply
$$L(\breve{\omega}(R)) = \lim_{s \rightarrow 0^+}L(\breve{\omega}\vert_s(R))=
\lim_{s \rightarrow 0^+} \breve{\omega}\vert_s(R)=\breve{\omega}(R)$$
for all $R \in \mathfrak{B}(\mathfrak{H})$, thereby proving $(ii)$.

Since $L$ fixes the range of $\breve{\omega}$ we have $\range(L) \supseteq \range(\breve{\omega})$.  
From the algebraic fact that $\breve{\omega}\vert_t \circ \Lambda$ commutes with $(I + \breve{\omega}\vert_t \circ \Lambda)^{-1}$ for all $t>0$,
we have $x \in \bk$,
$$
L(x)= \ultralim_{\mu} (I + \breve{\omega}\vert_{t_\mu} \Lambda)^{-1} \breve{\omega}\vert_{t_\mu}  \Lambda (x)  = 
\ultralim_{\mu}  \bromega\vert_{t_\mu} \big[\Lambda (I + \bromega\vert_{t_\mu} \Lambda)^{-1}(x)\big]. 
$$ 
Therefore, every element of the range of $L$ is the $\sigma$-weak limit of elements in the 
range of $\bromega$, which is a finite dimensional subspace of
$\bk$ and is thus $\sigma$-weakly closed.  Therefore, $\range(L) \subseteq \range(\breve{\omega})$.  We conclude $\range(L) = \range(\breve{\omega})$,
proving $(iii)$, whereby
property $(ii)$ implies that $L$ fixes its range, hence $L^2=L$.  Note $\|L\| \leq 1$ since $L \in \mathfrak{C}(\bk)$, so since $L$ is idempotent we have $\|L\|=0$ or $\|L\|=1$.  By assumption, $\bromega$ is not the zero map, hence 
$\{0\} \subsetneq \range(\breve{\omega})=\range(L)$, so $\|L\|=1$,
proving $(iv)$.
\end{proof}

Let $L$ be a boundary expectation for a $q$-weight $\omega$ over $\fk$, and 
let $\rl=L(\bk)$. Since $L:\bk\to \bk$ is a completely positive and contractive idempotent, by the work of Choi-Effros~\cite{choi-effros-injectivity} (see also section 6.1 of \cite{effros-ruan-book}), we recall that
\begin{equation}\label{eq:choi-effros}
L( L(T) S) = L( L(T) L(S)) = L( T L(S)), \qquad \forall T,S \in \bk.
\end{equation}
Furthermore, we have that $\rl$ is a unital C*-algebra under its norm and involution as a subspace of $\bk$ but with multiplication $\cem$ given by
\begin{equation}\label{multiplication}
x \cem y = L(x y) ,  \qquad \forall x,y \in \rl.
\end{equation}
If the range of $L$ is $\sigma$-weakly closed (for example if it is finite-dimensional as in the previous theorem), then it is the dual of a Banach space hence  $\rl$ is  a W$^*$-algebra. We also note that its unit is $L(I)$ (thus we remark that while $\rl$ is unital, in general it does not share the unit with $\bk$). 

Finally, we note that although a boundary expectation $L$ need \emph{not} be a conditional expectation in the traditional sense, it satisfies the following property by a direct application of \eqref{eq:choi-effros}:
$$
L(xTy) = x \cem L(T) \cem y, \qquad \forall T \in \bk, \forall x,y \in \rl.
$$

The following lemma will be useful for the study of $q$-corners between $q$-weights with range rank one.

\begin{lemma}\label{L-rank-one}
Let $\mathfrak{H}_1$ and $\mathfrak{H}_2$ be orthogonal complementary subspaces of $\cc^n$. Suppose that $L:M_n(\cc) \to M_n(\cc)$ is a completely positive contractive idempotent map, 
which has block form
$$
L \begin{pmatrix} A & B \\ C & D \end{pmatrix} = 
\begin{pmatrix} L_{11}(A) & L_{12}(B) \\ L_{21}(C) & L_{22}(D) \end{pmatrix}
$$
where $L_{ij}: \mathfrak{B}(\mathfrak{H}_j, \mathfrak{H}_i) \to \mathfrak{B}(\mathfrak{H}_j, \mathfrak{H}_i)$ for $i,j=1,2$. If $\dim( \range L_{11}) = \dim (\range L_{22})=1$, then either $L_{12}\equiv 0$ or $\dim (\range L_{12})=1$. 
\end{lemma}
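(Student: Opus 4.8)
The plan is to exploit the structure of a completely positive contractive idempotent $L$ on $M_n(\cc)$ together with the Choi--Effros multiplicative identities \eqref{eq:choi-effros}, applied to the corner algebras $\rl_i := L_{ii}(\mathfrak{B}(\mathfrak{H}_i))$ for $i=1,2$. Since $L$ is a c.p.\ contractive idempotent, so is each $L_{ii}$ (compress by the projection onto $\mathfrak{H}_i$), and by hypothesis $\dim \range L_{ii} = 1$, so $\rl_i = \cc P_i$ for some positive contraction $P_i = L_{ii}(I_{\mathfrak{H}_i}) \in \mathfrak{B}(\mathfrak{H}_i)$ with $\|P_i\| = 1$. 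The first step is to record that $L$, being a c.p.\ idempotent with range $\rl := L(M_n(\cc))$, makes $\rl$ into a W$^*$-algebra with product $x \cem y = L(xy)$ and unit $L(I)$, and that $L$ satisfies the bimodule identity $L(xTy) = x \cem L(T) \cem y$ for $x,y \in \rl$; in block form the unit is $L(I) = \mathrm{diag}(P_1, P_2)$, which is an orthogonal projection in the $\cem$-product sense.

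The key idea is then to look at the off-diagonal block of $\rl$. Write an arbitrary element of $\rl$ as $\begin{pmatrix} \lambda P_1 & y \\ y^* & \mu P_2 \end{pmatrix}$ where $y$ ranges over $\range L_{12}$, and apply the identity \eqref{eq:choi-effros} in the form $L(L(T)S) = L(L(T)L(S))$ with $T = E_{11}$ (the block unit of $\mathfrak{H}_1$, so $L(T) = \mathrm{diag}(P_1,0)$) and $S$ a general element. Working out the $(1,2)$-block of $L(\mathrm{diag}(P_1,0)\cdot S)$ versus $L(\mathrm{diag}(P_1,0) \cdot L(S))$ shows that for any $B \in \mathfrak{B}(\mathfrak{H}_2,\mathfrak{H}_1)$ one has $L_{12}(P_1 B) = P_1 L_{12}(B)$ (interpreting products across the decomposition appropriately), and symmetrically $L_{12}(B P_2) = L_{12}(B) P_2$. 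Hence $\range L_{12}$ is a subspace $V \subseteq P_1 \mathfrak{B}(\mathfrak{H}_2,\mathfrak{H}_1) P_2$ that is also stable in the sense that $L_{12}(V) = V$ with $L_{12}|_V = \mathrm{id}$.

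The main step, and the place I expect the real work to be, is to show $\dim V \le 1$. For this I would use positivity of $L$ on the $2\times 2$ block operator system: for $y \in V$, the matrix $\begin{pmatrix} P_1 & y \\ y^* & P_2 \end{pmatrix}$ need not itself be positive, but I can feed into $L$ the genuinely positive element $\begin{pmatrix} E_{11} & Z \\ Z^* & E_{22} \end{pmatrix}$ for appropriate $Z$ (a contraction, e.g.\ a partial isometry) whose image under $L$ has $(1,2)$-block $L_{12}(Z)$; combined with the bimodule property and the fact that $\rl$ with $\cem$ is a W$^*$-algebra whose $(1,1)$- and $(2,2)$-corners are one-dimensional, the off-diagonal corner $V$ of this W$^*$-algebra is a $\cc\!-\!\cc$ bimodule inside a II$_1$-type corner-of-corner, and any W$^*$-algebra of the form $\begin{pmatrix} \cc & V \\ V^* & \cc\end{pmatrix}$ forces $\dim V \le 1$ (it is either $\cc \oplus \cc$ or $M_2(\cc)$). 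Concretely: the element $\begin{pmatrix} P_1 & y \\ y^* & P_2 \end{pmatrix} \in \rl$ must be positive in $\rl$ whenever it is $\ge 0$ in $M_n$, and the Cauchy--Schwarz inequality $y^* \cem y \le \|y\|^2 P_2$ together with $y^* \cem y \in \cc P_2$ pins down $y$ up to scalar once we know two such $y$'s cannot be $\cem$-orthogonal; the dimension count on the W$^*$-algebra $\rl$ (a finite direct sum of matrix algebras with exactly two minimal central projections contributing $1$-dimensional corners, forcing the linking block to be at most $1$-dimensional) closes the argument. The obstacle is making the identification of $V$ with the off-diagonal part of a W$^*$-algebra fully rigorous rather than merely formal, and in particular checking that positivity in $M_n$ transfers to $\cem$-positivity in $\rl$ on the relevant elements — this is where the contractivity and complete positivity of $L$, not just idempotency, are essential.
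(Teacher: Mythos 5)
Your plan is correct, and it rests on the same underlying fact as the paper's proof: $\range(L)$, equipped with the Choi--Effros product $\cem$, is a C$^*$-algebra whose unit splits as a sum of two projections with one-dimensional corners, which forces the off-diagonal corner --- precisely $\range(L_{12})$ --- to have dimension at most one. The execution differs. The paper assumes $L_{12}\not\equiv 0$, picks a norm-one $B_0\in\range(L_{12})$, verifies by direct computation that $u=B_0^{12}$ is a partial isometry with $u\cem u^*+u^*\cem u=L(I)$ and $u\cem u^*$ minimal, and then builds matrix units by hand; you instead work with the diagonal elements $e_1=\mathrm{diag}(P_1,0)$, $e_2=\mathrm{diag}(0,P_2)$ and invoke the classification of finite-dimensional C$^*$-algebras (unit equal to a sum of two projections with corners $\cc e_i$ forces $\rl\cong\cc\oplus\cc$ or $M_2(\cc)$), which avoids choosing $B_0$ and is somewhat cleaner. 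Two comments on where the remaining work actually lies. The obstacle you flag --- transferring positivity between $M_n(\cc)$ and the $\cem$-product --- is a red herring: Choi--Effros already gives that $\rl$ is completely order isomorphic to its image, and your closing argument is purely algebraic anyway, so no positivity transfer is needed. What does require verification is (a) that $e_1,e_2$ are genuine $\cem$-projections summing to $1_{\rl}$: writing $L_{ii}(A)=\rho_i(A)P_i$, the identity $L(I)\cem L(I)=L(I)$ read blockwise gives $\rho_i(P_i^2)=1$, which combined with $\rho_i(P_i)=1$ and contractivity yields $\|P_i\|=1$ and $e_i\cem e_i=e_i$, $e_1\cem e_2=0$; and (b) that the Peirce decomposition $x=\sum_{i,j}e_i\cem x\cem e_j$ coincides with the block decomposition of $\rl$, so that $e_i\cem\rl\cem e_i=\cc e_i$ and $e_1\cem\rl\cem e_2$ is exactly the $(1,2)$-block part of $\range(L)$; both follow routinely from the assumed block form of $L$ and the bimodule identity $L(xTy)=x\cem L(T)\cem y$. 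With those two points supplied, your argument closes the lemma.
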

\begin{proof}
Suppose $L_{12} \not\equiv 0$, and let $B_0 \in \range(L_{12})$ be an element with $\| B_0 \| =1$. Notice that $B_{0} = L_{12}(B_0)$. 

We will show that the element of $\rl$ given by
$$
u= \begin{pmatrix} 0 & B_0 \\ 0 & 0 \end{pmatrix}  = \begin{pmatrix} 0 & L_{12}(B_0) \\ 0 & 0 \end{pmatrix} =  L\begin{pmatrix} 0 & B_0 \\ 0 & 0 \end{pmatrix}
$$
is a partial isometry such that $u^*\cem u+u\cem u^*=L(I)=I_{\rl}$ and $u\cem u^*$ is a minimal projection in
the W$^*$-algebra $\rl$. 
Notice that $\|u\| =1$ since the norm in $\rl$ is the same as the norm as a subspace of $M_n(\cc)$.  We note that
$$
u\cem u^* = L \left( \begin{pmatrix} 0 & B_0 \\ 0 & 0 \end{pmatrix} \cdot \begin{pmatrix} 0 & 0 \\ B_0^* & 0 \end{pmatrix} \right) = L \begin{pmatrix} B_0B_0^* & 0 \\ 0 & 0 \end{pmatrix} =
\begin{pmatrix} L_{11}(B_0B_0^*) & 0 \\ 0 & 0 \end{pmatrix} 
$$ 
$$
u^*\cem u = L \left( \begin{pmatrix} 0 & 0 \\ B_0^* & 0 \end{pmatrix} \cdot \begin{pmatrix} 0 & B_0 \\ 0 & 0 \end{pmatrix} \right) = L \begin{pmatrix} 0 & 0 \\ 0 & B_0^*B_0 \end{pmatrix} =
\begin{pmatrix} 0 & 0 \\ 0 &  L_{22}(B_0^*B_0) \end{pmatrix} 
$$ 
Since $\|u\| =1$, we have that $\| L_{11}(B_0B_0^*) \| =\| L_{22}(B_0^*B_0)  \|= 1$. Let $T_1 =  L_{11}(B_0B_0^*)$ and $T_2 =  L_{22}(B_0^*B_0)$, and note that these are positive operators since $L$ is completely positive. Notice that since 
$\dim(\range(L_{11}))=\dim(\range(L_{22}))=1$, and for each $i=1,2$, $T_i \in \range(L_{ii})$, hence it follows that there exists a linear functional $\rho_i$ on $\mathfrak{B}(\mathfrak{H}_i)$ such that $L_{ii}(A) = \rho_i(A) T_i$ for all $A \in \mathfrak{B}(\mathfrak{H}_i)$. Since $L$ is a completely positive contractive idempotent, so is $L_{ii}$, hence $\rho_i$ is a state. Therefore, we have that
$$
(u \cem u^*) \cem (u \cem u^*) = L\left( \begin{pmatrix}  T_1 & 0 \\ 0 &  0 \end{pmatrix} 
\begin{pmatrix}  T_1 & 0 \\ 0 &  0 \end{pmatrix} \right) = L \begin{pmatrix}  T_1^2 & 0 \\ 0 &  0 \end{pmatrix} =
\begin{pmatrix}  \rho_1(T_1^2)T_1 & 0 \\ 0 &  0 \end{pmatrix} = \rho_1(T_1^2) (u \cem u^*)
$$
On the other hand, we have that $\| u \cem u^* \|=1$, hence by taking norms on both sides
and using the C$^*$-norm identity, we obtain that $\rho_1(T_1^2)=1$ and $u \cem u^*$ is a projection. Hence $u$ is a partial isometry in $\rl$.

Furthermore, observe that
\begin{align*}
I_{\rl} & = L(I) = \begin{pmatrix} L_{11}(I_{\mathfrak{H}_1}) & L_{12}(0) \\ L_{21}(0) & L_{22}(I_{\mathfrak{H}_2}) \end{pmatrix} =
 \begin{pmatrix} \rho_1(I_{\mathfrak{H}_1}) T_1 & 0 \\ 0 & \rho_2(I_{\mathfrak{H}_2}) T_2 \end{pmatrix} = \begin{pmatrix}  T_1 & 0 \\ 0 &  T_2 \end{pmatrix} \\
&  = u^*\cem u+u\cem u^*.
\end{align*}

We prove that $u\cem u^*$ is minimal. Indeed, if $q$ is a projection in $\rl$ such that $q \leq u\cem u^*$, then $(u\cem u^*) \cem q \cem (u\cem u^*) = q$. Now note that
\begin{align*}
(u\cem u^*) \cem q \cem (u\cem u^*) & = L ((u\cem u^*) q (u\cem u^*)) =
L \left( \begin{pmatrix}  T_1 & 0 \\ 0 &  0 \end{pmatrix} \begin{pmatrix}  q_{11} & q_{12} \\ q_{21} &  q_{22} \end{pmatrix} \begin{pmatrix}  T_1 & 0 \\ 0 &  0 \end{pmatrix} \right) \\
& = L \begin{pmatrix}  T_1q_{11}T_1  & 0 \\ 0 &  0 \end{pmatrix} = \rho_1(T_1q_{11}T_1)\, u\cem u^*.
\end{align*}
Since $q\leq u\cem u^*$ is a projection, this implies that either $q=0$ or $q=u \cem u^*$.

Since $u$ is a partial isometry such that $u^*\cem u+u\cem u^*= 1_{\rl}$ and $u\cem u^*$ is a minimal projection, we obtain a system of matrix units $(e_{ij}: i,j=1,2)$ for $\rl$ by assigning $e_{12}=u$ and following the relations
$$
e_{ij}\cem e_{k\ell} = \delta_{jk} e_{i\ell}, \qquad e_{ij} = e_{ji}^*, \qquad e_{11}+e_{22}=1_{\rl}
$$
for all $i,j=1,2$. Therefore, we have that for every $x \in \rl$, if we denote by
$x_{ij}=e_{ii}xe_{jj}$, we have that $x = \sum_{i,j=1}^2 x_{ij}.$
Note however that $x_{ij}e_{ji} \in e_{ii}\rl e_{ii}$. Since $e_{11}$ is minimal, and $e_{22}$ is Murray-von Neumann equivalent to $e_{11}$, we have that $e_{22}$ is also minimal. Hence $e_{ii}\rl e_{ii} = \cc e_{ii}$ for $i=1,2$. Therefore, there exists $\lambda_{ij}\in\cc$ such that 
$$
x_{ij}e_{ji} = \lambda_{ij} e_{ii} \quad\implies\quad  x = x_{ij} e_{ji} e_{ij} =  \lambda_{ij} e_{ii} e_{ij} = \lambda_{ij} e_{ij}
$$
Therefore, $x=\sum_{ij} \lambda_{ij} e_{ij}$ for some $\lambda_{ij} \in \cc$ for $i=1,2$. In particular, for every $X \in M_n(\cc)$, $L_{12}(X) = \lambda_{12} e_{12}$ for some $\lambda_{12}\in\cc$, hence $\range(L_{12}) = \Span(e_{12})$ and it is one-dimensional.
\end{proof}

We will find in Theorem~\ref{1or4} that in the special case when $\fk=\cc^2$, Theorem~\ref{twoz} and Lemma~\ref{L-rank-one} 
can also be used to narrow down the possible range ranks of $q$-weights that are $q$-pure.

\section{CP-semigroups and $q$-purity}\label{sec-q-purity}

\begin{definition}\label{q-pure}
We will say that a CP-flow $\alpha$, or alternatively its $q$-weight map, is \emph{$q$-pure} if its set of flow subordinates is totally ordered by subordination. 
\end{definition}

We remark that a unital CP-flow is $q$-pure if and only if its minimal flow dilation is also $q$-pure by Theorem~3.5 and Lemma~4.50 of \cite{powers-CPflows}. We also note that a $q$-pure E$_0$-semigroup must have index 0 or 1. However, since automorphism groups are not CP-flows, therefore a $q$-pure E$_0$-semigroup cannot be of type I$_0$.

\begin{prop}
Let $\alpha$ be a unital CP-flow over $\fk$ which is $q$-pure, and let $\mathfrak{S}$ be its set of flow subordinates. Then $\mathfrak{S}$ is a complete totally ordered set which is order isomorphic (hence homeomorphic in the order topology) to a compact subset of the unit interval. Furthermore, the order topology on $\mathfrak{S}$ can also be described by the uniform convergence in the strong operator topology on compact sets.
\end{prop}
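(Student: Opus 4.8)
The plan is to collapse $\mathfrak{S}$ onto a subset of the unit interval by a single monotone invariant. Fix a faithful normal state $\rho_0$ on $\bk$ (available since $\fk$ is separable), and for $\beta\in\mathfrak{S}$ with $q$-weight map $\eta$ set $\Phi(\beta)=\eta(\rho_0)\big(I-\Lambda(I_\fk)\big)=\bub\eta(\rho_0)(I)$. Then $0\leq\Phi(\beta)\leq\Phi(\alpha)=1$ by Proposition~\ref{q-subordinate-at-1} together with the unitality of $\alpha$, so $\Phi:\mathfrak{S}\to[0,1]$. I would first verify that $\Phi$ is an order embedding. Monotonicity is exactly Proposition~\ref{q-subordinate-at-1} applied to the positive element $I-\Lambda(I_\fk)\in\ah$. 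For injectivity, suppose $\Phi(\beta_1)=\Phi(\beta_2)$; since $\mathfrak{S}$ is totally ordered we may assume $\beta_1\leq\beta_2$, and then $\psi:=\bub{\eta_2}-\bub{\eta_1}:\bk_*\to\bh_*$ is a positive map (as in the proof of Proposition~\ref{q-subordinate-at-1}), so $\psi(\rho_0)$ is a positive normal functional vanishing at $I$, hence $\psi(\rho_0)=0$. Dualizing, $\psi':\bh\to\bk$ is normal and sends positive operators to positive operators annihilated by the faithful state $\rho_0$, hence $\psi'=0$, hence $\psi=0$; therefore $\bub{\eta_1}=\bub{\eta_2}$, so $\eta_1=\eta_2$ and $\beta_1=\beta_2$ by the bijective correspondence between CP-flows and $q$-weight maps. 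Thus $\mathfrak{S}$ is order isomorphic to $\Phi(\mathfrak{S})\subseteq[0,1]$. Note also that $\mathfrak{S}$ has greatest element $\alpha$ and least element $\beta_{\min}$, the CP-flow given by $\beta_t(A)=S_tAS_t^*$, whose $q$-weight map is $0$ and whose generalized boundary representation is the zero map, so that $\beta_{\min}$ is dominated by every CP-flow over $\fk$ by Theorem~\ref{boundary-representation-subordinates}.

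Next I would show that $\mathfrak{S}$ is order complete, which (since a bounded subset of $\rr$ is Dedekind complete iff it is closed) amounts to $\Phi(\mathfrak{S})$ being a closed, hence compact, subset of $[0,1]$. It is enough to construct suprema of increasing nets in $\mathfrak{S}$; infima then arise as suprema of the set of lower bounds, which is nonempty because $\beta_{\min}$ exists. Given an increasing net $(\beta_i)$ in $\mathfrak{S}$, define $\tilde\beta_t(A)=\sup_i(\beta_i)_t(A)$ (a $\sigma$-strong limit) for $A\geq 0$ and extend by linearity. I would then check that each $\tilde\beta_t$ is a normal completely positive contraction (normality by interchanging iterated suprema of positive operators), that $\tilde\beta_s\circ\tilde\beta_t=\tilde\beta_{s+t}$ (the double supremum collapses to a diagonal supremum because the net increases), that $\tilde\beta_t(A)S_t=\big(\sup_i(\beta_i)_t(A)\big)S_t=S_tA$, and that $t\mapsto(f,\tilde\beta_t(A)g)$ is continuous (the predual semigroup is weakly measurable and bounded, hence strongly continuous on $(0,\infty)$, and $\tilde\beta_{t*}(\rho)\to\rho$ as $t\to0^+$ since it is squeezed between $(\beta_i)_{t*}(\rho)$ and $\alpha_{t*}(\rho)$). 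Hence $\tilde\beta\in\mathfrak{S}$, it dominates every $\beta_i$, and any upper bound in $\mathfrak{S}$ dominates it pointwise, so $\tilde\beta=\sup_i\beta_i$ and moreover $(\sup_i\beta_i)_t(A)=\sup_i(\beta_i)_t(A)$. With this, $\Phi(\mathfrak{S})$ is a compact subset of $[0,1]$ order isomorphic to $\mathfrak{S}$, and since the order topology on a closed subset of $\rr$ coincides with the subspace topology we obtain the asserted homeomorphism. I expect this paragraph to be the main obstacle: the only genuinely delicate verification is that the pointwise supremum $\tilde\beta$ is again a CP-flow, that is, the semigroup identity and the time-continuity.

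Finally, for the description of the order topology, I would prove that the identity from $(\mathfrak{S},\text{order topology})$ to $\mathfrak{S}$ equipped with the topology $\tau$ of uniform convergence on compact subsets of $[0,\infty)$ in the strong operator topology is a homeomorphism. As $(\mathfrak{S},\text{order topology})$ is compact metrizable (order isomorphic to a compact subset of $\rr$) and $\tau$ is Hausdorff, it suffices to prove this map continuous: if $\beta^{(n)}\to\beta$ in the order topology then $(\beta^{(n)})_t(A)f\to\beta_t(A)f$ uniformly for $t$ in each compact set, for all $A\in\bh$ and $f\in\fh$. By passing to subsequences and using total orderedness it is enough to treat monotone sequences; for $\beta^{(n)}\uparrow$ the second paragraph forces $\sup_n\beta^{(n)}=\beta$ (its $\Phi$-value is $\Phi(\beta)$, then apply injectivity), so $(\beta^{(n)})_t(A)\uparrow\beta_t(A)$ $\sigma$-strongly for $A\geq 0$. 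Then for $A\geq 0$ and $f\in\fh$ the maps $t\mapsto\big(f,\big(\beta_t(A)-(\beta^{(n)})_t(A)\big)f\big)$ are nonnegative, continuous, and decrease pointwise to $0$, so by Dini's theorem they converge to $0$ uniformly on each compact $t$-set; and since
$$
\big\|\big(\beta_t(A)-(\beta^{(n)})_t(A)\big)f\big\|^2\;\leq\;2\|A\|\,\big(f,\big(\beta_t(A)-(\beta^{(n)})_t(A)\big)f\big),
$$
uniform strong convergence follows for positive $A$, and then for arbitrary $A$ by decomposing into positive parts. The case $\beta^{(n)}\downarrow$ is symmetric. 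Together these three steps establish the proposition.
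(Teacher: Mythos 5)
Your proposal is correct, but it takes a genuinely different route from the paper's. The paper transfers the whole problem to the minimal flow dilation $\alpha^d$: the set $\mathfrak{S}$ is order isomorphic to the set $\mathfrak{Q}$ of positive contractive local flow cocycles of $\alpha^d$, completeness is then nearly free (a monotone bounded net of positive operators converges strongly, and the cocycle and locality relations pass to strong limits on bounded sets), and the order embedding into $[0,1]$ is $T\mapsto\sum_n 2^{-n}\int_0^1\<T_te_n,e_n\>\,dt$ for an orthonormal basis $(e_n)$ of $\fh$. You instead stay entirely inside the boundary-weight formalism: your monotone invariant is $\beta\mapsto\eta(\rho_0)(I-\Lambda(I_\fk))$ for a faithful normal state $\rho_0$ (a mild sharpening of the injectivity statement in Proposition~\ref{q-subordinate-at-1}, and your dualization argument for it is correct), and you build suprema of increasing nets directly as pointwise $\sigma$-strong suprema of the CP-flows. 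The trade-off sits exactly where you flagged it: your route must re-verify the semigroup law (the diagonal argument for jointly increasing double nets is fine), normality, the intertwining relation, and the continuity in $t$; for the last point your weak-measurability argument works, but it needs the extra observation that, by total orderedness together with injectivity of $\Phi$, the increasing net can be replaced by a cofinal increasing \emph{sequence} (or is eventually constant), so that $\tilde\beta_t(A)$ paired with a normal functional is a pointwise limit of a sequence of measurable functions before Hille--Phillips applies. In exchange you avoid the dilation machinery and the subordinate--cocycle correspondence altogether, and your closing Dini-plus-compactness argument makes explicit the identification of the order topology with uniform strong convergence on compacts, which the paper only asserts.
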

\begin{proof}
Let $\alpha^d$ be the minimal flow dilation of $\alpha$ on the Hilbert space $\fh$. Then the 
set of subordinates of $\alpha$ is order isomorphic to the set $\mathfrak{Q}$ of positive
 contractive flow cocycles of $\alpha^d$. Notice that it is clear that if $T^j=(T^j_t)$ 
is an  increasing net of positive contractive flow cocycles of $\alpha^d$, then for 
each $t>0$, the net $T^j_t$ converges in the strong topology to an operator $T_t$, and 
$T=(T_t)$ must be a local positive contractive cocycle. The same argument applies for 
decreasing nets, hence $\mathfrak{Q}$ is a complete totally ordered set. 
	Now let $(e_n: n\in\nn)$ be an orthonormal basis for $\fh$, and define the map 
$\phi: \mathfrak{Q} \to [0,1]$ by
$$
\phi(T) = \sum_{n\in\nn} 2^{-n} \int_0^1 \<T_t e_n, e_n \> dt
$$ 
It is clear that $\phi$ is a well-defined injective order-preserving map. Since $\mathfrak{Q}$ is complete, it follows that the range of $\phi$ must be compact. See also Theorem 12, page 242 of \cite{birkhoff}.

Finally, note that the map $\phi$ is also continuous when $\mathfrak{Q}$ is endowed with the convergence in the strong operator topology on compact sets. In this topology, which coincides with uniform $\sigma$-weak convergence on compact sets, the set $\mathfrak{Q}$ is also compact, hence we obtain the desired homeomorphism.
\end{proof}

We remark that in general we do not know whether $\mathfrak{S}$ is homeomorphic to an \emph{interval}. For that to hold, it suffices to show that for every two subordinates $\beta$ and $\gamma$ such that $\beta_t \leq \gamma_t$ for all $t$, there exists another subordinate $\sigma$ such that $\beta_t \leq \sigma_t \leq \gamma_t$ for all $t>0$ (see Theorem 14 of page 243 of \cite{birkhoff}).

Of course the situation with E-subordinates of an E$_0$-semigroup is somewhat simpler, in that they form a complete lattice (see Theorem~4.9 of \cite{powers-typeII}; see also Theorem 4 of \cite{liebscher}). We do not know if the analogous result holds for CP-flow subordinates of a unital CP-flow.

\subsection*{Concrete description of $q$-purity in the range rank one case}

Let $\fk$ be a separable Hilbert space and define $\fh=\fk \otimes L^2(0,\infty)$. If $\omega: \fb(\fk)_* \to \fa(\fh)_*$ is a $q$-weight map of range rank one (see Definition~\ref{bromega-finite-rank}), then there exists a positive
boundary weight $\mu \in \fa(\fh)_*$ and a positive $T \in \fb(\fk)$ such that
\begin{equation} \label{rankone} 
\omega(\rho )(A) = \rho (T)\mu (A)
\end{equation} 
for all $\rho \in \frak B (\frak K ) _*$ and $A
\in \frak A (\frak H )$. However not every boundary weight map of the form \eqref{rankone} is a $q$-weight.

\begin{theorem} \label{rank-one-q-weight} Suppose $T$ is a positive operator in $\frak B
(\frak K )$ of norm one and $\mu \in \frak A (\frak H )_*$ is positive
and $\mu (I - \Lambda (T)) \leq 1$.  Then the mapping $\omega (\rho )$ of
$\frak B (\frak K )_*$ into $\frak A (\frak H )_*$ given by
$$
\omega (\rho )(A) = \rho (T)\mu (A)
$$
$\rho \in \frak B (\frak K )_*$ and for $A \in \frak A (\frak H )$ is a
$q$-weight.  The $q$-weight map $\omega$ is unital if and only if $T = I$
and $\mu (I - \Lambda (T)) = 1$.  Conversely every $q$-weight of range rank
one is of the above form, and its generalized boundary representation $\pi^\#$ is given by
$$
\pi_t^\# (A) = \frac{\mu{\vert_t} (A)}{1 + \mu{\vert_t} (\Lambda (T))} \; T, \qquad\qquad
\forall t>0, \forall A \in \bh.
$$
\end{theorem}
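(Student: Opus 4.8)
The plan is to verify the hypotheses of Theorem~\ref{powerstheorem} directly for the candidate map $\omega(\rho)(A) = \rho(T)\mu(A)$, compute $\hat\pi_t$ explicitly, and then check the converse and the unitality statement. First I would observe that $\omega$ is completely positive as a boundary weight map: since $T\geq 0$ and $\mu\geq 0$, for any $\rho\in\bk_*$ positive and $A\in\ah$ positive we have $\rho(T)\geq 0$ and $\mu(A)\geq 0$, and complete positivity follows from the rank-one tensor structure $\omega = \mu(\cdot)\,\widehat{(\cdot\mapsto \rho(T))}$ together with the fact that $\rho\mapsto \rho(T)$ is the predual of a positive (rank one, when $T$ is) map on $\bk$. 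The inequality $\omega(\rho)(I-\Lambda(I_\fk)) \leq \rho(I_\fk)$ must be checked: $\omega(\rho)(I-\Lambda(I_\fk)) = \rho(T)\mu(I-\Lambda(I_\fk))$, but this is \emph{not} obviously $\leq \rho(I_\fk)$ from the hypothesis $\mu(I-\Lambda(T))\leq 1$ alone, so I would need to be careful: the correct bound to use is that $\omega(\rho)$ is a boundary weight on $\ah$, and the $q$-weight condition is really about $\hat\pi_t$ being a completely positive contraction, which is what I verify next.

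The computational heart is the formula for $\pi_t^\#$. Using \eqref{dmb1}, $\hat\pi_t = \omega\vert_t (I+\hat\Lambda\omega\vert_t)^{-1}$, where $\omega\vert_t(\rho)(A) = \rho(T)\,\mu\vert_t(A)$ with $\mu\vert_t(A) = \mu(E(t,\infty)AE(t,\infty))$. The key simplification is that $\omega\vert_t$ has a rank-one structure: $\omega\vert_t(\rho) = \rho(T)\,\mu\vert_t$, so $\hat\Lambda\omega\vert_t(\rho) = \rho(T)\,\hat\Lambda(\mu\vert_t)$, and hence $(I+\hat\Lambda\omega\vert_t)$ acts on the one-dimensional "direction" spanned by $\hat\Lambda(\mu\vert_t)$ in a scalar way once we track the eigenvalue. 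Concretely, applying $\hat\Lambda\omega\vert_t$ to a functional of the form $c\,\mu\vert_t$ (restricted appropriately) multiplies by $c\mapsto c\cdot(\text{something})$; the relevant scalar is $\mu\vert_t(\Lambda(T))$, obtained by pairing $\hat\Lambda(\mu\vert_t)$ against $T$. A Neumann series / geometric series argument then gives
$$
(I+\hat\Lambda\omega\vert_t)^{-1}(\rho) = \rho - \frac{\rho(T)\,\mu\vert_t(\Lambda(\,\cdot\,))}{1+\mu\vert_t(\Lambda(T))}\quad\text{(in the appropriate sense)},
$$
and composing with $\omega\vert_t$ and then dualizing to get $\pi_t^\#$ yields
$$
\pi_t^\#(A) = \frac{\mu\vert_t(A)}{1+\mu\vert_t(\Lambda(T))}\,T.
$$
I would then check that this is a completely positive contraction: complete positivity is clear since $\mu\vert_t\geq 0$ and $T\geq 0$ (it is $\mu\vert_t(A)$ times a fixed positive operator, up to the positive normalization), and the contraction estimate $\|\pi_t^\#(I)\| = \frac{\mu\vert_t(I)}{1+\mu\vert_t(\Lambda(T))}\|T\| \leq 1$ needs the hypothesis $\|T\|=1$ together with $\mu\vert_t(I) \leq 1 + \mu\vert_t(\Lambda(T))$, i.e. $\mu\vert_t(I-\Lambda(T))\leq 1$, which follows from $\mu(I-\Lambda(T))\leq 1$ and the monotone-limit behavior of $\mu\vert_t$ (Proposition~\ref{get-omega}, noting $I-\Lambda(T)\geq 0$ so $\mu\vert_t(I-\Lambda(T))$ increases to $\mu(I-\Lambda(T))$ as $t\to 0+$, and decreases in $t$).

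For unitality I would invoke the last sentence of Theorem~\ref{powerstheorem}: $\omega$ is unital iff $\omega(\rho)(I-\Lambda(I_\fk)) = \rho(I_\fk)$ for all $\rho$, i.e. $\rho(T)\mu(I-\Lambda(I_\fk)) = \rho(I_\fk)$ for all $\rho\in\bk_*$. Taking $\rho$ to run over all normal functionals forces $\mu(I-\Lambda(I_\fk))\,T = I_\fk$; since $\|T\|=1$ this gives $T = I_\fk$ and then $\mu(I-\Lambda(I_\fk)) = \mu(I-\Lambda(T)) = 1$, and conversely these two conditions plainly give unitality. For the converse direction of the theorem (every range-rank-one $q$-weight has this form), I would appeal to the discussion preceding the statement: if $\omega$ has range rank one then $\bromega$ has one-dimensional range $\cc T$ with $T$ chosen positive of norm one (possible since $\bromega$ is completely positive, so its range is spanned by a positive element), and writing $\bromega(B) = \mu(B)T$ defines a positive linear functional $\mu$ on $\ah$ via this relation; positivity of $\omega$ forces $\mu\geq 0$, and $\mu(I-\Lambda(T))\leq 1$ follows from the standing inequality $\omega(\rho)(I-\Lambda(I_\fk))\leq\rho(I_\fk)$ evaluated cleverly (or more directly from $\pi_t^\#$ being a contraction and letting $t\to 0$). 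The main obstacle I anticipate is the careful bookkeeping in inverting $(I+\hat\Lambda\omega\vert_t)$ and correctly identifying the scalar $\mu\vert_t(\Lambda(T))$ — in particular making sure the "rank-one in the predual" reasoning is rigorous when $\mu\vert_t$ is a genuine normal functional (after truncation) but the underlying $\mu$ is only a boundary weight, so everything must be done at the level of $\omega\vert_t$ and $\pi_t^\#$ where boundedness holds, and only at the very end passed to the limit via Proposition~\ref{get-omega}.
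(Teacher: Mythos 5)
Your proposal is correct and follows essentially the same route as the paper: both exploit the rank-one structure to invert $I+\hat\Lambda\omega\vert_t$ as a rank-one perturbation of the identity (you on the predual side, the paper on the dual operator side), arrive at the same formula for $\pi_t^\#$, and reduce the contraction condition to $\mu\vert_t(I-\Lambda(T))\leq 1$ via monotonicity in $t$. You additionally spell out the unitality criterion and the converse, which the paper's written proof leaves to Theorem~\ref{powerstheorem} and the discussion preceding the statement, but the substance is the same.
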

\begin{proof}

Suppose $\omega$ is a boundary weight map of the form \eqref{rankone}, where
$\mu \in \fa(\fh)_*$ is positive and $T \in \fb(\fk)$ is positive with norm one.  
 We observe that
$$
\hat \Lambda\omega{\vert_t} (\rho )(A) = \rho (T)\cdot\mu{\vert_t}
(\Lambda (A))
$$
for all $t > 0$, $A \in \frak B (\frak K )$, and $\rho \in \frak B (\frak K )_*$.  Then we
see that the inverse of the
mapping
$$
A \rightarrow A + \mu{\vert_t} (\Lambda (A))\cdot T
$$
is given by
$$
A \rightarrow A - \big(1 + \mu{\vert_t} (\Lambda (T))\big)^{-1}\mu{\vert_t}
(\Lambda (A))\cdot T.
$$
Then the generalized boundary representation is
\begin{align*}
\pi_t^\# (A) &= \mu{\vert_t} (A)\cdot T - \big(1 + \mu{\vert_t}
(\Lambda (T))\big)^{-1}\mu{\vert_t} (T)\mu{\vert_t} (A)\cdot T
\\
&= \big(1 + \mu{\vert_t} (\Lambda (T))\big)^{-1} \mu{\vert_t} (A)\cdot T.
\end{align*}
We see that the generalized boundary representation is completely
positive for all $t > 0$.  Now we need to check that the norm $\pi_t^\# (I)$
is
not greater than one.  We have
$$
\pi_t^\# (I) = \big(1 + \mu{\vert_t} (\Lambda (T))\big)^{-1} \mu{\vert_t}
(I)\cdot T
$$
and since $T$ is of norm one we have
$$
\Vert\pi_t^\# (I)\Vert = \big(1 + \mu{\vert_t} (\Lambda (T))\big)^{-1}
\mu{\vert_t} (I)
$$
for all $t > 0$.  In order that this norm not exceed one we must have
$$
\mu{\vert_t} (I - \Lambda (T)) \leq 1
$$
and since the above function of $t$ is non increasing the above
inequality holds for all $t > 0$ if and only if
$$
\mu (I - \Lambda (T)) \leq 1.
$$
Since $\Lambda (T) \leq \Lambda (I) = \Lambda$ it follows that this
inequality implies $\mu (I - \Lambda ) \leq 1$ so even if we did not assume
$\mu \in \frak A ( \frak H )_*$ the conditions that $\omega$ be a
contractive $q$-positive boundary weight map would force this on us.
\end{proof}

\begin{theorem}
Let $\fk$ be a separable Hilbert space, define $\fh=\fk \otimes L^2(0,\infty)$ and 
let $\omega:\bk_* \to \ah_*$ be a non-zero $q$-weight map of range rank one. Then 
for all $\lambda \in [0,1]$, the map $\lambda \omega$ is a $q$-weight map subordinate to 
$\omega$. Furthermore, $\omega$ is $q$-pure if and only if every $q$-subordinate of $\omega$
 has the form  $\lambda \omega$ for some $\lambda \in [0,1]$. 
\end{theorem}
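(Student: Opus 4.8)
The plan is to work entirely with the explicit description of range rank one $q$-weights furnished by Theorem~\ref{rank-one-q-weight}: write $\omega(\rho)(A)=\rho(T)\mu(A)$ with $T\in\bk$ positive of norm one and $\mu\in\ah_*$ positive with $\mu(I-\Lambda(T))\leq1$, and recall that the associated generalized boundary representation is $\pi_t^\#(A)=\big(1+\mu\vert_t(\Lambda(T))\big)^{-1}\mu\vert_t(A)\,T$. The only ingredients beyond Theorem~\ref{rank-one-q-weight} are Theorem~\ref{boundary-representation-subordinates} (comparison of CP-flows reduces to comparison of generalized boundary representations) and the bijective correspondence between CP-flows and $q$-weight maps recalled after it, together with the fact that $q$-subordination of $q$-weights is the same relation as subordination of the associated CP-flows.

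First I would prove that $\lambda\omega$ is a $q$-subordinate of $\omega$ for each $\lambda\in[0,1]$. Since $(\lambda\omega)(\rho)(A)=\rho(T)(\lambda\mu)(A)$ with $\lambda\mu$ positive and $(\lambda\mu)(I-\Lambda(T))\leq\lambda\leq1$, Theorem~\ref{rank-one-q-weight} gives that $\lambda\omega$ is a $q$-weight with generalized boundary representation $\xi_t^\#(A)=\big(1+\lambda\mu\vert_t(\Lambda(T))\big)^{-1}\lambda\mu\vert_t(A)\,T$. A one-line computation then shows $\pi_t^\#-\xi_t^\#=c_t(\lambda)\,\Phi_t$, where $\Phi_t(A)=\mu\vert_t(A)\,T$ is completely positive (a positive normal functional times a positive operator) and $c_t(\lambda)=(1-\lambda)\big[(1+\mu\vert_t(\Lambda(T)))(1+\lambda\mu\vert_t(\Lambda(T)))\big]^{-1}\geq0$; hence $\pi_t^\#-\xi_t^\#$ is completely positive for all $t>0$ and $\lambda\omega\leq_q\omega$ by Theorem~\ref{boundary-representation-subordinates}. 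Applying the same computation with $\lambda'\omega$ in place of $\omega$ (which is again a non-zero range rank one $q$-weight when $\lambda'>0$) and using $\lambda\omega=(\lambda/\lambda')(\lambda'\omega)$, I also obtain $\lambda\omega\leq_q\lambda'\omega$ whenever $0\leq\lambda\leq\lambda'\leq1$, so $\{\lambda\omega:\lambda\in[0,1]\}$ is totally ordered by $q$-subordination.

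For the equivalence: if every $q$-subordinate of $\omega$ has the form $\lambda\omega$, then by the previous step the set of $q$-subordinates of $\omega$ is exactly $\{\lambda\omega:\lambda\in[0,1]\}$, which is totally ordered; since $q$-subordination is the same as subordination of the associated CP-flows, $\omega$ is $q$-pure. Conversely, suppose $\omega$ is $q$-pure and let $\eta$ be an arbitrary $q$-subordinate, with generalized boundary representation $\zeta_t^\#$. From the explicit formula, $\lambda\mapsto\xi_t^{(\lambda),\#}$ is norm-continuous on $[0,1]$ for each fixed $t$, and the cone of completely positive maps is closed, so $E=\{\lambda\in[0,1]:\lambda\omega\leq_q\eta\}$ is a closed subset of $[0,1]$; it contains $0$, since the zero $q$-weight is $\leq_q\eta$, hence $E$ has a maximum $\lambda_0$ and $\lambda_0\omega\leq_q\eta$. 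For every $\lambda\in(\lambda_0,1]$ we have $\lambda\omega\not\leq_q\eta$; since $\omega$ is $q$-pure and both $\lambda\omega$ and $\eta$ are flow subordinates of $\omega$, the total order forces $\eta\leq_q\lambda\omega$. Letting $\lambda\downarrow\lambda_0$ (when $\lambda_0<1$; when $\lambda_0=1$ this is simply the hypothesis $\eta\leq_q\omega$) and again invoking the continuity in $\lambda$ together with the closedness of the cone of completely positive maps, I obtain $\eta\leq_q\lambda_0\omega$. Thus $\eta\leq_q\lambda_0\omega$ and $\lambda_0\omega\leq_q\eta$, so $\eta$ and $\lambda_0\omega$ have identical generalized boundary representations, whence $\eta=\lambda_0\omega$ by the bijective correspondence between CP-flows and $q$-weight maps.

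I expect the only delicate point to be the limiting argument in the converse direction, namely that $E$ attains its supremum and that $\eta\leq_q\lambda_0\omega$; this rests on the continuity in $\lambda$ of the generalized boundary representation of $\lambda\omega$ (immediate from the formula in Theorem~\ref{rank-one-q-weight}) and on the closedness of the cone of completely positive maps in the topology of pointwise norm convergence. Everything else is a routine unwinding of Theorems~\ref{rank-one-q-weight} and~\ref{boundary-representation-subordinates}.
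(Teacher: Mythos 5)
Your proof is correct. The first assertion is handled essentially as in the paper: both arguments reduce to the explicit formula for $\pi_t^\#$ from Theorem~\ref{rank-one-q-weight}, and your identity $\pi_t^\#-\xi_t^\# = c_t(\lambda)\Phi_t$ is just an unwound version of the paper's observation that $x\mapsto x/(1+bx)$ is increasing. Where you genuinely diverge is in the converse direction of the second assertion. The paper pins down the scalar in advance, setting $\lambda = \| \breta(I-\Lambda)\| / \|\bromega(I-\Lambda)\|$, uses the fact that $\omega_1\leq_q\omega_2$ forces $\bromega_1(I-\Lambda)\leq\bromega_2(I-\Lambda)$ to rule out both $\eta\leq_q(\lambda-\epsilon)\omega$ and $\eta\geq_q(\lambda+\epsilon)\omega$, deduces the sandwich $(\lambda-\epsilon)\omega\leq_q\eta\leq_q(\lambda+\epsilon)\omega$ from total ordering, and then converts this into the pointwise inequality $(\lambda-\epsilon)\omega(\rho)(T)\leq\eta(\rho)(T)\leq(\lambda+\epsilon)\omega(\rho)(T)$ via Proposition~\ref{q-subordinate-at-1} before letting $\epsilon\to0$. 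You instead locate the scalar a posteriori as $\lambda_0=\max\{\lambda:\lambda\omega\leq_q\eta\}$, justify attainment of the maximum by the norm-continuity of $\lambda\mapsto\xi_t^{(\lambda),\#}$ for fixed $t$ together with closedness of the cone of completely positive maps, and conclude by antisymmetry of $\leq_q$ at the level of generalized boundary representations (a map that is completely positive with completely positive negative is zero, and the $q$-weight is recovered from its boundary representation). Both arguments turn on the same use of total ordering to trap $\eta$ between scalar multiples of $\omega$; yours bypasses Proposition~\ref{q-subordinate-at-1} entirely at the cost of a closedness/limit argument, while the paper's has the small advantage of exhibiting $\lambda$ explicitly in terms of $\eta$. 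Both are complete.
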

\begin{proof}
Let $\lambda \in [0,1]$ be given. Let us show that if $\omega$ is a $q$-weight of range rank one then $\lambda\omega$ is a $q$-subordinate of $\omega$.  Let $\mu \in \ah_*$ be positive and $T\in \bk$ positive with norm one be given by the previous theorem so that $\omega$ satisfies \eqref{rankone}. Then by the previous theorem $\lambda \omega$ is a $q$-weight map of range rank one. Let $\pi^\#$ and $\phi^\#$ be the generalized boundary representations of $\omega$ and
$\lambda\omega$, respectively. Then we have that for all $t > 0$ and $A \in \frak B (\frak H )$,
\begin{align*}
\pi_t^\# (A) & = (1 + \mu{\vert_t} (\Lambda (T)))^{-1} \mu{\vert_t}
(A)\cdot T,\\
\phi_t^\# (A) & = (1 + \lambda\mu{\vert_t} (\Lambda (T)))^{-1}
\lambda\mu {\vert_t} (A)\cdot T.
\end{align*}
Since for $b \geq 0$ the function $h(x) = x/(1+bx)$ is an increasing function of $x$ it 
follows that $\pi_t^\# \geq \phi_t^\#$ for all $t > 0$.  Thus we have that $\lambda \omega \leq_q \omega$.

Now suppose that $\omega$ is non-zero and $q$-pure and $\eta$ is a non-zero $q$-subordinate of $\omega$. Let $\bromega$ and $\breta$ be the dualized $q$-weight maps corresponding to $\eta$ and $\omega$, respectively.  

Observe that if $\omega_1 \leq_q \omega_2$ are $q$-weights, then 
\begin{equation}\label{omega-at-1}
\bromega_1(I-\Lambda) \leq \bromega_2(I-\Lambda).
\end{equation}
Thus we have that that $0 \leq \breta(I-\Lambda) \leq \bromega(I - \Lambda)$, and observe that $\bromega(I-\Lambda) \neq 0$ and $\breta(I-\Lambda) \neq 0$ since $\omega$ and $\eta$ are non-zero. Let
$$
\lambda = \frac{ \| \breta(I-\Lambda) \| }{ \| \bromega(I-\Lambda) \|}
$$
and notice that $0<\lambda\leq 1$. If $\lambda=1$, then by Proposition~\ref{q-subordinate-at-1}, we have that $\eta=\omega$, as desired. So let us consider the case $0<\lambda <1$. Let $\epsilon>0$ be small enough so that $0 <\lambda -\epsilon < \lambda + \epsilon<1$ and
\begin{equation}\label{sandwich-norm}
(\lambda - \epsilon) \;\|   \bromega(I-\Lambda) \|  < \| \breta(I-\Lambda) \| < (\lambda + \epsilon)\; \|  \bromega(I-\Lambda) \|.
\end{equation}
 Now observe that $(\lambda -\epsilon)\omega\leq_q(\lambda +\epsilon)\omega$. Furthermore we cannot have $\eta \leq_q (\lambda -\epsilon) \omega$ or $\eta \geq_q (\lambda +\epsilon) \omega$, because by 
\eqref{omega-at-1} either inequality implies
$$
\breta(I-\Lambda) \leq (\lambda -\epsilon) \bromega(I-\Lambda), \quad\text{or}\quad
\breta(I-\Lambda) \geq (\lambda +\epsilon) \bromega(I-\Lambda),
$$
in which case we have that
$$
\| \breta(I-\Lambda)\|  \leq (\lambda -\epsilon) \| \bromega(I-\Lambda) \|, \quad\text{or}\quad
\| \breta(I-\Lambda) \| \geq (\lambda +\epsilon) \|\bromega(I-\Lambda) \|,
$$
contradicting \eqref{sandwich-norm}. However, by assumption $\omega$ is $q$-pure, hence the set of $q$-subordinates of $\omega$ is totally ordered. Therefore we must have 
\begin{equation}\label{sandwich}
(\lambda -\epsilon)\omega\leq_q \eta \leq_q(\lambda +\epsilon)\omega.
\end{equation}
Now by Proposition~\ref{q-subordinate-at-1}, this implies that for all $\epsilon>0$ small enough, positive $\rho \in \bk_*$ and positive $T \in \ah$, 
$$
(\lambda -\epsilon)\omega(\rho)(T) \leq \eta(\rho)(T) \leq(\lambda +\epsilon)\omega(\rho)(T).
$$
Thus we have that for all positive $\rho \in \bk_*$ and positive $T \in \ah$, $\eta(\rho)(T) =\lambda \omega(\rho)(T)$ hence $\eta = \lambda \omega$.
\end{proof}

We note that when $\omega: \bk_* \to \ah_*$ is a $q$-weight map but with range rank not equal to 1, then the above description of $q$-purity does not hold, for instance $\lambda \omega$ can fail to be a subordinate of $\omega$ for all $\lambda \in (0,1)$ (see Section~4 of \cite{powers-holyoke} or the beginning of Section~4 of \cite{jankowski1}). Furthermore, if $\nu$ is a type II Powers weight such $A \mapsto \nu((I-\Lambda)^{1/2}A(I-\Lambda)^{1/2})$ is a pure normal state and $\phi: M_n(\cc) \to M_n(\cc)$ is such that $\phi \circ (I \otimes \nu)$ defines a $q$-weight map, then $\omega$ is $q$-pure if and only if every non-zero 
$q$-subordinate $\eta$ of $\omega$ has the form 
$$
\breve{\eta} = \phi(I+t\phi)^{-1}\circ (I \otimes \nu)
$$
for some $t\geq 0$ (see Definition 4.2 and Lemma 4.3 of \cite{jankowski1}).

\section{The range rank one case}\label{sec-rank-one}

In this section we will study E$_0$-semigroups arising from $q$-weights with range rank one and their subordinates. Before we begin, however, we need to analyze the subordination structure of boundary weights.

\subsection{Boundary weights and their subordinates}
Let $\fk$ be a separable Hilbert space and let $\fh= \fk \otimes L^2(0, \infty)$, the space of $\fk$-valued Lebesgue
measurable functions defined on $(0, \infty)$.  We have found the following description of the boundary weights acting on $\frak A (\frak H )$ to be useful.  Let $q:(0,\infty)\to \rr)$ be given by $q(x)=1-e^{-x}$, and let
$\mathfrak{H}_q = \fk\otimes L^2(0,\infty; q(x)dx)$  be the linear space of Lebesgue measurable $\frak K$-valued functions which are square integrable with respect to the measure $(1-e^{-x})dx$. Notice that the operator $M_q$ of multiplication by $q(x)$ is bounded on $\fh_q$.
We define a sesquilinear form $\< \cdot, \cdot \>$ on $\fh_q \times M_q\fh_q$ as follows:
given $f \in \fh_q$ and $g \in M_q \fh_q$, then
$$
\< f, g \> = \int_0^\infty \overline{f(x)}g(x) dx.
$$
Now observe that if $A \in \fa(\fh)$, and $g \in \fh_q$, then we have that $Ag$ is well-defined in a natural way and furthermore $Ag \in M_q \fh_q$.  Now if $\omega \in
\frak A (\frak H )_*$ we have the
functional $\rho (A) = \omega ((I - \Lambda )^{1/2} A(I - \Lambda )^{1/2}
)$ for $A \in \frak B (\frak H )$ is normal so
there are two orthonormal sets of vectors $\{ f_i,g_i,i = 1,2,\cdots \}$ and a sequence of positive
real numbers $(\lambda_i)_{i=1}^r$ ($r=\infty$ is allowed) such that
$$
\sum_{i=1}^r \lambda_i < \infty\qquad \text{and}\qquad \rho (A) = \sum_{i=1}^r \lambda_i(f_i,Ag
_i)
$$
for $A \in \frak B (\frak H )$.  We can then think of the functions $h_i =
(I - \Lambda )^{-{\tfrac{1}{2}}}f_i$ and $k_i = (I - \Lambda )^{-{\tfrac{1}{2}}}g_i$ as
two sets of orthonormal vectors in $\frak H _q$ (with respect to the inner product of $\fh_q$ and not the sesquilinear form) and in terms of these
vectors we have
$$
\omega (A) = \sum_{i=1}^r \lambda_i \<h_i,Ak_i\>
$$
for $A \in \frak A (\frak K )$.  Note if $\omega$ is positive then we can
arrange it so $f_i = g_i$ or $h_i = k_i$.

We now define a useful ordering on positive boundary weights.

\begin{definition}  Suppose $\omega$ and $\eta$ are positive
boundary weights on
$\frak A (\frak H )$.  We say $\omega$ $q$-dominates $\eta$ or $\eta$ is a
$q$-subordinate of $\omega$, written $\omega \geq_q \eta$ if for all $t >
0$ we have
$$
\frac {\omega\vert_t(A)} {1+\omega\vert_t(\Lambda )}  \geq \frac {\eta\vert
_t(A)} {1+\eta\vert_t(\Lambda )}
$$
for all positive $A \in \frak B (\frak H )$.
\end{definition}

Let $\omega \in \fa(\fh)_*$ be a positive boundary weight.  If $A \in \fb(\fh)$ 
is positive and $\omega \vert_t(A) \rightarrow \infty$
as $t \to 0+$, we will write $\omega(A)= \infty$.  This notation will be
useful when dealing with unbounded boundary weights which are unbounded linear
functionals (i.e. $\omega(I)= \infty$)
or weights for which $\omega \vert_t (\Lambda(E)) \rightarrow \infty$ as $t \to 0+$
for a projection $E$.

One checks that if $\omega \geq_q \eta$ and $\eta \geq_q \mu$ then $\omega
\geq_q \mu$.  Also we see
that the $q$-ordering is stronger than the normal ordering in that if
$\omega \geq_q \eta$ then $\omega \geq \eta$ (i.e. $\omega (A) \geq \eta (A)$
for all positive $A \in \frak A (\frak H ))$.
Note that if $\omega$ is a positive boundary weight then $\omega \geq_q \lambda
\omega$ for all $\lambda \in [0,1]$.

\begin{definition}[\cite{powers-holyoke}]\label{old-q-pure} We say a positive boundary weight $\omega$ is \emph{$q$-pure} if $\omega
\geq_q \eta \geq_q 0$
if and only if $\eta = \lambda\omega$ with $\lambda \in [0,1]$.  
\end{definition}

We should remark that in the special case when $\fk=\cc$, a $q$-weight map $\omega$ over $\fk$
 is $q$-pure (Definition~\ref{q-pure}) if and only if $\omega(1)$ is $q$-pure as a boundary weight.

Note the
$q$-ordering is quite different from the normal ordering.  For example the
sum $\omega + \eta$ of two positive weights can be $q$-pure.  In Theorem
3.9 of \cite{powers-holyoke} this order relation was characterized in the
case where $\frak K$ is one dimensional so $\frak H = L^2(0,\infty )$.  It
turns out the same characterization applies to when $\frak K$ is any
separable Hilbert space.

\begin{theorem} \label{sub-decomposition} Suppose $\omega$ and $\eta$ are positive
boundary weights on $\frak A (\frak H )$.  Suppose $\rho \in \frak A (\frak
H )_*$ is positive and $\omega \geq \rho$ and $\rho (I) < \infty$ (so $\rho
\in \frak B (\frak H )_*)$ and $\eta = \lambda (1 +\rho (\Lambda
))^{-1}(\omega - \rho )$ with $0 \leq \lambda \leq 1$.  Then $\omega \geq_q
\eta$.  Conversely suppose $\omega \geq_q \eta$ and $\eta \neq 0$.  Then
there is a positive element $\rho \in \frak B (\frak H )_*$ and and a real
number $\lambda \in (0,1]$ so that $\eta = \lambda (1 + \rho (\Lambda
))^{-1}(\omega - \rho )$.  Furthermore, if $\omega (I) = \infty$ then
$\rho$ and $\lambda$ are unique.  It follows then that a positive boundary
weight on $\frak A (\frak H )$ is $q$-pure if and only if every rank
one positive functional $\rho \in \frak B (\frak H )_*$ subordinate to
$\omega$ (so if $\omega \geq \rho )$ is a multiple of $\omega$.   (Note in
the case when $\omega (\Lambda ) = \infty$ this means there are no bounded
positive functionals subordinate to $\rho$)
\end{theorem}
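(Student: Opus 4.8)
The plan is to prove the two implications of the equivalence separately and then read off the $q$-purity criterion.

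\emph{The forward implication.} First reduce to $\lambda=1$: writing $\eta_1:=(1+\rho(\Lambda))^{-1}(\omega-\rho)$ one has $\eta=\lambda\eta_1$ and $\eta_1\ge_q\lambda\eta_1$, so by transitivity of $\ge_q$ it suffices to show $\omega\ge_q\eta_1$. Putting $\rho\vert_t(A):=\rho(E(t,\infty)AE(t,\infty))$ (meaningful since $\rho\in\fb(\fh)_*$) we get $\eta_1\vert_t=(1+\rho(\Lambda))^{-1}(\omega\vert_t-\rho\vert_t)$. Substituting into the definition of $\ge_q$ and cross-multiplying --- legitimate because $1+\rho(\Lambda)-\rho\vert_t(\Lambda)=1+\rho(\Lambda E(0,t))\ge 1>0$ --- the inequality to be checked, for positive $A\in\fb(\fh)$, simplifies to
\[
\omega\vert_t(A)\,\bigl(\rho(\Lambda)-\rho\vert_t(\Lambda)\bigr)+\rho\vert_t(A)\,\bigl(1+\omega\vert_t(\Lambda)\bigr)\ge 0,
\]
which holds term by term: $\omega\vert_t(A),\rho\vert_t(A)\ge 0$, $\rho(\Lambda)-\rho\vert_t(\Lambda)=\rho(\Lambda E(0,t))\ge 0$, and $1+\omega\vert_t(\Lambda)>0$.

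\emph{The converse.} The key remark is that the identity $\eta=\lambda(1+\rho(\Lambda))^{-1}(\omega-\rho)$ holds \emph{automatically} once one sets $\rho:=\omega-\beta\eta$ and $\lambda:=(1+\rho(\Lambda))/\beta$ for a scalar $\beta>0$ with $\omega-\beta\eta\ge 0$, since then $\omega-\rho=\beta\eta$ and $\lambda(1+\rho(\Lambda))^{-1}(\omega-\rho)=\beta^{-1}\beta\eta=\eta$. So everything reduces to choosing $\beta$ so that (i) $\omega-\beta\eta\ge 0$; (ii) $\rho=\omega-\beta\eta\in\fb(\fh)_*$; (iii) $\lambda\in(0,1]$, equivalently $\beta\ge 1+\rho(\Lambda)$ (positivity of $\lambda$ is automatic). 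Rearranging the defining inequality of $\ge_q$ at level $t$ gives $\omega\vert_t\ge\beta_t\eta\vert_t$ with $\beta_t:=\frac{1+\omega\vert_t(\Lambda)}{1+\eta\vert_t(\Lambda)}\ge 1$; testing against $(I-\Lambda)E(t,\infty)$ and using $\omega\vert_t(I-\Lambda)\le\omega(I-\Lambda)<\infty$ together with $\eta\ne 0$ shows $\beta_t$ is bounded above. One checks that $c_t:=\max\{c:\omega\vert_t\ge c\eta\vert_t\}$ is non-increasing as $t\downarrow 0$, so $\beta:=\inf_{t>0}c_t\in[1,\infty)$ yields $\omega\vert_t\ge c_t\eta\vert_t\ge\beta\eta\vert_t$ for all $t$, which is (i). When $\omega(\Lambda)<\infty$ --- equivalently $\omega(I)<\infty$, since $(I-\Lambda)E(t,\infty)\le I-\Lambda$ forces $\omega\vert_t(I)-\omega\vert_t(\Lambda)\le\omega(I-\Lambda)<\infty$ --- one has $\beta_t\to\frac{1+\omega(\Lambda)}{1+\eta(\Lambda)}\le\beta$, so one may take $\beta=\frac{1+\omega(\Lambda)}{1+\eta(\Lambda)}$; then $\rho(\Lambda)=\frac{\omega(\Lambda)-\eta(\Lambda)}{1+\eta(\Lambda)}$ and $\rho(I-\Lambda)$ are finite, so $\rho$ is bounded, and a one-line computation gives $\lambda=1$. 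When $\omega(\Lambda)=\infty$ one first notes $\eta(\Lambda)=\infty$ as well (otherwise the $\ge_q$ inequality at $(I-\Lambda)E(t,\infty)$ forces $\eta\vert_t(I-\Lambda)\to 0$, whence $\eta=0$), then that $\beta_t\to\beta$, so that $\omega\vert_t(\Lambda)=\beta_t(1+\eta\vert_t(\Lambda))-1$ and (iii) follows in the limit. The genuinely delicate point --- which I expect to be the main obstacle --- is (ii) in the case $\omega(\Lambda)=\infty$: proving $\rho(\Lambda)=\sup_t\rho\vert_t(\Lambda)<\infty$. This is handled as in Powers~\cite{powers-holyoke} (Theorem~3.9): the extremal choice $\beta=\inf_t c_t$ together with the formula \eqref{mut} for $\omega\vert_t$ forces a uniform bound on $\rho\vert_t(\Lambda)$.

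\emph{Uniqueness and the $q$-purity criterion.} When $\omega(I)=\infty$ the weight $\eta$ is unbounded, so if $(\rho,\lambda)$ and $(\rho',\lambda')$ both work then $\omega-\rho$ and $\omega-\rho'$ are positive scalar multiples of $\eta$, hence $\rho-\rho'$ is a scalar multiple of $\eta$; being a difference of bounded functionals it must vanish, so $\rho=\rho'$, and then $\beta$ (the scalar with $\omega-\rho=\beta\eta$) and $\lambda=(1+\rho(\Lambda))/\beta$ are determined. For the final assertion, Definition~\ref{old-q-pure} says $\omega$ is $q$-pure iff every $q$-subordinate is $\lambda\omega$; by the two implications just proven, the $q$-subordinates of $\omega$ are exactly the weights $\lambda(1+\rho(\Lambda))^{-1}(\omega-\rho)$ with $0\le\rho\le\omega$ bounded and $\lambda\in(0,1]$, and such a weight is a multiple of $\omega$ precisely when $\rho$ is. Thus $\omega$ is $q$-pure iff every bounded positive $\rho\le\omega$ is a multiple of $\omega$; and since a bounded positive $\rho$ which is not a multiple of $\omega$ admits, via its spectral decomposition, a rank-one piece $\le\rho\le\omega$ still not a multiple of $\omega$, this is equivalent to the stated rank-one condition. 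Finally, when $\omega(\Lambda)=\infty$ the only bounded positive multiple of the unbounded weight $\omega$ is $0$, so the criterion reads: $\omega$ has no nonzero bounded positive subordinate, which is the parenthetical remark.
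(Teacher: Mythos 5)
Your forward implication is correct and is essentially the paper's computation reorganized: the paper verifies $\omega\geq_q\eta$ by a chain of fraction inequalities using $\lambda^{-1}\geq 1$ and $\rho(\Lambda)\geq\rho\vert_t(\Lambda)$, while you reduce to $\lambda=1$ by transitivity and cross-multiply; the resulting inequality is the same. The uniqueness argument and the reduction of $q$-purity to the rank-one condition also match the paper's reasoning.

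The converse is where there is a genuine gap, and you have flagged it yourself: you never prove $\rho(\Lambda)=\sup_t\rho\vert_t(\Lambda)<\infty$ when $\omega(\Lambda)=\infty$, but instead defer to Powers' Theorem~3.9 --- which is precisely the $\mathfrak{K}=\cc$ case of the statement being proved, so it cannot be cited for the general separable $\mathfrak{K}$. Moreover, your claim that ``(iii) follows in the limit'' presupposes (ii), since $\beta\geq 1+\rho(\Lambda)$ is meaningless if $\rho(\Lambda)=\infty$. The frustrating part is that you already hold every ingredient needed to close the gap: from $\rho\vert_t(\Lambda)=\omega\vert_t(\Lambda)-\beta\,\eta\vert_t(\Lambda)$ and your own identity $\omega\vert_t(\Lambda)=\beta_t\bigl(1+\eta\vert_t(\Lambda)\bigr)-1$ one gets
\[
\rho\vert_t(\Lambda)=(\beta_t-1)+(\beta_t-\beta)\,\eta\vert_t(\Lambda)\;\leq\;\beta_t-1\;\leq\;\beta-1,
\]
where $\beta_t\leq\beta$ holds because $c_t\geq\beta_t$ for every $t$ while $c_t$ decreases and $\beta_t$ increases as $t\downarrow 0$, so $\beta=\inf_t c_t\geq\sup_t\beta_t$. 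This uniform bound delivers (ii) and (iii) in one stroke, and it is exactly the paper's argument: there one sets $h(t)=(1+\eta\vert_t(\Lambda))/(1+\omega\vert_t(\Lambda))=1/\beta_t$, proves $h$ is non-decreasing with limit $\kappa$ as $t\to 0+$, defines $\rho=\omega-\kappa^{-1}\eta$, and reads off $\rho\vert_t(\Lambda)\leq\kappa^{-1}-1$ directly from $h(t)\geq\kappa$. Two further loose ends in your version: the assertion $\beta_t\to\beta$ in the unbounded case needs the squeeze $\beta_t\leq c_t\leq\omega\vert_t(\Lambda)/\eta\vert_t(\Lambda)$ together with $\eta\vert_t(\Lambda)\to\infty$, which you do not supply (the paper sidesteps this by never introducing $c_t$); and passing from $\omega\vert_t\geq\beta_t\eta\vert_t$ to $\omega\vert_t\geq\beta\eta\vert_t$ requires restricting the inequality at level $s<t$ to $E(t,\infty)\mathfrak{B}(\mathfrak{H})E(t,\infty)$ and letting $s\to 0$. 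With these repairs your argument becomes the paper's.
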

\begin{proof}  Assume the first two sentences in the statement of the theorem
are satisfied.  Then we have
\begin{align*}
\frac {\eta\vert_t} {1+\eta\vert_t(\Lambda )}  &= \frac {\omega\vert_t-\rho
\vert_t} {\lambda^{-1}+\lambda^{-1}\rho (\Lambda )+\omega\vert_t(\Lambda )-
\rho\vert_t(\Lambda )}
\\
&\leq \frac {\omega\vert_t-\rho\vert_t} {1+\rho (\Lambda
)+\omega\vert_t(\Lambda )-\rho\vert_t(\Lambda )} \leq \frac
{\omega\vert_t-\rho\vert_t} {1+\omega\vert _t(\Lambda )}
\\
&\leq \frac {\omega\vert_t} {1+\omega\vert_t(\Lambda )}
\end{align*}
Hence, we have
$$
\frac{\omega\vert_t(A)}{1+\omega\vert_t(\Lambda )}  \geq \frac {\eta\vert
_t(A)} {1+\eta\vert_t(\Lambda )}
$$
for all $t > 0$ so $\omega \geq_q \eta$.

Next assume $\omega$ and $\eta$ are as stated in the first sentence of the
statement of the theorem and $\omega \geq_q \eta \geq_q 0$.  Let
$$
h(t) = (1 + \eta{\vert_t} (\Lambda ))/(1 + \omega{\vert_t}(\Lambda ))
$$
for $t > 0$.  Since $\omega \geq_q \eta$ we have for $0 < t < s$

\begin{align*}
\frac {\eta\vert_t(\Lambda )-\eta\vert_s(\Lambda )}
{1+\eta\vert_t(\Lambda )}  &
= \frac {\eta\vert_t(E(t,s)\Lambda )} {1+\eta\vert_t(\Lambda )}
\\
&\leq \frac {\omega\vert_t(E(t,s)\Lambda )} {1+\omega\vert_t(\Lambda )}
= \frac {\omega\vert_t(\Lambda )-\omega\vert_s(\Lambda )} {1+\omega
\vert_t(\Lambda )}.
\end{align*}
Multiplying by the common denominator we have
$$
\eta{\vert_t} (\Lambda ) - \eta{\vert_s} (\Lambda ) -
\omega{\vert_t} (\Lambda )\eta{\vert_s} (\Lambda ) \leq
\omega{\vert_t}(\Lambda ) - \omega{\vert_s} (\Lambda ) -
\eta{\vert_t} (\Lambda )\omega{\vert_s} (\Lambda ).
$$
Rearranging \textsl{}this inequality gives
$$
\eta{\vert_t} (\Lambda ) + \omega{\vert_s} (\Lambda ) +
\eta{\vert_t} (\Lambda )\omega{\vert_s} (\Lambda ) \leq \omega{\vert_t}
(\Lambda ) + \eta{\vert_s} (\Lambda ) + \omega{\vert_t} (\Lambda
)\eta{\vert_s} (\Lambda ),
$$
and adding one gives
$$
(1 + \eta{\vert_t} (\Lambda ))(1 + \omega{\vert_s} (\Lambda ))
\leq (1 + \omega{\vert_t} (\Lambda ))(1 + \eta{\vert_s} (\Lambda
)),
$$
which yields $h(t) \leq h(s)$.  Hence, $h$ is non decreasing and since
$h(t) \leq 1$ for all $t > 0$ the $h(t)$ has a limit as $t \rightarrow 0+$.
We denote this limit by $\kappa$ so $h(t) \rightarrow \kappa$ as $t
\rightarrow 0+$.  Since $\omega \geq_q \eta$ we have $h(t) \omega{\vert_t}
\geq \eta{\vert_t}$  for all $t > 0$ so $\kappa \omega \geq \eta$.  Now if
$\kappa = 0$ then $\eta = 0$ and $\eta$ is trivially a subordinate of
$\omega$.  Since we are dealing with the case when $\eta \neq 0$ we have
$\kappa > 0$.  Let $\rho = \omega - \kappa^{-1}\eta $.  Since $\kappa\omega
\geq \eta$ we have $\rho$ is positive.  Note $\eta = \kappa (\omega - \rho
)$ and $\omega \geq \rho$.   Since $h$ is non decreasing we have $h(t)
\geq \kappa$ for all $t > 0$ and, hence,

$$
\frac {1+\kappa (\omega\vert_t(\Lambda )-\rho\vert_t(\Lambda ))} {1+\omega\vert
_t(\Lambda )} \geq \kappa
$$
for all $t > 0$.  Hence, $\rho{\vert_t} (\Lambda ) \leq \kappa^{-1} -
1$ for all $t > 0$ which yields $\rho (\Lambda ) \leq \kappa^{-1} - 1$.
Since $\rho \leq \omega$ we have $\rho (I - \Lambda ) \leq \omega (I -
\Lambda )$ and since $\omega$ is boundary weight we have $\omega (I -
\Lambda ) < \infty$ so $\rho (I) = \rho (I - \Lambda ) + \rho (\Lambda )
\leq$ $\kappa^{-1}+ \omega (I - \Lambda ) - 1$.  Hence, $\rho$ is bounded
so $\rho$ is a positive element of $\frak B (\frak H )_*$.  Since $\kappa
\leq (1 + \rho (\Lambda ))^{-1}$ we have $\kappa = \lambda (1 + \rho
(\Lambda ))^{-1}$ with $\lambda \in (0,1]$ and $\eta = \lambda (1 + \rho
(\Lambda ))^{-1}(\omega - \rho )$.  Hence, $\eta$ is of the form stated.

Finally, we show that if $\omega (I) = \infty$ then $\lambda$ and $\rho$
are unique.  Suppose then that $\lambda ,\lambda ^{\prime} \in (0,1]$ and
$\rho$ and $\rho ^{\prime}$ are positive elements of $\frak B (\frak H )_*$
so that $\omega \geq \rho$ and $\omega \geq \rho ^{\prime}$ and $\eta =
\lambda (1 + \rho (\Lambda ))^{-1}(\omega - \rho )$ and $\eta = \lambda
^{\prime}(1 + \rho ^{\prime}(\Lambda ))^{-1}(\omega - \rho ^{\prime}).  $
Then we have

$$
(\lambda (1 + \rho (\Lambda ))^{-1} - \lambda ^{\prime}(1 + \rho ^{\prime}(
\Lambda ))^{-1})\omega = \lambda (1 + \rho (\Lambda ))^{-1}\rho - \lambda
^{\prime}(1 + \rho ^{\prime}(\Lambda ))^{-1}\rho ^{\prime}
$$
Note if the functional on the left is non-zero it is unbounded and the
functional on the right is bounded so it follows both sides of the above
equality are zero.  Since $\omega \neq 0$ we have $\lambda (1 + \rho
(\Lambda ))^{-1} =$ $\lambda ^{\prime}(1 + \rho ^{\prime}(\Lambda ))^{-1}$
which when inserted in the right hand side of the above equality yields
$\rho = \rho ^{\prime}$.  Since $\rho (\Lambda ) = \rho ^{\prime}( \Lambda
)$ the fact that the right hand side is zero yields $\lambda = \lambda
^{\prime}$.  Hence, in the case at hand $\lambda$ and $\rho$ are unique.

We prove the last statement of the theorem.  From what we have proved we
see that $\omega$ is $q$-pure if and only if every positive $\rho \in \frak
B (\frak H )_*$ with $\omega \geq \rho$ is a multiple of $\omega $.  Note
every positive $\rho \in \frak B (\frak H )_*$ can be written as a possibly
infinite sum of positive multiples of orthogonal pure states of $\frak B
(\frak H )$ so for every positive $\rho \in \frak B (\frak H )_*$ there is
a pure state $\vartheta \in \frak B (\frak H )_*$ (so $\vartheta (A) =
(f,Af)$ for $A \in \frak B (\frak H )$ where $f \in \frak H$ is a unit
vector) and a number $s > 0$ so that $\rho \geq s\nu$.  It follows there
are non zero positive $\rho \in \frak B (\frak H )_*$ with $\omega \geq
\rho$ if and only if there are non zero positive multiples of pure states
$\vartheta \in \frak B (\frak H )_*$ so that $\omega \geq s\vartheta \geq
0.  $ Then it follows that if $\omega (\Lambda ) = \infty$ then $\omega$ is
$q$-pure if and only if there for every pure state $\vartheta \in \frak B
(\frak H )_*$ so that $\omega \geq s\vartheta \geq 0$ then $s = 0$ and if
$\omega (\Lambda ) < \infty$ then $\omega$ is $q$-pure if and only if
$\omega$ is pure in the ordinary sense of pure.  \end{proof}

In light of Theorem~\ref{sub-decomposition}, the proof of Theorem 3.10
of \cite{powers-holyoke} can easily be adapted to
give another 
characterization of $q$-pure boundary weights.  Suppose
$$
\omega (A) = \sum_{i=1}^r \lambda_i(f_i,Af_i)
$$
for $A \in \frak A (\frak H )$ where the $\{ f_i\}$ are orthonormal in
$\frak H _q$ and the $\lambda_i$ are positive numbers whose sum is finite.
Then $\omega$ is $q$-pure if and only if either $r = 1$ or for every set of
complex numbers $c_i$ for $0 < i < r+1$ so that
$$
0 < \sum_{i=1}^r \vert c_i\vert^2 < \infty\qquad \text{the vector} \qquad \sum
_{i=1}^r c_if_i \notin \frak H .
$$
In other words $\omega$ is $q$-pure if and only if it is pure in the ordinary
sense or there is no linear combination of the $f_i$ that lies in $\frak H
$.
Now one easily sees the reason for the strange fact that the sum of
two positive boundary weights can be $q$-pure.  The boundary weight
$$
\omega (A) = (f,Af) + (g,Ag)
$$
for $A \in \frak A (\frak H )$ and $f,g \in \frak H_q$ fails to be $q$-pure if and
only if $zf + g \in \frak H$ for
some complex number $z$.  With a little thought it is easy to construct
lots of examples of functions in $\frak H_q$ so that no linear combination
of
them is in $\frak H$.

\subsection{$q$-purity} 
We proceed to characterize the $q$-weight maps of range rank one which are $q$-pure, now that we are appropriately equipped with a concrete definition of $q$-purity in this case.

\begin{theorem} \label{rank-one-subordinates} Let $\fk$ be a separable Hilbert space. Suppose $\omega$
is a $q$-weight of range rank one over $\fk$ so $\omega$ can be
expressed in the form $\omega (\rho )(A) = \rho (T)\mu (A)$ for $\rho \in
\frak B (\frak K )_*$ and $A \in \frak A (\frak H )$ where $T$ is a
positive operator of norm one and $\mu$ is a positive element of $\frak A
(\frak H )_*$ and $\mu (I - \Lambda (T)) \leq 1$.  Suppose $T _1 \in \frak
B (\frak K )$ is a positive norm one operator and $T_1 \leq T$.  Let $\eta
(\rho ) = \lambda\rho (T_1)\mu (A)$ for $\rho \in \frak B (\frak K )_*$ for
$\lambda \geq 0$.  Then $\eta$ is a $q$-weight map which is $q$-subordinate to
$\omega$ (i.e., $\omega \geq_q \eta )$ if and only if $\lambda \leq (1 +
\mu (\Lambda (T - T_1)))^{-1}$ where if $\mu (\Lambda (T - T_1)) = \infty$
then $\omega \geq_q \eta$ only for $\lambda = 0$.
\end{theorem}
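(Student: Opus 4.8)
The plan is to make the subordination condition explicit by computing the generalized boundary representations of $\omega$ and $\eta$ via Theorem~\ref{rank-one-q-weight}, reducing $\omega \geq_q \eta$ first to an operator inequality and then to a scalar one. Since $\eta(\rho)(A) = \rho(T_1)(\lambda\mu)(A)$, the map $\eta$ has the range rank one form \eqref{rankone} with positive norm-one operator $T_1$ and positive boundary weight $\lambda\mu$; by Theorem~\ref{rank-one-q-weight} it is a $q$-weight exactly when $(\lambda\mu)(I-\Lambda(T_1))\leq 1$, and in that case, writing $c_1(t) = (1+\mu\vert_t(\Lambda(T)))^{-1}$ and $c_2(t) = \lambda(1+\lambda\mu\vert_t(\Lambda(T_1)))^{-1}$, a direct substitution gives
$$
\pi_t^\#(A) - \xi_t^\#(A) = \mu\vert_t(A)\,\big(c_1(t)\,T - c_2(t)\,T_1\big), \qquad \forall t>0,\ \forall A \in \bh.
$$
Because $\omega$ has range rank one, $\mu\neq 0$, so (using that $t\mapsto\mu\vert_t(I)$ is non-increasing) the positive normal functional $\mu\vert_t$ is non-zero for all small $t$, and for such $t$ the map $A\mapsto\mu\vert_t(A)X$ is completely positive if and only if $X\geq 0$. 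Hence by Theorem~\ref{boundary-representation-subordinates}, $\omega\geq_q\eta$ is equivalent to $c_1(t)T - c_2(t)T_1\geq 0$ for all $t>0$ (and it suffices to check this for small $t$).

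The key observation is that, since $\|T\|=\|T_1\|=1$, $T\geq T_1\geq 0$ and $c_1(t),c_2(t)\geq 0$, the operator inequality $c_1(t)T - c_2(t)T_1\geq 0$ holds if and only if the scalar inequality $c_1(t)\geq c_2(t)$ holds. One direction is $c_1(t)T - c_2(t)T_1 = (c_1(t)-c_2(t))T + c_2(t)(T-T_1)\geq 0$; for the other, choosing unit vectors $\xi_n\in\fk$ with $(\xi_n,T_1\xi_n)\to\|T_1\|=1$ (approximate eigenvectors, which exist since $T_1\geq 0$), the inequality $c_1(t)(\xi_n,T\xi_n)\geq c_2(t)(\xi_n,T_1\xi_n)$ together with $(\xi_n,T\xi_n)\leq 1$ yields $c_1(t)\geq c_2(t)$ in the limit. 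A short computation turns $c_1(t)\geq c_2(t)$ into $\lambda\big(1+\mu\vert_t(\Lambda(T-T_1))\big)\leq 1$. As $T-T_1\geq 0$ commutes with every $E(t,\infty)$, the function $t\mapsto\mu\vert_t(\Lambda(T-T_1))$ is non-increasing and tends to $\mu(\Lambda(T-T_1))\in[0,\infty]$ as $t\to0+$ (as in the proof of Theorem~\ref{rank-one-q-weight}; here $\mu(\Lambda(T-T_1))=\infty$ is permitted under the convention introduced above for positive operators). Thus the inequality for all $t$ is equivalent to $\lambda\big(1+\mu(\Lambda(T-T_1))\big)\leq 1$, that is $\lambda\leq(1+\mu(\Lambda(T-T_1)))^{-1}$, which forces $\lambda=0$ precisely when $\mu(\Lambda(T-T_1))=\infty$.

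It remains to reconcile this with the $q$-weight condition on $\eta$. If $\lambda\leq(1+\mu(\Lambda(T-T_1)))^{-1}$, then from $\mu(I-\Lambda(T_1))=\mu(I-\Lambda(T))+\mu(\Lambda(T-T_1))$ (valid in $[0,\infty]$ by linearity of each $\mu\vert_t$ and passage to the limit) and $\mu(I-\Lambda(T))\leq1$ we obtain $(\lambda\mu)(I-\Lambda(T_1))\leq\lambda(1+\mu(\Lambda(T-T_1)))\leq1$, so $\eta$ is indeed a $q$-weight, and the previous paragraph gives $\omega\geq_q\eta$; this is the ``if'' direction. Conversely, if $\eta$ is a $q$-weight with $\omega\geq_q\eta$, then the previous paragraph (applied for small $t$, then passing to $t\to0+$) forces $\lambda(1+\mu(\Lambda(T-T_1)))\leq1$, hence the stated bound on $\lambda$, and $\lambda=0$ when $\mu(\Lambda(T-T_1))=\infty$.

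I expect the main obstacle to be the step from the operator inequality $c_1(t)T-c_2(t)T_1\geq0$ to the scalar inequality $c_1(t)\geq c_2(t)$: this genuinely uses the norm-one hypothesis on \emph{both} $T$ and $T_1$ and, in infinite dimensions, approximate rather than exact eigenvectors. The bookkeeping needed to treat the degenerate case $\mu(\Lambda(T-T_1))=\infty$ and to justify the monotone convergence $\mu\vert_t(\Lambda(T-T_1))\to\mu(\Lambda(T-T_1))$ also requires some care, though it parallels the corresponding argument in the proof of Theorem~\ref{rank-one-q-weight}.
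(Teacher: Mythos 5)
Your proposal is correct and follows essentially the same route as the paper: compute both generalized boundary representations via Theorem~\ref{rank-one-q-weight}, reduce the operator inequality $c_1(t)T\geq c_2(t)T_1$ to the scalar inequality $c_1(t)\geq c_2(t)$ using vector states that witness $\|T_1\|=\|T\|=1$, rewrite it as $\lambda(1+\mu\vert_t(\Lambda(T-T_1)))\leq 1$, and conclude by monotonicity in $t$. Your write-up is somewhat more explicit than the paper's (you justify both directions of the operator-to-scalar reduction via the decomposition $(c_1-c_2)T+c_2(T-T_1)$, and you verify separately that $\eta$ satisfies the $q$-weight normalization $(\lambda\mu)(I-\Lambda(T_1))\leq 1$), but these are refinements of the same argument rather than a different one.
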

\begin{proof}  Assume $\omega$ and $\eta$ are as stated in the theorem.
Let $\pi ^\#$ and $\phi^\#$ be the generalized boundary representations of
$\omega$ and $\eta$, respectively.  Then we have
$$
\pi_t^\# (A) = (1 + \mu{\vert_t} (\Lambda (T)))^{-1} \mu{\vert_t}\vert
(A)\cdot T
$$
and
$$
\phi_t^\# (A) = (1 + \lambda\mu{\vert_t} (\Lambda (T_1)))^{-1} \lambda
\mu{\vert_t} (A)\cdot T_1
$$
for $A \in \frak B (\frak H )$ and $t > 0$.  Since $T_1$ is positive and $\|T_1\|=1$, there
is a sequence of vector states $\rho_n$ such that 
$\lim_{n \to \infty} \rho_n(T_1) = 1$, and since $T_1 \leq T \leq I$ we also have
$\lim_{n \to \infty} \rho_n(T) = 1$.  Therefore, to
determine whether $\pi_t^\# \geq \phi_t^\#$ need apply $\rho_o$ to
$\pi_t^\#$ and $\phi _t^\#$.  The $\omega \geq_q \eta$ if and only if
$$
(1 + \mu{\vert_t} (\Lambda (T)))^{-1} \leq \lambda (1 +
\lambda\mu{\vert_t} (\Lambda (T_1)))^{-1}
$$
for all $t > 0$.  Multiplying by the product of the denominators we
find the above inequality is equivalent to
$$
\lambda \leq (1 + \mu{\vert_t} (\Lambda (T - T_1)))^{-1}
$$
and since $\mu{\vert_t}(\Lambda(T - T_1)$ increases as $t$
decreases this inequality is valid for all $t$ if and only if $\lambda \leq
(1 + \mu (\Lambda (T - T_1))) ^{-1}.$
\end{proof}

\begin{cor} \label{q-pure-projection} Let $\fk$ be a separable Hilbert space. Suppose $\omega$ is a $q$-weight of range
rank one over $\fk$ so $\omega$ can be expressed in the form $\omega (\rho )(A) = \rho
(T)\mu (A)$ for $\rho \in \frak B (\frak K )_*$ and $A \in \frak A (\frak H
)$ where $T$ is a positive operator of norm one and $\mu$ is a positive
element of $\frak A (\frak H )_*$ and $\mu (I - \Lambda (T)) \leq 1$.
Suppose $T$ is not a projection and let
$$
T = \int s dF(s)
$$
be the spectral resolution of $T$.  Since $T$ is not a projection for some
$s_o \in (0,1)$ we have $F([s_o,1]) \neq T$.  Let $T_1 = F([s_o,1])T$ and
then let $\eta (\rho )(A) = \lambda\rho (T_1)\mu (A)$ for $\rho \in \frak B
(\frak K )_*$ and $A \in \frak A (\frak H )$ where $\lambda \geq 0$.  Then
$\eta$ is a $q$-weight with $\omega \geq_q \eta$ if and only if $0 \leq
\lambda \leq$ $1/(1 + \mu (\Lambda (T - T_1)))$.  There are always some
positive $\lambda$ satisfying this inequality since
$$
\mu (\Lambda (T - T_1)) \leq \kappa\mu (I - \Lambda (T)) \leq \kappa
$$
where $\kappa = s_o/(1 - s_o)$.  It follows that if the boundary weight
$\omega$ is $q$-pure then $T$ is a projection.
\end{cor}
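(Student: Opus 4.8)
The plan is to produce, from a spectral projection of $T$, a non-zero $q$-subordinate of $\omega$ which is not a multiple of $\omega$, and to use Theorem~\ref{rank-one-subordinates} to pin down exactly when it is a $q$-subordinate. First I would record the three facts about $T_1 = F([s_o,1])T$ needed to apply that theorem: since $F$ is the spectral measure of $T$ one has $T_1 = \int_{[s_o,1]} s\,dF(s) \ge 0$ and $T - T_1 = \int_{[0,s_o)} s\,dF(s) \ge 0$, so $T_1 \le T$; and $\|T_1\| = 1$, because $\|T\| = 1$ and $T \ge 0$ give $1 \in \sigma(T)$, hence $F((1-\epsilon,1]) \ne 0$ for every $\epsilon \in (0,1-s_o)$, and on the range of that projection (which lies inside the range of $F([s_o,1])$) the operator $T_1$ acts as multiplication by values in $(1-\epsilon,1]$, forcing $1-\epsilon \le \|T_1\| \le \|T\| = 1$. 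With the hypotheses checked, Theorem~\ref{rank-one-subordinates} applied to this $T_1$ is precisely the stated equivalence: $\eta(\rho)(A) = \lambda\rho(T_1)\mu(A)$ is a $q$-weight that is $q$-subordinate to $\omega$ if and only if $0 \le \lambda \le (1+\mu(\Lambda(T-T_1)))^{-1}$, the right endpoint being $0$ when $\mu(\Lambda(T-T_1)) = \infty$.

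Next I would prove $\mu(\Lambda(T-T_1)) \le \kappa\,\mu(I-\Lambda(T)) \le \kappa$ with $\kappa = s_o/(1-s_o)$, which shows in particular that $\mu(\Lambda(T-T_1))$ is finite, so that $\lambda = (1+\kappa)^{-1} > 0$ works. The crux is the operator inequality $T - T_1 \le \kappa(I-T)$ in $\bk$: the function equal to $s$ on $[0,s_o)$ and to $0$ on $[s_o,1]$ is dominated pointwise on $[0,1]$ by $s \mapsto \kappa(1-s)$ --- trivially on $[s_o,1]$, and on $[0,s_o)$ since $s < s_o = \kappa(1-s_o) \le \kappa(1-s)$ --- so the functional calculus of $T$ (through which $T-T_1 = TF([0,s_o))$ and $I-T$ arise from these two functions) yields the inequality. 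Applying the positive linear map $\Lambda$ and using $\Lambda(I) \le I$ gives $\Lambda(T-T_1) \le \kappa\bigl(\Lambda(I)-\Lambda(T)\bigr) \le \kappa(I-\Lambda(T))$ in $\bh$. Compressing by $E(t,\infty)$, which sends both sides into $\fa(\fh)$, and using positivity of the boundary weight $\mu$ gives $\mu\vert_t(\Lambda(T-T_1)) \le \kappa\,\mu\vert_t(I-\Lambda(T))$ for every $t>0$; passing to the limit $t\to 0^+$ via Proposition~\ref{get-omega} and the monotonicity in $t$ of these quantities, together with the hypothesis $\mu(I-\Lambda(T)) \le 1$, gives the claimed chain of inequalities.

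For the last assertion I would argue by contraposition. Since $\omega$ has range rank one, $\mu \ne 0$; fix $A_0 \in \fa(\fh)$ with $\mu(A_0) = 1$. If $T$ is not a projection, there is $\lambda_0 \in \sigma(T) \cap (0,1)$; choosing $s_o \in (\lambda_0,1)$ we get $F((0,s_o)) \ne 0$ (this open interval contains a point of $\sigma(T)$), hence $T - T_1 = TF([0,s_o)) \ne 0$, i.e. $T_1 \ne T$. Put $c = (1+\mu(\Lambda(T-T_1)))^{-1} \in (0,1]$ and $\eta(\rho)(A) = c\,\rho(T_1)\mu(A)$; by the two previous steps $\eta$ is a $q$-subordinate of $\omega$, and $\eta \ne 0$ since $c>0$, $T_1 \ne 0$, $\mu \ne 0$. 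If $\omega$ were $q$-pure, then $\eta = \lambda'\omega$ for some $\lambda' > 0$ (a $q$-pure range rank one $q$-weight has all of its $q$-subordinates of this form); evaluating at $A_0$ and using that $\bk_*$ separates points of $\bk$ would force $c\,T_1 = \lambda' T$, and then $\|T_1\| = \|T\| = 1$ would force $T_1 = T$, contradicting $T_1 \ne T$. Hence $\omega$ cannot be $q$-pure, which is the contrapositive of the statement.

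I expect the only genuinely delicate point to be the passage in the second step from the inequality $\Lambda(T-T_1) \le \kappa(I-\Lambda(T))$ between bounded operators to the corresponding inequality between the possibly infinite boundary-weight values $\mu(\Lambda(T-T_1))$ and $\mu(I-\Lambda(T))$: unless $T = I$, neither operator lies in $\fa(\fh)$, so one must work through the truncations $E(t,\infty)\,\cdot\,E(t,\infty)$ and take limits, invoking Proposition~\ref{get-omega} and the monotonicity of $t \mapsto \mu\vert_t$ on positive operators. Everything else is either elementary spectral theory or a direct appeal to Theorem~\ref{rank-one-subordinates}.
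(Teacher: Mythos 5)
Your proof is correct and follows essentially the same route as the paper: reduce to Theorem~\ref{rank-one-subordinates}, establish the operator inequality $T-T_1\leq \kappa(I-T)$ (you via pointwise domination of the spectral functions, the paper via an explicit chain of inequalities with $P=F([0,s_o))$ — the same idea), apply $\Lambda$ and $\mu$, and conclude non-$q$-purity from the existence of a subordinate that is not a multiple of $\omega$. Your extra care about $\|T_1\|=1$ and about passing the inequality through the truncations $E(t,\infty)\cdot E(t,\infty)$ (legitimate here since $\Lambda(T-T_1)$ and $I-\Lambda(T)$ are multiplication operators commuting with $E(t,\infty)$, so $t\mapsto\mu\vert_t$ is monotone on them) only makes explicit what the paper leaves implicit.
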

\begin{proof}  Except for the estimate of $\mu (\Lambda (T - T_1))$ the
theorem follows immediately from Theorem~\ref{rank-one-subordinates}.  All that remains is
proving the estimate for $\mu (\Lambda (T - T_1))$.  Let $\kappa = s_o/(1 -
s_o)$.  We prove $\Lambda (T - T_1) \leq \kappa (I - \Lambda (T))$.

We note all the operators in the inequality we want to prove are
multiplication operators.  For example for $A \in \frak B (\frak K )$ we
have $( \Lambda (A)f) =$ $e^{-x}Af(x)$ for $x \in [0,\infty )$ and $f \in
\frak H$.  It follow that our inequality is valid if and only if it is
valid for all $x$.  Let $P = F([0,s_o)) =$ $I - F([s_o,1])$ so $T - T_1
\leq s_oP$ and $T_1P = 0$ so $TP \leq s_oP$.  Also we have
$$
T = T(I - P) + TP \leq I - P + TP \leq I - P + s_oP
$$
so
$$
(1 - s_o)P \leq I - T
$$
Now we have
\begin{align*}
\Lambda (T - T_1) &\leq s_o\Lambda (P) = \kappa (1 - s_o)\Lambda (P) =
\kappa (1 - s_o)e^{-x} P
\\
&\leq \kappa e^{-x} (I - T) = \kappa (e^{-x} - e^{-x} T) \leq \kappa (I - e
^{-x}T)
\\
&= \kappa (I - \Lambda (T))
\end{align*}
for all $x > 0$.  And so we have $\Lambda (T - T_1) \leq \kappa (I - \Lambda
(T))$ which yields
$$
\mu (\Lambda (T - T_1)) \leq \kappa\mu (I - \Lambda (T)) \leq \kappa .
$$
If $T$ is not a projection there is a $T_1$ satisfying the conditions of
the theorem so that $T_1$ is not a multiple of $T$ so $\omega$ is not
$q$-pure.  \end{proof}

\begin{theorem}  \label{Thm6} Let $\fk$ be a separable Hilbert space.
Suppose $\omega$ is a $q$-weight of range rank
one so $\omega$ can be expressed in the form $\omega (\rho )(A) = \rho
(T)\mu (A)$ for $\rho \in \frak B (\frak K )_*$ and $A \in \frak A (\frak H
)$ where $T$ is a positive operator of norm one and $\mu$ is a positive
element of $\frak A (\frak H )_*$ and $\mu (I - \Lambda (T)) \leq 1$.
Suppose $\vartheta \in \frak B (\frak H )_*$ is positive and $\mu \geq
\vartheta \geq 0$.  Let $\nu (A) = \lambda (\mu (A) - \vartheta (A))$ and
let $\eta (\rho )(A) = \rho (T)\nu (A)$ for $\rho \in \frak B (\frak K )_*$
and $A \in \frak A (\frak H )$.  Then $\eta$ is a $q$-weight subordinate to
$\omega$ if and only if $0 \leq \lambda \leq 1/(1 + \vartheta (\Lambda
(T)))$.  Conversely, if we maintain the assumptions made on $T$ and $\mu$ 
and we have $\eta (\rho ) = \rho (T)\nu (A)$ for $\rho \in \frak B (\frak K
)_*$ and $\omega \geq_q \eta \geq_q 0$, then there is a positive $\vartheta \in \frak B (\frak H )_*$
so that $\mu \geq \vartheta \geq 0$ and a number $\lambda \in [0,1]$ so
that $\nu = \lambda (1 + \vartheta (\Lambda (T)))^{-1}(\mu - \vartheta )$.
Thus, if $\omega$ is $q$-pure then $\mu$ is $q$-pure.
\end{theorem}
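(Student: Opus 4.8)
The plan is to reduce Theorem~\ref{Thm6} to the boundary-weight statement of Theorem~\ref{sub-decomposition}, read off at the level of generalized boundary representations. By Theorem~\ref{rank-one-q-weight}, the generalized boundary representation of $\omega$ is $\pi_t^\#(A)=\big(1+\mu\vert_t(\Lambda(T))\big)^{-1}\mu\vert_t(A)\cdot T$, and once $\eta$ is known to be a $q$-weight its generalized boundary representation is $\xi_t^\#(A)=\big(1+\nu\vert_t(\Lambda(T))\big)^{-1}\nu\vert_t(A)\cdot T$. Because $T\geq 0$, a map of the form $A\mapsto\psi(A)T$ is completely positive precisely when $\psi$ is a positive linear functional; hence, by Theorem~\ref{boundary-representation-subordinates}, $\omega\geq_q\eta$ holds if and only if
$$\frac{\nu\vert_t(A)}{1+\nu\vert_t(\Lambda(T))}\ \leq\ \frac{\mu\vert_t(A)}{1+\mu\vert_t(\Lambda(T))}\qquad\text{for all }t>0\text{ and positive }A\in\bh.$$
This is exactly the $q$-domination of the positive boundary weights $\mu$ and $\nu$ in the sense of the definition preceding Theorem~\ref{sub-decomposition}, but with $\Lambda$ replaced throughout by $\Lambda(T)$. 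The proof of Theorem~\ref{sub-decomposition} uses only that $\Lambda$ commutes with each $E(t,s)$ --- which $\Lambda(T)$ also does, both being multiplication-type operators on $\fk\otimes L^2(0,\infty)$ --- and that $\mu(I-\Lambda)<\infty$ --- which here reads $\mu(I-\Lambda(T))\leq 1$; so the strategy is to transport that proof verbatim to the $\Lambda(T)$-twisted setting.

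For the ``if'' direction, given $\vartheta$ and $0\leq\lambda\leq\big(1+\vartheta(\Lambda(T))\big)^{-1}$, I would first verify through Theorem~\ref{rank-one-q-weight} that $\eta$ is a $q$-weight: $\nu=\lambda(\mu-\vartheta)$ is a positive boundary weight, and $\nu(I-\Lambda(T))=\lambda\big(\mu(I-\Lambda(T))-\vartheta(I-\Lambda(T))\big)\leq\lambda\leq 1$. Then, mirroring the opening computation in the proof of Theorem~\ref{sub-decomposition},
$$\frac{\nu\vert_t(A)}{1+\nu\vert_t(\Lambda(T))}=\frac{\mu\vert_t(A)-\vartheta\vert_t(A)}{\lambda^{-1}+\mu\vert_t(\Lambda(T))-\vartheta\vert_t(\Lambda(T))}\leq\frac{\mu\vert_t(A)-\vartheta\vert_t(A)}{1+\mu\vert_t(\Lambda(T))}\leq\frac{\mu\vert_t(A)}{1+\mu\vert_t(\Lambda(T))}$$
for positive $A$ and $t>0$, where the middle inequality uses $\lambda^{-1}\geq 1+\vartheta(\Lambda(T))\geq 1+\vartheta\vert_t(\Lambda(T))$ (the last step because $\vartheta$ is a positive normal functional and $E(t,\infty)\Lambda(T)E(t,\infty)\leq\Lambda(T)$) and the last inequality uses $\vartheta\vert_t(A)\geq 0$. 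Hence $\xi_t^\#\leq\pi_t^\#$ for every $t$, i.e. $\eta\leq_q\omega$.

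For the converse, assume $\omega\geq_q\eta$ with $\nu\neq 0$ and run the second half of the proof of Theorem~\ref{sub-decomposition}: with $h(t)=\big(1+\nu\vert_t(\Lambda(T))\big)/\big(1+\mu\vert_t(\Lambda(T))\big)$, applying the $q$-domination inequality at pairs $0<t<s$ to $E(t,s)\Lambda(T)$ and clearing denominators shows $h$ is nondecreasing and $\leq 1$, so $h(t)\downarrow\kappa\in(0,1]$ as $t\to0^+$. Then $\kappa\mu\geq\nu$, so $\vartheta:=\mu-\kappa^{-1}\nu$ is a positive boundary weight with $\mu\geq\vartheta$; from $h(t)\geq\kappa$ one gets $\vartheta\vert_t(\Lambda(T))\leq\kappa^{-1}-1$ for all $t$, hence $\vartheta(\Lambda(T))\leq\kappa^{-1}-1$, and together with $\vartheta(I-\Lambda(T))\leq\mu(I-\Lambda(T))\leq 1$ this forces $\vartheta(I)\leq\kappa^{-1}<\infty$, so $\vartheta$ is a bounded --- hence normal --- positive functional in $\bh_*$. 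Writing $\kappa=\lambda\big(1+\vartheta(\Lambda(T))\big)^{-1}$ with $\lambda\in(0,1]$ gives the claimed form $\nu=\lambda\big(1+\vartheta(\Lambda(T))\big)^{-1}(\mu-\vartheta)$. The ``only if'' half of the first assertion, together with uniqueness of $(\vartheta,\lambda)$, then follows in the case $\mu$ unbounded that is relevant for type~II$_0$ E$_0$-semigroups from the observation used by Powers that an identity $c\,\mu=(\text{bounded functional})$ forces $c=0$ when $\mu$ is unbounded: matching the $\mu$-coefficient of the given $\nu=\lambda(\mu-\vartheta)$ against the canonical decomposition above pins down $\lambda$ and $\vartheta$ and yields $\lambda\leq\big(1+\vartheta(\Lambda(T))\big)^{-1}$.

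Finally, to conclude ``$\omega$ $q$-pure $\Rightarrow\mu$ $q$-pure'', argue by contraposition: if $\mu$ is not $q$-pure then, by the last assertion of Theorem~\ref{sub-decomposition}, there is a positive $\vartheta\in\bh_*$ with $\mu\geq\vartheta$ that is not a scalar multiple of $\mu$; taking $\lambda=\big(1+\vartheta(\Lambda(T))\big)^{-1}$ and $\nu=\lambda(\mu-\vartheta)$, the ``if'' direction supplies a nonzero $q$-subordinate $\eta(\rho)(A)=\rho(T)\nu(A)$ of $\omega$, and $\eta$ is not of the form $c\,\omega$ (that would make $\vartheta$ a multiple of $\mu$). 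By the characterization of $q$-purity for range rank one $q$-weights established above --- that $\omega$ is $q$-pure iff every $q$-subordinate has the form $\lambda\omega$ --- this shows $\omega$ is not $q$-pure. I expect the main obstacle to be the converse direction: faithfully transporting the monotonicity-and-boundedness argument of Theorem~\ref{sub-decomposition} to the $\Lambda(T)$-twisted ordering, in particular verifying that the extracted functional $\vartheta$ is bounded, which is what secures the sharp bound on $\lambda$.
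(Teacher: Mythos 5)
Your proposal follows essentially the same route as the paper: compute the generalized boundary representations via Theorem~\ref{rank-one-q-weight}, observe that $\omega\geq_q\eta$ is equivalent to the $q$-domination inequality for the boundary weights $\mu$ and $\nu$ with $\Lambda$ replaced by $\Lambda(T)$, and then transport the argument of Theorem~\ref{sub-decomposition} to that twisted ordering, using $\mu(I-\Lambda(T))\leq 1$ exactly where the paper flags that this hypothesis is essential. The paper leaves the transported argument to the reader, so your write-up (including the boundedness of the extracted $\vartheta$ via $\vartheta(I)=\vartheta(I-\Lambda(T))+\vartheta(\Lambda(T))\leq\kappa^{-1}$ and the contrapositive derivation of the purity statement) is a correct filling-in of the same proof rather than a different one.
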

\begin{proof}  Assume the hypothesis and notation of the theorem is
satisfied.  Then if $\pi^\#$ and $\phi^\#$ are the generalized boundary
representations of $\omega$ and $\eta$, respectively we have
$$
\pi_t^\# (A) = (1 + \mu{\vert_t} (\Lambda (T)))^{-1} \mu{\vert_t}
(A)\cdot T
$$
and
$$
\phi_t^\# (A) = (1 + \nu{\vert_t} (\Lambda (T)))^{-1} \nu{\vert_t}
(A)\cdot T
$$
for $t > 0$.  Hence, $\omega \geq_q \eta$ if and only if
\begin{equation} \label{6.1}
(1 + \nu{\vert_t} (\Lambda (T)))^{-1} \nu{\vert_t}  \leq (1 + \mu
_{\vert_t} (\Lambda (T)))^{-1} \mu{\vert_t}
\end{equation}
for all $t > 0$.  If we replace $\Lambda (T)$ by $\Lambda (I)$ we have the
above conditions says $\mu \geq_q \nu$.  The analysis of the above order
relations is almost identical to the analysis in Theorem \ref{sub-decomposition} so rather
than repeat that argument we leave it to the reader to check the
conclusions are the same if one replaces $\Lambda$ with $\Lambda (T)$.  One
important point to remember is that $\mu$ satisfies the condition $\mu (I -
\Lambda (T)) \leq 1$.  All we need is to assume $\mu (I - \Lambda (T)) <
\infty$.  We mention this because without this assumption the proof fails
since $\mu (I - \Lambda (T))$ can be infinite for a boundary weight $\mu$.
(We have the condition $\mu (I - \Lambda (T)) \geq \mu (I - \Lambda )$ for
positive boundary weights so a bound on $\mu (I - \Lambda )$ does not give
us a bound on $\mu (I - \Lambda (T)).)$  With this said the conclusion of
the theorem follows.  Note that for this new ordering given above a
positive boundary weight satisfying $\mu (I - \Lambda (T)) < \infty$ is
pure with respect to this new ordering if and only if $\mu$ is $q$-pure.
So it follows that if $\omega$ is $q$-pure then $\mu$ is $q$-pure.
\end{proof}

Now we have all the pieces to give necessary and sufficient
conditions that a range rank one $q$-weight is $q$-pure.

We will make use of the following notation. If $A \in \fb(\fh)$ and $t>0$, then we define the operator of $\fb(\fh)$ given by
$$
A\vert_t = E(t,\infty) A E(t,\infty).
$$
We emphasize that in fact, for all $t>0$, we have that $A\vert_t \in \fa(\fh)$.

\begin{theorem}  \label{rank-one-q-pure}  
Let $\fk$ be a separable Hilbert space.
Suppose $\omega$ is a $q$-weight of range rank
one over $\fk$ so $\omega$ can be expressed in the form $\omega (\rho )(A) = \rho
(T)\mu (A)$ for $\rho \in \frak B (\frak K )_*$ and $A \in \frak A (\frak H
)$ where $T$ is a positive operator of norm one and $\mu$ is a positive
element of $\frak A (\frak H )_*$ and $\mu (I - \Lambda (T)) \leq 1$.  Then
$\omega$ is $q$-pure if and only if the following three conditions are met.
\begin{description}
\item[(i)] $T$ is a projection.

\item[(ii)] $\mu$ is $q$-pure.

\item[(iii)] If rank(T) $> 1$ and $e \in \frak B (\frak K )$ is a
rank one
projection with $T \geq e$ then $\mu (\Lambda (e)) = \infty$.
\end{description}
\end{theorem}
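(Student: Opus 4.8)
The plan is to establish the three conditions as necessary by exhibiting explicit $q$-subordinates that violate total ordering when a condition fails, and then to prove sufficiency by taking an arbitrary non-zero $q$-subordinate $\eta$ of $\omega$ and showing it must be comparable to any other such subordinate. The necessity of (i) is already essentially done: by Corollary~\ref{q-pure-projection}, if $T$ is not a projection then $\omega$ admits a $q$-subordinate $\eta(\rho)(A)=\lambda\rho(T_1)\mu(A)$ with $T_1$ not a multiple of $T$; one then checks that this $\eta$ is incomparable with a subordinate of the form $\lambda'\omega$ for a suitable choice of $\lambda'$, since the ``coefficient operators'' $T_1$ and $T$ are not proportional. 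The necessity of (ii) is handled by Theorem~\ref{Thm6}: if $\mu$ is not $q$-pure, there is a bounded positive $\vartheta$ with $\mu\geq\vartheta\geq 0$ and $\vartheta$ not a multiple of $\mu$, producing $\eta(\rho)(A)=\rho(T)\nu(A)$ with $\nu=\lambda(1+\vartheta(\Lambda(T)))^{-1}(\mu-\vartheta)$, and again the measure-theoretic incomparability of $\nu$ with multiples of $\mu$ (using the characterization of $q$-pure boundary weights via orthogonal vectors in $\fh_q$) gives two incomparable subordinates.

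The necessity of (iii) is the genuinely new point. Suppose $\operatorname{rank}(T)>1$, that $e\leq T$ is a rank-one projection, and that $\mu(\Lambda(e))<\infty$. I would build \emph{two} incomparable $q$-subordinates. On one hand, writing $T=e+T'$ with $T'=T-e$ a projection orthogonal to $e$ (using that $T$ is a projection, which we may assume since (i) is already shown necessary), Theorem~\ref{rank-one-subordinates} applied with $T_1=e$ shows that $\eta_1(\rho)(A)=\lambda_1\rho(e)\mu(A)$ is a $q$-subordinate of $\omega$ precisely when $\lambda_1\leq(1+\mu(\Lambda(T-e)))^{-1}$; since $\mu(\Lambda(e))<\infty$ but we do \emph{not} know $\mu(\Lambda(T-e))<\infty$, I would instead symmetrically also use $T_1=T-e'$ for a rank-one subprojection $e'$ of $T'$, or — cleaner — combine the hypothesis $\mu(\Lambda(e))<\infty$ with $\mu(I-\Lambda(T))\leq 1$ to deduce that on the compression by $e$, $\mu$ restricts to a bounded positive functional $\vartheta_e(A):=\mu(\Lambda(e)^{1/2}A\Lambda(e)^{1/2})$-type object allowing a nontrivial bounded subordinate $\vartheta\leq\mu$ supported ``on $e$''. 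Then Theorem~\ref{Thm6} gives a subordinate $\eta_2$ of the form $\rho(T)\nu(A)$ with $\nu=\lambda(1+\vartheta(\Lambda(T)))^{-1}(\mu-\vartheta)$. The two subordinates $\eta_1$ (with coefficient operator $e$, full weight $\mu$) and $\eta_2$ (with coefficient operator $T$, reduced weight $\nu$) are incomparable: comparing them via the generalized boundary representations and testing against vector states peaked on the range of $e$ versus on $\ker(e)\cap\operatorname{ran}(T)$ forces, for $\eta_1\leq_q\eta_2$, that $\nu\geq(\text{const})\mu$ on the $e$-part, contradicting $\nu=\text{const}(\mu-\vartheta)$ with $\vartheta$ ``living on $e$''; and $\eta_2\leq_q\eta_1$ fails because $\eta_1$ has zero coefficient in the directions of $\operatorname{ran}(T)\ominus\operatorname{ran}(e)$ while $\eta_2$ does not. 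This is the step I expect to be the main obstacle, because pinning down exactly which bounded $\vartheta\leq\mu$ exists (and its interaction with $\Lambda(T)$ versus $\Lambda(e)$) requires care with the boundary-weight formalism and the distinction between $\mu(\Lambda(e))$ and $\mu(\Lambda(T))$.

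For sufficiency, assume (i)--(iii) and let $\eta\leq_q\omega$ be a non-zero $q$-subordinate. The first claim is that $\eta$ has range rank one and can be written $\eta(\rho)(A)=\rho(T_1)\nu(A)$ with $0\leq T_1$, $\|T_1\|=1$, $T_1\leq T$ (up to scalar), and $\nu\leq_q\mu$ in the sense of Theorem~\ref{Thm6}: this follows by inspecting the generalized boundary representations — $\xi_t^\#\leq\pi_t^\#=(1+\mu|_t(\Lambda(T)))^{-1}\mu|_t(\cdot)\,T$, and since $\pi_t^\#$ has one-dimensional range spanned by $T$, so does $\xi_t^\#$, forcing $\xi_t^\#(A)=c_t(A)\,T_1$ and then the $t$-limiting behaviour together with range-rank-one rigidity (Theorem~\ref{rank-one-q-weight}) gives the stated form. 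Condition (ii) and Theorem~\ref{Thm6} then say that, for fixed coefficient operator, the $\nu$'s that occur are totally ordered (multiples of compressions $\lambda(1+\vartheta(\Lambda(T)))^{-1}(\mu-\vartheta)$ with $\vartheta$ ranging over bounded subordinates of the $q$-pure $\mu$, which are totally ordered). Condition (iii) is what forces the coefficient operators to be comparable: if $T_1\leq T$ and $T_1$ is a projection strictly smaller than $T$, then $T-T_1$ dominates a rank-one projection $e$, and (iii) gives $\mu(\Lambda(e))=\infty$, hence $\mu(\Lambda(T-T_1))=\infty$, so by Theorem~\ref{rank-one-subordinates} the only admissible $\lambda$ is $0$ — meaning a non-zero $\eta$ with that coefficient operator cannot be a proper compression unless it is already ``all of $T$'' on its support. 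Carefully combining: any two non-zero $q$-subordinates $\eta,\eta'$ have coefficient projections $T_1,T_1'$ that are both forced (by (iii) plus non-vanishing) to equal $T$ on the relevant support, hence are equal, reducing the comparison to the totally ordered family of reduced weights from (ii); and the zero subordinate is comparable to everything. Therefore the subordinates of $\omega$ form a totally ordered set, i.e. $\omega$ is $q$-pure. I would close by noting the only subtlety in sufficiency is making the ``coefficient operator must be all of $T$'' argument uniform when $\mu(\Lambda(T))=\infty$, where one works with the truncations $\mu|_t$ throughout and passes to the limit using Theorem~\ref{boundary-representation-subordinates}.
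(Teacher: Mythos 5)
Your proposal has the right ingredients for the necessity of (i) and (ii) (Corollary~\ref{q-pure-projection} and Theorem~\ref{Thm6} are indeed the tools), but it misses the reduction that makes the whole necessity direction short: for a non-zero range rank one $q$-weight, $q$-purity is \emph{equivalent} to every $q$-subordinate being of the form $\lambda\omega$ (this is the theorem at the end of Section~\ref{sec-q-purity}). With that in hand one never needs to exhibit a pair of incomparable subordinates; a single subordinate that is not a scalar multiple of $\omega$ already kills $q$-purity. Your plan instead tries to verify incomparability directly, and for condition (iii) this leads you into a construction (``a bounded $\vartheta\leq\mu$ supported on $e$'') that is both unjustified and unnecessary: the hypothesis $\mu(\Lambda(e))<\infty$ does not obviously produce such a $\vartheta$, and the incomparability of your $\eta_1$ and $\eta_2$ is only gestured at. The actual argument is one line: take $T_1=T-e$, note $\mu(\Lambda(T-T_1))=\mu(\Lambda(e))<\infty$, and apply Theorem~\ref{rank-one-subordinates} to get a subordinate with coefficient operator $T-e$, which is not a multiple of $T$.

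The more serious gap is in sufficiency. You assert that since $\pi_t^\#$ has one-dimensional range spanned by $T$, the dominated map $\xi_t^\#$ must also have one-dimensional range. This is false: $\xi_t^\#\leq\pi_t^\#$ only gives $0\leq\xi_t^\#(A)\leq c_t(A)T$ for positive $A$, which confines the range of $\xi_t^\#$ to the hereditary corner $T\fb(\fk)T$ (when $T$ is a projection), not to the span of $T$. If $\operatorname{rank}(T)>1$ that corner has dimension $(\operatorname{rank} T)^2$, and it is precisely conditions (ii) and (iii) that rule out higher-rank subordinates --- you cannot assume rank one for free and then use (ii) and (iii) only to compare coefficient operators. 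The correct route is: first show $\eta(\rho)=\eta(\psi(\rho))$ with $\psi(\rho)(\cdot)=\rho(T\cdot T)$ by a block-positivity argument on $\breve\omega-\breve\eta$; then, for states $\rho$ with $\rho(T)=1$, derive the inequality \eqref{6.1} of Theorem~\ref{Thm6} for $\eta(\rho)$ and use $q$-purity of $\mu$ to conclude $\eta(\rho)=\lambda(\rho)\mu$; then linearity yields $\eta(\rho)=\rho(C)\mu$ for an operator $C=\lambda T_1$ with $T\geq T_1\geq 0$; and only at that point does Theorem~\ref{rank-one-subordinates} together with (iii) force $T_1=T$. Your final step (using (iii) to force the coefficient operator to be all of $T$) is in the right spirit, but it rests on the unsupported rank-one claim, so as written the sufficiency direction does not go through.
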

\begin{proof}  Assume $\omega$ is of the form given in the statement of the
theorem and suppose that $\omega$ is $q$-pure.  Then from Corollary \ref{q-pure-projection} we
know that $T$ is a projection hence condition (i) is satisfied. 
 And from Theorem~\ref{Thm6} we know that $\mu$ is
$q$-pure, hence condition (ii) is satisfied.  Suppose that rank(T) $> 1$ however condition $(iii)$ is not satisfied. Then there exists a rank one
projection $e \in \frak B ( \frak K )$ with $T \geq e$ and $\mu (\Lambda
(e)) < \infty$.  Let $T_1 = T - e$.  Then $\Lambda (T - T_1) = \Lambda (e)$
and we have $\mu (\Lambda (T - T_1)) < \infty$.   Thus by 
Theorem~\ref{rank-one-subordinates}   
there is a range rank one $q$-weight map $\eta$ with $\omega \geq_q
\eta$ such that $\eta$ is not a multiple of $\omega$.  Hence, $\omega$ is not
$q$-pure.  Since this contradicts our assumption, we conclude that if $\omega$ is
$q$-pure, then it satisfies conditions (i), (ii) and (iii).

Now we assume $\omega$ is of the form given above and three conditions
given above are satisfied and $\eta$ is a $q$-weight map so that
$\omega \geq_q \eta$.  The proof will be complete when we show $\eta =
\lambda \omega$ with $\lambda \in [0,1]$.  Let $\pi^\#$ and $\phi^\#$ be
the generalized boundary representations of $\omega$ and $\eta$,
respectively.  Consider the mapping $\psi$ of $\frak B (\frak K )_*$ into
itself given by $(\psi(\rho) )(A) = \rho (TAT)$ for $A \in \frak B (\frak K
).  $ Note $\psi$ is completely positive and $\psi^2 = \psi$.  We will show
that $\eta (\rho ) = \eta (\psi (\rho ))$ for all $\rho \in \frak B (\frak
K )_*$.  Assume $\rho \in \frak B (\frak K )$ and $A \in \frak A (\frak H
)$ and both $\rho$ and $A$ are positive.  Since $\omega \geq_q \eta$ we
have $\omega \geq \eta$ so $\omega (\rho )(A) = \rho (T)\mu (A) \geq$ $\eta
(\rho )(A)$.  Hence, we have for positive $\rho \in \frak B (\frak K )$ if
$\rho (T) = 0$ then $\eta (\rho ) = 0$.  We write $\frak K = \frak K_1
\oplus \frak K_2$ with $\frak K _1 = T\frak K$ and $\frak K_2 = (I -
T)\frak K$.  Now each $X \in \frak B (\frak K )$ can be uniquely expressed
in matrix form
$$
X = \begin{bmatrix}
X_{11}&X_{12}\\
X_{21}&X_{22}
\end{bmatrix}
$$
where $X_{ij}$ maps $\frak K_i$ into $\frak K_j$ for $i,j = 1,2$.  Let $\breve{\omega}:\fa(\fh) \to \fb(\fk)$ denote 
the dualized $q$-weight map for $\omega$. For $A\in \frak A (\frak H )$ we have that
$\breve{\omega} (A)$ can be expressed in matrix form as
$$
\breve{\omega} (A) =
\begin{bmatrix}
\mu (A)I&0
\\
0&0
\end{bmatrix}
$$

Let $\breve{\eta}:\fa(\fh) \to \fb(\fk)$ denote the dualized $q$-weight map for $\eta$.
Since $\omega \geq \eta$, we have that $\breve{\omega} \geq \breve{\eta}$ hence
$$
\breve{\omega} (A) =
\begin{bmatrix}
\mu (A)I&0
\\
0&0
\end{bmatrix}
\geq \breve{\eta} (A) =
\begin{bmatrix}
\breta(A)_{11}&\breta(A)_{12}
\\
\breta(A)_{21}&\breta(A)_{22}
\end{bmatrix}
\geq 0
$$
for positive $A \in \frak A (\frak H )$.  This show us that $\breta(A)_{22} =
0$ for positive $A$ and since $\breta(A)_{22} = 0$ the only way that the above matrix can be positive is for both $\breta(A)_{12} = 0$ and $\breta(A)_{21} = 0.  $ Hence, $\breta(A)_{12} =
\breta(A)_{21} = \breta(A)_{22} = 0$ for all positive $A \in \frak A (\frak H )$. 
Since every $A \in \frak A (\frak H )$ is the complex linear
combination of four positive elements of $\frak A (\frak H )$ we have
$$
\breve{\eta} (A) =
\begin{bmatrix}
\breta(A)_{11}& 0
\\
0& 0
\end{bmatrix}
, \qquad \forall A\in\fa(\fh).
$$
Since the mapping $A \rightarrow TAT$ in matrix form sets all the $A_{ij}$
equal to zero except $A_{11}$ it follows that $\eta (\rho ) = \eta (\psi
(\rho ))$ for all $\rho \in \frak B (\frak K )_*$.

Now let $\pi_t^\#$ and $\phi_t^\#$ be the generalized boundary representations
of
$\omega$ and $\eta$, respectively.  Since $\omega \geq_q \eta$ we have
$$
\pi_t^\# (\rho )(A) = \rho (T)(1 + \mu{\vert_t} (\Lambda (T)))^{-1}\mu
{\vert_t} (A) \geq \eta ((I + \hat \Lambda\eta{\vert_t} )^{-1}\rho
)(A{\vert_t} ) \geq 0
$$
for positive $\rho \in \frak B (\frak K )_*$, positive $A \in \frak A
(\frak H )$ and $t > 0$.  Now let us replace $\rho$ in the above inequality
by $\vartheta \in \frak B (\frak K ) _*$ given by $\vartheta (A) =$ $\rho
(A) + \eta (\rho )(\Lambda (A){\vert_t} )$.  Note for $\rho \geq 0$
we have $\vartheta \geq 0$.  With this replacement we have
$$
(\rho (T) + \eta (\rho ){\vert_t} (\Lambda (T)))(1 + \mu{\vert_t}
(\Lambda (T)))^{-1}\mu (A) \geq \eta (\rho ){\vert_t} (A)
$$
for positive $A \in \frak A (\frak H )$, positive $\rho \in \frak B (\frak
K )_*$ and $t > 0$.  Now assume further that $\rho$ is a state and $\rho
(T) = 1$.  Then we have
$$
(1 + \mu{\vert_t} (\Lambda (T)))^{-1}\mu{\vert_t}  \geq (1 + \eta
(\rho ){\vert_t} (\Lambda (T)))^{-1} \eta (\rho ){\vert_t}
$$
for all $t > 0$.  But this is exactly the situation we had with
inequality \eqref{6.1} in the previous theorem with $\nu$ replaced by $\eta (\rho
)$.
Hence, we conclude $\eta (\rho )$ is of the form given in the previous
theorem and since $\mu$ is $q$-pure we have $\eta (\rho ) = \lambda\mu$ for
some $\lambda \geq 0$.  Hence, we have shown that $\eta (\rho ) =
\lambda\mu$ for each positive $\rho \in \frak B (\frak K )_*$ so that $\rho
(T) =$ $\rho (I)$.  Now for arbitrary positive $\rho \in \frak B (\frak K
)_*$ we have $\eta (\rho ) = \eta (\psi (\rho ))$ and $\psi (\rho )$ is
positive and $\psi (\rho )(T) = \psi (\rho )(I)$ so $\eta (\rho ) =
\lambda\mu$ for all positive $\rho \in \frak B (\frak K )_*$.  Then by
linearity we have $\eta (\rho ) = \lambda\mu$ for all $\rho \in \frak B
(\frak K )_*$ where $\lambda$ depends linearly on $\rho$ and, therefore,
there is an operator $C \in \frak B (\frak K )$ so that $\eta (\rho ) = \rho
(C)\mu$ for all $\rho \in \frak B (\frak K )_*$.  From the positivity of
$\eta$ we see that $C \geq 0$ so we can write $C =  \lambda T _1$ where
$\lambda \geq 0$ and $T_1$ is a positive operator of norm one.  If $\lambda
= 0$ then $\eta = 0\omega = 0$ and the proof is complete so we assume
$\lambda > 0$.  Now we have shown that $\eta (\rho ) = \eta (\psi (\rho ))$
for $\rho \in \frak B (\frak K )_*$ from which it follows that $T_1 =
TT_1T$ and since $T$ is a projection and $T_1$ is of norm one we have $T
\geq T_1 \geq 0$.  If $T_1$ is a multiple of $T$ the proof is complete so
we assume $T_1$ is not a multiple of $T$.  In this case rank(T) $\geq 2$
since in the rank one case $T_1$ must be a multiple of $T$.  Now we can
apply Theorem \ref{rank-one-subordinates} which tells us that $\lambda \leq (1 + \mu (\Lambda
(T - T_1))) ^{-1}$ so if $\lambda > 0$ we have $\mu (\Lambda (T - T_1)) <
\infty$.  Since $T_1$ is not a multiple of $T$ we have $T - T_1$ is a
positive non zero operator so there is a rank one hermitian $e \in \frak B
(\frak K )$ so that $\Vert T - T_1\Vert e \leq T - T_1$.  Then we have
$\Vert T - T_1\Vert\mu (\Lambda (e)) \leq \mu (\Lambda (T - T_1)) < \infty$
so condition (iii) is violated.  Assuming $T_1$ was not a multiple of $T$
has led to a contradiction so we conclude $T_1 = T$ and, therefore, $\eta
= \lambda\omega.$
\end{proof}

\subsection{$q$-corners}
Next we consider the problem of identifying the $q$-corners
between range rank one $q$-weight maps.  The first step is the  observation that in the type II$_0$ case, all $q$-corners between range rank one $q$-weight maps must have range rank one in the natural sense.

\begin{theorem}\label{corners-are-range-rank-one}
Let $\omega_1, \omega_2$ be q-weight maps of range rank one over finite-dimensional Hilbert spaces $\fk_1$ and $\fk_2$ respectively, whose associated normal spines are trivial. If $\gamma$ is a non-zero $q$-corner from $\omega_1$ to $\omega_2$, then it has range  rank one, i.e. there exist $S \in \fb(\fk_2,\fk_1)$ and $\ell \in \fa(\mathfrak{H}_2,\mathfrak{H}_1)_*$ such that
\begin{equation}\label{form-rank-one-q-corner}
\gamma(\mu)(B) = \ell(B) \mu(S), \qquad \forall B \in \fa(\mathfrak{H}_2,\mathfrak{H}_1), \forall \mu \in \fb(\fk_2,\fk_1)_*.
\end{equation}
\end{theorem}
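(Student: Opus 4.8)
\emph{Proof idea.} The plan is to package the corner into the combined $q$-weight map $\omega$ over $\fk=\fk_1\oplus\fk_2$ given by $\omega(\rho)=\begin{pmatrix}\omega_1(\rho_{11}) & \gamma(\rho_{12})\\ \gamma^*(\rho_{21}) & \omega_2(\rho_{22})\end{pmatrix}$, to produce a boundary expectation for $\omega$, and then to apply Lemma~\ref{L-rank-one}. Put $\fh=\mathfrak{H}_1\oplus\mathfrak{H}_2$. First I would invoke Theorem~\ref{rank-one-q-weight} to write $\omega_i(\rho)(A)=\rho(T_i)\mu_i(A)$ with $T_i\geq 0$ of norm one and $\mu_i$ a nonzero positive boundary weight, so that the dualized maps satisfy $\range(\breve{\omega}_i)=\cc T_i$; the dualized corner $\breve{\gamma}\colon\fa(\mathfrak{H}_2,\mathfrak{H}_1)\to\fb(\fk_2,\fk_1)$ is precisely the $(1,2)$-block of the dualized map $\breve{\omega}\colon\fa(\fh)\to\fb(\fk)$. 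Since $\omega$ is presented in generalized Schur form with respect to $\fk_1\oplus\fk_2$ and $\mathfrak{H}_1\oplus\mathfrak{H}_2$, and since $\Lambda$ and each $E(t,\infty)$ respect these decompositions, the truncations $\breve{\omega}\vert_t$, the resolvents $(I+\breve{\omega}\vert_t\circ\Lambda)^{-1}$, the generalized boundary representation $\Pi_t^\#$ and the composites $\Pi_t^\#\circ\Lambda$ are all generalized Schur maps; in particular the diagonal block $(\Pi_t^\#)_{ii}$ coincides with the generalized boundary representation of $\omega_i$ (this is the generalized Schur machinery of \cite{jankowski-markiewicz}).

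Next I would check that the normal spine $\Pi_0^\#$ of $\omega$ is zero. As the $\sigma$-weak limit of the generalized Schur maps $\Pi_t^\#$ on $\cup_{t>0}S_t\fb(\fh)S_t^*$ followed by $\sigma$-weakly continuous extension, $\Pi_0^\#$ is again a generalized Schur map, and by the previous paragraph its $(i,i)$-block is the normal spine of $\omega_i$, which vanishes by hypothesis. But a completely positive generalized Schur map from $\fb(\fh)$ to $\fb(\fk)$ with vanishing diagonal blocks is itself zero: applying it to the positive operator whose diagonal blocks are $\|B\|I$ and whose $(1,2)$-block is $B$ produces a self-adjoint block operator with zero diagonal, which is positive only if its off-diagonal blocks vanish; hence the $(1,2)$-block of $\Pi_0^\#$ annihilates every $B$, and similarly for the $(2,1)$-block. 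Thus $\Pi_0^\#\equiv 0$. Since $\fk$ is finite-dimensional, $\omega$ has finite range rank, and since $\gamma\neq 0$ it is nonzero, so Theorem~\ref{twoz} supplies a boundary expectation $L\colon\fb(\fk)\to\fb(\fk)$ for $\omega$. Being a BW-cluster point of the generalized Schur maps $\Pi_t^\#\circ\Lambda$, the idempotent $L$ is again a generalized Schur map, with components $L_{ij}\colon\fb(\fk_j,\fk_i)\to\fb(\fk_j,\fk_i)$.

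The remaining step is to read off the ranges of the blocks of $L$. By property (iii) of a boundary expectation, $\range(L)=\range(\breve{\omega})$; since both $L$ and $\breve{\omega}$ are generalized Schur maps, passing to $(i,j)$-blocks of this equality of subspaces gives $\range(L_{ij})=\range(\breve{\omega}_{ij})$. For $i=j$ this is $\cc T_i$, so $\dim(\range L_{11})=\dim(\range L_{22})=1$; for $(i,j)=(1,2)$ it equals $\range(\breve{\gamma})$, which is nonzero because $\gamma\neq 0$. Lemma~\ref{L-rank-one} then forces $\dim(\range L_{12})=1$, hence $\dim(\range\breve{\gamma})=1$. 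Writing $\range(\breve{\gamma})=\cc S$ for a nonzero $S\in\fb(\fk_2,\fk_1)$ gives $\breve{\gamma}(B)=\ell(B)\,S$ for some linear functional $\ell$ on $\fa(\mathfrak{H}_2,\mathfrak{H}_1)$, whence $\gamma(\mu)(B)=\mu(\breve{\gamma}(B))=\ell(B)\,\mu(S)$; choosing any $\mu$ with $\mu(S)\neq 0$ shows $\ell=\gamma(\mu)/\mu(S)\in\fa(\mathfrak{H}_2,\mathfrak{H}_1)_*$, which is exactly \eqref{form-rank-one-q-corner}.

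The main obstacle is the second paragraph: transporting the generalized Schur structure through the limiting constructions of the generalized boundary representation, the normal spine, and the boundary expectation — in particular verifying that the diagonal blocks of $\Pi_t^\#$ are the generalized boundary representations of $\omega_1$ and $\omega_2$, and that the class of generalized Schur maps is closed under composition, inversion, and the relevant $\sigma$-weak and BW limits. These facts follow from \cite{jankowski-markiewicz} but must be assembled carefully. Once the normal spine of $\omega$ is shown to vanish, Theorem~\ref{twoz} and Lemma~\ref{L-rank-one} do the rest essentially mechanically.
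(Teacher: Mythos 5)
Your proposal is correct and follows essentially the same route as the paper: form the combined $q$-weight $\omega$ over $\fk_1\oplus\fk_2$, verify its normal spine vanishes, obtain a boundary expectation $L$ from Theorem~\ref{twoz}, and apply Lemma~\ref{L-rank-one} to the blocks of $L$ to conclude $\dim(\range L_{12})=1$. You merely make explicit some steps the paper compresses (the positivity argument showing the off-diagonal blocks of the normal spine vanish, and the persistence of the generalized Schur structure through the limiting constructions).
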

\begin{proof}
Suppose that $\gamma$ is a $q$-corner from $\omega_1$ to $\omega_2$ and let $\omega$ be the $q$-weight map over $\fk_1 \oplus \fk_2$ defined by
$$
\omega = \begin{pmatrix} \omega_1 & \gamma \\ \gamma^* & \omega_2 \end{pmatrix}.
$$
Notice that $\omega$ has finite range rank, since $\fk_1 \oplus \fk_2$ is finite-dimensional. Let $\Pi_t^\#$ be the generalized boundary representation associated to $\omega$. 
Notice that since the normal spines of the generalized boundary representations associated 
to $\omega_1$ and $\omega_2$ are zero, it follows that by positivity the same must hold for the
 normal spine of $\Pi_t^\#$. Thus 
by Theorem~\ref{twoz} there exists a boundary expectation for $\omega$. Let us fix a boundary 
expectation $L$ for $\omega$ obtained as in statement of Theorem~\ref{twoz}, namely as a cluster 
point of $\Pi^\#_t\circ \Lambda$. Notice that in this case, for $j=1,2$,  $L_{jj}$ is a 
boundary expectation for $\omega_j$. Since for each $j=1,2$, $\breve{\omega}_j$ is range rank one and $L_{jj}$ is a boundary expectation for $\omega_j$, we have $\range(L_{jj})=\range(\breve{\omega}_j)$, it follows that $\dim(\range(L_{jj}))=1$.  Thus, by Lemma~\ref{L-rank-one}, we have that $\dim(\range(L_{12}))=1$.

Since $L$ is a boundary expectation for $\omega$, we have that $\range(L)=\range(\breve{\omega})$, therefore there exist $\ell \in \fa(\mathfrak{H}_2, \mathfrak{H}_1)_*$ and $S\in \fb(\fk_2, \fk_1)$ such that   
$$
\breve{\omega}\begin{pmatrix} A & B \\ C & D \end{pmatrix} = 
\begin{pmatrix}\breve{\omega}_1(A) & \ell(B) S \\ \ell^*(C) S^* & \breve{\omega}_2(D) \end{pmatrix}= 
\begin{pmatrix}\rho_1(A)T_1 & \ell(B) S \\ \ell^*(C) S^* & \rho_2(D) T_2 \end{pmatrix}
$$
Therefore, $\gamma$ is given by \eqref{form-rank-one-q-corner}, i.e. it is range rank one. 
\end{proof}

In the formal definition of a
$q$-corner between two $q$-weights $\omega$ and $\eta$ we assume $\omega$
and $\eta$ are defined on different Hilbert spaces.  In the following we
find it is notationally more efficient to have the weights defined on the
same Hilbert space.  Naturally the weights need to live on orthogonal
subspaces which we define as follows.

\begin{definition}\label{definition8}  Suppose $\eta_1$ and $\eta_2$ are
$q$-weight maps defined on $\ah$ with $\fh = \fk\otimes
L^2(0,\infty )$ for $\fk$ separable.  We say $\eta_1$ and $\eta_2$ are \emph{completely orthogonal} if
there are orthogonal projections $E_1,\medspace E_2 \in \frak B
(\frak K )$ so that, if we denote $\psi_{ij}(\rho )(A) = \rho
(E_iAE_j)$ for $\rho \in \frak B(\frak K )_*$ and $A \in \frak B (\frak K )$, then
\begin{align*}
\eta_1 (\rho )(A) & = \eta_1 (\psi_{11}(\rho ))((E_1\otimes I)A(E_1\otimes I)), \\
\eta_2 (\rho )(A) &= \eta_2 (\psi_{22}(\rho ))((E_2\otimes I)A(E_2\otimes I))
\end{align*}
for $A \in \frak A (\frak H )$ and $\rho \in \frak B (\frak K )_*$.

A map $\gamma:\fb(\fk)_* \to \fa(\fh)_*$ is called an \emph{\iq $q$-corner} from $\eta_1$ and 
$\eta_2$ if for all $\rho\in \fb(\fk)_*,$ $A\in \fa(\fh)$,
$$
\gamma (\rho )(A) = \gamma (\psi_{12}(\rho ))((E_1\otimes I)A(E_2\otimes I))
$$
and furthermore the map $\fb(\fk)_* \to \fa(\fh)_*$ given by
$$
\rho \mapsto \eta_1 (\rho ) + \gamma (\rho ) + \gamma^*(\rho )
+ \eta_2 (\rho )
$$
defines a $q$-weight map.
\end{definition}

\begin{remark} \label{2by2}
We will make use of the following observation. Let $E_1, E_2$ be orthogonal projection on $\fk$ with $E_1+E_2=I$. Given a map $\phi: \ah \to \bk$ , let $\phi_b : \ah \to \fb(\fk \oplus \fk)$ be the map given by
\begin{equation}\label{matricial}
\phi_b(A) = \begin{bmatrix} E_1\phi(A)E & E_1\phi(A)E_2 \\
 E_2\phi(A)E_1 & E_2\phi(A)E_2
\end{bmatrix}
\end{equation}
Then $\phi$ is completely positive if an only if $\phi_b$ is completely positive. 
We will often refer to the map $\phi_b$ as the \emph{matricial notation} for the map $\phi$, and we will abbreviate it by identifying $\phi$ with $\phi_b$ when invoking formula \eqref{matricial}.

\end{remark}

We note that this definition is compatible with the earlier notion of $q$-corner.

\begin{prop}\label{internal-to-external}
For $j=1,2$, Let $\fk_j$ be a Hilbert space, let $\fh_j = \fk_j \otimes L^2(0,\infty)$ and let $\fk=\fk_1 \oplus \fk_2$ and $\fh=\fh_1 \oplus \fh_2$.
For each $j=1,2$, let $\omega_j$ be a $q$-weight map over $\fk_j$, let $E_j$ be the canonical projection from $\fk$ onto $\fk_j$, and let $\eta_j$ be the $q$-weight map over $\fk$ given by
\begin{equation*}
\eta_j(\rho)(A) = \omega_j(\rho_{jj}) (A_{jj}), \qquad \forall \rho \in \bk_*, \forall A \in \ah.
\end{equation*}
Then $\eta_1$ and $\eta_2$ are completely orthogonal, and moreover there exists a $q$-corner $\sigma$ from $\omega_1$ to $\omega_2$ if and only if there exists an \iq $q$-corner $\gamma$ from $\eta_1$ to $\eta_2$, and we have that
$$
\gamma(\rho)(A) = \sigma(\rho_{12})(A_{12}), \qquad \forall \rho \in \bk_*, A\in\ah.
$$
\end{prop}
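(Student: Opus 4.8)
The plan is to recognize that the proposition is really a bookkeeping identification: once the generalized Schur / matricial notation of Section~\ref{generalized-Schur} is unwound, the ``internal'' object $\eta_1+\gamma+\gamma^*+\eta_2$ over $\fk=\fk_1\oplus\fk_2$ is \emph{literally the same} boundary weight map as the ``external'' block object $\bigl(\begin{smallmatrix}\omega_1 & \sigma\\ \sigma^* & \omega_2\end{smallmatrix}\bigr)$ appearing in Definition~\ref{definition-q-corner}. Once this is recorded, the $q$-weight condition, the equivalence of the two notions of $q$-corner, and the displayed formula all become tautologies.

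First I would fix $E_1,E_2\in\bk$ to be the canonical orthogonal projections onto $\fk_1,\fk_2$, so that $E_1+E_2=I_\fk$, and collect the block-compatibility facts: the right shift on $\fh=\mathfrak{H}_1\oplus\mathfrak{H}_2$ is the direct sum of the right shifts on $\mathfrak{H}_1$ and $\mathfrak{H}_2$; the operator $\Lambda(I_\fk)$ equals $\Lambda(I_{\fk_1})\oplus\Lambda(I_{\fk_2})$; and each $E(t,\infty)$ as well as each $E_j\otimes I$ is block-diagonal and commutes with $\Lambda(I_\fk)$. Consequently $\ah$, $\bk_*$ and the truncations $(\cdot)|_t$ all split along the decomposition exactly as prescribed by the generalized Schur machinery; in particular $A\in\ah$ implies $A_{ij}\in\fa(\mathfrak{H}_j,\mathfrak{H}_i)$. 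From $\eta_j|_t(\rho)=[\omega_j|_t(\rho_{jj})]^{jj}$ one sees that the generalized boundary representation $\hat{\pi}_t$ of $\eta_j$ is the lift to the $jj$-block of that of $\omega_j$; being a compression of a completely positive contraction it is again a completely positive contraction, so $\eta_j$ is indeed a $q$-weight map over $\fk$. Complete orthogonality of $\eta_1$ and $\eta_2$ in the sense of Definition~\ref{definition8} is then immediate with this choice of $E_1,E_2$: identifying $E_j\bk E_j$ with $\fb(\fk_j)$, the functional $\psi_{jj}(\rho)$ corresponds to $\rho_{jj}$, whence $\eta_j(\rho)(A)=\omega_j(\rho_{jj})(A_{jj})=\eta_j(\psi_{jj}(\rho))\big((E_j\otimes I)A(E_j\otimes I)\big)$.

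Next I would treat the two directions of the corner equivalence. Given a $q$-corner $\sigma$ from $\omega_1$ to $\omega_2$ in the sense of Definition~\ref{definition-q-corner}, define $\gamma\colon\bk_*\to\ah_*$ by $\gamma(\rho)(A)=\sigma(\rho_{12})(A_{12})$, i.e. $\gamma(\rho)=[\sigma(\rho_{12})]^{12}$; since $A\in\ah$ gives $A_{12}\in\fa(\mathfrak{H}_2,\mathfrak{H}_1)$ and $\sigma(\rho_{12})\in\fa(\mathfrak{H}_2,\mathfrak{H}_1)_*$, the functional $\gamma(\rho)$ lies in $\ah_*$. The support identity $\gamma(\rho)(A)=\gamma(\psi_{12}(\rho))\big((E_1\otimes I)A(E_2\otimes I)\big)$ is immediate because $\sigma(\rho_{12})$ depends only on $\rho_{12}\cong\psi_{12}(\rho)$ and $\big((E_1\otimes I)A(E_2\otimes I)\big)_{12}=A_{12}$. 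Finally one checks that $\gamma^*(\rho)=[\sigma^*(\rho_{21})]^{21}$, using $(\rho^*)_{12}=(\rho_{21})^*$ and $(A^*)_{12}=(A_{21})^*$ together with the definitions of the $*$-operation on corners, so that $\eta_1+\gamma+\gamma^*+\eta_2$ written in matrix notation is exactly $\bigl(\begin{smallmatrix}\omega_1 & \sigma\\ \sigma^* & \omega_2\end{smallmatrix}\bigr)$; this is a $q$-weight map by the definition of a $q$-corner, hence $\gamma$ is an \iq $q$-corner from $\eta_1$ to $\eta_2$. Conversely, if $\gamma$ is an \iq $q$-corner from $\eta_1$ to $\eta_2$, the support condition forces $\gamma$ to be a lift $[\sigma(\cdot)]^{12}$ of a unique map $\sigma\colon\fb(\fk_2,\fk_1)_*\to\fa(\mathfrak{H}_2,\mathfrak{H}_1)_*$ (the membership $\sigma(\rho_{12})\in\fa(\mathfrak{H}_2,\mathfrak{H}_1)_*$ being precisely the condition $\gamma(\rho)\in\ah_*$); reversing the computation above identifies the $q$-weight map $\eta_1+\gamma+\gamma^*+\eta_2$ with $\bigl(\begin{smallmatrix}\omega_1 & \sigma\\ \sigma^* & \omega_2\end{smallmatrix}\bigr)$, so $\sigma$ is a $q$-corner from $\omega_1$ to $\omega_2$. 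In either direction the formula $\gamma(\rho)(A)=\sigma(\rho_{12})(A_{12})$ is built into the construction.

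I expect the main obstacle to be purely organizational rather than mathematical: one must simultaneously juggle three overlapping notational layers — the generalized Schur / matricial notation for maps on direct sums (Section~\ref{generalized-Schur}), the canonical identifications $E_i\bk E_j\cong\fb(\fk_j,\fk_i)$ and $E_i\ah E_j\cong\fa(\mathfrak{H}_j,\mathfrak{H}_i)$ (and likewise for preduals), and the compatibility of $\hat{\Lambda}$, $E(t,\infty)$ and the right shift with the orthogonal decomposition. The one thing genuinely worth verifying carefully is that the data defining the $q$-weight property (the truncations $\omega|_t$, the operator $\hat{\Lambda}$, and the boundary representation $\hat{\pi}_t$) are transported naturally under the identity identification $\fk_1\oplus\fk_2=\fk$; once that is in place, complete positivity and contractivity of $\hat{\pi}_t$ hold on one side if and only if they hold on the other, and the proposition follows.
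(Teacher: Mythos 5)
Your proposal is correct and follows essentially the same route as the paper's proof: both rest on the identities $\rho_{ij}=[\psi_{ij}(\rho)]_{ij}$ and $A_{ij}=[(E_i\otimes I)A(E_j\otimes I)]_{ij}$, use them to verify complete orthogonality and the internal support condition, and then observe that $\eta_1+\gamma+\gamma^*+\eta_2$ is literally the block matrix $\bigl(\begin{smallmatrix}\omega_1 & \sigma\\ \sigma^* & \omega_2\end{smallmatrix}\bigr)$ of Definition~\ref{definition-q-corner}, with the same well-definedness check of $\sigma$ in the converse direction. The only addition beyond the paper is your brief justification that each $\eta_j$ is itself a $q$-weight map, which the paper takes as given in the statement.
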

\begin{proof} Notice that for all $i,j=1,2$,
\begin{equation}\label{eq-schur-internal}
\rho_{ij} = [\psi_{ij}(\rho )]_{ij} , \qquad A_{ij}  = [(E_i\otimes I)A(E_j\otimes I)]_{ij},
\end{equation}
therefore
\begin{align*}
\eta_j (\rho )(A) & = \omega_j(\rho_{jj}) (A_{jj}) = \eta_j (\psi_{jj}(\rho ))((E_j\otimes I)A(E_j\otimes I))
\end{align*}
hence $\eta_1$ and $\eta_2$ are completely orthogonal.

Suppose that $\sigma$ is a $q$-corner from $\omega_1$ to $\omega_2$. Let 
$$
\gamma(\rho)(A) = \sigma(\rho_{ij})(A_{ij}), \qquad \forall \rho \in \bk_*, \forall A \in \ah.
$$
Notice that by \eqref{eq-schur-internal}, we have that $\gamma$ satisfies the equation
$$
\gamma (\rho )(A) = \gamma (\psi_{12}(\rho ))((E_1\otimes I)A(E_2\otimes I)), \qquad \forall \rho \in \bk_*, \forall A \in \ah.
$$
Now note that if we define $\eta:\fb(\fk)_* \to \fa(\fh)_*$ by
$
\eta(\rho) =  \eta_1 (\rho ) + \gamma (\rho ) + \gamma^*(\rho )
+ \eta_2 (\rho )$, 
for all $\rho \in \bk_*$, then
$$
\eta(\rho)(A) =  \omega_1 (\rho_{11} )(A_{11}) + \sigma (\rho_{12} )(A_{12}) + \sigma^*(\rho_{21} )(A_{21}) + \omega_2 (\rho_{22} )(A_{22})
$$
for all $\rho \in \bk_*$ and $A \in \ah$. Therefore, it follows immediately from the definition of $q$-corner, that $\eta$ is a $q$-weight map over $\fk$.

Conversely, suppose that $\gamma$ is an \iq $q$-corner from $\eta_1$ to $\eta_2$. 
Let $\sigma: 
\mathfrak{B}(\mathfrak{K}_2, \mathfrak{K}_1)_*: \rightarrow \fa(\mathfrak{H}_2, \mathfrak{H}_1)_*$ be the map given by
$$
\sigma(\mu_{12})(A_{12}) = \gamma(\mu)(A)
$$
for all $\mu \in \bk_*$ and $A \in \ah$.  We show that $\sigma$ is well-defined.  First note that
for every $\ell \in \mathfrak{B}(\mathfrak{K}_2, \mathfrak{K}_1)_*$  there exists $\mu \in \bk_*$ such that $\ell=\mu_{12}$ 
and for every $X \in \fa(\mathfrak{H}_2, \mathfrak{H}_1)_*$ there exists $A \in \ah$ such that $X=A_{ij}$. Furthermore, since $\gamma$ is an \iq $q$-corner, for any other representatives $\mu'$ and $A'$ such that $\ell=\mu_{ij}'$  and $X=A_{ij}'$, we have that 
$$
\psi_{12}(\mu) = \psi_{12}(\mu'), \qquad (E_1\otimes I)A(E_2\otimes I) = (E_1\otimes I)A'(E_2\otimes I)
$$
hence
$$
\gamma(\mu')(A') = \gamma(\mu)(A),
$$
thus $\sigma$ is well-defined.

Now note that if we define once again $\eta:\fb(\fk)_* \to \fa(\fh)_*$ by
$
\eta(\rho) =  \eta_1 (\rho ) + \gamma (\rho ) + \gamma^*(\rho )
+ \eta_2 (\rho )$, 
for all $\rho \in \bk_*$, then we obtain once more
$$
\eta(\rho)(A) =  \omega_1 (\rho_{11} )(A_{11}) + \sigma (\rho_{12} )(A_{12}) + \sigma^*(\rho_{21} )(A_{21}) + \omega_2 (\rho_{22} )(A_{22})
$$
or 
$$
\eta(\rho) = \begin{bmatrix} \omega_1(\rho_{11}) & \sigma(\rho_{12}) \\ \sigma^*(\rho_{21})
& \omega_2(\rho_{22}) \end{bmatrix}.
$$
Therefore, if $\eta$ is a $q$-weight, then $\sigma$ is a $q$-corner from $\omega_1$ to $\omega_2$.
\end{proof}

The following  lemma will be a useful tool in the remainder.

\begin{lemma}\label{extractor}
Let $\fk$ be a Hilbert space, let $\sigma: \bk \to M_2(\cc)$ be a $*$-preserving linear map such that $\sigma_{12}\neq 0$, let $P_1, P_2 \in \bk$ be two non-zero orthogonal projections and let $Q \in \bk$ with norm one. Define
$\phi: \bk \to \fb(\fk \oplus \fk)$ given by 
$$
\phi(A) = \begin{bmatrix} \sigma_{11}(A) P_1 & \sigma_{12}(A) Q \\
\sigma_{21}(A) Q^* & \sigma_{22}(A) P_2 
\end{bmatrix}
$$
Then $\phi$ is (completely) positive if and only if $P_1QP_2=Q$, $QQ^* \leq P_1$, $Q^*Q \leq P_2$, and $\sigma$ is (completely) positive.
\end{lemma}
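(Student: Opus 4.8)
The plan is to factor $\phi$ as $\Phi\circ\sigma$, where $\Phi$ is the generalized Schur multiplication attached to the fixed block operator $\mathbf X=\begin{bmatrix}P_1&Q\\ Q^*&P_2\end{bmatrix}\in\fb(\fk\oplus\fk)$. Explicitly, define $\Phi:M_2(\cc)\to\fb(\fk\oplus\fk)$ by $\Phi((m_{ij}))=(m_{ij}X_{ij})_{i,j}$ with $X_{11}=P_1$, $X_{12}=Q$, $X_{21}=Q^*$, $X_{22}=P_2$; a one-line check gives $\phi=\Phi\circ\sigma$. By the Schur product theorem in operator form (write $\mathbf X^{1/2}=(Y_{ij})$ and check $\Phi(M)=\sum_{k=1}^2 D_k^*(M\otimes I_\fk)D_k$ with $D_k=\mathrm{diag}(Y_{k1},Y_{k2})$, so $\Phi$ is completely positive; conversely $\Phi$ evaluated at the rank-one positive all-ones matrix of $M_2(\cc)$ returns $\mathbf X$), the map $\Phi$ is positive $\iff$ $\Phi$ is completely positive $\iff$ $\mathbf X\ge0$. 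I would then record the elementary equivalence: $\mathbf X\ge0$ if and only if $P_1QP_2=Q$, $QQ^*\le P_1$ and $Q^*Q\le P_2$. One direction tests $\mathbf X$ against vectors $(-tQy,y)$ with $y\in\ker P_2$ (forcing $QP_2=Q$, and symmetrically $P_1Q=Q$) and against $(-Qy,y)$ (forcing $Q^*Q\le P_2$, symmetrically $QQ^*\le P_1$); the converse uses that $P_1QP_2=Q$ already forces $P_1Q=Q=QP_2$, whence $\langle x,Qy\rangle=\langle P_1x,QP_2y\rangle$ and $\langle(x,y),\mathbf X(x,y)\rangle\ge(\|P_1x\|-\|P_2y\|)^2\ge0$ using $\|Q\|=1$. (In fact $QQ^*\le P_1$ and $Q^*Q\le P_2$ are automatic from $P_1QP_2=Q$ together with $\|Q\|=1$.)

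Granting these two reductions, the ``if'' directions are immediate: if the conditions on $P_1,Q,P_2$ hold then $\Phi$ is completely positive, so $\phi=\Phi\circ\sigma$ is positive (resp.\ completely positive) whenever $\sigma$ is positive (resp.\ completely positive).

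For the ``only if'' directions, suppose first that $\phi$ is positive. Evaluating $\phi(A)\ge0$ at positive $A$ and reading off the diagonal blocks gives $\sigma_{11}(A),\sigma_{22}(A)\ge0$; moreover if $\sigma_{11}(A)=0$ then the $(1,1)$-block of $\phi(A)$ vanishes, which for a positive $2\times2$ block matrix forces $\sigma_{12}(A)Q=0$, hence $\sigma_{12}(A)=0$ since $Q\ne0$, and symmetrically for $\sigma_{22}$. Since $\sigma_{12}\ne0$ and $\bk$ is spanned by its positive elements, there is $A_0\ge0$ with $\sigma_{12}(A_0)\ne0$; for such $A_0$ we then have $\sigma_{11}(A_0),\sigma_{22}(A_0)\ne0$, and the range inclusions valid for a positive $2\times2$ block matrix applied to $\phi(A_0)$ (namely $\mathrm{ran}(\sigma_{12}(A_0)Q)\subseteq\overline{\mathrm{ran}(\sigma_{11}(A_0)P_1)}$ and the adjoint statement) yield $P_1Q=Q=QP_2$, i.e.\ the stated conditions on $P_1,Q,P_2$. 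Finally, for every positive $A$ the off-diagonal bound $YY^*\le\|Z\|X$ for the blocks $X,Y,Z$ of $\phi(A)\ge0$ reads $|\sigma_{12}(A)|^2QQ^*\le\sigma_{11}(A)\sigma_{22}(A)P_1$, so taking norms (with $\|Q\|=\|P_1\|=1$) gives $|\sigma_{12}(A)|^2\le\sigma_{11}(A)\sigma_{22}(A)$, which together with $\sigma_{11}(A),\sigma_{22}(A)\ge0$ says precisely that $\sigma(A)\ge0$ in $M_2(\cc)$. Hence $\sigma$ is positive, which finishes the ``positive'' version.

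It remains to upgrade ``$\sigma$ positive'' to ``$\sigma$ completely positive'' under the stronger hypothesis that $\phi$ is completely positive, and this is the step I expect to be the main obstacle. Fix $n$ and a positive $\mathbf A=(A_{kl})\in M_n(\bk)$; writing $\Sigma_{ij}=(\sigma_{ij}(A_{kl}))_{k,l}\in M_n(\cc)$ and using the natural $*$-isomorphism $M_n(\fb(\fk\oplus\fk))\cong M_2(M_n(\cc)\otimes\fb(\fk))$, positivity of $\phi^{(n)}(\mathbf A)$ becomes $\begin{bmatrix}\Sigma_{11}\otimes P_1&\Sigma_{12}\otimes Q\\ \Sigma_{21}\otimes Q^*&\Sigma_{22}\otimes P_2\end{bmatrix}\ge0$, and the goal is $\begin{bmatrix}\Sigma_{11}&\Sigma_{12}\\ \Sigma_{21}&\Sigma_{22}\end{bmatrix}\ge0$, which is $\sigma^{(n)}(\mathbf A)\ge0$ up to reindexing. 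Here the conditions $P_1Q=Q=QP_2$ and $\|Q\|=1$ do the work: given $\epsilon>0$, choose a unit vector $e_2\in\mathrm{ran}(P_2)$ with $\|Qe_2\|>1-\epsilon$ (possible since $Q=QP_2$ and $\|Q\|=1$), and put $e_1=Qe_2/\|Qe_2\|$, which lies in $\mathrm{ran}(P_1)$ because $P_1Q=Q$; testing the displayed inequality against vectors of the form $v_1\otimes e_1\oplus v_2\otimes e_2$, after adjusting the phase of $v_1$, yields $\langle v_1,\Sigma_{11}v_1\rangle-2(1-\epsilon)|\langle v_1,\Sigma_{12}v_2\rangle|+\langle v_2,\Sigma_{22}v_2\rangle\ge0$, and letting $\epsilon\to0$ gives the claim. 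Thus $\sigma$ is $n$-positive for every $n$, i.e.\ completely positive. The only genuinely delicate points are that $\|Q\|$ need not be attained (hence the $\epsilon$-approximation above) and bookkeeping with the block-reindexing isomorphisms; the rest is routine.
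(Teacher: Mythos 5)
Your proof is correct, and while it shares the overall skeleton of the paper's argument (extract the conditions on $Q$ from positivity at a single positive $A_0$ with $\sigma_{12}(A_0)\neq 0$; reduce the positivity of $\sigma$ to a scalar $2\times 2$ situation via approximate norming vectors for $Q$, since $\|Q\|$ need not be attained; prove sufficiency by exhibiting $\phi$ as a completely positive ``multiplier'' applied to $\sigma$), it packages both directions differently in ways worth noting. For sufficiency, the paper writes $\phi(A)$ by hand as a compression of a two-term sum, each term a sandwich of $\sigma$; you instead factor $\phi=\Phi\circ\sigma$ through the operator Schur multiplier $\Phi$ attached to $\mathbf X=\begin{bmatrix}P_1&Q\\ Q^*&P_2\end{bmatrix}$ and invoke the Choi--Kraus form coming from $\mathbf X^{1/2}$. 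This is cleaner, makes the ``if'' direction a one-liner, and yields as a byproduct the observation (absent from the paper) that the three conditions on $P_1,Q,P_2$ are jointly equivalent to $\mathbf X\geq 0$ and that $QQ^*\leq P_1$, $Q^*Q\leq P_2$ are redundant given $P_1QP_2=Q$ and $\|Q\|=1$. For necessity, the paper derives $P_1QP_2=Q$ from a Cauchy--Schwarz/determinant inequality and then compresses $\phi$ by the partial isometries $U_n\oplus U_n^*$ built from a norming sequence $x_n$ for $Q$; you use range inclusions and the bound $YY^*\leq\|Z\|X$ for positive block matrices to get scalar positivity, and for the completely positive upgrade you test $\phi^{(n)}$ directly against product vectors $v_1\otimes e_1\oplus v_2\otimes e_2$ with $e_1=Qe_2/\|Qe_2\|$ --- essentially the same trick as the paper's compression, but avoiding the bookkeeping with $G_n^*\phi(A)G_n$ at the cost of the canonical-shuffle reindexing. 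Both routes are sound; yours is somewhat more systematic and self-contained, while the paper's is more hands-on.
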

\begin{proof}
Suppose that $\phi$ is positive. Since $\sigma\neq 0$, there exists $A_0\geq 0$ such that $c_{12}=\sigma_{12}(A) \neq 0$. Notice that by positivity of $\phi$ and the fact that $P_j$ is a nonzero projection, we have that $c_{jj}=\sigma_{jj}(A) > 0$ for $j=1,2$. By positivity of $\phi$, we must have that for all $x,y\in \bk$,
$$
|c_{12}|^2  |( Qx,y)| ^2 \leq  c_{11}c_{22} (P_1y,y) (P_2 x, x)
$$
Thus we have that $\ker P_2 \subseteq \ker Q$ and $\ker P_1 \subseteq \ker Q^*$, hence $P_2Q^* = Q^*$ and $P_1Q=P_1$, or $P_1QP_2=Q$. Furthermore, 
$$
QQ^* = P_1QQ^*P_1 \leq P_1 \|Q^*\|^2 = P_1
$$
and similarly $Q^*Q\leq P_2$. Now let $x_n$ be a sequence of unit vectors in $P_2\fk$ such that $\|Qx_n\| \to 1$, and let $y_n = Qx_n$ (without loss of generality they are nonzero vectors). Let $U_n$ be the rank one partial isometry such that $U_n x_n = y_n/\|y_n\|$ and $U_n$ is zero on $\{ x_n \}^\perp$.  If $G_n=U_n\oplus U_n^*$, then
$$
\psi_n(A) = G_n^* \phi(A) G_n = \begin{bmatrix} \sigma_{11}(A) U_n^*U_n  & \sigma_{12}(A) \|y_n\| U_n \\
\sigma_{21}(A) \|y_n\| U_n & \sigma_{22}(A) U_n U_n^*
\end{bmatrix}
$$
Since $P_1$ and $P_2$ are orthogonal, it is clear that the range of $\psi_n$ is isomorphic to $M_2(\cc)$  and the map
$$
A \mapsto \begin{bmatrix} \sigma_{11}(A)  & \sigma_{12}(A) \|y_n \| \\
\sigma_{21}(A) \|y_n\| & \sigma_{22}(A)
\end{bmatrix}
$$
is (completely) positive if $\phi$ is (completely) positive. By taking limits, we obtain that $\sigma$ is (completely) positive.

Conversely suppose that  $P_1QP_2=Q$, $QQ^* \leq P_1$, $Q^*Q \leq P_2$, and $\sigma$ is (completely) positive. Then observe that 
\begin{align*}
\phi(A) & = \begin{bmatrix} \sigma_{11}(A) P_1 & \sigma_{12}(A) Q \\
\sigma_{21}(A) Q^* & \sigma_{22}(A) P_2 
\end{bmatrix} = \begin{bmatrix}  P_1 & 0 \\
0 & P_2 
\end{bmatrix}
\begin{bmatrix} \sigma_{11}(A) I & \sigma_{12}(A) Q \\
\sigma_{21}(A) Q^* & \sigma_{22}(A) P_2 
\end{bmatrix}
\begin{bmatrix}  P_1 & 0 \\
0 & P_2 
\end{bmatrix}
\end{align*}
But
$$
\begin{bmatrix} \sigma_{11}(A) I & \sigma_{12}(A) Q \\
\sigma_{21}(A) Q^* & \sigma_{22}(A) P_2 
\end{bmatrix}
=
\begin{bmatrix} \sigma_{11}(A) I & \sigma_{12}(A) Q \\
\sigma_{21}(A) Q^* & \sigma_{22}(A) Q^*Q 
\end{bmatrix}
+
\begin{bmatrix} 0 & 0 \\
0 & \sigma_{22}(A) (P_2-Q^*Q) 
\end{bmatrix}
$$
and
$$
\begin{bmatrix} \sigma_{11}(A) I & \sigma_{12}(A) Q \\
\sigma_{21}(A) Q^* & \sigma_{22}(A) Q^*Q 
\end{bmatrix} = 
\begin{bmatrix} I & 0 \\
0 & Q^*
\end{bmatrix} \cdot
\begin{bmatrix} \sigma_{11}(A) I & \sigma_{12}(A) I \\
\sigma_{21}(A) I & \sigma_{22}(A) I 
\end{bmatrix} \cdot
\begin{bmatrix} I & 0 \\
0 & Q
\end{bmatrix}.
$$

\end{proof}

\begin{lemma}  \label{ordinary} 
Let $\fk$ be a finite-dimensional Hilbert space.
Suppose $\omega$ and $\eta$ are completely
orthogonal $q$-pure range rank one $q$-weight maps over $\fk$ so
$\omega (\rho ) = \rho (T_1) \mu$ and $\eta (\rho ) = \rho (T_2)\nu$ for
$\rho \in \frak B (\frak K )_*$ and $\mu$ and $\nu$ are $q$-weights so
that $\mu (I - \Lambda (T_1)) \leq 1$ and $\nu (I - \Lambda (T_2)) \leq 1$. Suppose that $\gamma$ is a non-zero range rank one \iq corner from $\omega$
to $\eta$.  Then there exist $\tau \in \frak A (\frak H )_*$ and $Q \in \frak B (\frak K
)$ satisfying
\begin{equation}\label{q-eqs}
\|Q\| = 1, \qquad QQ^* \leq T_1\quad \text{and} \quad Q^*Q \leq T_2
\end{equation}
such that $\gamma (\rho ) = \rho(Q)\tau$ for all $\rho \in \bk_*$.  For $t
> 0$ let
$$
h(t) = \frac{(1+\mu\vert_t(\Lambda (T_1)))^{1/2}\;\; (1+\nu\vert_t(\Lambda
(T_2)))^{1/2}} {1+\tau\vert_t(\Lambda (Q))}
$$
Then $h(t)$ has a limit as $t\to 0+$, and furthermore $\vert h(t)\vert$ is a non-increasing
 function of $t$ which is bounded above. If $\kappa = \lim_{t \to 0+} |h(t)|$, then 
$1 \leq |h(t)| \leq \kappa$ for all $t>0$ and $\kappa \gamma$ is a ordinary \iq corner from
 $\omega$ to $\eta$.  

Conversely, given $Q\in\bk$ satisfying \eqref{q-eqs}, let $\gamma:\bk_* \to \ah_*$ be 
given by $\gamma (\rho ) = \rho (Q)\tau$ for $\rho\in \frak B (\frak K )_*$, and let $h(t)$ be defined as above for $t>0$. If there exists $\kappa\geq 1$ such that $|h(t)|\leq \kappa$ for all $t>0$ and
$\kappa \gamma$ is an ordinary \iq corner from $\omega$ to $\eta$, then $\gamma$ is an \iq $q$-corner from $\omega$ to $\eta$. If $\omega$ and $\eta$ are unbounded then the
\iq corner $\kappa\gamma$ from $\omega$ to $\eta$ is trivially maximal in that
$\lambda\kappa\gamma$ is not an \iq corner from $\omega$ to $\eta$ for $\lambda
> 1$.
\end{lemma}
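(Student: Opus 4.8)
The plan is to compute the generalized boundary representation of the Schur $q$-weight map $\Theta(\rho)=\omega(\rho)+\gamma(\rho)+\gamma^*(\rho)+\eta(\rho)$ explicitly and then read off every assertion from the requirement that it be a completely positive contraction at each $t>0$. Complete orthogonality of $\omega,\eta$ forces $T_1=E_1T_1E_1\le E_1$ with $\mu$ supported on $E_1$, and likewise $T_2\le E_2$, $\nu$ supported on $E_2$; since $\gamma$ is a nonzero range rank one corner we may write $\gamma(\rho)=\rho(Q)\tau$ (equivalently $\breve\gamma(B)=\tau(B)Q$) with $\tau\in\fa(\fh)_*$ supported on the $(1,2)$-corner, $Q=E_1QE_2$, and $\|Q\|=1$. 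By Theorem~\ref{rank-one-q-pure} the $q$-purity of $\omega,\eta$ makes $T_1,T_2$ projections, so Lemma~\ref{extractor} applies with $P_i=T_i$. Writing operators of $\fb(\fh)$ in matricial notation (Remark~\ref{2by2}) relative to $E_1,E_2$, the inversion of $I+\hat\Lambda\Theta\vert_t$ decouples — each output block depends only on the corresponding input block — reducing to four scalar Neumann-type inversions exactly as in the proof of Theorem~\ref{rank-one-q-weight}; this gives invertibility (equivalently $1+\tau\vert_t(\Lambda(Q))\ne 0$) and
$$
\Pi_t^\#(A)=\begin{pmatrix}\dfrac{\mu\vert_t(A_{11})}{1+\mu\vert_t(\Lambda(T_1))}\,T_1 & \dfrac{\tau\vert_t(A_{12})}{1+\tau\vert_t(\Lambda(Q))}\,Q\\ \ast & \dfrac{\nu\vert_t(A_{22})}{1+\nu\vert_t(\Lambda(T_2))}\,T_2\end{pmatrix}.
$$

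By Theorem~\ref{powerstheorem} and the remark after Definition~\ref{def-breve}, $\Theta$ is a $q$-weight map iff $\Pi_t^\#$ is a completely positive contraction for small $t$. Contractivity is automatic: $\Pi_t^\#(I)$ is block-diagonal with norm $\max\!\big(\tfrac{\mu\vert_t(I)}{1+\mu\vert_t(\Lambda(T_1))},\tfrac{\nu\vert_t(I)}{1+\nu\vert_t(\Lambda(T_2))}\big)\le 1$ by $\mu(I-\Lambda(T_1))\le1$, $\nu(I-\Lambda(T_2))\le1$ and monotonicity in $t$. Complete positivity: Lemma~\ref{extractor} (applied to $\Pi_t^\#$, $P_i=T_i$) shows it is equivalent to $QQ^*\le T_1$, $Q^*Q\le T_2$, $T_1QT_2=Q$ — which, the $T_i$ being projections, are precisely \eqref{q-eqs} — together with complete positivity of the stripped $M_2$-valued map; conjugating the latter by $\mathrm{diag}\big((1+\mu\vert_t(\Lambda(T_1)))^{1/2},(1+\nu\vert_t(\Lambda(T_2)))^{1/2}\big)$ shows it is equivalent in turn to complete positivity of $\Phi_t\colon A\mapsto\big(\begin{smallmatrix}\mu\vert_t(A_{11}) & h(t)\,\tau\vert_t(A_{12})\\ \ast & \nu\vert_t(A_{22})\end{smallmatrix}\big)$, with $h(t)$ exactly the stated function. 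Finally, since $\Theta$ is a $q$-weight map $\breve\Theta\vert_t$ is completely positive, so applying it to $\Lambda\big(\begin{smallmatrix}T_1 & Q\\ Q^* & T_2\end{smallmatrix}\big)\ge0$ gives $|\tau\vert_t(\Lambda(Q))|^2\le\mu\vert_t(\Lambda(T_1))\,\nu\vert_t(\Lambda(T_2))$, and then the triangle inequality and the bound $1+\sqrt{xy}\le\sqrt{(1+x)(1+y)}$ give $|1+\tau\vert_t(\Lambda(Q))|\le(1+\mu\vert_t(\Lambda(T_1)))^{1/2}(1+\nu\vert_t(\Lambda(T_2)))^{1/2}$, i.e.\ $|h(t)|\ge1$.

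For the monotonicity of $|h(t)|$, fix $0<t<s$ and apply $\Phi_t$ (completely positive, since $\Pi_t^\#$ is) to the increment $E(t,s)\,\Lambda\big(\begin{smallmatrix}T_1 & Q\\ Q^* & T_2\end{smallmatrix}\big)\,E(t,s)\ge0$; writing $p=\mu\vert_t(\Lambda(T_1))-\mu\vert_s(\Lambda(T_1))\ge0$ and $r=\nu\vert_t(\Lambda(T_2))-\nu\vert_s(\Lambda(T_2))\ge0$, the resulting $2\times2$ positivity reads $|h(t)|^2\,|\tau\vert_t(\Lambda(Q))-\tau\vert_s(\Lambda(Q))|^2\le pr$. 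Since $|h(t)|\ge1$, the triangle inequality gives $|1+\tau\vert_t(\Lambda(Q))|\le|1+\tau\vert_s(\Lambda(Q))|+\sqrt{pr}/|h(t)|$; using $|h(t)|\,|1+\tau\vert_t(\Lambda(Q))|=\big((1+\mu\vert_t(\Lambda(T_1)))(1+\nu\vert_t(\Lambda(T_2)))\big)^{1/2}$ and the arithmetic--geometric mean inequality in the form $\big((1+\mu\vert_t(\Lambda(T_1)))(1+\nu\vert_t(\Lambda(T_2)))\big)^{1/2}\ge\big((1+\mu\vert_s(\Lambda(T_1)))(1+\nu\vert_s(\Lambda(T_2)))\big)^{1/2}+\sqrt{pr}$ (obtained by squaring), one deduces $|h(t)|\ge|h(s)|$. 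Let $\rho_t\in[1,\infty]$ be the radius of the closed balanced convex set of $z\in\cc$ for which $A\mapsto\big(\begin{smallmatrix}\mu\vert_t(A_{11}) & z\,\tau\vert_t(A_{12})\\ \ast & \nu\vert_t(A_{22})\end{smallmatrix}\big)$ is completely positive; compression to $(s,\infty)$ gives $\rho_t\le\rho_s$ for $t<s$, so $|h(t)|\le\rho_t\le\rho_{t_0}<\infty$ for $t\le t_0$ (and $\tau\ne0$), whence $|h(t)|$ is bounded, $\kappa:=\lim_{t\to0^+}|h(t)|=\sup_t|h(t)|<\infty$, and $1\le|h(t)|\le\kappa$. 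Moreover $\kappa\le\rho_t$ for every $t$ (from $|h(s)|\le\rho_s\le\rho_t$ for $s<t$, letting $s\to0^+$), so $\big(\begin{smallmatrix}\mu & \kappa\tau\\ \kappa\bar\tau & \nu\end{smallmatrix}\big)$ is a positive boundary weight; combined with \eqref{q-eqs} via Lemma~\ref{extractor} and the estimate $\breve{\Theta}_\kappa(I-\Lambda(I))\le I$ (using $\mu(I-\Lambda)\le\mu(I-\Lambda(T_1))\le1$, $T_1\le E_1$, and likewise for $\eta$), this exhibits $\kappa\gamma$ as an ordinary internal corner.

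The converse reverses these steps: given $Q$ with \eqref{q-eqs}, the $T_i$ being projections give $T_1QT_2=Q$, and $|h(t)|\le\kappa<\infty$ forces $1+\tau\vert_t(\Lambda(Q))\ne0$ (so $\Pi_t^\#$ exists); "$\kappa\gamma$ an ordinary corner" gives $\kappa\le\rho_t$ for all $t$, hence $|h(t)|\le\kappa\le\rho_t$ makes $\Phi_t$ completely positive, and retracing the first two paragraphs shows $\Pi_t^\#$ is a completely positive contraction for all $t$, i.e.\ $\gamma$ is an internal $q$-corner. When $\omega,\eta$ are unbounded, $\mu(I-\Lambda(T_1))\le1$ together with the $q$-purity conditions forces $\mu\vert_t(\Lambda(T_1)),\nu\vert_t(\Lambda(T_2))\to\infty$ as $t\to0^+$, and an argument parallel to the uniqueness part of Theorem~\ref{sub-decomposition} (the case $\omega(I)=\infty$) shows $\rho_t\to\kappa$; since $\lambda\kappa\gamma$ being an internal corner would require $\lambda\kappa\le\inf_t\rho_t=\kappa$, no $\lambda>1$ is admissible, so $\kappa\gamma$ is trivially maximal. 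I expect the steps needing the most care to be the algebra of the block Neumann inversion leading to the correct normalization of $h(t)$, and the inequality juggling for the monotonicity of $|h(t)|$ (where the full strength of "$\Pi_t^\#$ completely positive at \emph{every} $t$", rather than mere positivity of $\Theta$, is essential); the "trivially maximal" clause additionally relies on ruling out residual slack in the disks $\{|z|\le\rho_t\}$ as $t\to0^+$, which uses the unboundedness hypothesis.
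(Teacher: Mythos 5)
Your proposal follows the same overall strategy as the paper's proof: form $\Theta=\omega+\gamma+\gamma^*+\eta$, compute its generalized boundary representation blockwise, use Lemma~\ref{extractor} to strip off the operators $T_1,Q,T_2$ and reduce everything to complete positivity of a scalar $M_2(\cc)$-valued map carrying the factor $h(t)$, and then extract monotonicity and boundedness of $\vert h(t)\vert$ from determinant inequalities applied to the increments $E(t,s)\Lambda(T_1+Q+Q^*+T_2)E(t,s)$. Several of your local arguments are genuine (and clean) variants of the paper's: you get $\vert h(t)\vert\ge 1$ directly at each $t$ from $\vert\tau\vert_t(\Lambda(Q))\vert^2\le\mu\vert_t(\Lambda(T_1))\nu\vert_t(\Lambda(T_2))$ together with $1+\sqrt{xy}\le\sqrt{(1+x)(1+y)}$, where the paper instead lets $t\to\infty$ and invokes monotonicity; your disk-radius $\rho_t$ packaging of the boundedness step is a tidy reformulation of the paper's explicit construction of a positive $A$ with $\mu(A),\nu(A)>0$ and $\tau(A)\ne0$ (you still need to exhibit such an $A$ to know $\rho_{t_0}<\infty$, so this construction cannot be skipped); and your monotonicity computation via the triangle inequality and AM--GM is equivalent to the paper's manipulation using $(1-\sqrt{xy})^2\ge(1-x)(1-y)$.

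There is, however, one genuine gap: the lemma asserts that $h(t)$ itself --- not merely $\vert h(t)\vert$ --- has a limit as $t\to 0+$, and your proposal only establishes convergence of $\vert h(t)\vert$. This is not automatic, since one must control the argument of $1+\tau\vert_t(\Lambda(Q))$. The paper handles it with a separate Cauchy-type estimate: writing $w(t)=\bigl((1+\mu\vert_t(\Lambda(T_1)))(1+\nu\vert_t(\Lambda(T_2)))\bigr)^{1/2}$, inequality \eqref{9.2} is massaged into
$$
\bigl(1-w(s)/w(t)\bigr)\bigl(\vert 1/h(s)\vert^2-\vert 1/h(t)\vert^2\bigr)\ \ge\ \vert 1/h(t)-1/h(s)\vert^2,
$$
whose left-hand side tends to $0$ as $s,t\to 0+$ because $\vert h\vert$ converges to $\kappa\ge1$; hence $1/h$ is Cauchy and $h(t)\to z\kappa$ for some unimodular $z$. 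You need to add this (or an equivalent) argument. A smaller issue is the maximality clause, which you defer to ``an argument parallel to Theorem~\ref{sub-decomposition}'': what is actually needed, and what the paper does directly, is that unboundedness forces $\mu\vert_t(\Lambda(T_1))$, $\nu\vert_t(\Lambda(T_2))$, and hence $\vert\tau\vert_t(\Lambda(Q))\vert$ to tend to infinity, so $\vert h(t)\vert\to\kappa$ gives $\mu\vert_t(\Lambda(T_1))\nu\vert_t(\Lambda(T_2))/\vert\tau\vert_t(\Lambda(Q))\vert^2\to\kappa^2$, contradicting the determinant condition $\lambda^2\kappa^2\vert\tau\vert_t(\Lambda(Q))\vert^2\le\mu\vert_t(\Lambda(T_1))\nu\vert_t(\Lambda(T_2))$ for $\lambda>1$; spelling that out would close the sketch.
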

\begin{proof}  
Suppose that $\gamma$ is a non-zero range rank one \iq $q$-corner from $\omega$ to $\eta$,
so that the map $\Theta  = \omega  + \gamma  + \gamma^* + \eta $
is a $q$-weight map from $\bk_*$ to $\ah_*$.  We write $\Theta$ in matrix form
$$
\Theta =
\begin{bmatrix}
\omega&\gamma
\\
\gamma^*&\eta
\end{bmatrix}.
$$

Since $\omega$ and $\eta$ are range rank one, there exist $T_1$ and $T_2$ non-zero positive operators so that $\omega (\rho ) = \rho (T_1) \mu$ and $\eta (\rho ) = \rho (T_2)\nu$ for
$\rho \in \frak B (\frak K )_*$ and $\mu$ and $\nu$ are $q$-weights so
that $\mu (I - \Lambda (T_1)) \leq 1$ and $\nu (I - \Lambda (T_2)) \leq 1$.
 Since they are $q$-pure, by Theorem~\ref{rank-one-q-pure} we 
have that $T_1$ and $T_2$ are projections. Furthermore, notice that 
if $\omega$ and $\eta$ are are completely orthogonal with respect to orthogonal 
projections $E_1$ and $E_2$, then we automatically have that 
$T_j \leq E_j$ for $j=1,2$. Recall that without loss of generality, we may assume that $E_1+E_2=I$.

Suppose that $\gamma$ has range rank one, i.e. there exists $Q \in \bk$ with norm one and $\tau \in \ah_*$ such that $\gamma(\rho) = \rho(Q) \tau$ for all $\rho \in \bk_*$. Since it is an \iq corner,
we have that $E_1QE_2 =  Q$. Thus,  by Remark~\ref{2by2}, $\brTheta$ is completely positive if and only if the map 
$$
A \mapsto  \begin{bmatrix} E_1\brTheta(A)E_1 & E_1\brTheta(A)E_2 \\
 E_2\brTheta(A)E_1 & E_2\brTheta(A)E_2 
\end{bmatrix} = \begin{bmatrix} \mu(A)T_1 & \tau(A)Q \\
\tau^*(A)Q^* & \nu(A) T_2
\end{bmatrix}
$$
from $\ah$ to $\fb(\fk\oplus\fk)$ is completely positive. It follows from Lemma~\ref{extractor} that $Q$ satisfies \eqref{q-eqs}.  Thus we see any candidate for a
range rank one $q$-corner must be of the form given in the statement of the
lemma.

Calculating the generalized boundary representation of $\Theta$ we find
$$
\Pi_t^\# (A) =
\begin{bmatrix}
(1+\mu{\vert_t}(\Lambda (T_1)))^{-1}\mu{\vert_t}(A)T_1&
(1+\tau{\vert_t}(\Lambda (Q)))^{-1}\tau{\vert_t}(A)Q
\\
(1+\tau^*{\vert_t}(\Lambda (Q^*)))^{-1}\tau^*{\vert_t}(A)Q^*&
(1+\nu{\vert_t}(\Lambda (T_2)))^{-1}\nu{\vert_t}(A)T_2
\end{bmatrix}.
$$
To simplify notation we make the following definitions
\begin{align*}
a(t) & = 1 + \mu{\vert_t} (\Lambda (T_1)),\\  
b(t) & = 1 + \nu{\vert_t} (\Lambda (T_2)), \\
c(t) & = 1 + \tau{\vert_t} (\Lambda (Q)).
\end{align*}
$$
h(t) = \frac {\sqrt{a(t)b(t)}} {c(t)}.
$$
By Lemma~\ref{extractor}, $\Pi_t^\#$ is completely positive if and only if
$$
A \mapsto 
\begin{bmatrix}
(1+\mu{\vert_t}(\Lambda (T_1)))^{-1}\mu{\vert_t}(A)&
(1+\tau{\vert_t}(\Lambda (Q)))^{-1}\tau{\vert_t}(A)
\\
(1+\tau^*{\vert_t}(\Lambda (Q^*)))^{-1}\tau^*{\vert_t}(A)&
(1+\nu{\vert_t}(\Lambda (T_2)))^{-1}\nu{\vert_t}(A)
\end{bmatrix}
$$
is completely positive. Since taking the Schur product of this mapping with a matrix
$$
\begin{bmatrix}
\vert x\vert^2&xy
\\
\overline{xy}&\vert y\vert^2
\end{bmatrix}
$$
where $xy \neq 0$ preserves completely positivity, it follows that
$\Pi_t^\#$ is completely positive if and only if
$$
\psi_t(A) =
\begin{bmatrix}
\mu{\vert_t}(A)&\vert h(t)\vert\tau{\vert_t}(A)
\\
\vert h(t)\vert\tau^*{\vert_t}(A)&\nu{\vert_t}(A)
\end{bmatrix}
$$
is completely positive for $t > 0$.  Note in the above expression we can
replace $\vert h(t)\vert$ by $h(t)$ since multiplying the upper off
diagonal entry by $z$ and the lower off diagonal entry by $\overline {z}$
where $\vert z\vert = 1$ preserves complete positivity.

Returning to our proof we see that if $\gamma$ is an \iq $q$-corner then  $\psi_t$ is completely positive for every $t > 0$.  We
will now show that $\vert h(t)\vert$ is a non-increasing function of $t$
and has a finite limit $\kappa \geq 1$ as $t \rightarrow 0+$.  First we show
$h(t)$ is bounded.  To accomplish this we need to find a positive element
$A \in \frak A (\frak H )$ so that $\mu (A) > 0,\medspace \nu (A) > 0$ and
$\tau (A) \neq 0$.  Since $\tau \neq 0$ we have $\tau{\vert_{t_o}}
\neq 0$ for some $t_o > 0$.  Hence there is an operator $B \in \frak B
(\frak H )$ with $\Vert B\Vert \leq 1$ so that $\tau (E(t,\infty
)BE(t,\infty )) \neq 0$.  Since $\mu$ and $\nu$ are not zero there is a
$t_1 > 0$ so that $\mu{\vert_{t_1}}  (I) > 0$ and $\nu{\vert_{t_1}}
(I) > 0$.  Let $s = \min(t_o,t_1)$ and let $A = 3E(s,\infty ) +
E(t_o,\infty )(zB + \overline {z}B^*)E(t_o,\infty )$ where $z \in \cc$
and $\vert z\vert \leq 1$.  Then we have $A \in \frak A (\frak H )$ and
$\mu (A) > 0,\medspace \nu (A) > 0$ and
$$
\tau (A) = 3\tau (E(s,\infty )) + 2Re(z\tau{\vert_{t_o}}  (B)).
$$
Since $\tau{\vert_{t_o}}  (B) \neq 0$ we can arrange it so $\tau (A) \neq 0$  with an appropriate choice of $z$ with $\vert z\vert \leq 1$.  Since $A
\geq 0$ we have $\psi _t(A) \geq 0$ for all $t > 0$ and for $0 < t \leq s$
we have $\psi_t(A)$ is constant.  Since the determinant of $\psi_t$ is
positive we have
$$
\vert h(t)\tau (A)\vert^2 \leq \mu (A)\nu (A)
$$
for $t \in (0,s]$ and since $\tau (A) \neq 0$ we have $h(t)$ is bounded for
$t \in (0,s]$ and $h$ is clearly bounded for $t > s$ so $h$ is bounded.

Let $A=T_1 + Q + Q^* + T_2$, which is a positive operator.
For $0 < t < s$,  we define $\Lambda_t^s(A) = E(t,s)\Lambda (A)E(t,s)$ and note that
$$
\mu (\Lambda_t^s(A)) = \mu{\vert_t} (\Lambda (A)) - \mu{\vert_s}
(\Lambda (A)).
$$
The same applies to $\nu$ and $\tau$.  Then we have for $0 < t < s$ the matrix
$$
\psi_t(\Lambda_t^s(A)) = 
\begin{bmatrix}
\mu{\vert_t}(\Lambda_t^s(T_1))&
\vert h(t)\vert\tau{\vert_t}(\Lambda_t^s(Q))
\\
\vert h(t)\vert\tau^*{\vert_t}(\Lambda_t^s(Q^*))&
\nu{\vert_t}(\Lambda_t^s(T_2))
\end{bmatrix}
=
\begin{bmatrix}
a(t)-a(s)&\vert h(t)\vert (c(t)-c(s))
\\
\vert h(t)\vert (\overline{c}(t)-\overline{c}(s))&b(t)-b(s)
\end{bmatrix}
$$
is a positive matrix.  Since the determinant is positive we have
$$
\vert h(t)\vert^2\vert c(t) - c(s)\vert^2 \leq (a(t)-a(s))(b(t)-b(s))
$$
for $0 < t < s$.  Recalling the definition of $h(t)$ we have
\begin{align} \label{9.1}
\vert 1 - c(s)/c(t)\vert^2 \leq (1 - a(s)/a(t))(1 - b(s)/b(t)).
\end{align}
$$\quad \text{Now for} \quad x,y \in [0,1]\quad \text{we have} \quad x - 2\sqrt
{xy} + y = (\sqrt{x} - \sqrt{y})^2 \geq 0
$$
which yields
$$
(1- \sqrt{xy})^2 = 1 - 2\sqrt{xy} + xy \geq 1 - x - y + xy = (1 - x)(1 - y).
$$
Using this inequality with $x = a(s)/a(t)$ and $y = b(s)/b(t)$ in
inequality \eqref{9.1} we find
\begin{align} \label{9.2}
\vert 1-c(s)/c(t)\vert^2 \leq (1- \sqrt{\frac{a(s)b(s)} {a(t)b(t)}})^2
\end{align}
Note that $a(t) \geq a(s)$ and $b(t) \geq b(s)$ so we have
$$
\vert 1 - c(s)/c(t)\vert \leq (1 -  \sqrt{\frac {a(s)b(s)} {a(t)b(t)}})
$$
Since $1 - \vert z\vert \leq \vert 1 - z\vert$ for $z \in \cc$ we have
$$
-  \frac {\vert c(s)\vert} {\vert c(t)\vert}  \leq  -  \frac {a(s)b(s)}
{a(t)b(t) } .
$$
Recalling the definition of $h$ this inequality states $\vert h(t)\vert
\geq \vert h(s)\vert$ and we have $\vert h(t)\vert$ is non increasing and
since $|h(t)|$ is bounded above we have $\vert h(t)\vert$ converges to its least
upper bound $\kappa$ as $t \rightarrow 0+$.  Hence, we have shown that $\vert
h(t)\vert \rightarrow \kappa$ as $t \rightarrow 0+$ and $\vert h(t)\vert \leq
\kappa$ for all $t > 0$.  Since $h(t) \rightarrow 1$ as $t \rightarrow
\infty$ we have $\kappa \geq 1$.

Now we show the function $h(t)$ has a limit as $t \rightarrow 0+$.  Since the
absolute value of $h$ has a limit and that limit is not zero to show $h$
has a limit it is enough to show the reciprocal has a limit as $t
\rightarrow 0+$.  Let $w(t) = (a(t)b(t))^{1/2}$ for $t > 0$.
Multiplying inequality \eqref{9.2} by $\vert c(t)\vert^2$ we find
$$
\vert c(t) - c(s)\vert^2 \leq \vert c(t)\vert^2 (1 - w(s)/w(t))^2
$$
and expanding both sides and canceling terms we find
$$
- 2Re(c(t)c(s)) + \vert c(s)\vert^2 \leq \vert
c(t)\vert^2w(s)/w(t)(w(s)/w(t) - 2)
$$
and dividing both sides by $w(t)w(s)$ we find
$$
\frac {-2Re(c(t)c(s))} {w(t)w(s)}  + \frac {\vert c(s)\vert^2} {w(t)w(s)}
\leq \frac {w(s)\vert c(t)\vert^2} {w(t)^3} - \frac {2\vert c(t)\vert^2}
{w(t) ^2}
$$
Now
$$
\vert 1/h(t) - 1/h(s)\vert^2 = \frac {\vert c(t)\vert^2} {w(t)^2}  + \frac
{\vert c(s)\vert^2} {w(s)^2} - \frac {2Re(c(t)c(s))} {w(t)w(s)}
$$
and combining this with the above inequality we find
$$
\frac {w(s)\vert c(t)\vert^2} {w(t)^3} - \frac {\vert c(t)\vert^2} {w(t)^2}
+  \frac {\vert c(s)\vert^2} {w(s)^2} - \frac {\vert c(s)\vert^2} {w(t)w(s)
} \geq \vert 1/h(t) - 1/h(s)\vert^2
$$
And in recalling that $c(t)/w(t) = 1/h(t)$ this inequality becomes
$$
(1 - w(s)/w(t))(\vert 1/h(s)\vert^2 - \vert 1/h(t)\vert^2) \geq \vert 1/h(t)
- 1/h(s)\vert^2
$$
Since $w(s) \leq w(t)$ and $\vert h(t)\vert \rightarrow \kappa \geq 1$ as
$t \rightarrow 0+$ it follows that the left hand side of the above
inequality tends to zero as $s$ and $t$ approach zero so it follows that
$1/h(t) - 1/h(s)$ tends to zero as $s$ and $t$ approach zero and since $1
\leq \vert h(t)\vert \leq \kappa$ it follows that $h(t) \rightarrow
z\kappa$ as $t \rightarrow 0+$ where $\vert z\vert = 1$.

We have the map
$$
\psi_t(A) =
\begin{bmatrix}
\mu{\vert_t}(A)&\vert h(t)\vert\tau{\vert_t}(A)
\\
\vert h(t)\vert\tau^*{\vert_t}(A)&\nu{\vert_t}(A)
\end{bmatrix}
$$
is a completely positive map from $\frak A (\frak H )$ into $M_2(\C)$
for each $t > 0$ and, therefore, since $\mu, \nu$ and $\tau$ are boundary weights, 
the limiting map defined for all $A \in \ah$ by
$$
\psi_o(A) =
\begin{bmatrix}
\mu (A)&\kappa\tau (A)
\\
\kappa\tau^*(A)&\nu (A)
\end{bmatrix}
$$
is completely positive. By Lemma~\ref{extractor}, we have that $\kappa\gamma$ is an ordinary \iq corner from $\omega$ to $\eta$.

Conversely, suppose $\kappa \geq 1$ and $\kappa\gamma$ is a corner from
$\omega$ to $\eta$ and
$$
\vert h(t)\vert = \frac {(1+\mu\vert_t(\Lambda (T_1)))^{\frac{1}{2}} (1+\nu\vert
_t(\Lambda (T_2)))^{\frac{1}{2}}} {\vert 1+\tau\vert_t(\Lambda (Q))\vert} \leq
\kappa
$$
for all $t > 0$. Then, in particular, for all $t>0$, the scalar 
$1+\tau\vert_t(\Lambda (Q))$ is not zero. Therefore, if $\Theta = \omega + \gamma + \gamma^* +\eta$, then its generalized boundary representation $\Pi_t^\#$ is well-defined and it is given by
$$
\Pi_t^\# (A) =
\begin{bmatrix}
(1+\mu{\vert_t}(\Lambda (T_1)))^{-1}\mu{\vert_t}(A)T_1&
(1+\tau{\vert_t}(\Lambda (Q)))^{-1}\tau{\vert_t}(A)Q
\\
(1+\tau^*{\vert_t}(\Lambda (Q^*)))^{-1}\tau^*{\vert_t}(A)Q^*&
(1+\nu{\vert_t}(\Lambda (T_2)))^{-1}\nu{\vert_t}(A)T_2
\end{bmatrix}.
$$
By an argument using Lemma~\ref{extractor} analogous to the one above, the map $\Pi_t^\#$ is completely positive if and only if the map
$$
\psi_t(A) =
\begin{bmatrix}
\mu{\vert_t}(A)&\vert h(t)\vert\tau{\vert_t}(A)
\\
\vert h(t)\vert\tau^*{\vert_t}(A)&\nu{\vert_t}(A)
\end{bmatrix}
$$
is completely positive.  But this follows immediately from the fact that $\kappa \gamma$ is an \iq corner from $\omega$ to $\eta$ and $|h(t)| \leq \kappa$ for all $t>0$.
Therefore $\gamma$ is an \iq $q$-corner from $\omega$ to $\eta $.

Finally, we show $\kappa\gamma$ is a trivially maximal corner from $\omega$
to $\eta$ if $\omega$ and $\eta$ are unbounded.  Suppose $\lambda > 1$ and
$\lambda\kappa \gamma$ is a corner from $\omega$ to $\eta$.  From the
inequality for $\kappa$ and the fact that $\mu{\vert_t} (\Lambda (T_1))$,
$\nu{\vert_t} (\Lambda (T_2))$ and, thus, $\vert\tau{\vert_t} ( \Lambda
(Q))\vert$ tend to infinity as $t \rightarrow 0+$ we have
$$
\lim_{t\rightarrow 0+}  \frac {\mu\vert_t(\Lambda (T_1))\nu\vert_t(\Lambda
(T_2))} {\vert\tau\vert_t(\Lambda (Q))\vert^2} \leq \kappa^2.
$$
Since $\lambda\kappa\gamma$ is a corner we have $\lambda^2\kappa^2\vert\tau
{\vert_t} (\Lambda (Q))\vert^2 \leq \mu{\vert_t} (\Lambda
(T_1)) \nu{\vert_t} (\Lambda (T_2))$ for $t > 0$ and, therefore,
$$
\lim_{t\rightarrow 0+}  \frac {\mu\vert_t(\Lambda (T_1))\nu\vert_t(\Lambda
(T_2))} {\vert\tau\vert_t(\Lambda (Q))\vert^2} \geq \lambda^2\kappa^2 >
\kappa ^2.
$$
This contradicts the previous limit inequality so $\kappa\gamma$ is a
trivially maximal corner from $\omega$ to $\eta.$
\end{proof}

The following generalizes Theorem 3.14 of \cite{powers-holyoke} to $q$-pure
range rank one $q$-weight maps, and it provides a useful criterion for the determining the existence of $q$-corners.

\begin{theorem}  \label{rank-one-corner-form}   
Suppose $\omega$ and $\eta$ are completely
orthogonal $q$-pure range rank one $q$-weight maps on a finite-dimensional Hilbert space
$\fk$ so $\omega (\rho )(A) = \rho (T_1)\mu (A)$ and $\eta (\rho )(A) =
\rho (T_2)\nu (A)$ for $\rho \in \frak B (\frak K )_ *$ and $A \in \frak A
(\frak H )$ where $T_1$ and $T_2$ are non-zero projections and $\mu$ and
$\nu$ are $q$-pure $q$-weights on $\frak A (\frak H )$.  Then there is a
non zero range rank one $q$-corner $\gamma$ from $\omega$ to $\eta$ if and
only if there is a unitary operator $U \in \frak B (\frak K )$ and a $z >
0$ so that $UT_1U^* = T_2$ and $\mu$ and $\nu$ can be expressed in the form
$$
\mu (A) = \sum_{k\in I} (f_k,(I - \Lambda )^{-{\tfrac{1}{2}}}A(I - \Lambda
)^{-{\tfrac{1}{2}} }f_k)
$$
$$
\nu (A) = \sum_{k\in I} (g_k,(I - \Lambda )^{-{\tfrac{1}{2}}}A(I - \Lambda
)^{-{\tfrac{1}{2}} }g_k)
$$
for $A \in \frak A (\frak H )$ and $g_k = z(U\otimes I)f_k + h_k$, with $f_k$ in the range of 
$T_1 \otimes I$ and
$h_k$ in the range of $(I - \Lambda )^{\frac{1}{2}}(T_2 \otimes I)$ for $k \in I$ and
$$
\sum_{k\in I} \Vert (I - \Lambda )^{-{\frac{1}{2}}}h_k\Vert^2 < \infty .
$$
\end{theorem}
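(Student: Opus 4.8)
The plan is to deduce the theorem from Lemma~\ref{ordinary} together with the structure theory of $q$-pure boundary weights in Theorem~\ref{sub-decomposition} and the discussion following it, in effect carrying out the argument behind Theorem~3.14 of \cite{powers-holyoke} while keeping track of the operators $T_1,T_2,Q$ on $\fk$. First I would record that, $\omega$ and $\eta$ being $q$-pure of range rank one, Theorem~\ref{rank-one-q-pure} already gives that $T_1$ and $T_2$ are projections and lets us take $\mu,\nu$ supported on $T_1\otimes I$ and $T_2\otimes I$, so that $\mu$ is of the stated form with each $f_k$ in the range of $T_1\otimes I$ (and likewise for $\nu$). Then I would apply Lemma~\ref{ordinary}: a nonzero range rank one internal $q$-corner from $\omega$ to $\eta$ exists if and only if there are $Q\in\fb(\fk)$ satisfying \eqref{q-eqs}, a nonzero $\tau\in\fa(\fh)_*$ and a number $\kappa\ge1$ such that, with $\gamma(\rho)=\rho(Q)\tau$, one has $|h(t)|\le\kappa$ for all $t>0$ and $\kappa\gamma$ is an ordinary internal corner, i.e.\ $A\mapsto\left[\begin{smallmatrix}\mu(A)&\kappa\tau(A)\\\kappa\tau^*(A)&\nu(A)\end{smallmatrix}\right]$ is a completely positive map of $\fa(\fh)$ into $M_2(\cc)$. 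So the whole problem reduces to classifying this data.

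For necessity I would start from such $(Q,\tau,\kappa)$. Complete positivity of that $2\times2$ matrix says precisely that $\Xi:=\left[\begin{smallmatrix}\mu&\kappa\tau\\\kappa\tau^*&\nu\end{smallmatrix}\right]$ is a positive boundary weight on $\fa(\fh\oplus\fh)$, so I can write $\Xi(A)=\sum_{k\in I}\langle\Phi_k,A\Phi_k\rangle$ with $\Phi_k=(p_k,q_k)$ in $\fh_q\oplus\fh_q$, which yields $\mu=\sum_k\langle p_k,\cdot\,p_k\rangle$, $\nu=\sum_k\langle q_k,\cdot\,q_k\rangle$ and $\kappa\tau=\sum_k\langle p_k,\cdot\,q_k\rangle$. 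Next I would extract $U$: condition~(iii) of Theorem~\ref{rank-one-q-pure} forces $\mu(\Lambda(e))=\infty$ for every rank-one projection $e\le T_1$ and $\nu(\Lambda(e'))=\infty$ for every rank-one $e'\le T_2$ whenever the corresponding rank exceeds one, and I would combine this with $QQ^*\le T_1$, $Q^*Q\le T_2$ and the bound $|h(t)|\le\kappa$ to show $QQ^*=T_1$ and $Q^*Q=T_2$; then $\rank T_1=\rank T_2$, and $Q$ extends to a unitary $U\in\fb(\fk)$ with $UT_1U^*=T_2$ and $Q=T_1U^*T_2$. Finally the rigidity step: since $\mu$ is $q$-pure, the characterization following Theorem~\ref{sub-decomposition} says no nonzero $\ell^2$-combination of its defining vectors lies in $\fh$; since the $p_k$ span the same subspace, a Gram-matrix argument should force each $q_k=z(U\otimes I)\hat f_k+\hat h_k$ for one common scalar $z\ge0$ (where $\hat f_k=(I-\Lambda)^{-1/2}f_k$) with $\hat h_k\in\fh$ and $\sum_k\|\hat h_k\|^2<\infty$ --- the residual being exactly what $q$-purity of $\mu$ and of $\nu$ permits. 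Applying $(I-\Lambda)^{1/2}$ gives $g_k=z(U\otimes I)f_k+h_k$ with $h_k$ in the range of $(I-\Lambda)^{1/2}(T_2\otimes I)$ and $\sum_{k\in I}\|(I-\Lambda)^{-1/2}h_k\|^2<\infty$, and $z>0$ because $\gamma\ne0$; that is the asserted form.

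For sufficiency, given $U$, $z$ and the stated representations, I would set $Q:=T_1U^*T_2$, so that $QQ^*=T_1$, $Q^*Q=T_2$, $\|Q\|=1$, $E_1QE_2=Q$ and $QU=T_1$ (hence \eqref{q-eqs} holds), and define $\tau_0\in\fa(\fh)_*$ by $\tau_0(A)=\sum_{k\in I}(f_k,(I-\Lambda)^{-1/2}A(I-\Lambda)^{-1/2}g_k)$. The matrix $\left[\begin{smallmatrix}\mu&\tau_0\\\tau_0^*&\nu\end{smallmatrix}\right]$ is completely positive because it is the sum over $k$ of the manifestly completely positive maps $A\mapsto\left[\begin{smallmatrix}(f_k,\cdot\,f_k)&(f_k,\cdot\,g_k)\\(g_k,\cdot\,f_k)&(g_k,\cdot\,g_k)\end{smallmatrix}\right]$ (with $(I-\Lambda)^{-1/2}$-conjugation understood). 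It then remains to bound the associated $h(t)$: using $g_k=z(U\otimes I)f_k+h_k$, the identity $QU=T_1$, and $\sum_k\|(I-\Lambda)^{-1/2}h_k\|^2<\infty$, a Cauchy--Schwarz estimate gives $1+\tau_0\vert_t(\Lambda(Q))=1+z\,\mu\vert_t(\Lambda(T_1))+O\big(\sqrt{\mu\vert_t(\Lambda(T_1))}\,\big)$ and $1+\nu\vert_t(\Lambda(T_2))=O\big(\mu\vert_t(\Lambda(T_1))\big)$, whence $\sqrt{(1+\mu\vert_t(\Lambda(T_1)))(1+\nu\vert_t(\Lambda(T_2)))}\,/\,|1+\tau_0\vert_t(\Lambda(Q))|$ is bounded on $(0,\infty)$ by some $\kappa\ge1$. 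Taking $\tau:=\tau_0/\kappa$ and $\gamma(\rho):=\rho(Q)\tau$, the hypotheses of the converse half of Lemma~\ref{ordinary} are met (rescaling the off-diagonal of the completely positive $2\times2$ matrix by $\kappa^{-1}\le1$ preserves complete positivity), so $\gamma$ is a nonzero internal $q$-corner from $\omega$ to $\eta$.

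The main obstacle will be the rigidity in the necessity direction: showing that $q$-purity of $\mu$ forces a \emph{single} proportionality constant $z$ relating $q_k$ to $(U\otimes I)f_k$ modulo an $\fh$-bounded summable residual, and that $Q$ must be a full partial isometry between $T_1$ and $T_2$ (so $\rank T_1=\rank T_2$). This is where condition~(iii) of Theorem~\ref{rank-one-q-pure} and the ``no $\ell^2$-combination in $\fh$'' criterion have to be combined carefully, and it is the step in which the passage beyond $\fk=\cc$ genuinely requires bookkeeping of the $\fk$-components (that $(U\otimes I)f_k$ lands in the range of $T_2\otimes I$, that the residual $h_k$ is trapped in the range of $(I-\Lambda)^{1/2}(T_2\otimes I)$, and so on). The $h(t)$ bound in the sufficiency direction is routine but somewhat lengthy.
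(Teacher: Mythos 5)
Your reduction to Lemma~\ref{ordinary} and the overall shape of both implications match the paper, but the necessity direction has a genuine gap exactly at the step you flag as ``the main obstacle.'' The paper does not obtain the common scalar $z$ and the summable residual from a Gram-matrix/$q$-purity argument. Instead it studies the matrices
$$
M_t = \begin{bmatrix} 1+\mu\vert_t(\Lambda(T_1)) & \kappa+\kappa\tau\vert_t(\Lambda(Q)) \\ \kappa+\kappa\tau^*\vert_t(\Lambda(Q^*)) & 1+\nu\vert_t(\Lambda(T_2)) \end{bmatrix}:
$$
the bound $\vert h(t)\vert\le\kappa$ gives $\det M_t\le 0$ for every $t$, while complete positivity of $\psi_o$ applied to $\Lambda_t^s(T_1+Q+Q^*+T_2)$ gives $M_t-M_s\ge 0$ for $0<t<s$; a compactness argument on the eigenvectors of $M_t$ belonging to nonpositive eigenvalues then produces a \emph{single} unit vector $v$, necessarily proportional to $(x,-1)$ with $x\ne 0$, such that $(v,M_tv)\le 0$ for all $t>0$. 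This uniform-in-$t$ inequality \eqref{10.5} is what everything else rests on: rewritten as a sum of squares it yields $\nu\vert_t(\Lambda(T_2-Q^*Q))\le K$ and $\mu\vert_t(\Lambda(T_1-QQ^*))\le K\vert x\vert^{-2}$, whence condition (iii) of Theorem~\ref{rank-one-q-pure} upgrades these to $Q^*Q=T_2$ and $QQ^*=T_1$ (this is where $q$-purity enters), and then, with $z=\vert x\vert$ and $U$ built from $Q$, it gives $\sum_i\Vert(I-\Lambda)^{-1/2}h_i\Vert^2\le K+(1+z)^2$ for $h_i=z(U\otimes I)f_i-g_i$. Your sketch reverses the logic: you try to prove $Q^*Q=T_2$ before having any uniform quadratic-form bound, and you hope $q$-purity will manufacture the single proportionality constant, whereas in fact $z$ comes from the accumulation-point construction of $v$ and $q$-purity only converts ``bounded'' into ``equal.'' Without the $t$-independent vector $v$ neither conclusion is reachable, and nothing in your outline supplies it.

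A secondary flaw lies in your normalization in the sufficiency direction. Setting $\tau:=\tau_0/\kappa$ with $\kappa=\sup_t\sqrt{ab}/\vert 1+\tau_0\vert_t(\Lambda(Q))\vert$ does not verify the hypothesis of the converse half of Lemma~\ref{ordinary}: for the rescaled corner one needs $ab\le\vert\kappa+\tau_0\vert_t(\Lambda(Q))\vert^2$, which is strictly stronger than the known $ab\le\vert\kappa+\kappa\tau_0\vert_t(\Lambda(Q))\vert^2$ once $\kappa>1$. The paper instead scales by the explicit $\lambda=2z/(1+z^2+r)$, chosen precisely so that the determinant inequality collapses to the perfect square $\vert\lambda(z-b)-1\vert^2\ge 0$; some such explicit choice, not a generic sup-normalization, is required.
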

\begin{proof}  Before we begin the proof we remark that in the sums for
$\omega$ and $\eta$ we sum over the same index set $I$.  Even though we sum
over the same index set the sums for $\omega$ and $\eta$ can have different
numbers of non zero terms since some of the $f^{\prime}s$ or $g^{\prime}s$
can be zero.

Assume the setup and notation of the theorem. Let us assume that $g_k =$ $z(U\otimes I)f_k + h_k$, with $z > 0$, $f_k$  in the range of $T_1 \otimes I$, $h_k$ in the range of $(I - \Lambda )^{1/2}(T_2 \otimes I)$ for each $k \in I$ and the sum
involving the $h_k^{\prime}s$ given above converges.  Since the sum
involving the $h_k^{\prime}s$ above converges and $0 \leq \Lambda \leq I$
we have
$$
r = \sum_{k\in I} \Vert\Lambda^{\tfrac{1}{2}} (I - \Lambda
)^{-{\tfrac{1}{2}}}h_k\Vert^2 < \infty .
$$

Let $\tau\in \ah_*$ be the boundary weight given by
$$
\tau (A) = \sum_{k\in I} (f_k,(I - \Lambda )^{-{\tfrac{1}{2}}}A(I - \Lambda
)^{-{\tfrac{1}{2}} }g_k)
$$
and let
$$
\lambda = \frac {2z} {1+z^2+r}
$$
We shall prove that the map $\gamma: \bk_* \to \ah_*$ given by
$$
\gamma (\rho )(A) = \lambda\rho (T_1U^*)\tau (A)
$$
is an internal $q$-corner from $\omega$ to $\eta$. To that end, we will use Lemma~\ref{ordinary}. Notice that the operator $Q=T_1U^*$ satisfies \eqref{q-eqs}. 
Furthermore, $\gamma$ is an ordinary \iq corner from $\omega$ to $\eta$. Indeed, since $0< \lambda\leq 1$, it suffices to prove that  
$$
h(t) = \frac{(1+\mu\vert_t(\Lambda (T_1)))^{1/2}\;\; (1+\nu\vert_t(\Lambda
(T_2)))^{1/2}} {1+\tau\vert_t(\Lambda (T_1U^*))}
$$
is well-defined, satisfies $|h(t)| \leq \lambda^{-1}$ for all $t>0$ and 
$\lambda^{-1}\gamma$ is an ordinary \iq corner from $\omega$ to $\eta$. Let $\Theta = \omega + \lambda^{-1}\gamma + \lambda^{-1}\gamma^* + \eta$, and notice that using the matricial identification from Remark~\ref{2by2} we have that
$$
\brTheta(A) = \begin{bmatrix}
\mu(A)T_1& \tau(A)T_1U^* \\
\tau(A)UT_1 & \nu(A) T_2
\end{bmatrix}.
$$
By Lemma~\ref{extractor}, $\brTheta$ is completely positive if and only if the map
$$
\psi(A) = \begin{bmatrix}
\mu(A)& \tau(A) \\
\tau(A) & \nu(A) 
\end{bmatrix}
$$
is completely positive. But this follows immediately from observing that by the definition of $\mu$, $\nu$ and $\tau$, for all $B \in \bh$, if $A= (1-\Lambda)^{1/2}B(1-\Lambda)^{1/2}$,
$$
\psi(A) = \sum_{k\in I}
\begin{bmatrix}
(f_k,B f_k)& (f_k,B g_k)
\\
(g_k,B f_k)  & (g_k , B g_k)
\end{bmatrix}
$$
hence $\psi$ is a sum of completely positive maps. Finally, we shall prove that the function 
$h(t)$ defined in Lemma~\ref{ordinary} is well-defined, bounded, and $\sup |h(t)|
\leq \lambda^{-1}$. It suffices to show that 
\begin{align}\label{determinant}
\lambda^2\; \big(1+\mu{\vert_t} (\Lambda (T_1)) \big) \;
\big(1+\nu{\vert_t} (\Lambda(T_2)) \big)
\leq \big|1 + \lambda \tau{\vert_t} (\Lambda (T_1U^*)) \big|^2
\end{align}
since by taking the limit as $t \to 0+$ we obtain
\begin{equation}\label{det2}
\lambda^2\; \big(1+\mu (\Lambda (T_1)) \big) \;
\big(1+\nu (\Lambda(T_2)) \big)
\leq \big|1 + \lambda \tau (\Lambda (T_1U^*)) \big|^2
\end{equation}
This equation implies that 
$$
0< \lambda \leq \big|1 + \lambda \tau (\Lambda (T_1U^*)) \big|
$$
hence  $h(t)$ is well-defined for all $t>0$. Furthermore, also by equation \eqref{det2}, we have that $| h(t) | \leq \lambda^{-1}$,  for all $t>0.$
We proved above that $\lambda^{-1}\gamma$ is an ordinary \iq corner, hence assuming that \eqref{determinant} holds then we have by Lemma~\ref{ordinary} that $\gamma$ is an \iq $q$-corner. 

Let us prove \eqref{determinant}. By expanding it, we obtain 
\begin{align*}
\lambda^2(1+\mu{\vert_t} (\Lambda (T_1))+\nu{\vert_t} (\Lambda
(T_2))+\mu{\vert_t} (\Lambda (T_1))\nu{\vert_t} (&\Lambda (T_2))
-\vert\tau{\vert_t} (\Lambda (T_1U^*))\vert^2)
\\
& \leq 1 + 2\lambda Re(\tau{\vert_t} (\Lambda (T_1U^*))).
\end{align*}
To give this inequality a name so we can refer to it we will call
this inequality the determinant inequality.  We will need some
notation.  Let
$$
\zeta (A) = \sum_{k\in I} (f_k,(I - \Lambda )^{-{\tfrac{1}{2}}}A(I - \Lambda )^{-
{\tfrac{1}{2}}}h_k)
$$
and
$$
\vartheta (A) = \sum_{k\in I} (h_k,(I - \Lambda )^{-{\tfrac{1}{2}}}A(I - \Lambda
)^{-{\tfrac{1}{2}}}h_k)
$$
for $A \in \frak A (\frak K )$.  Then we have
$$
\nu (A) = z^2\mu ((U^*\otimes I)A(U\otimes I))+ z\zeta ((U^*\otimes I)A)+ z
\zeta^*(A(U\otimes I)) + \vartheta (A)
$$
and
$$
\zeta (A) = z\mu (A(U\otimes I)) + \vartheta (A)
$$
for $A \in \frak A (\frak H )$.  Then we have
$$
\nu{\vert_t} (\Lambda (T_2)) = z^2\mu{\vert_t} (\Lambda (T_1))
+ z\zeta{\vert_t} (\Lambda (T_1U^*)) + z\zeta^*{\vert_t} (\Lambda
(UT_1)) + \vartheta{\vert_t} (\Lambda (T_2))
$$
and
$$
\zeta{\vert_t} (\Lambda (T_1U^*)) = z\mu{\vert_t} (\Lambda (T_1))
+ \vartheta{\vert_t} (\Lambda (T_1U^*)).
$$
To simplify the determinant inequality let
$$
a = \mu{\vert_t} (\Lambda (T_1)),\qquad b = \zeta{\vert_t}
(\Lambda (T_1U^*))\qquad \text{and} \qquad c = \vartheta{\vert_t}
(\Lambda (T_2)).
$$
Then the determinant inequality becomes
$$
\lambda^2((1+a)(1+z^2a+zb+z\overline {b}+c)-\vert za+b\vert^2) \leq 1 +
2\lambda (az+Re(b))
$$
and with a slight further simplification this becomes
$$
\lambda^2(1+a+az^2+zb+z\overline {b}+c+ac-\vert b\vert^2) \leq 1 + 2\lambda
(az+Re(b)).
$$
Note that $c = \vartheta{\vert_t} (\Lambda (T_2)) \leq \vartheta
(\Lambda (T_2)) = r$ so if we replace $c$ by $r$ the term on the left hand
side of the above inequality does not decrease so if the above inequality
is satisfied with $c$ replaced by $r$ it will be satisfied.  Then to
establish the determinant inequality it is sufficient to prove that
\begin{equation} \label{10.1}
\lambda^2(1+a+az^2+2zRe(b)+r+ar-\vert b\vert^2) \leq 1 + 2\lambda
(az+Re(b)).
\end{equation}
Since $\lambda = 2z(1 + z^2 + r)^{-1}$ we have
\begin{equation} \label{10.2}
\lambda^2(1 + z^2 + r) = 2\lambda z.
\end{equation}
Multiplying both sides of this equation by a we find
\begin{equation} \label{10.3}
\lambda^2(a + az^2 + ar) = 2\lambda az.
\end{equation}
Since $\vert\lambda (z - b) - 1\vert^2 \geq 0$ we have
\begin{equation} \label{10.4}
\lambda^2(-z^2 + 2zRe(b) - \vert b\vert^2) \leq - 2\lambda z + 2\lambda Re(b)
+ 1.
\end{equation}
If we add equations \eqref{10.2} and \eqref{10.3} to inequality \eqref{10.4} we obtain
inequality \eqref{10.1}.  Hence, $\gamma$ is a non-zero rank one \iq $q$-corner
from $\omega$ to $\eta$.

Now we give the proof in converse direction so we assume $\gamma$ is a non-zero
 range rank one $q$-corner from $\omega$ to $\eta$.  From Lemma~\ref{ordinary}, 
we know $\gamma$ is of the form $\gamma (\rho )(A) = \rho (Q)\tau
(A)$ for $\rho \in \frak B (\frak K )_*$ and $A \in \frak A (\frak H )$ and
$\Vert Q\Vert = 1,\medspace QQ^* \leq T_1$ and $Q ^*Q \leq T_2$ and if
$$
h(t) = \frac {(1+\mu\vert_t(\Lambda (T_1)))^{\tfrac{1}{2}} (1+\nu\vert_t(\Lambda
(T_2)))^{\tfrac{1}{2}}} {1+\tau\vert_t(\Lambda (Q))}
$$
then there is a number $\kappa \geq 1$ so that $\vert h(t)\vert \leq
\kappa$ for all $t > 0$  and $\vert h(t)\vert \rightarrow \kappa$ as $t
\rightarrow 0+$ and $\kappa\gamma$ is an ordinary \iq corner from $\omega$ to 
$\eta$.

We will show there is a constant $K$ and complex number $x \neq 0$
and so that
\begin{equation} \label{10.5}
\vert x\vert^2\mu{\vert_t} (\Lambda (T_1)) + \nu{\vert_t}
(\Lambda (T_2)) - 2\kappa Re(\overline {x}\tau{\vert_t}
(\Lambda (Q))) \leq K 
\end{equation}

By Remark~\ref{2by2} and Lemma~\ref{extractor}, the map from $\frak A (\frak H )$ into 
$M_2(\cc)$ given by
$$
\psi_o(A) =
\begin{bmatrix}
\mu (A)&\kappa\tau (A)
\\
\kappa\tau^*(A)&\nu (A)
\end{bmatrix}
$$
is completely positive.  Now consider the family of matrices
$$
M_t = 
\begin{bmatrix}
1+\mu{\vert_t}(\Lambda (T_1))&
\kappa+\kappa\tau{\vert_t}(\Lambda (Q))
\\
\kappa+\kappa\tau^*{\vert_t}(\Lambda (Q^*))&
1+\nu{\vert_t}(\Lambda (T_2))
\end{bmatrix} 
$$
Note that if $0 < t < s$ then
$$
M_t - M_s  = 
\begin{bmatrix}
\mu (\Lambda_t^s(T_1))&\kappa\tau (\Lambda_t^s(Q))
\\
\kappa\tau^*(\Lambda_t^s(Q^*))&\nu (\Lambda_t^s(T_2))
\end{bmatrix}= \psi_0\big(\Lambda_t^s(T_1+Q+Q^*+T_2)\big)
$$
where $\Lambda_t^s(A) = E(t,s)\Lambda (A)E(t,s)$.  Since $\psi_o$ is
completely positive and $Q$ satisfies \eqref{q-eqs}, it follows that $M_t - M_s$ is a positive matrix.
Since $\vert h(t)\vert \leq \kappa$ for all $t > 0$ we have
$$
(1 + \mu{\vert_t} (\Lambda (T_1)))(1 + \nu{\vert_t} (\Lambda (T
_2))) \leq \vert\kappa + \kappa\tau{\vert_t} (\Lambda (Q))\vert^2.
$$
This tells us the determinant of $M_t$ is less than or equal to zero so for
each $t > 0$ there is a unit vector $v(t)$ so that $M_tv(t) = \lambda
(t)v(t)$ and $\lambda (t) \leq 0$.  We will prove there is a unit vector
$v$ so that $(v,M_tv) \leq 0$ for all $t > 0$.

Since the set of unit vectors in $\cc^2$ is compact in the norm topology
there is at least one accumulation point of $v_t$ as $t \rightarrow 0+$.
Let $v$ be such an accumulation point so for each $\epsilon > 0$ there is a
$t \in (0,\epsilon )$ with $\Vert v - v_t\Vert < \epsilon$.  We show
$(v,M_tv) \leq 0$ for all $t > 0$.  Suppose $t > 0$ and $\epsilon > 0$.
Let $\epsilon_1 = \min(\epsilon ,\medspace {\tfrac{1}{2}} \epsilon/\Vert M_t\Vert
)$.  Then there is an $s \in (0,\epsilon_1)$ with $\Vert v - v_s\Vert <
\epsilon_1$.  We have
$$
(v,M_tv) = (v,M_tv) - (v_s,M_tv_s) + (v_s,M_tv_s) - (v_s,M_sv_s) + (v_s,M_sv
_s).
$$
We have
\begin{align*}
(v,M_tv) - (v_s,M_tv_s) &= ((v - v_s),M_tv) + (v_s,M_t(v - v_s))
\\
&\leq 2\Vert v - v_s\Vert\medspace\Vert M_t\Vert < 2\epsilon_1\Vert
M_t\Vert \leq \epsilon .
\end{align*}
Since $M_t$ is non increasing and $0 < s < t$ we have
$$
(v_s,M_tv_s) - (v_s,M_sv_s) \leq 0.
$$
And we have
$$
(v_s,M_sv_s) = \lambda (s) \leq 0.
$$
Combining the previous four relation we find $(v,M_tv) \leq \epsilon$ and
since $\epsilon$ is arbitrary we have $(v,M_tv) \leq 0$ for all $t$.

Notice that the non-zero vector $v$ so that $(v,M_tv) \leq 0$ for
all $t > 0$ cannot be a multiple of $(1,0)$ or $(0,1)$ because in the
first case we have $(v,M_tv) = 1 + \mu{\vert_t} (\Lambda (T_1))$ and
the second case $(v,M_tv) = 1 + \nu{\vert_t} (\Lambda (T_2))$ and
neither of these are less than or equal to zero.  So the vector $v$ must be
a multiple of a vector of the form $w = (x,-1)$ with $x \neq 0$.  Then we
have $(w,M_tw) \leq 0$ which yields the promised inequality \eqref{10.5}
$$
\vert x\vert^2\mu{\vert_t} (\Lambda (T_1)) + \nu{\vert_t}
(\Lambda (T_1)) - 2\kappa Re(\overline {x}\tau{\vert_t} (\Lambda
(Q))) \leq 2\kappa Re(x) - 1 - \vert x\vert^2 = K
$$
for all $t > 0$.  

The fact that $\psi_o$ is completely positive on $\frak A
(\frak H )$ means the mapping $\phi (A) = \psi_o((I - \Lambda )^{\frac{1}{2}}
A(I - \Lambda )^{\frac{1}{2}} )$ is a completely positive map from $\frak B
(\frak H )$ into $\frak B (\Bbb C \oplus \Bbb C )$ and every such
completely positive map is of the form
$$
\phi (A) = \sum_{i\in I} S_iAS_i^*
$$
where the $S_i$ are linear maps from $\frak H$ to $\Bbb C \oplus \Bbb C$.
Since every such map is specified by a pair of vectors $f_i,g_i \in \frak
H$ it follows that

\begin{align}\label{form-mu-nu-tau1}
\mu(A) &= \sum_{i\in I} (f_i,(I - \Lambda )^{-{\tfrac{1}{2}}}A(I - \Lambda
)^{-{\tfrac{1}{2}} }f_i)
\\\label{form-mu-nu-tau2}
\nu (A) &= \sum_{i\in I} (g_i,(I - \Lambda )^{-{\tfrac{1}{2}}}A(I - \Lambda
)^{-{\tfrac{1}{2}} }g_i)
\\\label{form-mu-nu-tau3}
\kappa\tau (A) &= \sum_{i\in I} (f_i,(I - \Lambda )^{-{\tfrac{1}{2}}}A(I -
\Lambda )^{-{\tfrac{1}{2}}}g_i)
\end{align}
for $A\in \frak A (\frak H )$.  Furthermore, since $\omega$ and $\eta$ are completely orthogonal with respect to $T_1$ and $T_2$ and $\gamma$ is an \iq corner from $\omega$ to $\eta$, by replacing $f_i$ by $(T_1 \otimes I)f_i$ and replacing $g_i$ by $(T_2 \otimes I)g_i$, we still have that \eqref{form-mu-nu-tau1}, \eqref{form-mu-nu-tau2} and \eqref{form-mu-nu-tau3} hold.

Then from inequality \eqref{10.5} we have
\begin{equation} \label{10.6}
\sum_{i\in I} \vert x\vert^2(f_i^t,(T_1\otimes I)f_i^t) + (g_i^t,(T_2\otimes
I)g_i^t) - 2Re(xf_i^t,(Q\otimes I)g_i^t) \leq K 
\end{equation}
where $f_i^t = \Lambda^{\tfrac{1}{2}} (I - \Lambda )^{-{\tfrac{1}{2}}}E(t,\infty )f_i$
and $g_i^t = \Lambda^{\tfrac{1}{2}} (I - \Lambda )^{-{\tfrac{1}{2}}}E(t,\infty )g_i$
for $t > 0$ and $i \in I$.  We can write the above sum in the
form
$$
\sum_{i\in I} \Vert x(T_1\otimes I)f_i^t - (Q\otimes I)g_i^t\Vert^2 +
(g_i^t,[(T _2-Q^*Q)\otimes I]g_i^t) \leq K
$$
From this it follows that $\nu{\vert_t} (\Lambda (T_2-Q^*Q)) \leq K$
for all $t > 0$.  Now if $T_2 \neq Q^*Q$ there is a rank one projection $e
\in \frak B (\frak K )$ and a real number $y > 0$ that $ye \leq T_2 - Q^*Q$
from which it follows that $\nu{\vert_t} (\Lambda (e)) \leq K/y$ for
all $t > 0$.   But since $\eta$ is $q$-pure it follows from 
Theorem~\ref{rank-one-q-pure}
that $\nu (\Lambda (e)) = \infty$.  Hence, we conclude that $T_2 = Q^*Q$.

The sum \eqref{10.6} can also be written in the form
\begin{equation} \label{10.7}
\sum_{i\in I} \Vert x(Q^*\otimes I)f_i^t - (T_2\otimes I)g_i^t\Vert^2 +
\vert x\vert^2(f_i^t,[(T_1-QQ^*)\otimes I]f_i^t) \leq K 
\end{equation}
From which we conclude $\mu{\vert_t} (\Lambda (T_1 - QQ^*)) \leq
K\vert x\vert^{-2}$ for all $t > 0$.  Since $\omega$ is $q$-pure we
conclude by repeating the argument above that $T_1 = QQ^*$.  Hence, $Q$ is
a partial isometry from the $\range(T_2)$ to the $\range(T_1)$.  We construct number $z$ and the unitary $U$ as promised in the statement of the theorem. (Strictly speaking, we are assuming that $\dim \fk<\infty$, hence the argument is straightforward, however we describe a proof which works even when $\dim \fk = \infty$ anticipating future considerations).  Let 
$z = \vert x\vert$ and $u = x/\vert x\vert$ so $x = zu$.
Since $Q^*Q = T_2$ and $QQ^* = T_1$ it follows that the ranges of $T_1$ and
$T_2$ have the same dimension.  If $T_1$ is of finite rank then the
$\range(T_1)^\perp$ and $\range(T_2)^\perp$ have the same dimension and if $T
_1$ is of infinite rank then since $T_1$ and $T_2$ are disjoint it follows that 
$\range(T_1)^\perp$ and $\range(T_2)^\perp$ are both of infinite dimension.
So $\range(T_1)^\perp$ and $\range(T_2)^\perp$ have the same dimension.
We define $U$ as follows.  We define $U$ on $\range(T_1)$ as $UT_1 =
uQ^*$, and since $\range(T_1) ^\perp$ and $\range(T_2)^\perp$ have the
same dimension we can define a partial isometry $S$ from $\range(I - T_1)$
onto $\range(I - T_2)$.  Then we define $U$ on $\range(I - T_1)$ as  $U(I 
- T_1) = S(I - T_1)$.  Then we have $U$ is unitary and $UT_1U^* = T_2$ and
we have from inequality \eqref{10.7} that
\begin{equation} \label{10.8}
\sum_{i\in I} \Vert z(UT_1\otimes I)f_i^t - (T_2\otimes I)g_i^t\Vert^2
\leq K
\end{equation}
for all $t > 0$.  This is the important estimate which will yield the
estimate stated in the theorem when we use the normalization properties of
$\mu$ and $\nu$.  In subsequent calculation it is useful to have the
formulae
$$
x = zu, \quad z = \vert x\vert ,\quad  T_2U = UT_1 = uQ^*\qquad \text{and} \qquad
\overline {u}Q = T_1U^* = U^*T_2
$$
Now expressing inequality \eqref{10.8} inner product form we have
\begin{align} \label{10.8'}
\sum_{i\in I} &\vert x\vert^2((I-\Lambda )^{-{\tfrac{1}{2}}}E(t,\infty )f_i,\Lambda
(T_1)(I-\Lambda )^{-{\tfrac{1}{2}}}E(t,\infty )f_i)
\\
-& \sum_{i\in I}\quad \text{2Re} (x(I-\Lambda )^{-{\tfrac{1}{2}}}E(t,\infty )f_i,
\Lambda (Q)(I-\Lambda )^{-{\tfrac{1}{2}}}E(t,\infty )g_i)
\\
&\qquad + \sum_{i\in I} ((I-\Lambda )^{-{\tfrac{1}{2}}}E(t,\infty )g_i,\Lambda (T
_2)(I-\Lambda )^{-{\tfrac{1}{2}}}E(t,\infty )g_i) \leq K
\end{align}
We also have
\begin{equation} \label{10.9}
\mu{\vert_t} (I-\Lambda (T_1)) = \sum_{i\in I} ((I-\Lambda )^{-{\tfrac{1}{2}}
}E(t,\infty )f_i,(I-\Lambda (T_1))(I-\Lambda )^{-{\tfrac{1}{2}}}E(t,\infty )f_i)
\leq 1
\end{equation}
and
\begin{equation} \label{10.10}
\nu{\vert_t} (I-\Lambda (T_2)) = \sum_{i\in I} ((I-\Lambda )^{-{\tfrac{1}{2}}
}E(t,\infty )g_i,(I-\Lambda (T_2))(I-\Lambda )^{-{\tfrac{1}{2}}}E(t,\infty )g_i)
\leq 1
\end{equation}
We will need one more inequality. Notice that if $A =T_1 + Q + Q^* + T_2$, then 
$B=(A \otimes I) - \Lambda(A)$ is positive. Therefore, from the complete positivity of $\psi_o$, and using the fact that $\mu(B\vert_t) = \mu\vert_t(I - \Lambda(T_1))$, $\nu(B\vert_t) = \nu\vert_t(I - \Lambda(T_2))$ and 
$\tau(B\vert_t) = \tau\vert_t(  (u U^* \otimes I )- \Lambda(Q) )$,
we have that 
$$
0 \leq \psi_o(B\vert_t) = \begin{bmatrix}
\mu\vert_t (I - \Lambda(T_1))& \kappa\tau\vert_t (u U^* \otimes I - \Lambda(Q) )
\\
\kappa\tau^*\vert_t(\overline{u} U \otimes I - \Lambda(Q) )&\nu\vert_t (I -\Lambda(T_2))
\end{bmatrix}
$$
whence by considering the determinant we obtain
$$
\vert\kappa\tau\vert_t (u U^*\otimes I-\Lambda (Q))\vert^2 \leq \mu\vert_t (I-
\Lambda (T_1))\nu\vert_t (I-\Lambda (T_1)) \leq 1\cdot 1 = 1.
$$
This gives us the inequality
\begin{equation}\label{10.11}
- \sum_{i\in I}\quad \text{2Re} (x(I-\Lambda )^{-{\tfrac{1}{2}}}E(t,\infty )f_i,(
u U^*\otimes I-\Lambda (Q))(I-\Lambda )^{-{\tfrac{1}{2}}}E(t,\infty )g
_i) \leq 2\vert x\vert.
\end{equation}
Combining inequalities \eqref{10.8'} through \eqref{10.11}, we obtain
\begin{align*}
\sum_{i\in I} &\vert x\vert^2((I-\Lambda )^{-{\tfrac{1}{2}}}E(t,\infty
)f_i,(I-\Lambda )^{-{\tfrac{1}{2}}}E(t,\infty )f_i)
\\
-& \sum_{i\in I}\quad \text{2Re} (x(I-\Lambda )^{-{\tfrac{1}{2}}}E(t,\infty )f_i,(
u U^*\otimes I)(I-\Lambda )^{-{\tfrac{1}{2}}}E(t,\infty )g_i)
\\
&\qquad + \sum_{i\in I} ((I-\Lambda )^{-{\tfrac{1}{2}}}E(t,\infty )g_i,(I-\Lambda
)^{-{\tfrac{1}{2}}}E(t,\infty )g_i) \leq K + 1 + 2z + z^2
\end{align*}
And the three sums collapse into a simpler sum
$$
\sum_{i\in I} \Vert (I-\Lambda )^{-{\tfrac{1}{2}}}E(t,\infty )h_i\Vert^2 \leq K +
(1+z)^2
$$
where $h_i = z(U\otimes I)f_i - g_i$ for $i \in I$.  Since the sum is
independent of $t$ we have $h_i \in \range((I-\Lambda )^{\frac{1}{2}} ) =$
Domain of $(I-\Lambda )^{-{\frac{1}{2}}}$ and
$$
\sum_{i\in I} \Vert (I-\Lambda )^{-{\tfrac{1}{2}}}h_i\Vert^2 \leq K + (1+z)^2.
$$
This completes the proof of the theorem. 
\end{proof}

In the case when $\omega$ and $\eta$ have trivial normal spines, by Proposition~\ref{internal-to-external} and Theorem~\ref{corners-are-range-rank-one}, the only non-zero $q$-corners between them have to be range rank one. If they are unital, this leads to the following immediate corollary. 

\begin{cor}\label{the-corollary}
For $j=1,2$, let $\omega_j$ be a unital $q$-pure range rank one $q$-weight map over $\fk_j$ finite-dimensional with trivial normal spine. If $\dim \fk_1 \neq \dim \fk_2$, then $\omega_1$ and $\omega_2$ induce E$_0$-semigroups that are not cocycle conjugate.
\end{cor}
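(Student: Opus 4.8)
The plan is to argue by contradiction. Assuming the $\mathrm{E}_0$-semigroups $\alpha_1^d$ and $\alpha_2^d$ induced by $\omega_1$ and $\omega_2$ are cocycle conjugate, I will produce a hyper-maximal $q$-corner from $\omega_1$ to $\omega_2$, show it must be range rank one, transport it to the internal setting over $\fk_1\oplus\fk_2$, and then apply Theorem~\ref{rank-one-corner-form} to obtain a unitary conjugating the identity of $\fk_1$ onto that of $\fk_2$; this forces $\dim\fk_1=\dim\fk_2$, a contradiction.

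Two preliminary remarks set the stage. First, since the normal spine of $\omega_j$ is trivial, Theorem~\ref{normal-spine} shows that $\alpha_j^d$ has index $0$; as it is $q$-pure it cannot be of type I$_0$ (automorphism groups are not CP-flows; see the remark after Definition~\ref{q-pure}), so $\alpha_j^d$ is of type II$_0$. Second, since $\omega_j$ is unital of range rank one, Theorem~\ref{rank-one-q-weight} forces the positive operator in its range rank one representation to be $I_{\fk_j}$, so $\omega_j(\rho)(A)=\rho(I_{\fk_j})\mu_j(A)$ for a boundary weight $\mu_j$ which, by Theorem~\ref{rank-one-q-pure}(ii), is $q$-pure.

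Now suppose $\alpha_1^d$ and $\alpha_2^d$ are cocycle conjugate. Since $\alpha_1^d$ is of type II$_0$, the converse half of the theorem immediately following Definition~\ref{definition-q-corner} produces a hyper-maximal $q$-corner $\gamma$ from $\omega_1$ to $\omega_2$. This $\gamma$ is non-zero: if $\gamma=0$, the block-diagonal $q$-weight map over $\fk_1\oplus\fk_2$ built from $\omega_1$ and $\omega_2$ $q$-dominates the one built from $0$ and $\omega_2$ (because $0$ is a $q$-subordinate of $\omega_1$ and the generalized boundary representations are block diagonal), contradicting hyper-maximality of $\gamma$, as $\omega_1\neq 0$. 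Since $\fk_1,\fk_2$ are finite-dimensional and the normal spines are trivial, Theorem~\ref{corners-are-range-rank-one} applies, so $\gamma$ is range rank one, say $\gamma(\mu)(B)=\ell(B)\mu(S)$. Next, by Proposition~\ref{internal-to-external}, $\gamma$ corresponds to a non-zero internal $q$-corner $\gamma'$ between the completely orthogonal $q$-weight maps $\eta_j(\rho)(A)=\omega_j(\rho_{jj})(A_{jj})$ over $\fk:=\fk_1\oplus\fk_2$, and $\gamma'$ is again range rank one, namely $\gamma'(\rho)(A)=\rho(S^{12})\tau(A)$ with $\tau(A)=\ell(A_{12})$. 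Using $\omega_j(\sigma)(B)=\sigma(I_{\fk_j})\mu_j(B)$ one computes $\eta_j(\rho)(A)=\rho(E_j)\,\mu_j(A_{jj})$, where $E_j\in\bk$ is the orthogonal projection onto $\fk_j$; thus the positive operator attached to the range rank one $q$-weight map $\eta_j$ is the projection $E_j$, of rank $\dim\fk_j$, and $\eta_j$ remains $q$-pure (checking the conditions of Theorem~\ref{rank-one-q-pure}: $E_j$ is a projection, $\mu_j(\cdot_{jj})$ is $q$-pure, and condition (iii) for $\eta_j$ reduces to condition (iii) for $\omega_j$). Hence the hypotheses of Theorem~\ref{rank-one-corner-form} hold for $\eta_1,\eta_2$, and the existence of the non-zero range rank one internal $q$-corner $\gamma'$ forces, by the ``only if'' direction of that theorem, a unitary $U\in\bk$ with $UE_1U^*=E_2$. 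Comparing ranks gives $\dim\fk_1=\dim\fk_2$, contradicting the hypothesis; therefore $\omega_1$ and $\omega_2$ induce $\mathrm{E}_0$-semigroups that are not cocycle conjugate.

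The main obstacle is not conceptual but a matter of bookkeeping: one must carefully verify, after passing to the internal picture, that $\mu_j(\cdot_{jj})$ is still a $q$-pure boundary weight on $\fa(\fk\otimes L^2(0,\infty))$ and that the operator attached to $\eta_j$ is exactly the projection $E_j$, and one must match the complete orthogonality datum produced by Proposition~\ref{internal-to-external} with the projections $E_1,E_2$. All of this is a routine computation with the generalized Schur map notation of Section~\ref{generalized-Schur}.
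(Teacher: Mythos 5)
Your proposal is correct and follows essentially the same route the paper intends: cocycle conjugacy of the type II$_0$ dilations yields a (necessarily non-zero) hyper-maximal $q$-corner, which by Theorem~\ref{corners-are-range-rank-one} has range rank one, and after passing to the internal picture via Proposition~\ref{internal-to-external}, Theorem~\ref{rank-one-corner-form} produces a unitary carrying $E_1$ to $E_2$, forcing $\dim\fk_1=\dim\fk_2$. The supporting checks you flag (that $T_j=I_{\fk_j}$ by unitality, that the corner is non-zero by hyper-maximality, and that the internal $\eta_j$ remain $q$-pure with attached projections $E_j$) are exactly the bookkeeping the paper leaves implicit in calling the corollary immediate.
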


\section{The range rank two case}\label{sec-rank-two}

In this section we analyze range rank two $q$-weights.  This means that $\omega$
is of the form
\begin{equation*} 
\omega (\rho )(A) = \rho (e_1)\mu_1(A) + \rho (e_2)\mu_2(A)
\end{equation*}
for $\rho \in \frak B (\frak K )_*$ and $A \in \frak A (\frak H )$. 
Typically we assume that $e_1$ and $e_2$ are positive norm one operators of the following particular form.

\begin{prop}\label{standard-form-range-rank-two}
Let $\fk$ be a separable Hilbert space and let $\omega: \bk_* \to \bh_*$ be a unital $q$-weight map with range rank 2 and trivial normal spine. Then there exist positive $\mu_1, \mu_2\in \ah_*$ and $e_1, e_2 \in \bk$ positive operators with norm one such that for all $\rho \in \bk_*$ and $A \in \ah$,
$$
\omega (\rho )(A) = \rho (e_1)\mu_1(A) + \rho (e_2)\mu_2(A).
$$ 
Furthermore, given $x_1, x_2 \in \rr$, we have that $0\leq x_1 e_1 + x_2e_2 \leq I$ if and only if $x_1, x_2 \in [0,1]$. 
\end{prop}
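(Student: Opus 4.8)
The plan is to let $e_1,e_2$ be the edge generators of the cone of positive operators lying inside the range of $\bromega$, and to deduce $e_1+e_2=I$ from unitality; only that $\omega$ is a unital $q$-weight map of range rank two will be used. Write $V=\range(\bromega)\subseteq\bk$, so $\dim_\cc V=2$. Since $\omega$ is a completely positive boundary weight map, $\bromega$ is positive, hence $*$-preserving (using $\ah_{sa}=\ah_+-\ah_+$), so $V$ is $*$-invariant; writing $V_{sa}=\{x\in V:x^*=x\}$ we then have $V=V_{sa}\oplus iV_{sa}$ and $\dim_\rr V_{sa}=2$. Put $\mathcal P=V_{sa}\cap\bk_+$. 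One checks that $\mathcal P$ is a closed convex cone which is \emph{pointed} ($x\in\mathcal P$ and $-x\in\mathcal P$ give $x\ge0$ and $x\le0$, so $x=0$) and \emph{spans} $V_{sa}$: indeed $\bromega(\ah_{sa})=V_{sa}$, since $\bromega(\ah_{sa})+i\bromega(\ah_{sa})=\bromega(\ah)=V$ forces $\dim_\rr\bromega(\ah_{sa})\ge 2$, while $\bromega(\ah_{sa})=\bromega(\ah_+)-\bromega(\ah_+)\subseteq\mathcal P-\mathcal P$. A pointed closed convex cone spanning a two-dimensional space is the conical hull of its two extreme rays; let $e_1,e_2\in\mathcal P$ be the norm-one generators of those rays. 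Then $e_1,e_2$ are positive operators of norm one, are linearly independent, $(e_1,e_2)$ is a complex basis of $V$ and a real basis of $V_{sa}$, and $\mathcal P=\{t_1e_1+t_2e_2:t_1,t_2\ge0\}$.

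Next, letting $\phi_1,\phi_2$ be the coordinate functionals of the basis $(e_1,e_2)$ of $V$, put $\mu_i=\phi_i\circ\bromega$. For $A\in\ah_+$ we have $\bromega(A)\in\mathcal P$, so $\mu_i(A)\ge0$ and $\bromega(A)=\mu_1(A)e_1+\mu_2(A)e_2$; by linearity this holds for all $A\in\ah$. Each $\mu_i$ is a boundary weight, because $A\mapsto\mu_i\big((I-\Lambda)^{1/2}A(I-\Lambda)^{1/2}\big)=\phi_i\circ\bub{\omega}'$ is $\sigma$-weakly continuous ($\bub{\omega}'$ being normal and $\phi_i$ bounded), and $\mu_1,\mu_2$ are linearly independent (otherwise $\range(\bromega)$ would be at most one-dimensional). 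Dualizing yields $\omega(\rho)(A)=\rho(e_1)\mu_1(A)+\rho(e_2)\mu_2(A)$ for all $\rho\in\bk_*$ and $A\in\ah$, which is the asserted decomposition.

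For the ``furthermore'' the crux is that $e_1+e_2=I$. Unitality gives $\omega(\rho)(I-\Lambda)=\rho(I_\fk)$ for all $\rho$, and since $I-\Lambda=(I-\Lambda)^{1/2}I(I-\Lambda)^{1/2}\in\ah$ this reads $\bromega(I-\Lambda)=I$; hence $I\in V_{sa}$, and since $I$ is invertible it lies in the relative interior of $\mathcal P$ (for $B\in V_{sa}$ with $\|B\|<1$ one has $I+B\ge(1-\|B\|)I\ge0$), so $I=\lambda_1e_1+\lambda_2e_2$ with $\lambda_1,\lambda_2>0$. From $\lambda_ie_i=I-\lambda_je_j\le I$ (for $j\ne i$, as $\lambda_je_j\ge0$) and $\|e_i\|=1$ we get $\lambda_i\le1$; and from $e_i\le I$ we get $I-e_i\ge0$, so $I-e_i=(\lambda_i-1)e_i+\lambda_je_j\in\mathcal P$, and reading off the $(e_1,e_2)$-coordinates forces $\lambda_i\ge1$. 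Hence $\lambda_1=\lambda_2=1$, so $e_1+e_2=I$. The equivalence now follows: if $x_1,x_2\in[0,1]$, then $x_1e_1+x_2e_2\ge0$ and $x_1e_1+x_2e_2\le e_1+e_2=I$; conversely, $0\le x_1e_1+x_2e_2\le I$ forces $x_1e_1+x_2e_2\in\mathcal P$, hence $x_1,x_2\ge0$ (the positive cone being the first quadrant in the $(e_1,e_2)$-coordinates), and then $x_ie_i\le x_1e_1+x_2e_2\le I$ gives $x_i\le1$.

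The two-dimensional convex geometry of $\mathcal P$ and the normality/boundary-weight verifications are routine; the step requiring care is the \emph{choice} of $e_1,e_2$. Taking them as the extreme-ray generators of $\mathcal P=V_{sa}\cap\bk_+$ (rather than of the possibly smaller cone $\overline{\bromega(\ah_+)}$) is exactly what makes ``$x_1e_1+x_2e_2\ge0$'' equivalent to ``$x_1,x_2\ge0$'', and it is also what allows $\bromega(I-\Lambda)=I$ to pin down $\lambda_1=\lambda_2=1$; verifying that $\mathcal P$ is pointed and full (so that it has exactly two extreme rays) and that $I$ is interior to it should be the only delicate point.
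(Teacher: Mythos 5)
Your proof is correct, but it takes a genuinely different route from the paper's. The paper uses the trivial normal spine hypothesis to produce a boundary expectation $L$ via Theorem~\ref{twoz}; since $\range(L)=\range(\bromega)$ is two-dimensional, the Choi--Effros algebra $\rl$ is $*$-isomorphic to $\cc\oplus\cc$, and $e_1,e_2$ are taken to be its two minimal projections, with $\mu_j=\nu_j\circ\bromega$ for the associated pure states $\nu_j$; the ``furthermore'' then follows because the order $\rl$ inherits from $\bk$ agrees with that of $\cc\oplus\cc$ and $e_1+e_2=L(I)=I$ by unitality. You instead bypass the boundary expectation entirely and extract $e_1,e_2$ as the norm-one generators of the two extreme rays of the planar cone $V_{sa}\cap\bk_+$, where $V=\range(\bromega)$, and derive $e_1+e_2=I$ by an elementary order argument from $\bromega(I-\Lambda)=I$. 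The two constructions produce the same operators (by Choi--Effros the positive cone of $\rl$ is exactly $V_{sa}\cap\bk_+$, whose extreme rays are spanned by the minimal projections), and your verifications --- pointedness and fullness of the cone, the boundary-weight property of $\mu_i=\phi_i\circ\bromega$ via normality of $\bub{\omega}'$, and the pinning of $\lambda_1=\lambda_2=1$ --- all check out. Notably, your argument never uses the trivial normal spine hypothesis: it needs only that $\omega$ is a unital completely positive boundary weight map of range rank two, so it proves a slightly more general statement. What the paper's approach buys is uniformity and brevity given machinery already in place: the same boundary-expectation/Choi--Effros structure is reused in Lemma~\ref{L-rank-one} and Theorem~\ref{1or4}, where the purely convex-geometric picture of the positive cone would not by itself suffice.
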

\begin{proof}
Since $\omega$ has a trivial normal spine, by Theorem~\ref{twoz}  there exists $L: \bk \to \bk$ a boundary expectation for $\omega$. Since $\range(L)=\range(\bromega)$, we have that the Choi-Effros algebra $\rl$ is isomorphic to $\cc \oplus \cc$. Furthermore, notice that the unit of $\rl$ is given by $L(I)$. Let $e_1 \in \rl$ be a minimal central projection (with respect to the Choi-Effros multiplication), and notice that $e_1\neq 0$ and $e_1\neq L(I)$. Let $e_2 = L(I) - e_1$, also a minimal projection. Then we have that for all $X \in \range(L)$,
$$
X = e_1 \cem X \cem e_1 + e_2 \cem X \cem e_2 = L(e_1Xe_1) + L(e_2Xe_2) = 
\nu_1(X) e_1 + \nu_2(X) e_2
$$
for some states $\nu_1, \nu_2$ on $\rl$ since $e_1$ and $e_2$ are minimal projections in $\rl$. Thus, for all $A \in \ah$,
$$
\bromega(A) = \nu_1(\bromega(A)) e_1 + \nu_2(\bromega(A)) e_2
$$
Now let $\mu_j = \nu_j \circ \bromega$ for $j=1,2$. Those are clearly positive boundary weights.

The final part of the statement follows immediately from the observation that $e_1$ and $e_2$ are orthogonal projections in the algebra $\rl$ such that $e_1+e_2=I_{\rl}=L(I)$ and 
$L(I)=I$ since $\omega$ is unital. Indeed,  $\bromega(I-\Lambda)=I$ and $L$ fixes the range of $\bromega$.
\end{proof}

We recall the following notation. If $A \in \fb(\fh)$ and $t>0$, then we define the operator of $\fb(\fh)$
$$
A\vert_t = E(t,\infty) A E(t,\infty).
$$
We emphasize that in fact, for all $t>0$, we have that $A\vert_t \in \fa(\fh)$.

\begin{theorem} \label{rank-two-big}  Let $\fk$ be a separable Hilbert space. 
Suppose $e_1$ and $e_2$ are two positive
operators in $\frak B (\frak K )$ so that if $x_1$ and $x_2$ are
real numbers then
\begin{equation}\label{nc-square}
0 \leq x_1e_1 + x_2e_2 \leq I\quad \iff \quad x_1,x_2 \in[0,1].
\end{equation}
Let $\omega$ be a  range rank two boundary weight map of the form
\begin{equation}\label{2.8.1}
\omega (\rho )(A) = \rho (e_1)\mu_1(A) + \rho (e_2)\mu_2(A)
\end{equation}
for $\rho \in \frak B (\frak K )_*$ and $A \in \frak A (\frak H )$ where
$\mu _1$ and $\mu_2$ are positive boundary weights on $\frak A (\frak H
)$.  Let $h_1$ and $h_2$ be functions
\begin{equation}\label{name-h1-h2}
h_1(t) = \frac {\mu_1(\Lambda (e_2)\vert_t)} {1+\mu_2(\Lambda (e_2)\vert_t)
}\qquad \text{and} \qquad h_2(t) = \frac {\mu_2(\Lambda (e_1)\vert_t)}
{1+\mu _1(\Lambda (e_1)\vert_t)}
\end{equation}
defined for $t > 0$.  Let
\begin{equation}\label{def-kappa}
\kappa_1 = \sup(h_1(t): t > 0)\qquad \text{and} \qquad \kappa_2 =
\sup(h_2(t): t > 0)
\end{equation}
Then $\omega$ is a $q$-weight if and only if the following conditions are
satisfied.  The numbers $\kappa_1$ and $\kappa_2$ are in the closed
interval $[0,1 ]$ and the weights $\mu_1$ and $\mu_2$ satisfy the weight
inequalities
\begin{equation}\label{kappa-ineqs}
\mu_1 \geq \kappa_1\mu_2\qquad \text{and} \qquad \mu_2 \geq \kappa_2\mu_1
\end{equation}
and the numbers $x = \mu_1(I - \Lambda (e_1+e_2))$ and $y = \mu_2(I -
\Lambda (e_1+e_2))$ are in the set $S_o$ consisting of a parallelogram in
the $(x,y)$ plane with opposite vertices $(0,0)$ and $(1,1)$ and whose
sides are parallel to the lines $x = \kappa_1y$ and $y = \kappa_2x$.  In
the event that either $\kappa _1 = 1$ or $\kappa_2 = 1$ then the set $S_o$
consists of the line segment $0 \leq x = y \leq 1$.  If both $\kappa_1 < 1$
and $\kappa_2 < 1$ then set $S_o$ consists of the points satisfying the
inequalities
$$
0 \leq  x - \kappa_1y \leq 1 - \kappa_1\qquad \text{and} \qquad 0 \leq  y -
\kappa_2x \leq 1 - \kappa_2.
$$

Furthermore, in the event $\mu_1$ and $\mu_2$ satisfy these conditions then
the functions $h_1(t)$ and $h_2(t)$ are non increasing and $h_1(t)
\rightarrow \kappa_1$ and $h_2(t) \rightarrow \kappa_2$ as $t \rightarrow
0+$ and if either $\kappa _1 = 1$ or $\kappa_2 = 1$ then $\mu_1 = \mu_2$ so
$\omega$ is a range rank one $q$-weight.
\end{theorem}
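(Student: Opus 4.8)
The plan is to apply Theorem~\ref{powerstheorem} and Definition~\ref{def-breve}: $\omega$ is a $q$-weight precisely when, for every $t>0$, the map $I+\hat\Lambda\omega\vert_t$ is invertible and the resulting generalized boundary representation $\pi^\#_t$ is a completely positive contraction. The first move is to exploit the range rank two structure to reduce this to a $2\times2$ matrix computation. Dualizing, $\breve\omega\vert_t(A)=\mu_1\vert_t(A)e_1+\mu_2\vert_t(A)e_2$ takes values in the two-dimensional space $V=\Span\{e_1,e_2\}$ (the operators $e_1,e_2$ are linearly independent by \eqref{nc-square}), and $V$ is invariant under $\breve\omega\vert_t\circ\Lambda$, hence under $(I+\breve\omega\vert_t\circ\Lambda)^{-1}$. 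Writing $m_{ij}(t)=\mu_i(\Lambda(e_j)\vert_t)$, the map $\breve\omega\vert_t\circ\Lambda$ acts on $V$ in the basis $e_1,e_2$ by the matrix $M(t)=(m_{ij}(t))$, so $I+\hat\Lambda\omega\vert_t$ is invertible iff $D(t):=\det(I+M(t))=(1+m_{11})(1+m_{22})\bigl(1-h_1(t)h_2(t)\bigr)\neq0$, and in that case $\pi^\#_t(A)=c_1(A)e_1+c_2(A)e_2$, where $c_1,c_2$ are the functionals on $\bh$ obtained by applying $(I+M(t))^{-1}$ to $\bigl(\mu_1\vert_t,\mu_2\vert_t\bigr)$. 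Since $m_{ij}(t)\to0$ and so $D(t)\to1$ as $t\to\infty$, and $D$ is continuous in $t$, invertibility for all $t$ forces $D(t)>0$, i.e.\ $h_1(t)h_2(t)<1$, for all $t$.

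Next I would convert complete positivity and contractivity into scalar inequalities. The conceptual input is that \eqref{nc-square} makes $V\hookrightarrow\bk$ a complete order embedding with $V$ completely order isomorphic to $\C^2$: if $e_1\otimes A+e_2\otimes B\geq0$ in $\bk\otimes M_n$ with $A,B\in M_n$, then testing on product vectors $\xi_m\otimes v$ with $(e_1\xi_m,\xi_m)\to1$ (whence $(e_2\xi_m,\xi_m)\to0$ using $e_1+e_2\leq I$) forces $A\geq0$, and symmetrically $B\geq0$. Consequently $\pi^\#_t$ is completely positive iff $c_1,c_2$ are positive functionals, and (since $\pi^\#_t(I)=c_1(I)e_1+c_2(I)e_2$ and, using \eqref{nc-square}, a positive operator of this form has norm $\leq1$ iff both coefficients lie in $[0,1]$) it is a contraction iff $c_1(I)\leq1$ and $c_2(I)\leq1$. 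By Cramer's rule these become, for every $t>0$: $\mu_1\vert_t\geq h_1(t)\mu_2\vert_t$ and $\mu_2\vert_t\geq h_2(t)\mu_1\vert_t$ on $E(t,\infty)\bh E(t,\infty)$ (complete positivity, using $D(t)>0$), together with $p_1(t)-h_1(t)p_2(t)\leq1-h_1(t)$ and $p_2(t)-h_2(t)p_1(t)\leq1-h_2(t)$ (contractivity), where $p_i(t)=\mu_i\vert_t\bigl(I-\Lambda(e_1+e_2)\bigr)$.

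The core step is the monotonicity of $h_1,h_2$. Testing $\mu_1\vert_t\geq h_1(t)\mu_2\vert_t$ (and, in the converse direction, the weight inequality $\mu_1\geq\kappa_1\mu_2$ together with $h_1(t)\leq\kappa_1$) on the positive element $\Lambda(e_2)E(t,s)$ for $0<t<s$ gives $m_{12}(t)-m_{12}(s)\geq h_1(t)\bigl(m_{22}(t)-m_{22}(s)\bigr)$; inserting $m_{12}(t)=h_1(t)\bigl(1+m_{22}(t)\bigr)$ and simplifying yields $h_1(t)\geq h_1(s)$, so $h_1$ is non-increasing and $h_1(t)\to\sup_t h_1(t)=\kappa_1$ as $t\to0+$, and likewise $h_2(t)\to\kappa_2$. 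Testing complete positivity on $\bigl(I-\Lambda(e_1+e_2)\bigr)\vert_t\geq0$ gives $p_1(t)\geq h_1(t)p_2(t)$, so the contractivity inequality forces $1-h_1(t)\geq0$; hence $\kappa_1,\kappa_2\in[0,1]$. Letting $t\to0+$ in the complete positivity inequalities (using Proposition~\ref{get-omega}) yields $\mu_1\geq\kappa_1\mu_2$ and $\mu_2\geq\kappa_2\mu_1$, and letting $t\to0+$ in the four $p_i$-inequalities, together with the monotone convergence $p_i(t)\uparrow x,y$, gives $0\leq x-\kappa_1y\leq1-\kappa_1$ and $0\leq y-\kappa_2x\leq1-\kappa_2$, i.e.\ $(x,y)\in S_o$ (with the degenerate segment $x=y$ when some $\kappa_i=1$), whence $x,y\in[0,1]$ by an elementary computation. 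The converse runs these implications backwards: $\mu_1\geq\kappa_1\mu_2$ restores the level-$t$ complete positivity inequalities; $p_i(t)\leq x,y$ with $h_1(t)\leq\kappa_1$ and $(x,y)\in S_o$ give $p_1(t)+h_1(t)\bigl(1-p_2(t)\bigr)\leq\bigl(p_1(t)-\kappa_1p_2(t)\bigr)+\kappa_1\leq(x-\kappa_1y)+\kappa_1\leq1$, the contractivity inequality; and $h_1(t)h_2(t)\leq\kappa_1\kappa_2<1$ gives $D(t)>0$ (the case where some $\kappa_i=1$ being subsumed in the rank one reduction below).

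For the final assertion: if, say, $\kappa_1=1$, then $(x,y)\in S_o$ forces $x=y$, while $\mu_1\geq\kappa_1\mu_2=\mu_2$, so $\sigma:=\mu_1-\mu_2$ is a positive boundary weight with $\sigma\bigl(I-\Lambda(e_1+e_2)\bigr)=x-y=0$; since $I-\Lambda\leq I-\Lambda(e_1+e_2)$ this gives $\sigma(I-\Lambda)=0$, and a positive boundary weight whose associated normal functional has value $0$ at $I$ must vanish, so $\mu_1=\mu_2$ and $\omega(\rho)(A)=\rho(e_1+e_2)\mu_1(A)$ is a range rank one $q$-weight by Theorem~\ref{rank-one-q-weight} (note $\|e_1+e_2\|=1$ by \eqref{nc-square} and $\mu_1\bigl(I-\Lambda(e_1+e_2)\bigr)=x\leq1$). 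I expect the main obstacle to be the bookkeeping in the third paragraph --- keeping the one-parameter families of level-$t$ inequalities, their monotonicity, and their $t\to0+$ limits mutually consistent, and in particular reading off precisely the parallelogram $S_o$ from the four families of inequalities; the monotonicity identity for $h_1$ is the one genuinely non-routine manipulation, and the complete order isomorphism $V\cong\C^2$ is the conceptual input that makes the matrix reduction legitimate.
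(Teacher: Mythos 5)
Your proposal is correct and follows essentially the same route as the paper's proof: the reduction to the $2\times2$ matrix $X(t)$ acting on $\Span\{e_1,e_2\}$, the Cramer's-rule formula for $\pi_t^\#$, the positivity of $\det(I+X(t))$ via continuity and the limit at $t\to\infty$, the monotonicity of $h_1,h_2$ obtained by testing the level-$t$ positivity inequalities on $E(t,s)\Lambda(e_j)E(t,s)$, the parallelogram conditions from the $t\to0+$ limits, and the collapse to $\mu_1=\mu_2$ when some $\kappa_i=1$ via $(\mu_1-\mu_2)(I-\Lambda)=0$. The only differences are cosmetic (you package \eqref{nc-square} as a complete order isomorphism $V\cong\C^2$ and derive $\kappa_i\leq1$ by combining the positivity and contractivity inequalities rather than from $\pi_t^\#(\Lambda(e_1+e_2))\leq1$ directly).
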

\begin{proof}  Before we begin the proof we will establish some notation and basic facts
that we will use through out the proof.  Let $e_1, e_2 \in \bk$ be positive operators 
satisfying \eqref{nc-square}, let $\mu_1$ and $\mu_2$ be positive boundary weights on 
$\ah$ and suppose that $\omega$ is a boundary weight map of the form \eqref{2.8.1}. 
Then $\omega$ is a completely positive boundary weight map and its 
dualized boundary weight map $\bromega$ is well-defined.
Let $X(t)$ for $t > 0$ denote
the matrix representing the restriction of $\bromega\vert_t\Lambda$ to the invariant subspace of
$\bk$ spanned by $e_1, e_2$:
\begin{equation}\label{what-is-x}
X(t) = \begin{bmatrix} \mu_1(\Lambda (e_1)\vert_t)&\mu_1(\Lambda
(e_2)\vert_t)
\\
\mu_2(\Lambda (e_1)\vert_t)&\mu_2(\Lambda (e_2)\vert_t)
\end{bmatrix}.
\end{equation}
We denote the entries of $X(t)$ by $x_{ij}(t)$ for $i,j =
1,2$.  Sometimes in calculations we will write simply $X$ or
$x_{ij }$ instead of $X(t)$ or $x_{ij}(t)$.  When we do this we of course
mean that all terms in the equation are to be evaluated at the same $t$. 

Notice that $I+\bromega\vert_t\Lambda$ is invertible if and only if it is injective. Indeed,
 $\bromega\vert_t\Lambda$ is finite rank, therefore $I+\bromega\vert_t\Lambda$ is a Fredholm
 operator of index zero. Therefore, if it is injective then it is automatically surjective
 therefore invertible. Let us describe a convenient necessary and sufficient condition for 
its injectivity. Since the range of $\bromega\vert_t$ is an invariant subspace under $I+
 \bromega\vert_t\Lambda$, the same must hold for $(I+ \bromega\vert_t\Lambda)^{-1}$ whenever 
the inverse exists. Observe that if $T:\bk \to \bk$ is any operator, then $\ker(I+T) 
\subseteq \range(T)$, hence we have that $I+ \bromega\vert_t\Lambda$ is injective if
 and only if its restriction to the range of $\bromega\vert_t$ is injective. Thus $I+ \bromega\vert_t\Lambda$ is invertible if and only if
 $\det(I+X(t))\neq 0$. 

Let us assume that $\det(I+X(t))\neq 0$ for all $t>0$. In this 
case, the generalized boundary representation for $\omega$  can obtained by solving the 
equation $(I+X(t))\pi_t^\#=\bromega\vert_t$, from which we obtain that
$$
\pi_t^\#(A) = \ell^{(1)}_t(A)e_1 + \ell^{(2)}_t(A) e_2, \qquad \forall A \in \bh
$$
where
\begin{equation} \label{2.8.2}
\begin{bmatrix}
\ell^{(1)}_t(A) \\
\ell^{(2)}_t(A)
\end{bmatrix}
= \frac {1} {\det(I+X(t))} \begin{bmatrix} 1+x_{22}(t)&-x_{12}(t)
\\
-x_{21}(t)&1+x_{11}(t)
\end{bmatrix}
\begin{bmatrix} \mu_1\vert_t(A)
\\
\mu_2\vert_t(A)
\end{bmatrix}
\end{equation}
for all $A \in \bh$.
Furthermore, recall that $\omega$ is $q$-weight if and only if $\pi_t^\#$ is a completely
positive contraction for all $t > 0$. But observe that by equation~\eqref{nc-square}, we will have that $\pi^\#_t$ is positive if and only if both $\ell^{(1)}_t$ and $\ell^{(2)}_t$ are positive linear functionals. However, if those linear functionals are positive, then clearly $\pi_t^\#$ is completely positive since it is the sum of completely positive maps. Furthermore, when $\pi_t^\#$ is positive, we have also by \eqref{nc-square} that $\pi_t^\#$ is a contraction if and only if the map from $\bh$ to $\cc\oplus\cc$ given by \eqref{2.8.2} is a contraction. Thus, in order to simplify matters, we will abuse notation and denote also by
$\pi_t^\#$ the map from $\bh$ to $\cc\oplus\cc$ given by 
\begin{equation}\label{coord-pi-t}
\pi_t^\#(A) = \begin{bmatrix}
\ell^{(1)}_t(A) \\
\ell^{(2)}_t(A)
\end{bmatrix}.
\end{equation}
In summary, we conclude that the true generalized boundary representation 
is a completely positive contraction if and only if the map $\pi_t^\#$ given by \eqref{coord-pi-t} is a positive contraction.   For the remainder of the
proof we will refer to $\pi_t^\#$ given by \eqref{coord-pi-t} as if it were the true
generalized boundary representation.  Although this is not strictly true
the conclusions we draw for it are still valid for the reasons we have
given.

Notice in terms of the $x_{ij}(t)$ the functions $h_1$ and $h_2$ are 
\begin{equation} \label{2.8.3}
h_1(t) = \frac {x_{12}(t)} {1+x_{22}(t)}\qquad \text{and} \qquad h_2(t) =
\frac {x_{21}(t)} {1+x_{11}(t)}
\end{equation}

Our goal is to show that the conditions stated in the theorem are necessary
and sufficient for $\omega$ to be a $q$-weight.

For the forward direction of the proof of the theorem we assume that $\omega$ is a $q$-weight map,
which is equivalent to assuming $\pi_t^\#$ given above is a 
positive contraction for all $t > 0$.  Notice that since $\omega$ is a $q$-weight and thus $\omega(\rho)(I-\Lambda)\leq \rho(I)$ for all positive $\rho \in \bk_*$, we have that 
$$
\bromega(I-\Lambda) = \mu_1(I-\Lambda) e_1 + \mu_2(I-\Lambda) e_2 \leq I
$$
hence by \eqref{nc-square}, we have that $\mu_j(I-\Lambda) \leq 1$ for $j=1,2$, hence we find ourselves precisely in the framework described in the preamble of the current proof, and we have that $\det(I+X(t))\neq0$ for all $t>0$.

 The first problem we face is that we need to know the sign
of the $\det(I + X(t))$.  As we will see this determinant is never less
than one. Recall that if $\nu$ is any boundary weight, then for all $A\in \bh$, we have that for all $t>1$, $\nu\vert_t(A) = \nu\vert_1(A\vert_t)$ and $\nu\vert_1$ is normal, hence $\nu\vert_t(A) \to 0$ as $t\to \infty$.  Therefore $x_{ij}(t) \rightarrow 0$ since as $t \rightarrow \infty$ so  $\det(I + X(t))$ approaches one in the limit.  Hence, there is a
number $t_o > 0$ so that the determinant is greater than one half for $t >
t_o$.  Then for $t > t_o$
\begin{align*}
\pi_t^\# (\Lambda (e_1)) \geq 0\qquad & \textrm{yields} \qquad x_{11}(t) + \det(X(t))
\geq 0,
\\
\pi_t^\# (\Lambda (e_2)) \geq 0\qquad & \textrm{yields} \qquad x_{22}(t) + \det(X(t))
\geq 0.
\end{align*}
Since
$$
\det(I + X(t)) = 1 + x_{11}(t) + x_{22}(t) + \det(X(t)),
$$
we find
$$
\det(I + X(t)) \geq 1 + \max(x_{11}(t),x_{22}(t)) \geq 1
$$
Hence, we have $\det(I + X(t)) \geq 1$ for $t > t_o$.  Since this estimate
is independent of $t$ and the determinant is continuous in $t$ we have the
determinant is greater than one for all $t > 0$.  Having established this
fact we will use it in subsequent computations without comment.

Now making use of the fact that $\pi_t^\#$ is a contraction we have
$$
\pi_t^\# (\Lambda (e_1+e_2)) \leq 1
$$
which yields
\begin{equation} \label{2.8.4}
x_{12}(t) \leq 1 + x_{22}(t)\qquad \text{and} \qquad x_{21}(t) \leq 1 +
x_{11 }(t)
\end{equation}
for $t > 0$.  This then shows that $h_1(t) \leq 1$ and $h_2(t) \leq 1$ for
all positive $t$ and so we have $\kappa_1 \leq 1$ and $\kappa_2 \leq 1$.

The positivity of $\pi_t^\#$ yields the inequalities
\begin{align} \label{2.8.5}
\mu_1\vert_t &\geq \frac {x_{12}(t)} {1+x_{22}(t)} \mu_2\vert_t =
h_1(t)\mu_2\vert_t
\\ \label{2.8.5'}
\mu_2\vert_t &\geq \frac {x_{21}(t)} {1+x_{11}(t)} \mu_1\vert_t = h_2(t)\mu
_1\vert_t
\end{align}
for $t > 0$.  We claim $h_1$ and $h_2$ are non increasing functions of $t$.
Suppose $0 < t < s$.  Applying the above inequalities to the positive
elements $\Lambda (e_1)\vert_t - \Lambda (e_1){\vert_ s}$  and
$\Lambda (e_2)\vert_t - \Lambda (e_2){\vert_s}$  we find

\begin{align*}
x_{11}(t) - x_{11}(s) &\geq \frac {x_{12}(t)} {1+x_{22}(t)} (x_{21}(t) - x_
{21}(s))
\\
x_{12}(t) - x_{12}(s) &\geq \frac {x_{12}(t)} {1+x_{22}(t)} (x_{22}(t) - x_
{22}(s))
\\
x_{21}(t) - x_{21}(s) &\geq \frac {x_{21}(t)} {1+x_{11}(t)} (x_{11}(t) - x_
{11}(s))
\\
x_{22}(t) - x_{22}(s) &\geq \frac {x_{21}(t)} {1+x_{11}(t)} (x_{12}(t) - x_
{12}(s))
\end{align*}

Multiplying by the denominators in the middle two inequalities we find
\begin{align*}
x_{12}(t) + x_{12}(t)x_{22}(s) &\geq x_{12}(s) + x_{22}(t)x_{12}(s)
\\
x_{21}(t) + x_{21}(t)x_{11}(s) &\geq x_{21}(s) + x_{11}(t)x_{21}(s)
\end{align*}
which yields
$$
h_1(t) \geq h_1(s)\qquad \text{and} \qquad h_2(t) \geq h_2(s)
$$
for $0 < t < s$.  Hence, $h_1$ and $h_2$ are non increasing functions.
Since $h_1(t) \leq 1$ and $h_2(t) \leq 1$ for all $t > 0$ these functions
have limits as $t \rightarrow 0+$.  Hence, we have
$$
h_1(t) \rightarrow \kappa_1 \leq 1\qquad \text{and} \qquad h_2(t)
\rightarrow \kappa_2 \leq 1
$$
From inequalities \eqref{2.8.5} and \eqref{2.8.5'}, we see that
the inequalities \eqref{kappa-ineqs} hold.

Now we tackle the normalization conditions on the point in the $xy$-plane
given by
$$
x_o = \mu_1(I - \Lambda (e_1+e_2))\qquad \text{and} \qquad y_o = \mu_1(I -
\Lambda (e_1+e_2)).
$$

Since, $\pi_t^\#$ is a completely positive contraction we have $\pi_t^\# (I)$ is a positive contraction
for all $t > 0$ where $\pi_t^\# (I)$ is given by
$$
\pi^\#_t (I) = \frac {1} {\det(I+X(t))} \begin{bmatrix} 1+x_{22}(t)&-x_{12}(t)
\\
-x_{21}(t)&1+x_{11}(t)
\end{bmatrix}
\begin{bmatrix} \mu_1(I\vert_t)
\\
\mu_2(I\vert_t)
\end{bmatrix}
$$
We see $\pi_t^\#$ is a contraction if and only if the first and second
entries of $\pi_t^\#$ are contractions by \eqref{nc-square}.  Beginning with the top entry we
have
$$
(1+x_{22}(t))\mu_1(I\vert_t) - x_{12}(t)\mu_2(I\vert_t) \leq
(1+x_{11}(t))(1 +x_{22}(t)) - x_{12}(t)x_{21}(t))
$$
for $t > 0$.  Now we have $I\vert_t = (I - \Lambda (e_1+e_2))\vert_t +
\Lambda (e_1+e_2)\vert_t$ and putting this in the above inequality we have
\begin{align*}
((1+x_{22})\mu_1 - x_{12}\mu_2)((I-\Lambda (e_1+e_2))\vert_t) +&
(1+x_{22})(x _{11}+x_{12}) - x_{12}(x_{21}+x_{22})
\\
&\leq (1+x_{11})(1+x_{22}) - x_{12}x_{21}
\end{align*}
Canceling terms we see this inequality is equivalent to
$$
((1+x_{22}(t))\mu_1 - x_{12}(t)\mu_2)((I-\Lambda (e_1+e_2))\vert_t) \leq 1
+ x_{22}(t) - x_{12}(t)
$$
Now dividing by $(1 + x_{22}(t))$ we find
\begin{equation} \label{2.8.6}
(\mu_1 - h_1(t)\mu_2)((I-\Lambda (e_1+e_2))\vert_t) \leq 1 - h_1(t)
\end{equation}

By symmetry we see the condition that the bottom term in $\pi_t^\#$ be a
contraction is
\begin{equation} \label{2.8.7}
(\mu_2 - h_2(t)\mu_1)((I-\Lambda (e_1+e_2))\vert_t) \leq 1 - h_2(t)
\end{equation}

Now we show that if $h_1(t) = 1$ or $h_2(t) = 1$ for some $t > 0$
then $\mu_1 = \mu_2$.  Suppose for example $h_2(t_o) = 1$ for $t_o > 0$.
It follows that $\kappa_2 = 1$ and, therefore, $\mu_2 \geq \mu_1$ by \eqref{kappa-ineqs}.  Since
$h_1$ is non increasing we have $h_1(t) = 1$ for all $t \in (0,t_o]$.
Hence, by \eqref{2.8.7} we have
$$
(\mu_2 - \mu_1)((I-\Lambda (e_1+e_2))\vert_t) = 0
$$
for all $t \in (0,t_o)$.  But this means $(\mu_2 - \mu_1)(I-\Lambda
(e_1+e_2)) = 0$ and therefore,  $0 \leq (\mu_2 - \mu_1)(I-\Lambda ) \leq
(\mu_2 - \mu_1)(I-\Lambda (e_1+e_2)) = 0$.  Hence, $\mu_2 - \mu_1$ is a
positive weight which gives zero when applied to $I - \Lambda$ so $\mu_2 -
\mu_1 = 0$.  In the case where $\mu_1 = \mu_2$  we have $x_{11}(t) =
x_{21}(t)$ and $x_{12}(t) = x_{22}(t)$ and $\pi_t^\# (I)$ is
$$
\pi^\#_t (I) = \frac {1} {1+\mu (\Lambda (e_1+e_2)\vert_t)} \begin{bmatrix} \mu
(I\vert_t)
\\
\mu (I\vert_t)
\end{bmatrix}
$$
for $t > 0$ where $\mu = \mu_1 = \mu_2$.  Then $\pi_t^\#$ is a contraction
if and only if
$$
\mu ((I - \Lambda (e_1+e_2))\vert_t) \leq 1
$$
for all $t > 0$ and this is equivalent to $\mu (I - \Lambda (e_1+e_2)) \leq
1$.  Hence, we see that if $h_2(t) = 1$ for some $t > 0$ then $\mu_1(I -
\Lambda (e_1+e _2))$ and $\mu_2(I - \Lambda (e_1+e_2))$ satisfy the
normalization condition given in the statement of the theorem.  The same
argument with the indices 1 and 2 interchanged shows that if $h_1(t) = 1$
for some $t > 0$ then $\mu_1 = \mu_2$ and the normalization conditions on
$\mu_1$ and $\mu_2$ stated in the theorem are satisfied.

Now that we have taken care of the case where either $h_1(t) = 1$ or
$h_2(t) = 1$ for some $t > 0$ we now assume that both $h_1(t) < 1$ and
$h_2(t) < 1$ for all $t > 0$.  We will show that if $\kappa_1 = 1$ then the
normalization conditions of the theorem are satisfied.  Suppose $\kappa_1 =
1$.  Then inequality \eqref{2.8.6} becomes
$$
(\mu_1 - \mu_2)((I-\Lambda (e_1+e_2))\vert_t) + (1 -
h_1(t))\mu_2((I-\Lambda (e_1+e_2))\vert_t) \leq 1 - h_1(t)
$$
Since $\mu_1 - \mu_2 \geq 0$ and $h_1(t) < 1$ for all $t > 0$ we have
$$
\mu_2((I-\Lambda (e_1+e_2))\vert_t) \leq 1
$$
for all $t > 0$ and we conclude $\mu_2(I-\Lambda (e_1+e_2)) \leq 1$.  Then
we conclude from the above inequalities that
$$
(\mu_1 - \mu_2)((I-\Lambda (e_1+e_2))\vert_t) \leq (1 - h_1(t))(1 -
\mu_2((I -\Lambda (e_1+e_2))\vert_t))
$$
Since the right hand side of the above inequality converges to zero as $t
\rightarrow 0+$  we conclude that $0 \leq (\mu_1 - \mu_2)((I-\Lambda (e
_1+e_2))) \leq 0$ so $\mu_1 = \mu_2$ and as we have seen this then leads to
the conclusion that $\mu_1$ and $\mu_2$ satisfy the normalization condition
in the statement of the theorem. The same argument with the indices 1 and 2 
interchanged shows that if $\kappa_2 = 1$ then $\mu_2 = \mu_1$ and the 
normalization conditions are satisfied.  

Thus to complete the proof of the forward direction of the theorem, 
all that remains is the case where
both $\kappa_1 < 1$ and $\kappa_2 < 1$.  We assume this to be the case.
Then inequality \eqref{2.8.6}  and \eqref{kappa-ineqs} imply
that for $t > 0$,
$$
0 \leq (\mu_1 - \kappa_1\mu_2)((I-\Lambda (e_1+e_2))\vert_t) \leq 1 -
h_1(t),
$$
from which we conclude that
$$
0 \leq (\mu_1 - \kappa_1\mu_2)((I-\Lambda (e_1+e_2))) \leq 1 - \kappa_1.
$$
The same argument with the indices 1 and 2 interchanged shows
$$
0 \leq (\mu_2 - \kappa_2\mu_1)((I-\Lambda (e_1+e_2))) \leq 1 - \kappa_2.
$$
These inequalities show that $\mu_1((I-\Lambda (e_1+e_2)))$ and
$\mu_2((I-\Lambda (e_1+e_2)))$ lie in the parallelogram given in the
statement of the theorem.  Hence, we have shown if $\omega$ is a $q$-weight
of the form \eqref{2.8.1} then $\mu_1$ and $\mu _2$ satisfy the conditions given in
the statement of the theorem.

Now we prove the backward direction of the theorem. Let us assume that $\omega$ is of the form \eqref{2.8.1} and $\mu_1$ and $\mu_2$ satisfy
the conditions given in the statement of the theorem. Notice that we automatically have that
$\mu_j(I-\Lambda) \leq \mu_j(I - \Lambda(e_1+e_2)) \leq 1$, since this is one of the coordinates of a point $(x,y)$ in the parallelogram $S_o$. Thus we find ourselves again in the framework discussed in the preamble of the current proof.

First we note that
if $\kappa_1 = 1$ or $\kappa_2 = 1$ then $\mu_1 = \mu_2$.  Indeed, suppose
$\kappa_1 = 1$.  Then we have $\mu_1 - \mu_2 \geq 0$ and $(\mu_1 - \mu_2)(I
- \Lambda)\leq (\mu_1 - \mu_2)(I
- \Lambda (e_1+e_2)) = 0$  from which we conclude that $\mu_1 = \mu_2$.  As
we have seen from Theorem \ref{rank-one-q-weight}, if
$$
0 \leq \mu_1(I - \Lambda (e_1+e_2)) = \mu_2(I - \Lambda (e_1+e_2)) \leq 1
$$
then $\omega$ is a $q$-weight.  The same argument with the indices 1 and 2
interchanged shows that if $\kappa_2 = 1$ then $\omega$ is a range rank one
$q$-weight.

Then to complete the proof of the theorem we may assume both $\kappa_1 < 1$
and $\kappa_2 < 1$.  Let $X(t)$ be the matrix given by \eqref{what-is-x}.  Since $x_{12}(t) \leq \kappa_1(1 + x_{22}(t))$ and
$x_{21}(t) \leq \kappa_2(1 + x_{11}(t))$ we have
\begin{align*}
\det(I + X(t)) &= (1 + x_{11}(t))(1 + x_{22}(t)) - x_{12}(t)x_{21}(t)
\\
&\geq (1 + x_{11}(t))(1 + x_{22}(t))(1 - \kappa_1\kappa_2) \geq (1 - \kappa
_1\kappa_2)
\end{align*}
so we can conclude the determinant is positive for all $t > 0$, hence the generalized boundary representation of $\omega$ is well-defined as discussed in the preamble.  From
equation \eqref{2.8.2} we conclude the generalized boundary representation is
completely positive if and only if
\begin{equation} \label{some}
\mu_1\vert_t \geq h_1(t)\mu_2\vert_t\qquad \text{and} \qquad \mu_2\vert_t \geq
h_2(t)\mu_1\vert_t
\end{equation}
for all $t > 0$.  The conditions on $\mu_1$ and $\mu_2$ are $\mu_1 \geq
\kappa _1\mu_2,\medspace \mu_2 \geq \kappa_2\mu_1$, $h_1(t) \leq \kappa_1
\leq 1$ and $h_2(t) \leq \kappa_2 \leq 1$ and these conditions ensure that
\eqref{some} holds.  Hence, we conclude that the generalized boundary
representation is completely positive.

We recall that the proof that $h_1(t)$ and $h_2(t)$ are non increasing
functions of $t$ we only needed that these functions were bounded by one
and the positivity of $\pi_t^\#$.  Since these conditions are satisfied we
know that these functions are non increasing and, therefore, $h_1(t)
\rightarrow \kappa_1$ and $h_2(t) \rightarrow \kappa_2$ as $t \rightarrow
0+$.  All that remains to show is that $\pi_t^\#$ is contractive and for
this we only need show the first and second components of $\pi_t^\# (I)$ do
not exceed one and as we have already calculated this condition will be met
if and only if equations \eqref{2.8.6} and \eqref{2.8.7} hold
for all $t > 0$.  Now by the assumptions on $\mu_1$ and $\mu_2$ and the
fact that we are in the case $\kappa_1 < 1$ and $\kappa_2 < 1$ all the
terms above have limits and in the limit of $t \rightarrow 0+$ these
inequalities become
$$
0 \leq (\mu_1 - \kappa_1\mu_2)((I-\Lambda (e_1+e_2))) \leq 1 - \kappa_1
$$
and
$$
0 \leq (\mu_2 - \kappa_2\mu_1)((I-\Lambda (e_1+e_2))) \leq 1 - \kappa_2.
$$
We know these inequalities are satisfied because these inequalities are
precisely the inequalities that describe the parallelogram $S_o$ and by the
conditions of the theorem the point $x = \mu_1(I-\Lambda (e_1+e_2))$ and $y
= \mu_2(I-\Lambda (e_1+e_2))$ lies in $S_o$.  At first it may seem that
knowing the inequalities are satisfied in the limit will not be of much
help until we recall we have other inequalities at our disposal.  Since
$\mu_1 \geq \kappa_1\mu_2$ and $\mu_2 \geq \kappa_2\mu_1$ we have
$$
(\mu_1 - \kappa_1\mu_2)((I-\Lambda (e_1+e_2) - (I-\Lambda
(e_1+e_2)))\vert_t) \geq 0
$$
$$
(\mu_2 - \kappa_2\mu_1)((I-\Lambda (e_1+e_2) - (I-\Lambda
(e_1+e_2)))\vert_t) \geq 0
$$
$$
(\mu_1 - \kappa_1\mu_2)((I-\Lambda (e_1+e_2))\vert_t) \geq 0
$$
$$
(\mu_2 - \kappa_2\mu_1)((I-\Lambda (e_1+e_2))\vert_t) \geq 0
$$
and let us not forget the inequalities
$$
h_1(t) \leq \kappa_1 < 1\qquad \text{and} \qquad h_2(t) \leq \kappa_2 < 1
$$
for all $t > 0$.   To deal with these equations we will need to simplify
notation.  To this end let $x(t) = \mu_1((I-\Lambda (e_1+e_2))\vert _t)$
and $y(t) = \mu_2((I-\Lambda (e_1+e_2))\vert_t)$.  Then $x(t)$ and $y(t)$
are non increasing functions of $t$ and the inequalities we have have shown
to be true are
\begin{align}
\label{the-n-1}
0 \leq  x(0) - &\kappa_1y(0) \leq 1 - \kappa_1
\\
\label{the-n-2}
0 \leq  y(0) - &\kappa_2x(0) \leq 1 - \kappa_2
\\
\label{the-n-3}
x(0) - \kappa_1y(0) &\geq x(t) - \kappa_1y(t)
\\
\label{the-n-4}
y(0) - \kappa_2x(0) &\geq y(t) - \kappa_2x(t)
\\
\label{the-n-5}
x(t) \geq& \kappa_1y(t)
\\
\label{the-n-6}
y(t) \geq& \kappa_2y(t)
\end{align}
Now note that by equations \eqref{the-n-1}, \eqref{the-n-3} and \eqref{the-n-5} we have that
$$
0 \leq x(t) - \kappa_1y(t) \leq 1 -\kappa_1.
$$
Adding $(\kappa_1 - h_1(t))y(t)$ to both sides and using the fact that $y(t) \leq 1$ for all $t$, we obtain
\begin{equation}\label{useful-h1}
0\leq x(t) - h_1(t) y(t) \leq 1 -h_1(t).
\end{equation}
By using \eqref{the-n-2}, \eqref{the-n-4} and \eqref{the-n-6} in the analogous way, we obtain:
$$
0\leq y(t) - h_2(t) x(t) \leq 1 -h_2(t).
$$
These are precisely the desired inequalities.

 \end{proof}

\begin{theorem}  \label{2-not-qpure}  Let $\fk$ be a separable Hilbert space. Suppose $e_1$ and $e_2$ are two positive
operators in $\frak B (\frak K )$ so that $0 \leq x_1e_1 + x_2e_2
\leq I$ if and only if both $x_1$ and $x_2$ lie in the closed interval
$[0,1]$.  Let $\omega$ be a $q$-weight map of the form 
\begin{equation*}
\omega (\rho )(A) = \rho (e_1)\mu_1(A) + \rho (e_2)\mu_2(A)
\end{equation*}
for $\rho \in \frak B (\frak K )_*$ and $A \in \frak A (\frak H )$ where
$\mu _1$ and $\mu_2$ are positive boundary weights on $\frak A (\frak H )$
satisfying the conditions given in Theorem \ref{rank-two-big} which ensure $\omega$ is
a $q$-weight map.  If $\omega$ is $q$-pure then $\mu _1 = \mu_2$
so $\omega$ has range rank one.
\end{theorem}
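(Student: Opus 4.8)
The plan is to argue by contradiction: assuming $\omega$ is $q$-pure but $\mu_1\neq\mu_2$, I will produce two $q$-subordinates of $\omega$ that are not comparable. Since $q$-subordination of $q$-weight maps coincides with subordination of the associated CP-flows, this contradicts the fact that the set of flow subordinates of $\omega$ is totally ordered.

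First I would carry out two reductions. By the last assertion of Theorem~\ref{rank-two-big}, if $\kappa_1=1$ or $\kappa_2=1$ then $\mu_1=\mu_2$ already, so I may assume $\kappa_1,\kappa_2\in[0,1)$. Next, if $\mu_1$ and $\mu_2$ are linearly dependent then $\breve{\omega}(A)=\mu_1(A)e_1+\mu_2(A)e_2$ takes values in a one-dimensional subspace, so $\omega$ has range rank at most one and can be written as $\omega(\rho)(A)=\rho(T)\mu(A)$ with $\mu$ a positive boundary weight and $T$ a nonnegative combination of $e_1,e_2$; using \eqref{nc-square} together with $\kappa_1,\kappa_2<1$ one checks that $T$ is not a projection, so Corollary~\ref{q-pure-projection} contradicts $q$-purity. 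Hence I may assume $\mu_1,\mu_2$ are linearly independent, in which case the positive boundary weights $\tau_1:=\mu_1-\kappa_1\mu_2$ and $\tau_2:=\mu_2-\kappa_2\mu_1$ (nonnegative by \eqref{kappa-ineqs}) are both nonzero.

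The heart of the argument is to show that, for all sufficiently small $\lambda_1,\lambda_2>0$, the range rank one maps $\eta_j(\rho)(A):=\lambda_j\,\rho(e_j)\,\tau_j(A)$ are $q$-subordinates of $\omega$. That each $\eta_j$ is a $q$-weight follows from Theorem~\ref{rank-one-q-weight} once $\lambda_j\,\tau_j(I-\Lambda(e_j))\leq 1$, and the needed finiteness of $\tau_j(I-\Lambda(e_j))$ comes from $\tau_j(I-\Lambda(e_1+e_2))\leq\mu_j(I-\Lambda(e_1+e_2))\leq 1$ together with the bound $\tau_1(\Lambda(e_2)\vert_t)=h_1(t)+(h_1(t)-\kappa_1)\mu_2(\Lambda(e_2)\vert_t)\leq\kappa_1$ and its symmetric counterpart. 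To see $\eta_1\leq_q\omega$ I would compare generalized boundary representations: $\omega$ has the one displayed in \eqref{2.8.2}, while $\eta_1$ has $\xi_t^\#(A)=\lambda_1\bigl(1+\lambda_1\tau_1\vert_t(\Lambda(e_1))\bigr)^{-1}\tau_1\vert_t(A)\,e_1$ by Theorem~\ref{rank-one-q-weight}; by the argument based on \eqref{nc-square} used in the proof of Theorem~\ref{rank-two-big} (the second coordinate of $\pi_t^\#$ is already positive), the relation $\xi_t^\#\leq\pi_t^\#$ reduces to the scalar inequality $\xi_t^\#(A)\leq\ell_t^{(1)}(A)$ for all positive $A$, where $\ell_t^{(1)}$ is the first coordinate of \eqref{2.8.2}. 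After substituting $x_{12}(t)=h_1(t)(1+x_{22}(t))$ in the entries \eqref{what-is-x} and using $\mu_1\geq\kappa_1\mu_2$ and the monotone convergence $h_1(t)\to\kappa_1$, this collapses to an elementary estimate that holds uniformly in $t$ once $\lambda_1$ is small. I expect this uniform verification to be the main obstacle, particularly when $\mu_1,\mu_2$ are unbounded so that the entries $x_{ij}(t)$ blow up as $t\to 0^+$: one must control a quantity of the form $\sup_{t>0}x_{21}(t)\bigl(\kappa_1-h_1(t)\bigr)$, and the structural information from Theorem~\ref{rank-two-big} (the functions $h_i$ are non-increasing with limit $\kappa_i$, and $\mu_j(I-\Lambda(e_1+e_2))\leq 1$) has to be invoked; if necessary one works instead with a compression $E(s,\infty)\tau_j E(s,\infty)$ for a fixed small $s>0$, and appeals to the range rank one results of Section~\ref{sec-rank-one} in any residual degenerate configuration.

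Finally I would derive the contradiction. Since $\tau_1,\tau_2\neq 0$, both $\eta_1,\eta_2$ are nonzero, $\breve{\eta}_1$ has range $\cc e_1$ and $\breve{\eta}_2$ has range $\cc e_2$. If $\eta_1\leq_q\eta_2$ then \eqref{omega-at-1} gives $\breve{\eta}_1(I-\Lambda)\leq\breve{\eta}_2(I-\Lambda)$, that is, $a e_1\leq b e_2$ with $a,b>0$ (a nonzero positive boundary weight does not vanish on $I-\Lambda$); but then for small $\varepsilon>0$ the operator $(-\varepsilon a)e_1+(\varepsilon b)e_2=\varepsilon(b e_2-a e_1)$ is nonnegative and, since $\|e_2\|=1$, bounded above by $I$, so by \eqref{nc-square} its $e_1$-coefficient $-\varepsilon a$ must lie in $[0,1]$, which is absurd. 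Symmetrically $\eta_2\not\leq_q\eta_1$. Thus $\eta_1$ and $\eta_2$ are incomparable $q$-subordinates of $\omega$, so the set of flow subordinates of $\omega$ is not totally ordered, contradicting $q$-purity. Therefore $\mu_1=\mu_2$ and $\omega$ has range rank one.
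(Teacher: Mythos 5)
Your overall strategy coincides with the paper's: from $\mu_1\neq\mu_2$ one gets $\kappa_1,\kappa_2<1$, forms the range rank one candidates built from $\tau_1=\mu_1-\kappa_1\mu_2$ (paired with $e_1$) and $\tau_2=\mu_2-\kappa_2\mu_1$ (paired with $e_2$), shows they are $q$-subordinates of $\omega$, and concludes incomparability from \eqref{nc-square}. Your incomparability argument and your verification that $\tau_j(I-\Lambda(e_j))\leq 1$ are both correct, and your explicit treatment of the linearly dependent case via Corollary~\ref{q-pure-projection} is a sensible safeguard for the degenerate configuration in which one of the $\tau_j$ vanishes.

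The gap is exactly where you flag it: the subordination $\eta_j\leq_q\omega$ is never established, and the route you sketch does not work. If one bounds $\ell^{(1)}_t$ from below by discarding the term $(\kappa_1-h_1(t))\mu_2\vert_t$ from $\mu_1\vert_t-h_1(t)\mu_2\vert_t=\tau_1\vert_t+(\kappa_1-h_1(t))\mu_2\vert_t$, the resulting scalar condition on $\lambda_1$ is $\lambda_1\bigl(1+(\kappa_1-h_1(t))x_{21}(t)\bigr)\leq 1$, and there is no reason for $\sup_{t>0}(\kappa_1-h_1(t))x_{21}(t)$ to be finite: $x_{21}(t)$ may blow up while $\kappa_1-h_1(t)\to 0$ at an arbitrary relative rate, and nothing in Theorem~\ref{rank-two-big} controls that product. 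The correct verification (the paper's) takes $\lambda_1=1$ and keeps the discarded term: using $\det(I+X)=(1+x_{22})(1+x_{11}-h_1(t)x_{21})$ one has $\ell^{(1)}_t=(\mu_1\vert_t-h_1(t)\mu_2\vert_t)/(1+x_{11}-h_1(t)x_{21})$, and the inequality $\ell^{(1)}_t\geq\tau_1\vert_t/(1+x_{11}-\kappa_1x_{21})$ reduces, after cross-multiplying and cancelling, to
$$
(\kappa_1-h_1(t))\,\bigl[(1+x_{11}(t))\mu_2\vert_t-x_{21}(t)\mu_1\vert_t\bigr]\geq 0,
$$
which holds because $h_1(t)\leq\kappa_1$ and $\mu_2\vert_t\geq h_2(t)\mu_1\vert_t$ by \eqref{2.8.5'}. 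So the statement you want is true --- indeed for every $\lambda_1\in(0,1]$, not just small ones --- but it is proved by this exact factorization, not by a smallness or uniform-bound argument; as written, your proposal leaves the central step unproved and points toward an estimate that need not hold.
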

\begin{proof}  Assume the hypothesis of the theorem and suppose $\mu_1 \neq \mu_2$.
Then the constants $\kappa_1$ and $\kappa_2$ defined in the previous
theorem satisfy $\kappa _1 < 1$ and $\kappa_2 < 1$.  Let us consider the boundary weight maps given by
$$
\eta (\rho ) = \rho (e_1)(\mu_1 - \kappa_1\mu_2)\qquad \text{and} \qquad
\nu (\rho ) = \rho (e_2)(\mu_2 - \kappa_2\mu_1)
$$
for $\rho \in \frak B (\frak K )_*$.  Let us show first that $\eta$ is a $q$-weight, as the proof for $\nu$ is analogous. By Theorem~\ref{rank-two-big}, $\mu_1 - \kappa_1\mu_2$ is a positive boundary weight. Hence, by Theorem~\ref{rank-one-q-weight}, since $e_1$ is a positive norm one operator, in order to obtain that $\eta$ is a $q$-weight map, it suffices to show that 
$$
(\mu_1 - \kappa_1\mu_2)(I-\Lambda(e_1)) \leq 1.
$$
Using the notation for the matrix $X(t)$ as in \eqref{what-is-x}, and denoting 
$x(t)=\mu_1((I-\Lambda(e_1+e_2))\vert_t)$, $y(t)=\mu_2((I-\Lambda(e_1+e_2))\vert_t)$
we have that
\begin{align*}
(\mu_1 - \kappa_1\mu_2)((I-\Lambda(e_1))\vert_t) & = (\mu_1 - \kappa_1\mu_2)\big((I-\Lambda(e_1+e_2) + \Lambda(e_2))\vert_t\big) \\
& = x(t) - \kappa_1 y(t) + x_{12}(t) -\kappa_1 x_{22}(t) \\
\text{by \eqref{name-h1-h2}}\;\; & = x(t) - \kappa_1 y(t) + h_1(t) (1+x_{22}(t)) - \kappa_1 x_{22}(t) \\
\text{by \eqref{useful-h1}}\;\;& \leq 1- \kappa_1 + h_1(t) \big(1+x_{22}(t)\big) - \kappa_1 x_{22}(t) \\
& = 1 - (\kappa_1 - h_1(t))\big(1 + x_{22}(t)\big) \\
& \leq 1
\end{align*}
Thus we have the desired inequality by taking the limit. By Theorem~\ref{rank-one-q-weight}, the generalized boundary representation $\xi^\#$ for $\eta$ is given by
$$
\xi_t^\# (A) = \frac{(\mu_1 -
\kappa _1\mu_2)(A\vert_t)}{1+x_{11}(t)-\kappa_1x_{21}(t)} \; e_1
$$
Now let $\pi^\#$ be the generalized boundary representation for $\omega$. Then, by Theorem~\ref{boundary-representation-subordinates}, $\eta \leq_q \omega$ if and only if for all $t>0$, $\pi_t^\#-\xi_t^\#$ is completely positive. Since $e_1$ and $e_2$ satisfy \eqref{nc-square}, in order to prove that $\pi_t^\#-\xi_t^\#$ is completely positive, it suffices to prove that 
$$
\ell^{(1)}_t - \frac{(\mu_1 -
\kappa _1\mu_2)\vert_t}{1+x_{11}(t)-\kappa_1x_{21}(t)}
$$
is positive (where $\ell^{(1)}_t$ is defined in \eqref{2.8.2}). In other words, we need to check that
\begin{align*}
((1+x_{11}-\kappa_1x_{21})(1+x_{22})&-\det(I+X))\mu_1\vert_t
\\
&+ (\kappa_1\det(I+X)-x_{12}(1+x_{11}-\kappa_1x_{21}))\mu_2\vert_t \geq 0.
\end{align*}
With some simplification this yields
$$
x_{21}(x_{12}-\kappa_1(1+x_{22}))\mu_1\vert_t + (\kappa_1(1+x_{11}+x_{22}+x
_{11}x_{22})-x_{12}(1+x_{11}))\mu_2\vert_t \geq 0.
$$
Dividing by $(1+x_{22})(1+x_{11})$ we find this is equivalent to
$$
h_2(t)(h_1(t)-\kappa_1)\mu_1\vert_t + (\kappa_1 - h_1(t))\mu_2\vert_t \geq 0.
$$
And this inequality can be written in the form
$$
(\kappa_1 -  h_1(t))(\mu_2 - h_2(t)\mu_1)\vert_t \geq 0.
$$
Now notice that this inequality holds because $\kappa_j \geq h_j(t)$ for $j=1,2$ and $\mu_2 \geq \kappa_2 \mu_1$ by \eqref{def-kappa} and \eqref{kappa-ineqs}. Therefore $\eta\leq_q\omega$, and by an analogous argument with indices 1 and 2 exchanged we have that $\nu\leq_q\omega$.
 It is immediately apparent that neither
$\eta \geq_q \nu$ nor $\nu \geq_q \eta$ so the subordinates of $\eta$ are
not well ordered.  Hence, $\omega$ is not $q$-pure.  \end{proof}

We remark that we do not know that all range rank two $q$-weights have the form assumed in the theorem, 
namely in terms of positive weights $\mu_1, \mu_2$ and positive operators $e_1, e_2$ such that $0 \leq x_1e_1 + x_2e_2 \leq I$
 if and only if $x_1, x_2 \in [0,1]$. However, at least in the interesting case when the $q$-weight is unital and has trivial normal spine, in other words it gives rise to an 
E$_0$-semigroup of type II$_0$, this is guaranteed by 
Proposition~\ref{standard-form-range-rank-two}. Thus we immediately have the following 
result as an application of Theorem~\ref{2-not-qpure}. 

\begin{cor}\label{sec6-main}
Let $\omega$ be a unital $q$-weight map over a separable Hilbert space $\fk$
with range rank two. If its induced E$_0$-semigroup  has type II$_0$ then it is not $q$-pure.
\end{cor}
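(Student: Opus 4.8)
The plan is to obtain the corollary by combining Proposition~\ref{standard-form-range-rank-two} with Theorem~\ref{2-not-qpure}. First I would note that, since the E$_0$-semigroup induced by $\omega$ has type II$_0$, its normal spine vanishes (as observed immediately after Theorem~\ref{normal-spine}); hence $\omega$ is a unital $q$-weight map of range rank two with trivial normal spine, which is precisely the hypothesis of Proposition~\ref{standard-form-range-rank-two}.

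Applying that proposition produces positive boundary weights $\mu_1,\mu_2\in\ah_*$ and positive norm-one operators $e_1,e_2\in\bk$ such that
$$
\omega(\rho)(A) = \rho(e_1)\mu_1(A) + \rho(e_2)\mu_2(A), \qquad \forall \rho\in\bk_*,\ \forall A\in\ah,
$$
and such that $0\leq x_1 e_1 + x_2 e_2 \leq I$ holds for real $x_1,x_2$ exactly when $x_1,x_2\in[0,1]$; that is, the pair $(e_1,e_2)$ satisfies condition~\eqref{nc-square}. Since $\omega$ is in particular a $q$-weight map, the weights $\mu_1$ and $\mu_2$ necessarily satisfy the conditions of Theorem~\ref{rank-two-big} that guarantee that $\omega$ is a $q$-weight. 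Thus $\omega$ is exactly of the form to which Theorem~\ref{2-not-qpure} applies.

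Now suppose, toward a contradiction, that $\omega$ is $q$-pure. Then Theorem~\ref{2-not-qpure} forces $\mu_1=\mu_2$; writing $\mu$ for this common weight, we get $\bromega(A) = \mu(A)(e_1+e_2)$ for all $A\in\ah$, and since $e_1+e_2$ is a nonzero operator (both $e_i$ being positive of norm one) the range of $\bromega$ equals the one-dimensional subspace $\cc(e_1+e_2)\subseteq\bk$. This contradicts the hypothesis that $\omega$ has range rank two (Definition~\ref{bromega-finite-rank}). Hence $\omega$ is not $q$-pure.

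Because every step merely invokes an already-established result, there is no serious obstacle in this argument; the only point that needs a moment's attention is the final one — translating the conclusion $\mu_1=\mu_2$ of Theorem~\ref{2-not-qpure} back into a statement about $\dim\range(\bromega)$ — and that is immediate once $\bromega$ is written out explicitly in the standard form provided by Proposition~\ref{standard-form-range-rank-two}.
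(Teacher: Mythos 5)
Your proposal is correct and follows exactly the paper's own route: type II$_0$ gives a trivial normal spine, Proposition~\ref{standard-form-range-rank-two} puts $\omega$ into the standard form with $e_1,e_2$ satisfying \eqref{nc-square}, and Theorem~\ref{2-not-qpure} then forces $\mu_1=\mu_2$ under $q$-purity, contradicting range rank two. The explicit final step identifying $\range(\bromega)$ with $\cc(e_1+e_2)$ is a slight elaboration of what the paper leaves implicit, but the argument is the same.
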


\begin{theorem} \label{1or4}
If $\omega$ is a $q$-weight map over $\cc^2$ with trivial normal spine, 
then $\omega$ has range rank 1, 2, or 4.
Furthermore, if $\omega$ is unital and $q$-pure, then it has range rank 1 or 4.  
\end{theorem}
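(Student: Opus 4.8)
The plan is to study the Choi--Effros algebra $\rl=\range(\bromega)$ attached to a boundary expectation of $\omega$, and then to reduce the last assertion to the range rank two analysis. We may assume $\omega\neq 0$, since otherwise $\bromega=0$. As $\fk=\cc^2$ we have $\dim_\cc\bk=4$, so the range rank is at most $4$; the whole content of the first assertion is therefore to exclude the value $3$.

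To do this, I would first apply Theorem~\ref{twoz}: the normal spine of $\omega$ is trivial and $\fk$ is finite-dimensional, so there exists a boundary expectation $L:\bk\to\bk$, and $\rl=L(\bk)=\range(\bromega)$ is a $W^*$-algebra under the product $\cem$ whose complex dimension is exactly the range rank of $\omega$. Suppose that this dimension equals $3$. Then $\rl\cong\cc\oplus\cc\oplus\cc$, the only $C^*$-algebra of dimension $3$, so $\rl$ has three minimal projections $f_1,f_2,f_3$, which are linearly independent positive norm-one operators in $\bk$ with $f_1+f_2+f_3=L(I)$ and $L(f_if_j)=f_i\cem f_j=0$ for $i\neq j$. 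Two elementary observations drive the argument: since $L$ is idempotent, $\dim_\cc\ker L=4-3=1$; and by the Choi--Effros identities~\eqref{eq:choi-effros}, $L(f_if_jf_i)=L\bigl(f_i\,L(f_jf_i)\bigr)=0$ for $i\neq j$.

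Now I would split into two cases. If $f_if_j\neq 0$ for some $i\neq j$, then $a:=f_if_jf_i=(f_j^{1/2}f_i)^*(f_j^{1/2}f_i)$ is a nonzero positive element of $\ker L$; writing $P$ for its range projection in $\bk$ and using $a\geq\delta\,\chi_{[\delta,\infty)}(a)$ together with normality of $L$, one gets $L(P)=0$, and $P$ must have rank one, since $P=I$ would force $L(I)=0$ and hence $\rl=\{0\}$. Because $P^2=P$ and $L$ is $2$-positive, applying $L\otimes\mathrm{id}_2$ to the positive operator $\bigl[\begin{smallmatrix}y\\ P\end{smallmatrix}\bigr]\bigl[\begin{smallmatrix}y^* & P\end{smallmatrix}\bigr]=\bigl[\begin{smallmatrix}yy^* & yP\\ Py^* & P\end{smallmatrix}\bigr]$ forces $L(yP)=0$ for every $y\in\bk$; thus the two-dimensional space $\bk P$ lies inside the one-dimensional space $\ker L$, a contradiction. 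In the complementary case $f_if_j=0$ for all $i\neq j$, the operators $f_1,f_2,f_3$ are positive with pairwise orthogonal ranges inside $\cc^2$, which is impossible for three nonzero operators. Hence the range rank of $\omega$ cannot be $3$, so it is $1$, $2$, or $4$.

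For the final statement, suppose in addition that $\omega$ is unital and $q$-pure, and assume toward a contradiction that its range rank is $2$. By Proposition~\ref{standard-form-range-rank-two} we may write $\omega(\rho)(A)=\rho(e_1)\mu_1(A)+\rho(e_2)\mu_2(A)$ with $e_1,e_2$ positive norm-one operators satisfying~\eqref{nc-square} and $\mu_1,\mu_2$ positive boundary weights; since $\omega$ is a $q$-weight, $\mu_1$ and $\mu_2$ satisfy the hypotheses of Theorem~\ref{rank-two-big}, and then Theorem~\ref{2-not-qpure} shows that $q$-purity of $\omega$ forces $\mu_1=\mu_2$, i.e.\ $\omega$ has range rank one --- contradicting range rank $2$. (Alternatively, a $q$-pure $q$-weight with trivial normal spine induces an E$_0$-semigroup of type II$_0$, so Corollary~\ref{sec6-main} applies directly.) Therefore the range rank is $1$ or $4$. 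I expect the only genuine obstacle to be the exclusion of range rank $3$, and within it the step producing a nonzero positive element of $\ker L$ on which the positivity of $L$ can be played against a dimension count; the rest of the first assertion is bookkeeping, and the second assertion is essentially a citation of Theorem~\ref{2-not-qpure} via Proposition~\ref{standard-form-range-rank-two}.
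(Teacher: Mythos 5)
Your proposal is correct, and the second assertion is handled exactly as in the paper (Proposition~\ref{standard-form-range-rank-two} feeding into Theorem~\ref{2-not-qpure}, equivalently Corollary~\ref{sec6-main}); but the core of the first assertion --- ruling out range rank $3$ --- is argued quite differently. The paper's proof is a short structural argument: by Choi--Effros, the restriction of $L$ to the $C^*$-subalgebra of $M_2(\cc)$ generated by $\range(L)$ under the \emph{ordinary} product is a $*$-homomorphism onto $\rl$; if $\dim\range(L)>2$ that subalgebra must be all of $M_2(\cc)$ (no $C^*$-subalgebra of $M_2(\cc)$ has dimension $3$), and simplicity of $M_2(\cc)$ then forces $\rl=\{0\}$ or $\rl=M_2(\cc)$, i.e.\ range rank $4$. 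You instead classify a hypothetical $3$-dimensional $\rl$ as $\cc^3$, extract its minimal projections $f_1,f_2,f_3$, and play the identities \eqref{eq:choi-effros} against a dimension count: either some $f_if_jf_i=(f_j^{1/2}f_i)^*(f_j^{1/2}f_i)$ is a nonzero positive element of the one-dimensional $\ker L$, whose rank-one range projection $P$ satisfies $L(P)=0$ and hence, by $2$-positivity, $M_2(\cc)P\subseteq\ker L$ --- a $2$-dimensional space inside a $1$-dimensional one --- or else all the products $f_if_j$ vanish and three nonzero positive operators on $\cc^2$ would have pairwise orthogonal ranges. Both cases check out (in particular $f_i=L(f_i^2)\geq 0$ in $\bk$, so the square roots and range projections you use are legitimate), so your route is a valid, more elementary substitute: it is longer, but it only uses positivity and the algebraic identities \eqref{eq:choi-effros} rather than the multiplicativity of $L$ on the generated $C^*$-algebra; the paper's argument is slicker and generalizes more readily to larger $\fk$.
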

\begin{proof}
Suppose that $\dim \range(\breve{\omega}) >2$, and let $L$ be a boundary expectation
for $\omega$, so $\dim \range(L)=\dim \range(\bromega)>2$
by Theorem~\ref{twoz}.  Recall that the range $\mathfrak{R}_L$ of $L$
is a $C^*$-algebra under the norm and involution inherited from $M_2(\C)$ but with 
Choi-Effros multiplication given by \eqref{multiplication}.
Let $\mathfrak{B}$
be the $C^*$-subalgebra of $M_2(\C)$ generated by the range of $L$ in the usual sense. 
Linearity of $L$ and the definition of the multiplication operation in \eqref{multiplication} imply that the restriction of $L$ to $\mathfrak{B}$ is a 
$*$-homomorphism onto $\mathfrak{R}_L$.  

Since $\dim \range(L)>2$ and no $C^*$-subalgebra of 
$M_2(\C)$ has dimension 3, it follows
that $\mathfrak{B}=M_2(\C)$, hence $L$ is a $*$-homomorphism from $M_2(\C)$ onto $\mathfrak{R}_L$.
Notice that  $M_2(\C)$ is simple, hence $\rl$ is either $\{0\}$ or $M_2(\C)$.  Since $\bromega$ is not the zero map, we have $\rl=M_2(\cc)$.  Therefore, if $\omega$ is a $q$-weight map over $\cc^2$ whose normal spine is zero, then $\omega$ has range rank 1, 2, or 4.
To finish the proof, we note that by Corollary~\ref{sec6-main}, there is no $q$-pure unital $q$-weight map with range rank 2 over $\cc^2$
that has trivial normal spine.
\end{proof}

\providecommand{\bysame}{\leavevmode\hbox to3em{\hrulefill}\thinspace}

\end{document}